\documentclass[11pt,a4paper,reqno]{amsart}

\usepackage[includehead,includefoot,margin=25mm]{geometry}
\usepackage{amsmath,amsthm,amssymb}
\usepackage{times}
\usepackage{enumerate}
\usepackage{amscd}
\usepackage{comment}
\usepackage{color}
\usepackage{tikz-cd}
\usepackage{graphpap}
\usepackage[normalem]{ulem}

\newtheorem{thm}{Theorem}[section]
\newtheorem{cor}[thm]{Corollary}
\newtheorem*{cor*}{Corollary}
\newtheorem{lem}[thm]{Lemma}

\newtheorem{prop}[thm]{Proposition}



\theoremstyle{definition}
\newtheorem{defin}[thm]{Definition}
\newtheorem{rem}[thm]{Remark}
\newtheorem{exa}[thm]{Example}



\numberwithin{equation}{section}

\newcommand{\p}{{\mathbb{P}}}

\newcommand{\ddd}{\delta}

\newcommand{\NN}{{\mathbb N}}
\newcommand{\CC}{{\mathbb C}}
\newcommand{\RR}{{\mathbb R}}
\newcommand{\QQ}{{\mathbb Q}}

\newcommand{\be}{\begin{enumerate}}
	\newcommand{\ee}{\end{enumerate}}
\newcommand{\beq}{\begin{equation}}
	\newcommand{\eeq}{\end{equation}}
\DeclareMathOperator{\id}{id}
\DeclareMathOperator{\SL}{SL}
\DeclareMathOperator{\Aut}{Aut}

\DeclareMathOperator{\GL}{GL}
\DeclareMathOperator{\Lie}{Lie}

\DeclareMathOperator{\Der}{Der}

\DeclareMathOperator{\Bir}{Bir}
\DeclareMathOperator{\Cent}{Cent}
\DeclareMathOperator{\Jonq}{Jonq}
\DeclareMathOperator{\Tr}{Tr}

\newcommand{\bbmat}{\begin{bmatrix}}
	\newcommand{\ebmat}{\end{bmatrix}}
\newcommand{\bsmat}{\begin{smallmatrix}}
	\newcommand{\esmat}{\end{smallmatrix}}

\DeclareMathOperator{\Aff}{Aff}
\DeclareMathOperator{\SAff}{SAff}

\newcommand{\C}{\mathbb{C}}
\newcommand{\Z}{\mathbb{Z}}
\DeclareMathOperator{\PGL}{PGL}
\DeclareMathOperator{\PSL}{PSL}
\newcommand{\Q}{\mathbb{Q}}
\newcommand{\A}{\mathbb{A}}


\renewcommand{\phi}{\varphi}

\DeclareMathOperator{\Pic}{Pic}

\newcommand{\GA}[0]{\ensuremath{\mathbb{G}_{\mathrm{a}}}}
\newcommand{\GM}[0]{\ensuremath{\mathbb{G}_{\mathrm{m}}}}
\newcommand{\KK}[0]{\ensuremath{\mathbb{C}}}
\newcommand{\spec}[0]{\ensuremath{\operatorname{Spec}}}
\newcommand{\TT}[0]{\ensuremath{T}}
\newcommand{\OO}[0]{\ensuremath{\mathcal{O}}}
\newcommand{\RT}[0]{\ensuremath{\mathcal{R}}}
\newcommand{\pr}[0]{\ensuremath{\operatorname{pr}}}
\newcommand{\NS}[0]{\ensuremath{\operatorname{NS}}}

\begin{document}

	\title[Characterization of affine varieties with a torus action]{Characterization of
		affine surfaces with a torus action \\ by their automorphism groups}
	
	\author{Alvaro Liendo}
	\author{Andriy Regeta}  \author{Christian Urech}
	\thanks{Partially supported by Fondecyt projects 1160864 and 1200502.}
	\address{Instituto de Matem\'atica y F\'\i sica \\ Universidad de Talca \\ Casilla 721, Talca, Chile}
	\email{aliendo@inst-mat.utalca.cl}
	
	\address{Institut f\"{u}r Mathematik \\ Friedrich-Schiller-Universit\"{a}t Jena, \\
		Jena 07737, Germany}
	\email{andriyregeta@gmail.com}
	\thanks{Supported by the SNF, project number P2BSP2\_175008.}
	\address{EPFL SB MATH \\ 
		Station 8 \\
		1015 Lausanne, Switzerland}
	\email{christian.urech@gmail.com}

	\begin{abstract}
		In this paper we prove that if two normal affine surfaces
		$S$ and $S'$ have isomorphic automophism groups, then every
		connected algebraic group acting regularly and faithfully on
		$S$ acts also regularly and faithfully on $S'$.          Moreover, if
		$S$ is non-toric, we show that the dynamical type of a
		1-torus action is preserved in presence of an additive group
		action.
		We also show that complex affine toric surfaces are determined by
		the abstract group structure of their regular automorphism
		groups in the category of complex normal affine surfaces
		using properties of the Cremona group. As a generalization
		to arbitrary dimensions, we show that complex affine toric
		varieties, with the exception of the algebraic torus, are
		uniquely determined in the category of complex affine normal
		varieties by their automorphism groups seen as ind-groups.

	\end{abstract}
	
	\maketitle
	
	\tableofcontents

	\section{Introduction}
In the whole paper we work over the field $\CC$ of complex numbers and
varieties are considered to be irreducible.  Let $\TT$ be the
algebraic torus, i.e.\,$\TT\simeq \GM^n$, where $\GM=(\KK^*,\cdot)$ is
the multiplicative group of the base field $\C$.  A $\GM$-variety is a
variety endowed with a regular $\GM$-action. A toric variety is a
normal algebraic variety endowed with a $\TT$-action having a Zariski
dense open orbit.

Let $G$ be an algebraic group acting faithfully and regularly on an
affine surface $S$. This induces an embedding of $G$ into
$\Aut(S)$. Subgroups of $\Aut(S)$ of this form are called {\it
  algebraic subgroups} (see Section~\ref{indgrps} for details). In
this paper we look at the question, in as far this algebraic structure
of subgroups of $\Aut(S)$ is encoded by the structure of $\Aut(S)$ as
an abstract group. As a first main result we show that abstract group
isomorphisms preserve algebraic groups.
	
\begin{thm} \label{main} %
  Let $S$ and $S'$ be normal affine surfaces and let
  $\varphi\colon\Aut(S)\to\Aut(S')$ be an isomorphism of groups. If
  $G$ is a connected algebraic subgroup of $\Aut(S)$ that is not
  unipotent, then $\varphi(G)\subset\Aut(S')$ is an algebraic subgroup
  isomorphic to $G$ as an algebraic group.
\end{thm}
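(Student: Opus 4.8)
The plan is to characterize algebraic subgroups of $\Aut(S)$ purely in terms of the abstract group structure, so that an abstract isomorphism $\varphi$ is forced to send such subgroups to subgroups of the same kind. Since $G$ is connected and not unipotent, its structure theory tells us that $G$ contains a nontrivial torus $\GM$, and in fact a maximal torus $\TT \subset G$ of positive rank. The key idea will be to recover the relevant algebraic structure from group-theoretic invariants that any isomorphism must preserve: divisibility, the existence of large divisible abelian subgroups (images of tori), and centralizer/normalizer conditions. Let me think about how to isolate the image $\varphi(G)$.

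First I would reduce to understanding the image of a one-parameter subgroup $\GM \hookrightarrow G$. A regular faithful $\GM$-action on the normal affine surface $S$ is, up to conjugacy, very constrained, and in particular such a subgroup is a maximal divisible abelian subgroup with specific centralizer behaviour inside $\Aut(S)$. The strategy is to show that $\varphi(\GM)$ is again algebraic: one abstractly characterizes the image of a torus as a divisible abelian subgroup that is ``algebraically saturated'' in the sense that it is not properly contained in any larger abelian subgroup sharing the same divisibility and torsion structure. Once I know $\varphi$ sends tori to tori (realized as genuine $\GM$-actions on $S'$), I would recover the reductive part of $G$ by a root-system/centralizer argument, using that the centralizer of a maximal torus and the commutation relations among root subgroups are all group-theoretic data preserved by $\varphi$.

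The main structural input is that a connected algebraic group $G$ acting on a surface has very restricted dimension and reductive quotient; over a surface the possible connected non-unipotent $G$ are essentially built from $\GM$, $\GA$, and small semisimple pieces such as $\SL_2$ or $\PGL_2$, together with solvable extensions. I would therefore proceed by cases according to the Levi decomposition $G = L \ltimes R_u(G)$: handle the torus/reductive factor $L$ first by the saturation argument above, then handle the unipotent radical $R_u(G)$ by showing its elements are exactly the unipotent elements of $G$ that are normalized appropriately by $\varphi(L)$, using that $\varphi$ preserves the property of generating a $\GA$-subgroup in the presence of a torus normalizing it (this is where the hypothesis that $G$ is \emph{not} unipotent does real work, since it supplies the torus needed to detect and rigidify the unipotent elements). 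Reconstructing the algebraic group law on $\varphi(G)$ then follows because the action of $\varphi(L)$ on the unipotent part determines the bracket, and the resulting bijection $G \to \varphi(G)$ respects the one-parameter subgroups, hence is a morphism of algebraic groups; applying the same reasoning to $\varphi^{-1}$ shows it is an isomorphism of algebraic groups.

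The hard part will be the very first step: showing abstractly that $\varphi(\GM)$ is again a torus acting regularly on $S'$, i.e.\ that the maximal divisible abelian subgroup is \emph{realized} by a genuine algebraic $\GM$-action rather than merely being an abstract divisible group sitting inside $\Aut(S')$. This requires pinning down a purely group-theoretic fingerprint of ``being a one-dimensional torus'' that survives arbitrary abstract isomorphisms, presumably via the interplay between divisibility, the torsion subgroup (the roots of unity), and the action on the surrounding group through conjugation, and then invoking a classification of divisible abelian subgroups of automorphism groups of normal affine surfaces to conclude that such a subgroup must in fact be algebraic. I expect this to be the crux, and the surface hypothesis (dimension two) to be essential in making the classification of candidate subgroups tractable.
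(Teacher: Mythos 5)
Your plan correctly identifies the crux --- showing abstractly that $\varphi(\GM)$ is realized by a genuine algebraic subgroup of $\Aut(S')$ --- but at exactly that point it stops short, and the repair you gesture at does not exist. There is no classification of divisible abelian subgroups of automorphism groups of normal affine surfaces, and ``maximal divisible abelian'' is not a fingerprint of tori: on $\A^2$ the subgroup $\{(x,y+p(x)) \mid p\in\C[x]\}$ is an infinite-dimensional, divisible, self-centralizing abelian group, and on parabolic $\GM$-surfaces all $\GA$-actions commute and generate similar infinite-dimensional divisible abelian groups. Torsion considerations do not close the gap either, because one must still exclude abstract, non-closed subgroups isomorphic to $\GM$ whose elements have unbounded degree growth as birational maps. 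The paper's solution operates at the level of single \emph{elements}, not subgroups, and passes through $\Bir(S')$: Theorem~\ref{divisibility} shows that $f\in\Bir(S')$ is algebraic if and only if some iterate $f^k$ is divisible, and the proof is genuinely two-dimensional birational dynamics --- the Gizatullin--Diller--Favre--Blanc--Cantat degree-growth classification (Theorem~\ref{cremonathm}), the Blanc--D\'eserti lower bound $c\geq 1/2$ in the linear-growth case, the Halphen-pencil argument in the quadratic case, and the virtually cyclic centralizer theorem (Theorem~\ref{cantat}) in the exponential case --- followed by Weil regularization (Corollary~\ref{algelement}) to transfer algebraicity from $\Bir(S')$ back to $\Aut(S')$. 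Without this ingredient your ``saturation'' criterion has no way to exclude, say, a divisible-looking element of linear degree growth, so the first step of your reduction cannot be carried out as proposed.

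Two further steps would fail as written. First, once $\varphi(t)$ is known to be algebraic, one only gets $\overline{\langle \varphi(t)\rangle}^\circ\simeq \GM^{n_1}\times\GA^{n_2}$, and pinning this down requires the surface-specific structure theory the paper develops: faithful $\GA\times\GM$-actions force toricity (Corollary~\ref{toricma}), the centralizer of the torus on a non-toric $\GM$-surface is a finite extension of it (Lemma~\ref{centralizer}), and root-subgroup weights are rigid (Lemma~\ref{different-weight}). The paper also splits into toric versus non-toric surfaces: in the non-toric case the only connected non-unipotent groups are $\GM$, $\GM\ltimes\GA^n$, and $\PSL_2$, so no general Levi-decomposition machinery is needed, while in the toric case (where $\SL_2$, $\Aff_2$, etc.\ do occur) the argument runs through the Arzhantsev--Zaidenberg amalgamated-product structure of $\Aut(X_{d,e})$ (Proposition~\ref{groupautomorphismtoric}); your scheme never interacts with this dichotomy. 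Second, your concluding claim that the bijection $G\to\varphi(G)$ ``respects the one-parameter subgroups, hence is a morphism of algebraic groups'' is false: conjugation by a wild field automorphism $\tau$ of $\C$ yields abstract automorphisms of $\Aut(\A^2)$ that preserve every algebraic subgroup setwise and permute its one-parameter subgroups without being regular on any of them. The paper only obtains that $\varphi|_G$ is algebraic after twisting by some such $\tau$, and then deduces $\varphi(G)\simeq G$ as algebraic groups from invariance under base change (Lemma~\ref{basechangelemma}), not from regularity of $\varphi|_G$ itself.
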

	
The restriction on unipotent subgroups in the theorem cannot be
removed as shown by Example~\ref{example-unipotent}, where we exhibit a
surface $S$ all of whose automorphisms are unipotent such that $\Aut(S)$ is
isomorphic to $(\CC,+)$ as group. This surface allows us to produce a
counter-example to Theorem~\ref{main} in the case where $G$ is
unipotent. However, let $S$ and $S'$ be normal affine surfaces and let
$\varphi\colon\Aut(S)\to\Aut(S')$ be an isomorphism of groups. If
$\Aut(S)$ contains a unipotent subgroup $U$, then $\Aut(S')$ also
contains an algebraic subgroup isomorphic to $U$ (see
Remark~\ref{unipotentrem}).
	
Furthermore, $\GM$-surfaces come in three types with respect to the
dynamical behavior of the $\GM$-action: \emph{elliptic} corresponding
to the case where there is an attractive fixed point, \emph{parabolic}
where there are infinitely many fixed points, and \emph{hyperbolic}
where there are finitely many non-attractive fixed points. For a
non-toric surface $S$ admitting a $\GM$-action, the dynamical type is
an invariant of the surface and does not depend on the choice of the
$\GM$-action (\cite[Corollary~4.3]{MR2126657}). In the case of
surfaces whose automorphism group contains algebraic subgroups
isomorphic to $\GA$ and $\GM$, the dynamical type is preserved by
isomorphisms of the automorphism groups.
	
\begin{thm}\label{dynamical}
  Let $S$ and $S'$ be normal affine $\GM$-surfaces with $S$
  non-toric. Assume that there is a non-trivial regular action of
  $\GA$ on $S$. If there exists a group isomorphism
  $\varphi\colon \Aut(S)\rightarrow\Aut (S')$, then the $\GM$-surfaces
  $S$ and $S'$ are of the same dynamical type.
\end{thm}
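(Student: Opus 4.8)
The plan is to transport the torus and a Borel subgroup through $\varphi$, reduce both surfaces to the same combinatorial situation, and then read off the dynamical type from a group-theoretic invariant that $\varphi$ respects. First I would set $T' := \varphi(\GM)$; since $\GM$ is connected and not unipotent, Theorem~\ref{main} gives that $T'$ is an algebraic subgroup of $\Aut(S')$ isomorphic to $\GM$, so $S'$ is a $\GM$-surface. To see that $S'$ is again non-toric, I would use that a normal affine surface is toric precisely when its automorphism group contains a two-dimensional torus; as $S$ is non-toric every torus in $\Aut(S)$ is at most one-dimensional, and Theorem~\ref{main} preserves tori together with their dimension, so the same holds for $\Aut(S')$ and $S'$ is non-toric. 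In particular the dynamical type of $S'$ is well defined by \cite[Corollary~4.3]{MR2126657}.

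Next I would produce a $\GM$-normalized $\GA$-action on both surfaces. By homogenizing the given locally nilpotent derivation with respect to the grading, $S$ carries a $\GM$-normalized $\GA$-subgroup $U$, so $\Aut(S)$ contains the connected solvable non-unipotent algebraic subgroup $B=\GM\ltimes U$. Applying Theorem~\ref{main} to $B$ yields an isomorphic algebraic subgroup $B'=\varphi(B)\subset\Aut(S')$; since $T'=\varphi(\GM)$ is a maximal torus of $B'$, its unipotent radical is a $\GA$-subgroup $U'$ normalized by $T'$. Hence $S'$ also admits a homogeneous locally nilpotent derivation, and $S,S'$ are now both non-toric $\GM$-surfaces carrying homogeneous $\GA$-actions.

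The heart of the argument is then a purely group-theoretic characterization of the three types, phrased through the torus and its normalized $\GA$-subgroups. Using the Dolgachev--Pinkham--Demazure description and the classification of homogeneous locally nilpotent derivations, a non-toric $\GM$-surface is hyperbolic exactly when it carries homogeneous derivations of both positive and negative weight; these give two $\GM$-normalized $\GA$-subgroups whose weights have opposite sign, and together with $\GM$ they generate a non-solvable subgroup (an $\SL_2$- or larger configuration). In the elliptic and parabolic cases all homogeneous derivations have weight of a single sign, so the subgroup $G_S:=\langle\GM,\ \text{all }\GM\text{-normalized }\GA\text{'s}\rangle$ is solvable. Since solvability is preserved by $\varphi$, and since the property of being a $\GM$-normalized $\GA$ is detected through Borel subgroups $T'\ltimes U'$ as above, $\varphi$ matches hyperbolic with hyperbolic. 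To separate the elliptic from the parabolic case I would use that in the parabolic case the fiber-type derivation occurs in an infinite family $\{f\partial: f\in A_0\}$, producing a $\GM$-normalized abelian unipotent subgroup on which $\GM$ acts through a single character with infinite-dimensional weight space, whereas in the elliptic case ($A_0=\C$) all weight spaces of the maximal unipotent subgroup are finite-dimensional; this contrast can be made abstract through the $T'$-conjugation structure, which $\varphi$ preserves.

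The main obstacle I anticipate is exactly this last step: a single semidirect product $\GM\ltimes U$ only remembers the \emph{magnitude} of the weight as an abstract group, so the sign of a weight and the elliptic-versus-parabolic dichotomy are invisible to bare group isomorphism. The resolution must extract these from data that $\varphi$ genuinely respects---namely solvability of the generated subgroups and the $T'$-equivariant (conjugation) structure on the transported unipotent subgroups---rather than from the isomorphism type of any individual subgroup. Carefully turning ``weights of opposite sign'' and ``infinite-dimensional weight space'' into conjugation-invariant group-theoretic statements is where the real work lies.
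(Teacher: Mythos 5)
Your opening reductions are sound and run parallel to the paper's: invoking Theorem~\ref{main} is legitimate here (its proof in the paper goes through Theorem~\ref{semidirect-product} and Proposition~\ref{one-dimensional} and does not use Theorem~\ref{dynamical}, so there is no circularity), and it correctly yields that $S'$ is a non-toric $\GM$-surface carrying a root subgroup with respect to $\varphi(\GM)$. The genuine gap is in what you call the heart of the argument. Your characterization ``hyperbolic exactly when there are homogeneous derivations of both positive and negative weight'' is not what holds: the correct statement, Lemma~\ref{different-weight}(a), is that a non-toric surface is hyperbolic if and only if it admits root subgroups of \emph{different} weights, and these may all have the same sign --- the construction there multiplies a homogeneous $\delta$ by a semi-invariant $f\in\ker\delta$, shifting the weight within a single sign. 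Consequently your solvability invariant fails: there are hyperbolic non-Gizatullin surfaces on which all root subgroups have the same generic orbits and hence commute (by Lemma~\ref{different-weight}(b) their weights then all have one sign), so the subgroup generated by $\GM$ and all root subgroups has the shape $\GM\ltimes(\text{abelian unipotent})$, solvable exactly as in the parabolic case. This commuting hyperbolic situation is precisely the case the paper's proof is forced to handle, so solvability cannot separate hyperbolic from parabolic.

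The paper resolves the obstruction you flag at the end, but differently. First, the elliptic case is vacuous: both $S$ and $S'$ are non-toric and carry $\GA$-actions, and by \cite[Theorem~3.3]{MR2196000} a non-toric elliptic surface admits no root subgroup, so neither surface is elliptic --- your proposed weight-space-dimension argument addresses a case that does not occur. For the remaining dichotomy: if root subgroups fail to commute on either side, Proposition~\ref{Gizatullin} makes both surfaces Gizatullin, which forces hyperbolicity by \cite[Corollary~4.4]{MR2196000}; if all root subgroups commute, the paper transports the \emph{inequality} of weights rather than their signs, which is exactly the conjugation-invariant formulation you were missing. Namely, choose $t$ with $t\circ u\circ t^{-1}=u^2$ for all $u$ in a root subgroup $U$; for a root subgroup $U'$ of different weight one has $t\circ u'\circ t^{-1}\neq (u')^2$, and both the relation and its failure pass through $\varphi$ (after replacing $t$ by a power $t^k$ with $\varphi(t^k)\in\overline{\langle\varphi(t)\rangle}^\circ$ and normalizing $u$, $u'$ so that $\overline{\langle\varphi(u)\rangle}$, $\overline{\langle\varphi(u')\rangle}$ are connected). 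Hence different weights on $S$ yield different weights on $S'$, and Lemma~\ref{different-weight}(a) concludes. Your proposal stops exactly where this relation-based transport is needed, and the invariants you propose in its place (sign of the weight, solvability of the generated subgroup) are not invariants of the abstract group in the relevant cases.
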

	
The presence of a $\GA$-action in Theorem~\ref{dynamical} cannot be
removed as is shown by Example~\ref{example-elliptic} and
Example~\ref{example-hyperbolic}, where we exhibit a hyperbolic and an
elliptic surface both having automorphism group isomorphic to
$\GM$. Note that most toric surfaces can have $\GM$-actions of all the
dynamical types, so the dynamical type is not an invariant of the
surface anymore.

In a next step, we apply our techniques to the following question: is
a toric variety uniquely determined by its automorphism group? This
question can be seen in the context of the Erlangen program of Felix
Klein, in which he suggested to understand geometrical objects through
their groups of symmetries (\cite{MR1510801}).  Note that in general
it is impossible to characterize affine algebraic varieties by their
groups of regular automorphisms, since most of them have a trivial
automorphism group. However, in the case of toric surfaces the
automorphism group is well-studied and large enough to determine the
variety.
	
\begin{thm}\label{toricthm}
  Let $S_1$ be an affine toric surface and let $S_2$ be a normal
  affine surface. If $\Aut(S_1)$ and $\Aut(S_2)$ are isomorphic as
  groups, then $S_1$ and $S_2$ are isomorphic.
\end{thm}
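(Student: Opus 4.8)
The plan is to first use Theorem~\ref{main} to transfer the torus action and conclude that $S_2$ is toric, and then to recover the combinatorial data of the cone defining $S_1$ from the group structure of $\Aut(S_1)$. Since $S_1$ is toric it carries a faithful action of the $2$-torus $\TT=\GM^2$, giving an algebraic subgroup $\TT\subset\Aut(S_1)$. As $\TT$ is connected and not unipotent, Theorem~\ref{main} shows that $\varphi(\TT)=:\TT'$ is an algebraic subgroup of $\Aut(S_2)$ isomorphic to $\GM^2$ as an algebraic group; in particular $S_2$ admits a faithful regular action of $\GM^2$. I would then argue that a normal affine surface with a faithful $\GM^2$-action is automatically toric: the generic isotropy group of a torus action is a subtorus which, being contained in the stabilizer of every point of a dense open subset, acts trivially on all of $S_2$, so by faithfulness it is trivial; hence the generic orbit is two-dimensional and therefore dense. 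Thus both $S_1$ and $S_2$ are affine toric surfaces.

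The main device is to reconstruct the Demazure roots of the defining cone together with their $\TT$-weights. Each root subgroup $U_m\cong\GA$ of $\Aut(S_1)$ is normalized by $\TT$, which acts on it through a nonzero character (the root) $m$. Crucially, although $U_m$ is unipotent and hence outside the scope of Theorem~\ref{main}, the subgroup $H_m:=\TT\cdot U_m\cong\GM^2\ltimes_{m}\GA$ is connected, algebraic and \emph{not} unipotent, since it contains the torus $\TT$. Applying Theorem~\ref{main} to $H_m$ shows that $\varphi(H_m)$ is an algebraic subgroup of $\Aut(S_2)$ isomorphic to $H_m$ as an algebraic group and containing $\TT'$. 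The image $\varphi(U_m)$ is then the unipotent radical of $\varphi(H_m)$, hence an algebraic $\GA$-subgroup of $\Aut(S_2)$ normalized by $\TT'$; moreover the weight of $\TT'$ on $\varphi(U_m)$ equals $m$ transported along the fixed algebraic isomorphism $\varphi|_{\TT}\colon\TT\simto\TT'$, because $\varphi$ preserves the conjugation action.

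Running this over all roots, and using that $\varphi$ is a bijection to capture every root subgroup of $\Aut(S_2)$ in the same intrinsic way—namely as unipotent radicals of subgroups of the form $\TT'\cdot\GA$—I would conclude that the weighted set of Demazure roots of $S_1$ is carried by $\varphi|_{\TT}$ onto that of $S_2$. Since this root datum determines the cone of an affine toric surface up to the action of $\GL_2(\Z)$ on the character lattice, and $\varphi|_{\TT}$ realises precisely such a lattice isomorphism, the cones of $S_1$ and $S_2$ are $\GL_2(\Z)$-equivalent, and therefore $S_1\simeq S_2$. The extremal cases fit the same scheme: when $S_1=\TT$ there are no roots, so the presence of any $H_m$ in $\Aut(S_2)$ would pull back via $\varphi^{-1}$ to a non-unipotent connected algebraic subgroup of $\Aut(\TT)=\TT\rtimes\GL_2(\Z)$ other than a subtorus, which is impossible and forces $S_2=\TT$; and when $S_1=\mathbb{A}^2$ the reconstruction of the (smooth) cone, or alternatively the presence of an algebraic $\SL_2$ detected through Theorem~\ref{main}, singles out $\mathbb{A}^2$.

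The main obstacle is precisely the handling of the unipotent root subgroups, which are invisible to Theorem~\ref{main} on their own; the resolution is the observation above that each $U_m$ can be packaged with the torus into the non-unipotent group $H_m$ and then recovered intrinsically as a unipotent radical carrying a well-defined weight. Care is also needed to show that $\varphi$ matches the \emph{full} collection of root subgroups on both sides, and that the numerical weights—not merely the abstract isomorphism types of the groups $H_m$—are preserved, since it is the weights that ultimately pin down the cone rather than just its combinatorial shape.
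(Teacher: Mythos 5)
Your proposal is circular within the logic of this paper: Theorem~\ref{main} is proved \emph{after} Theorem~\ref{toricthm} and depends on it in an essential way. The toric case of Theorem~\ref{main} is handled by Proposition~\ref{groupautomorphismtoric}, which treats automorphisms of $\Aut(S)$ for a single toric $S$ and therefore presupposes that $\Aut(S')$ has already been identified with $\Aut(S)$, i.e.\ presupposes Theorem~\ref{toricthm}; likewise the non-toric case of Theorem~\ref{main} begins with ``$S$ is non-toric and therefore $S'$ is non-toric,'' which is again Theorem~\ref{toricthm}, and the intermediate results it rests on (Theorem~\ref{semidirect-product}, Proposition~\ref{one-dimensional}, Proposition~\ref{SL2T}) all invoke Theorem~\ref{toricthm} explicitly. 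The paper instead proves Theorem~\ref{toricthm} from more primitive tools: the divisibility characterization of algebraic elements (Theorem~\ref{divisibility}, Proposition~\ref{alg}), the centralizer and closedness lemmas (Lemmas~\ref{toruscentralizer}, \ref{closed}, \ref{centralizer}, Corollary~\ref{toricma}), the transfer of root subgroups (Lemma~\ref{root}), and the case analysis of Lemma~\ref{exceptions}, Proposition~\ref{tor} and Proposition~\ref{A2}. Note in particular that ruling out a \emph{non-toric} $S_2$ is genuinely hard (it is the bulk of Lemma~\ref{exceptions} and Proposition~\ref{tor}) and is exactly what your black-box appeal to Theorem~\ref{main} hides.

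Even granting Theorem~\ref{main} as given, your weight-transport step fails. Theorem~\ref{main} asserts only that $\varphi(H_m)$ is an algebraic subgroup abstractly isomorphic to $H_m$ as an algebraic group; it does \emph{not} assert that $\varphi|_{H_m}$, or $\varphi|_{\TT}$, is a morphism. An abstract group isomorphism $\TT\to\TT'$ between copies of $\GM^2$ need not be algebraic (field automorphisms of $\C$ and wild abstract automorphisms of $\C^*$ abound), so there is no ``fixed algebraic isomorphism $\varphi|_\TT$'' inducing an identification of character lattices, and the weight of $\TT'$ on $\varphi(U_m)$ is simply not determined by $m$. What \emph{is} preserved group-theoretically is the kernel of each weight character --- the center of $\TT\ltimes_\chi U$ is $\{0\}\times\ker\chi$ --- together with orders of finite subgroups; this is precisely the invariant the paper uses. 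Its proof computes the weights for $X_{d,e}$ (Lemma~\ref{weight-toric-surface}), distinguishes the two families of roots by non-commutation of root subgroups, forms the intersections $K_{l,k}=\ker\chi^{\alpha_1}\cap\ker\chi^{\alpha_2}$ with $|K_{l,k}|=a+le'+ke+lkd$, and recovers $d$ and $\{e,e'\}$ as the smallest and second-smallest common differences of the resulting arithmetic progressions, concluding via Lemma~\ref{isom-toric-surfaces}. Your proposal flags the weight issue at the end but resolves it only by the unjustified algebraicity of $\varphi|_\TT$. A further (minor) flaw: detecting an algebraic $\SL_2$ does not single out $\A^2$, since by Lemma~\ref{list-SL} the surfaces $\SL_2/T$, $\SL_2/N$ and the Veronese cones $X_{d,1}$ also carry $\SL_2$- (or $\PSL_2$-) actions; the paper's Proposition~\ref{A2} instead characterizes $\A^2$ through the translation subgroup and the finite-dimensionality of its centralizer.
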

	
An important class of elements in $\Aut(S)$, where $S$ is an affine
surface, are {\it algebraic} elements, i.e.\,elements that are
contained in an algebraic group (see Section \ref{algsubgrps}). The
main idea of the proof of Theorem~\ref{main} and
Theorem~\ref{toricthm} is to consider an automorphism of an affine
surface $S$ as an element of $\Bir(S)$, the group of birational
transformations of $S$. We show that an element in $\Bir(S)$ is
algebraic if and only if an iterate of it is divisible (Theorem
\ref{divisibility}). From this purely group theoretical
characterization we obtain that algebraic elements are preserved under
group homomorphisms and we are able to reconstruct the corresponding
surfaces.
	
Much less is known about the group structure of $\Bir(X)$ if $X$ is a
variety of dimension greater than two. Hence, we are not able to
generalize Theorem~\ref{main} and Theorem~\ref{toricthm} to arbitrary
dimensions. However, the automorphism group of an affine variety comes
with the additional structure of an ind-group (see Section
\ref{indgrps}). We show that the automorphism group as an ind-group
determines a toric variety in most of the cases:
	
\begin{thm}\label{mainhighdim}
  Let $X$ be an affine toric variety different from the algebraic
  torus, and let $Y$ be a normal affine variety. If $\Aut(X)$ and
  $\Aut(Y)$ are isomorphic as ind-groups, then $X$ and $Y$ are
  isomorphic.
\end{thm}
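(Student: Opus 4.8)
The plan is to reconstruct the combinatorial data defining the toric variety $X$ purely from the ind-group $\Aut(X)$, to transport this data through the isomorphism $\varphi\colon\Aut(X)\to\Aut(Y)$, and thereby to force $Y$ to be the toric variety with the same data. Write $n=\dim X$ and let $T\cong\GM^{n}$ be the big torus of $X$, which is a maximal torus of the ind-group $\Aut(X)$ of dimension exactly $n$. The first step is to transport $T$. An isomorphism of ind-groups carries algebraic subgroups to algebraic subgroups and restricts on each to an isomorphism of algebraic groups; moreover it carries a maximal torus to a maximal torus, since maximality among tori together with the conjugacy of maximal tori is an ind-group-theoretic property. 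Hence $T'=\varphi(T)\cong\GM^{n}$ is a torus acting faithfully and regularly on the normal affine variety $Y$, and in particular $\dim Y\ge n$.

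The second step is to read off the Demazure roots. Because $X\neq T$, the variety $X$ carries at least one root subgroup, that is, a one-parameter unipotent subgroup $\GA\subset\Aut(X)$ normalized by $T$; the set of all such subgroups, together with the characters of $T$ by which $T$ acts on their Lie algebras, is exactly the set of Demazure roots of the cone defining $X$. Each such $\GA$ is an algebraic subgroup normalized by $T$, so $\varphi$ sends it to a one-parameter unipotent subgroup of $\Aut(Y)$ normalized by $T'$, and $\varphi^{-1}$ does the reverse. Since $\varphi$ restricts to an isomorphism $T\to T'$, it induces an isomorphism of character lattices matching the $T$-weight of each root subgroup of $X$ with the $T'$-weight of its image in $\Aut(Y)$. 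Thus the weighted collection of $T$-normalized additive subgroups---the Demazure roots---is transported bijectively and weight-compatibly from $X$ to $Y$.

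The third step is the reconstruction. On the side of $X$, the character lattice $M$ of $T$ together with the set of Demazure roots determines the defining cone $\sigma$, hence determines $X=\spec\CC[\sigma^{\vee}\cap M]$ up to isomorphism. On the side of $Y$, the same weighted root data now lives inside $\Aut(Y)$ and must be realized by the $T'$-action on $Y$. One shows that the group generated by $T'$ and these additive subgroups acts on $Y$ with a dense orbit, which forces $\dim Y=n$ and exhibits $Y$ as a normal affine toric variety with big torus $T'$ whose Demazure roots correspond, under the above lattice isomorphism, to those of $X$. The defining cones then agree and $X\cong Y$.

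The hard part is the second half of the third step: upgrading the abstractly transported root data on $Y$ to a genuine toric structure, that is, proving $\dim Y=n$ and that the $T'$-action has a dense orbit. A priori $Y$ could have dimension larger than $n$ with $T'$ acting with positive complexity, and the difficulty is to show that the $T'$-normalized additive subgroups move generic points in enough independent directions to span a full-dimensional orbit; this is precisely where the hypothesis $X\neq T$ is used, guaranteeing that the root set is nonempty. A genuinely separate and delicate case is that of a rigid toric variety $X\neq T$ whose only Demazure datum is trivial, so that the identity component of $\Aut(X)$ is the torus itself; here the reconstruction must instead proceed from the finite symmetry group of the cone acting on the character lattice, and establishing $\dim Y=n$ requires an independent bound on $\dim Y$ in terms of the ind-group $\Aut(Y)$. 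Finally, the foundational facts used throughout---conjugacy and maximality of tori in an ind-group, and the compatibility of $\varphi$ with the $T$-weight decompositions of $\Lie\Aut(X)$ and $\Lie\Aut(Y)$---must be established in the ind-group setting.
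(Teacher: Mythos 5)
There is a genuine gap, and you have correctly located it yourself: everything hinges on proving $\dim Y=n$, equivalently that $\varphi(T)$ acts on $Y$ with complexity zero, and your proposal leaves this open. Moreover, the route you sketch for it---showing directly that $\varphi(T)$ together with the transported additive subgroups has a dense orbit on $Y$---is not workable as stated: the transported data consists only of one-parameter unipotent subgroups normalized by $\varphi(T)$ together with their weights, and weights give no control over the geometry of the orbits of these subgroups on the unknown variety $Y$, so there is no handle for showing they move a generic point in ``independent directions.'' The paper's key idea, which is absent from your proposal, is an invariant-theoretic dimension bound instead of an orbit construction: by Lemma~\ref{n-1} one chooses an $(n-1)$-dimensional subtorus $H\subset T$ such that all root subgroups of $\Aut(X)$ with respect to $H$ still have pairwise distinct weights (this rests on \cite[Proposition~1]{MR3263267}, asserting that for a suitable $H$ every $H$-root subgroup is already a $T$-root subgroup, so Corollary~\ref{diff-toric} applies). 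Transporting through $\varphi$, all root subgroups of $\Aut(Y)$ with respect to $\varphi(H)$ have distinct weights; this forces $\OO(Y)^{U}$ to be multiplicity-free for a root subgroup $U$, since two independent homogeneous $U$-invariants of equal weight would yield, upon multiplying the locally nilpotent derivation of $U$, two distinct root subgroups of equal weight. Kraft's bound (Lemma~\ref{Kr15}) then gives $\dim Y\leq \dim\varphi(H)+1=n$, and combined with $\dim Y\geq n$ from the faithful $\varphi(T)$-action one gets $\dim Y=n$, so $\varphi(T)$ automatically has a dense orbit and $Y$ is toric. Note that applying Kraft's lemma to the full torus would only give $\dim Y\leq n+1$; the drop to an $(n-1)$-dimensional subtorus, while retaining distinctness of root weights, is exactly the trick you are missing. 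Once $Y$ is known to be toric, the conclusion follows as in your third step, which is the paper's Proposition~\ref{propfinal} together with Lemma~\ref{det-roots} (the recovery of $\sigma$ from $\RT(\sigma)$, including the distinguished rays, is itself a nontrivial point that the paper proves and you only assert).

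Two further remarks. Your ``genuinely separate and delicate case'' of a rigid toric variety $X\neq\TT$ with empty root set is vacuous and contradicts your own second step: for every ray $\rho\in\sigma(1)$ the set $\RT_\rho(\sigma)$ is non-empty (\cite[Remark~2.5]{MR2657447}, used in the proof of Lemma~\ref{det-roots}), so any affine toric variety different from the torus has infinitely many root subgroups by Corollary~\ref{diff-toric}; no such case analysis is needed, and the hypothesis $X\neq\TT$ enters only to guarantee a non-empty root set. Also, you do not need maximality of $T$ or conjugacy of maximal tori in the ind-group setting (the latter is not available for a general normal affine $Y$): it suffices that an ind-group isomorphism carries the closed algebraic subgroup $T$, contained in a filter set, to a closed algebraic subgroup of $\Aut(Y)$ isomorphic to $\GM^n$, which is immediate from the definitions.
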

	
In fact, we show that toric varieties and their automorphism groups
are uniquely determined by the weights of their root subgroups (see
Section \ref{sec:higher}). In the case of finite dimensional algebraic
groups this is a property of reductive groups
\cite[Section~4.4]{MR1642713}.
	
Theorem \ref{mainhighdim} should be seen in the context of the results
from \cite{MR3738084} (see also \cite{regeta2017characterization}),
which show that the complex affine space is characterized by its
automorphism group seen as an ind-group in the category of not
necessarily irreducible affine varieties. The assumption that $X$ is
not an algebraic torus in Theorem \ref{mainhighdim} cannot be removed
as shown by Example~\ref{main2}.  We also remark that the normality
condition in Theorem~\ref{main} and Theorem~\ref{toricthm} cannot be
removed as shown in \cite[Theorem
2]{regeta2017characterization}. However, in the particular case of the
two-dimensional affine space, one can remove the normality hypothesis
(Theorem~\ref{A2}).  In Section~\ref{autaut} we will deduce from
Theorem~\ref{main} an algebraic analogue to a result of Filipkiewicz
(\cite{MR693972}) about isomorphisms between groups of diffeomorphisms
of manifolds without boundary.

\subsection*{Acknowledgements} The authors would like to warmly thank
J\'er\'emy Blanc, Michel Brion, Serge Cantat, Adrien Dubouloz, Hanspeter Kraft,
Alexander Pere\-pechko, and Mikhail Zaidenberg for many useful discussions, comments, and references.  Part of this
work was done during a stay of the three authors at IMPAN in
Warsaw. We would like to thank IMPAN and the organizers of the Simons
semester ``Varieties: Arithmetic and Transformations'' for the
hospitality.
	
\noindent This work was partially supported by the grant 346300 for
IMPAN from the Simons Foundation and the matching 2015-2019 Polish
MNiSW fund. The first author was supported by Fondecyt project number
1160864.  The second author was partially supported by SNF, project
number P2BSP2\_165390 and the third author was supported by the SNF,
project number P2BSP2\_175008.

	\section{Automorphisms and birational transformations of affine
		varieties}\label{Preliminaries}
	
	In this section we recall some results that we will need about
	automorphisms and birational transformations of normal affine
	varieties.
	
	\subsection{Ind-groups}\label{indgrps}
	
	The notion of an ind-group was introduced by Shafarevich, who called
	these objects infinite dimensional groups, see \cite{MR0485898}.  We
	refer to \cite{furter2018geometry} for an overview of the topic.
	
	\begin{defin} \label{ind-group} %
		An \emph{ind-variety} is a set $V$ together with an
		ascending filtration
		$V_0 \subset V_1 \subset V_2 \subset \ldots \subset V$ such that the
		following conditions are satisfied:
		\begin{enumerate}[$(1)$]
			\item $V = \bigcup_{k \geq 0} V_k$;
			\item each $V_k$ has a structure of an algebraic variety;
			\item for every $k \in \mathbb{Z}_{\geq 0}$, the embedding
			$V_k \subset V_{k+1}$ is closed in the Zariski-topology.
		\end{enumerate}
	\end{defin}
	Note  that in particular every algebraic variety $X$ is an ind-variety by setting $X_k:=X$ for all $k$. 
	A \emph{morphism} between ind-varieties $V = \bigcup_k V_k$ and
	$W = \bigcup_m W_m$ is a map $\phi\colon V \to W$ such that for any $k$
	there is an $m \in \mathbb{Z}_{\geq 0}$ such that $\phi(V_k) \subset W_m$ and
	such that the induced map $V_k \to W_m$ is a morphism of algebraic
	varieties. An \emph{isomorphism} of ind-varieties is defined in the usual way.
	An ind-variety $V$ can be equipped with a topology: a subset $S \subset V$ is open if $S_k := S \cap V_k \subset V_k$ is open for all $k$. A locally closed subset
	$S \subset V$ has the natural structure of an ind-variety and is called an
	{\it ind-subvariety}. An ind-variety $V$ is called \emph{affine} if all the $V_k$ are
	affine varieties.

	\begin{defin}
		An affine ind-variety $G$ is called  an \emph{ind-group} if the
		underlying set $G$ is a group such that the map $G \times G \to G$,
		defined by $(g,h) \mapsto gh^{-1}$, is a morphism of ind-varieties.
	\end{defin}
	
	Of course, ind-groups can be defined in a more general way, but since in this paper we only deal with ind-groups whose underlying ind-variety is affine, we go with this more restrictive definition. 
	
	A closed subgroup $H$ of $G$
	is again an ind-group under the closed
	ind-subvariety structure on $G$.  A closed subgroup
	$H$ of an ind-group $G$ is called an {\it algebraic subgroup} if $H$ is contained in some filter set $G_k$ of $G$. It then follows that the ind-group structure induces the structure of an algebraic group on $H$ and that the inclusion $H\hookrightarrow G$ is a homomorphism of ind-groups. Note that with our definition of ind-groups, algebraic subgroups are always linear.

	\begin{thm}[{\cite[Theorem 0.3.1]{furter2018geometry}}, {\cite[Theorem 3.7]{stampfli2013contributions}}]\label{ind-group-prop}
		Let $X$ be an affine variety. Then $\Aut(X)$ has the structure
		of an  ind-group such that for any algebraic group $G$, there is a natural correspondence between regular $G$-actions on $X$ and ind-group homomorphisms $G \to \Aut(X)$. 
	\end{thm}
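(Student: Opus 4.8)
The plan is to construct the filtration explicitly and then verify the stated universal property. Fix a closed embedding $X \subset \A^N$ and write $A = \OO(X) = \CC[x_1,\dots,x_N]/I$. Every endomorphism $\phi$ of $X$ is determined by the $N$-tuple $(\phi^*(x_1),\dots,\phi^*(x_N)) \in A^N$, and conversely an $N$-tuple $(f_1,\dots,f_N)$ defines an endomorphism precisely when the assignment $x_i \mapsto f_i$ respects the relations in $I$. Using the degree filtration $A_{\leq d}$ induced by the generators $x_i$, I would set $\deg\phi = \max_i \deg \phi^*(x_i)$ and let $\operatorname{End}(X)_{\leq d}$ be the set of endomorphisms of degree $\leq d$. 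The coefficients of the $f_i$ in $A_{\leq d}$ are free parameters subject only to the polynomial conditions that the relations of $I$ be preserved, so $\operatorname{End}(X)_{\leq d}$ is naturally a closed affine subvariety of a finite-dimensional coefficient space.

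Next I would address the subtlety that a bound on $\deg\phi$ alone does not cut out a closed, well-behaved piece of the automorphism group, since the locus of invertible $\phi$ need not be closed. The standard remedy is the graph trick: define $\Aut(X)_{\leq d} = \{\phi \in \Aut(X) : \deg\phi \leq d \text{ and } \deg\phi^{-1} \leq d\}$ and embed it via $\phi \mapsto (\phi,\phi^{-1})$ into $\operatorname{End}(X)_{\leq d} \times \operatorname{End}(X)_{\leq d}$. The equations $\phi\circ\psi = \id$ and $\psi\circ\phi = \id$ are closed conditions, so $\Aut(X)_{\leq d}$ becomes a closed affine subvariety; since every automorphism has bounded degree, $\Aut(X) = \bigcup_d \Aut(X)_{\leq d}$. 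One then checks that the inclusions $\Aut(X)_{\leq d} \hookrightarrow \Aut(X)_{\leq d+1}$ are closed and that composition and inversion are ind-morphisms, by observing that the composite of two endomorphisms of bounded degree again has bounded degree and that its coefficients are polynomial in the coefficients of the factors. Ensuring that this filtration is admissible and that the group operations are ind-morphisms is where I expect the main difficulty to lie.

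For the universal property I would argue in both directions. Given a regular action $a \colon G \times X \to X$, the homomorphism $\rho\colon G \to \Aut(X)$, $g \mapsto (x \mapsto g\cdot x)$, has image of bounded degree: writing $a^*(x_i) = \sum_j h_j \otimes f_j \in \OO(G) \otimes A$ shows that $\rho(g)^*(x_i) = \sum_j h_j(g) f_j$ lies in the fixed finite-dimensional subspace $A_{\leq d}$ for all $g$, while the same bound applied to $g^{-1}$ controls $\deg \rho(g)^{-1}$; since the coefficients $h_j(g)$ depend regularly on $g$, the induced map $G \to \Aut(X)_{\leq d}$ is a morphism, so $\rho$ is an ind-group homomorphism. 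Conversely, the evaluation map $\operatorname{ev}\colon \Aut(X)_{\leq d} \times X \to X$, $(\phi,x) \mapsto \phi(x)$, is a morphism essentially by construction, as $\phi(x)$ is polynomial in the coordinates of $\phi$ and in $x$; hence any ind-group homomorphism $\rho$, which necessarily factors through some $\Aut(X)_{\leq d}$ because $G$ is algebraic, yields a regular action as the composite $G\times X \xrightarrow{\rho\times\id} \Aut(X)_{\leq d}\times X \xrightarrow{\operatorname{ev}} X$. These two assignments are visibly mutually inverse and compatible with composition on the $G$-side, giving the claimed natural bijection; the key point enabling the correspondence is the existence of this evaluation morphism together with the degree-boundedness that forces an algebraic group to land in a single filtration piece.
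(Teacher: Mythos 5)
Your proposal is correct and follows essentially the same route as the proof in the sources cited for this theorem (Furter--Kraft and Stampfli), which the paper itself quotes without proof: filter $\operatorname{End}(X)$ by degree relative to a closed embedding $X\subset\mathbb{A}^N$, realize $\Aut(X)$ as the closed ind-subvariety of pairs $(\phi,\psi)$ with $\phi\circ\psi=\psi\circ\phi=\operatorname{id}$, and obtain the correspondence from the coaction (giving uniform degree bounds for $\rho(g)$ and $\rho(g)^{-1}$) in one direction and from the evaluation morphism together with the fact that an algebraic group must land in a single filtration piece in the other. Nothing in your sketch deviates from or omits the essential steps of that standard argument.
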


In particular, if $G$ is an algebraic group acting regularly and faithfully on $X$, then we can consider $G$ as an algebraic subgroup of $\Aut(X)$. We will sometimes implicitly switch between these two points of view.

	The following observation will turn out to be useful:
	
	\begin{lem}\label{closed}
		Let $U \subset \Aut(X)$ be a commutative subgroup that coincides
		with its centraliser. Then $U$ is a closed subgroup of $\Aut(X)$.
	\end{lem}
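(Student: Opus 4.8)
The plan is to show that $U$ is closed by exhibiting it as the preimage of the closed point $\{\id\}$ under a morphism of ind-varieties, the key leverage being the hypothesis that $U$ coincides with its own centraliser.

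First I would use that $\Aut(X)$ carries the structure of an affine ind-group (Theorem~\ref{ind-group-prop}), so that the map $(g,h)\mapsto gh^{-1}$ is a morphism of ind-varieties. From this one deduces in the standard way that inversion as well as left and right translations by a fixed element are morphisms, and hence that for each fixed $s\in\Aut(X)$ the commutator map
\[
c_s\colon \Aut(X)\longrightarrow \Aut(X),\qquad g\longmapsto g\,s\,g^{-1}s^{-1},
\]
is a morphism of ind-varieties, being a composition of the group operations.

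Next I would observe that the singleton $\{\id\}$ is closed in $\Aut(X)$: each filtration piece $\Aut(X)_k$ is an affine variety, in which points are closed, so $\{\id\}\cap\Aut(X)_k$ is closed for every $k$, which by the definition of the ind-topology means $\{\id\}$ is closed. Since the preimage of a closed subset under a morphism of ind-varieties is again closed — one checks this on each filtration piece, using that the morphism restricts to a morphism $\Aut(X)_k\to\Aut(X)_m$ of algebraic varieties for a suitable $m$ — the centraliser of a single element
\[
C(s)=\{g\in\Aut(X)\mid gs=sg\}=c_s^{-1}(\{\id\})
\]
is closed in $\Aut(X)$.

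Finally, the centraliser of the whole subgroup is $C(U)=\bigcap_{s\in U}C(s)$, an intersection of closed subsets and therefore closed. Since by hypothesis $U$ coincides with its centraliser, we conclude $U=C(U)$, so $U$ is closed, as asserted. The only points requiring care are the verification that $c_s$ is a genuine morphism of ind-varieties and that forming preimages preserves closedness at the level of the filtration; both are routine consequences of the definition of an ind-group and of the ind-topology, so I do not expect any serious obstacle here.
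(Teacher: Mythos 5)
Your proposal is correct and follows essentially the same route as the paper: the paper's proof likewise observes that $ug=gu$ is a closed condition on each filter set (your commutator-morphism formulation $c_s^{-1}(\{\id\})$ merely makes this verification explicit) and then writes $U=\bigcap_{u\in U}G_u$ as an intersection of single-element centralisers, invoking the hypothesis that $U$ coincides with its centraliser. There are no gaps; your extra care about the filtration pieces is exactly the routine check the paper leaves implicit.
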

	\begin{proof}
		Let $u\in U$ and define $G_u = \{ g \in \Aut(X) \mid g u = u g
		\}$.
		Since $ug=gu$ is a closed condition on each filter set, we obtain
		that $G_u \subset \Aut(X)$ is a closed subgroup. Hence,
		$\cap_{u \in U} G_u = U$ is closed in $\Aut(X)$.
	\end{proof}
	
	\subsection{The Zariski topology on $\Bir(X)$}\label{zariskitop}
	Let $X$ be a variety and denote by $\Bir(X)$ its group of birational
	transformations. Blanc and Furter show in \cite{MR3092478} that
	$\Bir(\p^n)$ is not an ind-group. However, it still comes with the
	so-called {\it Zariski topology}, which has been introduced by
	Demazure (\cite{MR0284446}).  Let $A$ be a variety and let
	\[
	f\colon A\times X\dashrightarrow A\times X
	\]
	be an $A$-birational map, i.e.\,$f$ is the identity on the first factor,
	that induces an isomorphism between open subsets $U$ and $V$ of
	$A\times X$ such that the projections from $U$ and from $V$ to $A$ are
	both surjective. From this definition it follows that each $a\in A$ defines an element in
	$\Bir(X)$ and we obtain a map $A\to\Bir(X)$. A map of this form is called a
	{\it morphism}. The {\it Zariski topology} is now defined to be the finest
	topology on $\Bir(X)$ such that all the morphisms $A\to\Bir(X)$ for all
	varieties $A$ are continuous with respect to the Zariski topology on
	$A$. For all $g\in\Bir(X)$ the maps $\Bir(X)\to\Bir(X)$
	given by $x\mapsto x^{-1}$, $x\mapsto g\circ x$ and
	$x\mapsto x\circ g$ are continuous.
	
	Assume that $X$ is the projective $n$-space $\p^n$. With respect to
	homogeneous coordinates $[x_0:\dots :x_n]$, an element $f\in\Bir(\p^2)$
	is given by $[x_0:\dots:x_n]\mapsto [f_0:\dots:f_n]$, where the
	$f_i\in \C[x_0,\dots,x_n]$ are homogeneous polynomials of the same
	degree $d$ without a non-constant common factor. We call $d$ the {\it degree} of
	$f$. 
	
	\subsection{Algebraic subgroups of $\Bir(X)$ and $\Aut(X)$}\label{algsubgrps}
	An {\it algebraic subgroup} of $\Bir(X)$ is the image of an
	algebraic group $G$ by a morphism $G\to\Bir(X)$ that is also
	an injective group homomorphism.  An element $g\in\Bir(X)$ is
	called {\it algebraic} if $g$ is contained in an algebraic subgroup of $\Bir(X)$. On the other hand, an
	element $g\in\Aut(X)$ is {\it algebraic} if it is contained in
	an algebraic subgroup $G$ of $\Aut(X)$ with respect to its ind-group structure. The following results, which follow from Weil's regularization theorem, show that the two notions coincide for automorphisms of affine varieties.
	
	\begin{prop}[{\cite[Theorem 2]{kraft2018regularization}}]\label{weilcor}
		Let $X$ be an affine variety and $\rho\colon G\to\Bir(X)$ a rational $G$-action on $X$. If there is a dense subgroup $H\subset G$ such that $\rho(H)\subset\Aut(X)$, then the action of $G$ on $X$ is regular.
	\end{prop}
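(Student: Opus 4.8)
The plan is to combine Weil's regularization theorem with a density argument that transfers the regularity of the $H$-action to all of $G$, followed by an argument on regular functions that exploits the affineness of $X$. First I would invoke Weil's regularization theorem to replace the rational action by a regular one on a birational model: there is a variety $Y$ carrying a \emph{regular} $G$-action $\alpha\colon G\times Y\to Y$ together with a $G$-equivariant birational map $\psi\colon X\dashrightarrow Y$, so that $\psi\circ\rho(g)=\alpha(g,-)\circ\psi$ as birational maps for every $g\in G$. The goal then becomes to transport this regular structure back to $X$ itself.

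The next step is to locate an open dense subset of $X$ on which the $G$-action is already regular. Let $U'\subseteq X$ be the largest open subset on which $\psi$ restricts to an isomorphism $U'\xrightarrow{\sim}V$ onto an open subset $V=\psi(U')\subseteq Y$. For each $h\in H$ both $\rho(h)$ and $\alpha(h,-)$ are genuine automorphisms (of $X$ and $Y$ respectively) and the equivariance relation conjugates $\psi$ into itself; since $U'$ is characterised canonically by $\psi$, it follows that $\rho(h)(U')=U'$ and $\alpha(h,-)(V)=V$, so $U'$ and $V$ are $H$-invariant. The key upgrade is the claim that $V$ is in fact $G$-invariant. Passing to the closed complement $C=Y\setminus V$, which is $H$-invariant, it suffices to show $C$ is $G$-invariant. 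Fixing $c\in C$ and considering the orbit morphism $\beta_c\colon G\to Y$, $g\mapsto\alpha(g,c)$, one has $\beta_c(H)\subseteq C$ because $C$ is $H$-invariant; since $\beta_c$ is continuous, $C$ is closed, and $H$ is dense, we get $\beta_c(G)=\beta_c(\overline{H})\subseteq\overline{\beta_c(H)}\subseteq C$. Hence $C$, and therefore $V$, is $G$-invariant, so the regular action on $Y$ restricts to a regular action on the open set $V$. Transporting it through $\psi|_{U'}\colon U'\xrightarrow{\sim}V$ yields a regular $G$-action on $U'$ that agrees with $\rho(h)|_{U'}$ for every $h\in H$.

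The final and main step uses affineness. Write $A=\OO(X)$ and fix $f\in A$. Since the action on $U'$ is regular, the function $(g,u)\mapsto f(g\cdot u)$ lies in $\OO(G\times U')=\OO(G)\otimes_{\CC}\OO(U')$, so I may write the comorphism as $\rho^{\#}(f)=\sum_{i}p_i\otimes q_i$ with $p_i\in\OO(G)$ linearly independent over $\CC$ and $q_i\in\OO(U')$. For every $h\in H$ one has $\sum_i p_i(h)\,q_i=\rho(h)^{*}f\in A$, because $\rho(h)\in\Aut(X)$. As $H$ is Zariski dense in $G$ and the $p_i$ are linearly independent, the vectors $(p_1(h),\dots,p_n(h))$ span $\CC^{n}$ as $h$ ranges over $H$; hence each $q_i$ is a $\CC$-linear combination of the elements $\rho(h)^{*}f\in A$, so $q_i\in A$. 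Therefore $\rho^{\#}(A)\subseteq\OO(G)\otimes A=\OO(G\times X)$, which is precisely the statement that the action map $G\times X\to X$ is a morphism; being a group action, it is regular.

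I expect the main obstacle to be the passage from $H$-invariance to $G$-invariance of the open set $V$: a naive density argument fails for open sets, since an open set containing a dense subgroup need not be everything — exactly because $X$ is not proper — and the trick is to argue on the closed complement, where continuity into a closed set lets density do the work. A secondary technical point is the decomposition $\OO(G\times U')=\OO(G)\otimes\OO(U')$, for which it is convenient that $G$ be affine, as is the case in all our applications where $G$ is linear.
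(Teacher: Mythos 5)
Your argument is correct. Note, however, that the paper itself contains no proof of Proposition~\ref{weilcor}: the statement is imported verbatim from \cite[Theorem 2]{kraft2018regularization}, so there is no in-paper argument to compare against; what you have produced is a self-contained reconstruction of the cited result, and it follows the same route as that reference, which likewise derives the statement from Weil's regularization theorem. Your three steps are all sound. The maximal open set $U'$ on which $\psi$ restricts to an isomorphism is canonically attached to $\psi$, hence stable under conjugation by the automorphisms $\rho(h)$ and $\alpha(h)$, and your device of proving $G$-invariance on the closed complement $C=Y\setminus V$ via the orbit morphisms $\beta_c$ is exactly the right way to make density usable: $\beta_c(G)=\beta_c(\overline{H})\subseteq\overline{\beta_c(H)}\subseteq C$ works because $C$ is closed, whereas a direct density argument on the open set $V$ would indeed fail. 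The affine step is also correct: writing $\mu^{\#}(f)=\sum_i p_i\otimes q_i$ with the $p_i$ linearly independent, Zariski density of $H$ forces the evaluation vectors $\bigl(p_1(h),\dots,p_n(h)\bigr)$, $h\in H$, to span $\CC^n$ (a nonzero relation would produce a nonzero regular function on $G$ vanishing on the dense set $H$), so each $q_i$ is a $\CC$-linear combination of the functions $\rho(h)^{*}f\in\mathcal{O}(X)$ and therefore extends from $U'$ to $X$, the restriction $\mathcal{O}(X)\to\mathcal{O}(U')$ being injective. Two small points of polish: (i) affineness of $G$ is not actually needed, since $\mathcal{O}(V\times W)=\mathcal{O}(V)\otimes_{\CC}\mathcal{O}(W)$ holds for arbitrary varieties over $\CC$, and because $X$ is affine, any algebra homomorphism $\mathcal{O}(X)\to\mathcal{O}(G\times X)$ defines a morphism $G\times X\to X$ regardless of what $G$ is; (ii) at the end you should add one line explaining why the morphism $G\times X\to X$ so obtained coincides with the given rational action and still satisfies the group-action axioms --- both follow by density, since it agrees with $\rho$ on the dense open set $G\times U'$ and $X$ is separated. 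Neither point is a gap, only a matter of spelling things out.
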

	
	\begin{cor}\label{algelement}
		Let $X$ be an affine variety and $g\in\Aut(X)$. Then $g$ is algebraic in $\Bir(X)$ if and only if $g$ is algebraic in $\Aut(X)$.
	\end{cor}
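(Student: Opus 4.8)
The plan is to prove the two implications separately. The implication that being algebraic in $\Aut(X)$ forces being algebraic in $\Bir(X)$ is formal, while the reverse implication is where the content lies and rests on the regularization statement of Proposition~\ref{weilcor}. Throughout I regard $\Aut(X)$ as a subset of $\Bir(X)$ via the inclusion of regular automorphisms into birational transformations.

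For the direction that algebraic in $\Aut(X)$ implies algebraic in $\Bir(X)$, suppose $g$ lies in an algebraic subgroup $G\subset\Aut(X)$ for the ind-group structure. By Theorem~\ref{ind-group-prop} this inclusion corresponds to a regular faithful action of the algebraic group $G$ on $X$. I would then note that such a regular action produces the $G$-birational isomorphism $G\times X\to G\times X$, $(a,x)\mapsto(a,a\cdot x)$, whose inverse is $(a,y)\mapsto(a,a^{-1}\cdot y)$; this exhibits a morphism $G\to\Bir(X)$ in the sense of Section~\ref{zariskitop}, and it is an injective homomorphism because the action is faithful. Hence $G$ is an algebraic subgroup of $\Bir(X)$ containing $g$, so $g$ is algebraic in $\Bir(X)$.

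For the converse, suppose $g$ is algebraic in $\Bir(X)$, so that there is an algebraic group $G$ and a morphism $\rho\colon G\to\Bir(X)$ which is an injective homomorphism with $g\in\rho(G)$. Choose $a\in G$ with $\rho(a)=g$ and let $\bar G=\overline{\langle a\rangle}\subset G$ be the Zariski closure of the cyclic group generated by $a$; this is a commutative algebraic subgroup in which $\langle a\rangle$ is dense. Restricting $\rho$ gives a rational action of $\bar G$ on $X$, and since $g=\rho(a)\in\Aut(X)$ all its powers lie in $\Aut(X)$, so that $\rho(\langle a\rangle)=\langle g\rangle\subset\Aut(X)$. As $\langle a\rangle$ is dense in $\bar G$, Proposition~\ref{weilcor} applies and shows that the action of $\bar G$ on $X$ is regular. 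By Theorem~\ref{ind-group-prop} the map $\rho|_{\bar G}$ is then an ind-group homomorphism $\bar G\to\Aut(X)$; its image is an algebraic subgroup of $\Aut(X)$ containing $g$, so $g$ is algebraic in $\Aut(X)$.

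The step that needs the most care is checking that the hypotheses of Proposition~\ref{weilcor} are genuinely met: that $\bar G$ is an honest algebraic subgroup in which $\langle a\rangle$ sits densely, and that $\rho|_{\bar G}$ qualifies as a rational $\bar G$-action whose restriction to this dense subgroup already takes values in $\Aut(X)$. One also wants to be sure, in the reverse direction, that the image of the algebraic group $\bar G$ under the resulting ind-group homomorphism is indeed an algebraic subgroup of $\Aut(X)$, which follows from the fact that a morphism out of an algebraic variety lands in a single filter set. Once these points are in place the corollary follows immediately from the regularization theorem, with $\rho(\bar G)$ furnishing the desired algebraic subgroup of $\Aut(X)$.
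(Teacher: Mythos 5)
Your proof is correct and follows essentially the same route as the paper: the forward direction is immediate from the definitions, and the converse passes to the Zariski closure of the cyclic group generated by (a preimage of) $g$ and applies Proposition~\ref{weilcor} to regularize the action. The only cosmetic difference is that you take the closure $\overline{\langle a\rangle}$ inside the abstract algebraic group $G$ and restrict $\rho$, whereas the paper works with $\overline{\langle g\rangle}$ directly inside $\Bir(X)$ --- your phrasing is, if anything, slightly more careful on that point.
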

	
	\begin{proof}
		If $g$ is algebraic in $\Aut(X)$, then $g$ is contained in an algebraic subgroup $G\subset\Aut(X)$, which is, by definition, also an algebraic subgroup of $\Bir(X)$. On the other hand, assume that $g$ is algebraic in $\Bir(X)$. Then $g$ is contained in an algebraic group $G\subset\Bir(X)$ that acts rationally on $X$. Hence $\overline{\langle g\rangle}\subset \Bir(X)$ is also an algebraic subgroup. By Proposition~\ref{weilcor}, $\overline{\langle g\rangle}$ acts regularly on $X$ and hence, $g$ is algebraic in $\Aut(X)$.
	\end{proof}

	\section{Characterization of algebraic elements 
		on surfaces}\label{divisible}
	
	\subsection{Divisibility in the Cremona group}
	
	Recall that an element $f$ in a group $G$ is called {\it divisible by}
	$n$ if there exists an element $g\in G$ such that $g^n=f$. An element
	is called {\it divisible} if it is divisible by all $n\in\Z^+$. We use divisibility in order to characterize algebraic elements in $\Bir(S)$ for surfaces $S$:
	
	\begin{thm}\label{divisibility}
		Let $S$ be a surface and $f\in\Bir(S)$. Then the following two conditions are equivalent:
		\begin{enumerate}
			\item there exists a $k>0$ such that $f^k$ is divisible;
			\item $f$ is algebraic.
		\end{enumerate}
	\end{thm}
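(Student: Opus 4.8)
The plan is to prove the equivalence in both directions, with the bulk of the work lying in the implication $(1)\Rightarrow(2)$.

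The plan is to pass to a smooth projective model $X$ of $S$, so that $\Bir(S)=\Bir(X)$, and to study $f$ through its action on the Picard--Manin (hyperbolic) space of $X$ together with its dynamical degree $\lambda(f)$, using the trichotomy elliptic/parabolic/loxodromic. Throughout I will use the standard characterization that $f$ is algebraic if and only if the degree sequence $\{\deg(f^n)\}$ is bounded, i.e.\ $f$ acts as an elliptic isometry. For the implication $(2)\Rightarrow(1)$, suppose $f$ lies in an algebraic subgroup $G\subset\Bir(X)$ and let $H=\overline{\langle f\rangle}$ be its Zariski closure, a commutative algebraic subgroup. Its identity component $H^0$ has finite index $m=[H:H^0]$, so $f^m\in H^0$. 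Over $\CC$ every connected commutative algebraic group is divisible as an abstract group: by Chevalley's theorem it is an extension of an abelian variety by a product of copies of $\GA$ and $\GM$, each factor is divisible, and an extension of divisible abelian groups by divisible abelian groups is again divisible. Hence $f^m$ is divisible inside $H^0\subset\Bir(X)$, and $(1)$ holds with $k=m$.

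For $(1)\Rightarrow(2)$, set $g=f^k$ and assume $g$ is divisible; I will show that $g$ has bounded degree. Since $\deg$ is submultiplicative and $f^{kn+j}=f^{kn}f^{j}$ with $0\le j<k$, boundedness of $\{\deg(g^n)\}$ forces boundedness of $\{\deg(f^n)\}$, whence $f$ is algebraic. Suppose toward a contradiction that $g$ is loxodromic, i.e.\ $\lambda(g)>1$. For each $n$ choose $h_n$ with $h_n^n=g$. As the dynamical degree is multiplicative, $\lambda(g)=\lambda(h_n)^n$, so $\lambda(h_n)=\lambda(g)^{1/n}>1$ and each $h_n$ is loxodromic. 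Then $\{\lambda(h_n)\}_{n\ge 1}$ is an infinite strictly decreasing sequence of dynamical degrees tending to $1$, contradicting the theorem of Blanc--Cantat that the set of dynamical degrees of $\Bir(X)$ is well ordered. Hence $\lambda(g)=1$.

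It remains to exclude the parabolic case, where $\lambda(g)=1$ but $\{\deg(g^n)\}$ is unbounded. Here $g$ fixes a unique isotropic boundary point, corresponding to a $g$-invariant fibration $\pi\colon X\dashrightarrow\p^1$; since every root $h_n$ commutes with $g=h_n^n$ it fixes the same boundary point, is again parabolic, and hence preserves $\pi$. By Diller--Favre there are two subcases. If $\pi$ is a rational fibration then $\deg(g^n)$ grows linearly, and after passing to the action on the generic fibre the growth rate is the translation length of $g$ on the Bruhat--Tits tree of $\PGL_2(\CC(t))$, a positive integer that is multiplicative under powers; from $h_n^n=g$ this integer would be divisible by every $n$, which is absurd. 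If $\pi$ is an elliptic fibration then $\deg(g^n)$ grows quadratically and, on a suitable Jacobian model, $g$ acts by translation by a section of infinite order in the Mordell--Weil group; as that group is finitely generated it has no nonzero divisible element, again contradicting divisibility of $g$. Therefore $g$ is elliptic, of bounded degree, and the argument above shows $f$ is algebraic.

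The main obstacle I anticipate lies in the parabolic case, and specifically in making the linear-growth subcase rigorous: one must identify the growth rate with an integer-valued, power-multiplicative invariant (the tree translation length), and verify that all roots $h_n$ genuinely act on the \emph{same} fibration and, up to finite index, in the kernel of the induced action on the base $\p^1$. The loxodromic case is comparatively clean once the Blanc--Cantat well-ordering is invoked, and the quadratic subcase is clean via finite generation of the Mordell--Weil group; it is the linear subcase where the careful bookkeeping will be concentrated.
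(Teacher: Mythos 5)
Your converse direction is sound and is essentially the paper's own argument (Lemma~\ref{lineardiv}): you pass to the Zariski closure of $\langle f\rangle$ and use divisibility of connected commutative algebraic groups; the paper handles the non-linear case via anti-affine subgroups, you via Chevalley's theorem, and both work. Your loxodromic case is also correct and takes a genuinely different route: where the paper bounds the set of roots using the centralizer theorem (Theorem~\ref{cantat}), you use the well-ordering of the set of dynamical degrees from the same Blanc--Cantat paper, via $\lambda(h_n)=\lambda(g)^{1/n}$, which is clean and arguably shorter.

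The genuine gap is in the parabolic cases, and in the linear-growth subcase it is structural, not mere bookkeeping. Your translation-length argument requires each root $h_n$ to lie in $\PGL_2(\CC(t))$, i.e.\ to act trivially on the base of the fibration; but even when $g$ itself acts trivially on the base, an $n$-th root $h_n$ acts on the base by a M\"obius transformation of order dividing $n$, and such a twisted element does not act on the Bruhat--Tits tree at all. Nor can you repair this by passing to a power: if $s$ is the order of the base part of $h_n$, then $(h_n^{s})^{n}=g^{s}$ gives $L(h_n^{s})=sL(g)/n$, and when $s=n$ this equals $L(g)$ --- a perfectly good positive integer, so no divisibility constraint on $L(g)$ ensues and the claimed absurdity evaporates. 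The invariant that survives nontrivial base actions is the degree-growth slope $c$ itself, which is power-multiplicative for all fibration-preserving maps; what one then needs is a uniform lower bound on it, and that is exactly Blanc--D\'eserti's $c\geq 1/2$ (Theorem~\ref{blancdeserti}), which is how the paper argues: from $g^{l}=f$ one gets $\deg(g^n)\sim (c/l)n$, forcing $l\leq 2c$, so $f$ has only finitely many roots.

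Two further points. In the quadratic subcase you implicitly assume the surface is rational: for a Halphen surface the Mordell--Weil group of the Jacobian is indeed finitely generated, but you must first show that every root is an automorphism of the same Halphen surface (the paper does this via uniqueness of the pencil $\vert -mK_S\vert$, Lemma~\ref{halphen}) and then handle roots acting on fibers by translation composed with torsion and on the base with finite order; the paper sidesteps all of this with the representation on the N\'eron--Severi lattice and the elementary mod-$p$ fact that a divisible element of $\GL_n(\Z)$ is trivial (Lemmas~\ref{glnzdiv} and \ref{divaut}). Finally, you never treat non-rational surfaces: for Kodaira dimension $\geq 0$ your Mordell--Weil claim can fail outright (on $E\times E$ an isotrivial fibration has $E(\CC)$, a divisible group, inside its group of sections), and the paper instead passes to the unique minimal model, where $\Bir=\Aut$, and reuses the N\'eron--Severi argument (Lemma~\ref{poskod}); for non-rational ruled surfaces Blanc--D\'eserti does not apply, and the paper proves a substitute slope bound by counting base points (Lemma~\ref{negkod}).
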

	
	In order to prove Theorem \ref{divisibility}, we need some results
	from dynamics of birational transformations of surfaces. Let $H$ be an ample divisor class on $S$ and denote by $f^*H$ the total transform of $H$ under $f\in\Bir(S)$. The {\it degree of $f$ with respect to $H$} is defined as 
	\[
	\deg_H(f)=f^*H\cdot H.
	\]
	If $f$ is an element in $\Bir(\p^2)$ and $H$ the class of a hypersurface, then
	$\deg_H(f)=\deg(f)$, the degree we have defined in Section \ref{zariskitop}.
	Let $H_1$ and $H_2$ be two different ample divisors. Then there exists a constant $C>0$ such that
	\[
	\frac{1}{C}\deg_{H_1}(f)\leq\deg_{H_2}(f)\leq C\deg_{H_1}(f)
	\]
	for all $f\in\Bir(S)$ (see for example \cite[Theorem 1(2)]{bac_dang}).
	
	If we fix an ample divisor $H$ on $S$, we can associate to each $f\in\Bir(S)$ its degree sequence $\{\deg_H(f^n)\}_{n\in\Z_+}$. The growth of the degree sequence of a birational transformation carries information about its dynamical behavior. The following Theorem~\ref{cremonathm} is crucial for the understanding of groups of birational transformations in dimension two. It has been developped in various papers. We refer to \cite[Theorem 4.6, Theorem 5.4]{cantatrev} for the version we state below and detailed references to its proof.
	
	\begin{thm}[Gizatullin; Diller, Favre; Blanc; Cantat
		(\cite{MR563788, diller2001dynamics, cantatrev, blanc2016dynamical})]\label{cremonathm}
		Let $S$ be a projective surface, $H$ an ample divisor on $S$ and $f\in\Bir(S)$  a birational transformation. Then we are in exactly one of the following cases:
		\begin{enumerate}
			\item the sequence
			$\{\deg_H(f^n)\}_{n\in\Z_+}$ is bounded, which is equivalent to $f$ being
			algebraic;
			\item $\deg_H(f^n)\sim cn$ for some constant $c>0$ and
			$f$ preserves a rational fibration;
			\item $\deg_H(f^n)\sim cn^2$ for some  constant $c>0$ and
			$f$ preserves an elliptic fibration;
			\item $\deg_H(f^n)\sim c\lambda^n $ for some constant $c>0$, where $\lambda$ is a Pisot or Salem
			number.
		\end{enumerate}
	\end{thm}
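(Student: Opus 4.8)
The plan is to study the linear action that $f$ induces on a suitable cohomological space and to read off all four cases from the spectral behaviour of this action. The natural receptacle is the Picard--Manin space $\mathcal{Z}(S)$, the inductive limit of the Néron--Severi groups $\NS(S')_{\RR}$ over all smooth projective models $\pi\colon S'\to S$, taken along the pullback maps; it carries an intersection form of signature $(1,\infty)$, and every $f\in\Bir(S)$ acts on it by an isometry $f_*$. The first step is to recall, following Diller--Favre, that after replacing $S$ by a suitable model one may assume $f$ to be \emph{algebraically stable}, i.e.\ $(f^n)_*=(f_*)^n$ holds already on the finite-dimensional lattice $\NS(S')$. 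This reduces the computation of the degree sequence to linear algebra: fixing $H$, the asymptotics of $\{\deg_H(f^n)\}$ are governed by the spectral radius and the Jordan type of the integral isometry $f_*\colon\NS(S')\to\NS(S')$, which preserves a form of signature $(1,\rho-1)$.

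Second, I would define the dynamical degree $\lambda(f)=\lim_{n\to\infty}\deg_H(f^n)^{1/n}$. Its existence follows from the submultiplicativity $\deg_H(gh)\le C\,\deg_H(g)\deg_H(h)$ (so that $n\mapsto\log\deg_H(f^n)$ is subadditive up to a constant), and the comparison estimate quoted before the theorem shows that $\lambda(f)$ does not depend on the choice of $H$. On the algebraically stable model, $\lambda(f)$ is exactly the spectral radius of $f_*$. Because the invariant form has only one positive direction, an elementary argument on isometries of a Minkowski lattice shows there are only two regimes: either $\lambda(f)>1$, and then $\lambda(f)$ is the unique eigenvalue of modulus $>1$; or $\lambda(f)=1$, and then $f_*$ is quasi-unipotent.

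Third, in the regime $\lambda(f)>1$ I would argue that the characteristic polynomial of $f_*$ is a monic reciprocal integer polynomial (reciprocity comes from $f_*$ preserving the intersection form), whose largest root is $\lambda(f)$. The signature constraint forces every other root to have modulus $\le 1$, and the standard dichotomy for such algebraic integers yields that $\lambda(f)$ is a Pisot or a Salem number; the degree sequence then grows like $c\lambda(f)^n$, giving case (4). In the regime $\lambda(f)=1$ the map $f_*$ is quasi-unipotent, so the growth is polynomial of degree equal to (size of the largest Jordan block)$-1$. The signature $(1,\rho-1)$ bounds this block size by $3$: size $1$ means bounded degree, size $2$ gives linear growth, size $3$ gives quadratic growth. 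The bounded case is equivalent to $f$ being algebraic, since a bounded orbit in $\mathcal{Z}(S)$ allows one to regularize $f$ to an automorphism of a projective surface; this is case (1), compatible with the equivalence already exploited in Corollary~\ref{algelement} via Proposition~\ref{weilcor}.

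Finally, I would identify the invariant fibrations in cases (2) and (3). A quasi-unipotent parabolic isometry of $\mathcal{Z}(S)$ fixes a unique isotropic ray; its class $\theta$ satisfies $\theta^2=0$ and is $f_*$-invariant, and by Riemann--Roch it is represented by a pencil of curves whose generic member is preserved, up to the induced permutation, by $f$, that is, an $f$-invariant fibration. Distinguishing the two cases is exactly where the Jordan block size re-enters: a size-$2$ block corresponds to a rational (genus-$0$) fibration and linear growth, while a size-$3$ block corresponds to an elliptic fibration and quadratic growth, the arithmetic genus being forced by the adjunction formula together with the nilpotency degree. The main obstacle is precisely this last geometric translation---passing from the purely linear-algebraic fixed isotropic class to an honest fibration of the correct genus on a concrete model, and upgrading the asymptotics to genuine equivalences $\sim$ rather than mere $O$-bounds---which is the technical heart distributed across the cited works of Gizatullin, Diller--Favre, Blanc and Cantat.
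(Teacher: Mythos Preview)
The paper does not supply its own proof of this theorem: it is quoted from the literature, with the sentence ``We refer to \cite[Theorem 4.6, Theorem 5.4]{cantatrev} for the version we state below and detailed references to its proof.'' So there is no in-paper argument to compare your proposal against.

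That said, your outline is essentially the standard route through the cited works (Diller--Favre for algebraic stability and the spectral dichotomy, Cantat for the hyperbolic-space/Picard--Manin viewpoint, Gizatullin for the invariant fibrations, Blanc--Cantat for the Salem/Pisot refinement), and as a sketch it is sound. Two places deserve more care if you ever flesh it out. First, your reciprocity argument for the characteristic polynomial tacitly assumes $f_*$ is an isometry of $\NS(S')$, but on a finite model a birational $f$ only satisfies the adjunction $(f_*\alpha)\cdot\beta=\alpha\cdot(f^*\beta)$; the genuine isometric action lives on the full Picard--Manin space, and extracting the Pisot/Salem conclusion from the finite-dimensional data requires the more delicate analysis in \cite{diller2001dynamics} and \cite{blanc2016dynamical}. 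Second, the sentence ``the arithmetic genus being forced by the adjunction formula together with the nilpotency degree'' glosses over the real content of Gizatullin's theorem: the fixed isotropic class is only known to be nef with square zero, and promoting it to an honest fibration of the correct genus (rational for linear growth, elliptic for quadratic) is a nontrivial geometric step, not a formal consequence of the Jordan type. You acknowledge this obstacle in your last paragraph, which is fair; just be aware that it is the bulk of the work, not a detail.
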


	Blanc and D\'eserti gave lower bounds for the constant $c$ appearing in the cases  (b) and (c)  of Theorem~\ref{cremonathm} if the surface $S$ is rational:
	
	\begin{thm}[{\cite[Theorem C]{MR3410471}}]\label{blancdeserti}
		Let $f\in\Bir(\p^2)$ and let $H$ be the divisor class of a line. Assume that $\deg_H(f^n)\sim cn$, then $c\geq 1/2$. 
	\end{thm}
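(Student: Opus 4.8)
The plan is to extract the growth constant $c$ from the action of $f$ on the Picard--Manin space and to reduce the estimate to the behaviour of a single orthogonal operator on a Hilbert space, after which the sharp value $1/2$ is forced by the integral structure. Since $\deg_H(f^n)\sim cn$ grows linearly, Theorem~\ref{cremonathm} places us in case~(b): $f$ preserves a rational fibration. Passing to the Picard--Manin space $\mathcal Z$ of $\p^2$, whose intersection form has signature $(1,\infty)$ and on which $f$ acts by the isometry $f^*$, the fibration is represented by its primitive nef class $\theta$, which is isotropic ($\theta^2=0$), satisfies $\theta\cdot K=-2$ because the general fibre is rational, and is fixed by $f^*$ (being nef, no sign ambiguity occurs). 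Write $H$ for the class of a line, so $H^2=1$, set $\delta:=\theta\cdot H>0$, and put $u_n:=(f^*)^nH$, so that $d_n:=\deg_H(f^n)=u_n\cdot H$ and $u_n^2=1$.

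The first step is a conservation law: since $(f^n)_*\theta=\theta$, the projection formula gives $u_n\cdot\theta=H\cdot(f^n)_*\theta=H\cdot\theta=\delta$ for all $n$. The plane $P=\langle H,\theta\rangle$ is non-degenerate of signature $(1,1)$, so its orthogonal complement $P^\perp$ is negative definite; writing $u_n=H+\beta_n\theta+w_n$ with $w_n\in P^\perp$ and pairing with $H$ one reads off $\beta_n=(d_n-1)/\delta$ and $-w_n^2=2(d_n-1)$. Because $f^*\theta=\theta$, every $w\in P^\perp$ satisfies $f^*w=\mu(w)\theta+Tw$ with $Tw\in P^\perp$ and $(Tw)^2=w^2$, so $T$ is an orthogonal operator of the Hilbert space $(P^\perp,-(\,\cdot\,)^2)$; feeding this into $u_{n+1}=f^*u_n$ yields the recursion $w_{n+1}=w_1+Tw_n$, hence $w_n=\sum_{i=0}^{n-1}T^iw_1$ and
\[
2(d_n-1)=\Big\|\sum_{i=0}^{n-1}T^iw_1\Big\|^2,\qquad c=\lim_{n\to\infty}\frac{1}{2n}\Big\|\sum_{i=0}^{n-1}T^iw_1\Big\|^2 .
\]

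By the spectral theorem this limit equals $\pi$ times the absolutely continuous spectral density of $w_1$ at the eigenvalue $1$ of $T$, and linear (as opposed to quadratic) growth is precisely the absence of a $T$-fixed component of $w_1$. This soft argument gives $c>0$ but not the sharp bound, and indeed no functional-analytic input can: the value $1/2$ is an arithmetic feature of the Picard--Manin \emph{lattice}. The hard part, which I expect to be the main obstacle, is to exploit integrality --- $T$ is induced by an isometry of $\mathcal Z$ permuting the exceptional classes $e_p$, and $w_1$ is controlled by the base points of $f$ through Noether's equalities $\sum m_p=3(d_1-1)$, $\sum m_p^2=d_1^2-1$ together with $\theta\cdot K=-2$. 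Concretely one passes to a minimal model of the fibration, a conic bundle on which $f$ becomes a fibred map lying in the de Jonquières group of the ruling, and computes $d_n$ from its fibrewise $\PGL_2(\CC(t))$-component; the cancellations between consecutive iterates pair up base points two at a time, so the degree increases on average by at least $1/2$ per step. This same pairing makes the bound sharp, since there exist maps whose degree jumps by one every two iterations and which therefore realise $c=1/2$.
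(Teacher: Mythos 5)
You have not given a proof of the inequality; you have given a correct reduction followed by an admission that the reduction does not suffice. The Picard--Manin setup is sound: the conservation law $u_n\cdot\theta=\delta$, the identity $-w_n^2=2(d_n-1)$, and the Birkhoff-sum recursion $w_n=\sum_{i=0}^{n-1}T^iw_1$ are all correct (modulo a one-line argument, which you wave at with ``no sign ambiguity,'' that $f^*\theta=\lambda\theta$ forces $\lambda=1$ by primitivity and integrality applied to $f$ and $f^{-1}$). But, as you say yourself, this functional-analytic picture can only detect the dichotomy between linear and quadratic growth; it gives no lower bound on $c$ whatsoever, and nothing in your text excludes, say, $c=1/4$. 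The entire content of the theorem is then compressed into the unproven sentence ``the cancellations between consecutive iterates pair up base points two at a time, so the degree increases on average by at least $1/2$ per step'' --- which is a restatement of the conclusion, not an argument. Naming Noether's equations and the de Jonqui\`eres group identifies plausible ingredients, but no estimate is actually carried out, so the proposal is a plan with its crucial step missing.

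For comparison: the paper itself does not prove this statement --- it is imported from Blanc--D\'eserti \cite[Theorem C]{MR3410471} --- but the paper's proof of Lemma~\ref{negkod} runs the analogous argument on $C\times\p^1$ and displays exactly the mechanism your sketch lacks. There, since $f$ preserves the $\p^1$-fibration, all base points of $f^n$ have multiplicity one, and computing the self-intersection of the total transform $(f^n)^*L$ yields the exact count $\mathfrak{b}(f^n)=2(\deg_H(f^n)-1)$; combining this with Theorem~\ref{numberbp}, which says $\mathfrak{b}(f^n)=n\nu+O(1)$ for a \emph{non-negative integer} $\nu$, gives $\deg_H(f^n)\sim(\nu/2)n$, so that $c=\nu/2$ is a half-integer and $c\geq 1/2$ as soon as the growth is genuinely linear. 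The arithmetic source of the bound is thus the integer $\nu$ counting persistent base points per iterate, not the spectral density of your operator $T$. To turn your sketch into a proof in the rational case you would need to do the corresponding work: conjugate $f$ to a model (conic bundle or Hirzebruch surface) on which the pencil is base-point free, verify that the relevant base points are simple there, establish the linear identity between $\mathfrak{b}(f^n)$ and $\deg_H(f^n)$ for a suitable ample class, and only then invoke the integrality of $\nu$.
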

	
	We also need the following:
	
	\begin{thm}[{\cite[Corollary~4.7]{blanc2016dynamical}}]\label{cantat}
		Let $S$ be a projective surface with an ample divisor $H$ and $f\in\Bir(S)$ such that $\deg(f^n)$ grows exponentially with $n$. Then the centralizer of $f$ equals $\left<f\right>$ up to
		finite index.
	\end{thm}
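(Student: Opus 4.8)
The plan is to realize $\Bir(S)$ as a group of isometries of an infinite-dimensional hyperbolic space and to read off the centralizer from the loxodromic dynamics of $f$ there. Concretely, I would form the Picard--Manin space $\mathcal Z(S)=\varinjlim \NS(S')_{\RR}$, the direct limit over all smooth projective surfaces $S'$ dominating $S$ by a birational morphism, taken with respect to the pullback maps on Néron--Severi groups; by the Hodge index theorem the intersection form extends to $\mathcal Z(S)$ with signature $(1,\infty)$, and $\Bir(S)$ acts on $\mathcal Z(S)$ by form-preserving linear maps $f\mapsto f_*$. Let $\mathbb H\subset\mathcal Z(S)$ be the upper sheet of the hyperboloid $\{x\cdot x=1\}$; this is an infinite-dimensional real hyperbolic ($\mathrm{CAT}(-1)$) space on which $\Bir(S)$ acts by isometries, with $\cosh d(x,y)=x\cdot y$. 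The degree with respect to $H$ is comparable to the norm of $f_*$ on this space (using the comparability of degrees for different ample classes recalled above), so the hypothesis that $\deg(f^n)$ grows exponentially is exactly the statement that $\lambda:=\lim \deg(f^n)^{1/n}>1$, i.e. that $f$ acts as a \emph{loxodromic} isometry of $\mathbb H$.

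A loxodromic isometry has an invariant geodesic (its axis $A$) and exactly two fixed points $\theta^+,\theta^-$ on the boundary $\partial\mathbb H$, realized by isotropic eigenvectors with $f_*\theta^{\pm}=\lambda^{\pm1}\theta^{\pm}$, and its translation length along $A$ is $\log\lambda$. The first step is the standard observation that every $g\in\Cent(f)$ permutes the fixed-point set $\{\theta^+,\theta^-\}$, since $g$ carries fixed points of $f$ to fixed points of $gfg^{-1}=f$. Passing to the subgroup $N\le\Cent(f)$ of index at most two fixing each of $\theta^+$ and $\theta^-$, every $g\in N$ preserves $A$ and translates along it; recording this signed translation defines a homomorphism $\tau\colon N\to\RR$ with $\tau(f)=\log\lambda$. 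I would then establish two properties of $\tau$.

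First, its image $\Lambda=\tau(N)$ is a discrete subgroup of $\RR$, hence infinite cyclic. The point is that any $g\in N$ with $\tau(g)\neq 0$ is itself loxodromic with dynamical degree $e^{|\tau(g)|}$; by the gap theorem for dynamical degrees of birational surface maps (dynamical degrees exceeding $1$ are bounded below by a constant $\lambda_\ast>1$, e.g. Lehmer's number), $\Lambda$ contains no nonzero element of absolute value $<\log\lambda_\ast$, and a subgroup of $\RR$ bounded away from $0$ is cyclic. Write $\Lambda=\ell\Z$ and $\log\lambda=\tau(f)=m\ell$ with $m\ge 1$. Second --- and this is the crux --- the kernel $K=\ker\tau$ is finite: its elements are elliptic isometries fixing $A$ pointwise and fixing $\theta^{\pm}$, so they act as orthogonal transformations of the negative-definite orthogonal complement of $\langle\theta^+,\theta^-\rangle$ while commuting with $f_*$. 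The finiteness is forced by the integral structure of $\mathcal Z(S)$ (the classes of finite support form an $f$-invariant lattice) together with the rigidity of the commuting loxodromic $f$, and this arithmetic input is where the main work lies.

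Granting these two facts, the conclusion is a short group-theoretic computation. Since $f$ is loxodromic, $\langle f\rangle\cap K=\{1\}$, so $\langle f\rangle$ injects into $\Lambda$ with image $m\ell\,\Z$ of index $m$; writing the extension $1\to K\to N\to\Lambda\to 1$ one finds $[N:\langle f\rangle]=m\,|K|<\infty$. Combined with $[\Cent(f):N]\le 2$, this gives $[\Cent(f):\langle f\rangle]<\infty$, i.e. the centralizer of $f$ equals $\langle f\rangle$ up to finite index. I expect the finiteness of $K$ to be the main obstacle: the translation-length bookkeeping and the gap theorem are robust, but controlling the ``rotational part'' of the centralizer requires exploiting the arithmetic of the Picard--Manin lattice rather than soft hyperbolic geometry, since the $\Bir(S)$-action on $\mathbb H$ is far from proper.
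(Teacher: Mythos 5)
The paper does not prove this statement: it imports it verbatim from Blanc--Cantat \cite[Corollary~4.7]{blanc2016dynamical}, so the comparison must be with that source. Your outline correctly reconstructs the architecture of their argument: $f$ acts as a loxodromic isometry of the (completed $L^2$) Picard--Manin hyperbolic space --- note that you need the completion, not just the direct limit, for the boundary fixed points $\theta^{\pm}$ and the axis to exist --- the centralizer permutes $\{\theta^+,\theta^-\}$, an index-$\leq 2$ subgroup $N$ preserves the axis, and the translation-length homomorphism $\tau\colon N\to\RR$ has discrete image because translation lengths of loxodromic elements are logarithms of dynamical degrees, which are Pisot or Salem numbers (Diller--Favre, Blanc--Cantat) bounded below by Lehmer's number (McMullen for the Salem case, the plastic number bound for the Pisot case). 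All of that is sound and is exactly how the cited proof proceeds.

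The genuine gap is the one you flag yourself: the finiteness of $K=\ker\tau$ is asserted (``forced by the integral structure \dots together with the rigidity of the commuting loxodromic $f$'') but never proved, and it is the crux --- without it the index computation $[N:\langle f\rangle]=m\,|K|$ proves nothing. Moreover, the mechanism you gesture at cannot work as stated: since the action of $\Bir(S)$ on the hyperbolic space is far from proper, pointwise stabilizers of the axis are \emph{not} finite for soft or integral reasons alone --- e.g.\ a positive-dimensional automorphism group of a surface fixes a point of the hyperbolic space and gives an infinite elliptic subgroup, and the orthogonal group of the negative-definite complement of $\langle\theta^+,\theta^-\rangle$ is enormous even after imposing integrality, because the relevant ``lattice'' has infinite rank. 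The actual argument needs algebraic geometry, not lattice rigidity: elements of $K$ fix a point on the axis, hence have uniformly bounded degrees; if $K$ were infinite, its closure would contain (after regularization à la Weil, cf.\ Proposition~\ref{weilcor}) a positive-dimensional connected algebraic group $C$ centralizing $f$; the closures of generic $C$-orbits then sweep out an $f$-invariant pencil of rational curves, placing $f$ in cases (a)--(c) of Theorem~\ref{cremonathm}, where $\deg_H(f^n)$ grows at most quadratically --- contradicting the exponential growth hypothesis. That algebro-geometric step is the missing content of the proof, and your proposal is incomplete without it.
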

	
	Theorem \ref{blancdeserti} can be generalized to non-rational surfaces of negative Kodaira dimension:
	
	\begin{lem}\label{negkod}
		Let $S=C\times\p^1$, where $C$ is a smooth projective, non-rational curve. Then there exists an ample divisor class $H$ on $S$ such that for all $f\in\Bir(S)$ we are in one of the following cases:
		\begin{enumerate}
			\item the sequence $\{\deg_H(f^n)\}_{n\in\Z_+}$ is bounded and $f$ is
			algebraic;
			\item $\deg_H(f^n)\sim cn$ for some constant $c\geq 1/2$.
		\end{enumerate}
	\end{lem}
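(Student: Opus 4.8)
The plan is to use the non-rationality of $C$ to force every $f\in\Bir(S)$ to preserve the projection $\pi=\pr_1\colon S\to C$, and then to run a Néron--Severi degree computation in the spirit of Theorem~\ref{blancdeserti} adapted to this fibred situation.

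\emph{Step 1: reduction to the ruling.} Since $C$ is not rational, any morphism $\p^1\to C$ is constant, so every rational curve in $S=C\times\p^1$ lies in a fibre of $\pi$. Hence $\pi$ is, up to birational equivalence, the maximal rationally connected fibration of $S$, and as such it is preserved by every birational self-map. Consequently each $f\in\Bir(S)$ maps fibres of $\pi$ to fibres, inducing $\bar f\in\Bir(C)=\Aut(C)$ (birational self-maps of the smooth projective non-rational curve $C$ are automorphisms), and $f$ sits in an exact sequence $1\to\PGL_2(\C(C))\to\Bir(S)\to\Aut(C)$, the kernel being the fibre-preserving maps acting on the generic fibre $\p^1_{\C(C)}$ by Möbius transformations.

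\emph{Step 2: only cases (a) and (b) occur.} Let $F$ be the class of a fibre of $\pi$ and $G$ the class of a horizontal section $C\times\{pt\}$, so $F^2=G^2=0$, $F\cdot G=1$ and $\NS(S)=\Z F\oplus\Z G$. Because $\bar f$ is an automorphism of $C$, a general fibre is sent to a general reduced fibre, so $f^\bullet[F]=[F]$ for the isometry induced on the Picard--Manin space. As $[F]$ is isotropic with $[F]\cdot K_S=-2\neq 0$, it cannot be the fibre class of a genus-one fibration; thus $f$ is neither a Halphen twist (case (c), which fixes an isotropic class orthogonal to $K_S$) nor loxodromic (case (d), whose invariant isotropic classes are scaled by an eigenvalue $\neq 1$). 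By Theorem~\ref{cremonathm} we are in case (a) (bounded degree, $f$ algebraic) or case (b) (linear growth); equivalently, a map preserving a rational fibration has at most linear degree growth \cite{diller2001dynamics}.

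\emph{Step 3: the lower bound.} It suffices to produce one ample class $H_0$ for which the growth rates of all non-algebraic $f$ are bounded below by a fixed $c_0>0$: replacing $H_0$ by $\lambda H_0$ multiplies every rate by $\lambda^2$, so choosing $\lambda$ with $\lambda^2 c_0\geq 1/2$ gives the claim. Take $H_0=F+G$. A fibre-preserving non-algebraic $g\in\PGL_2(\C(C))$ induces on $\NS(S)$ a unipotent isometry fixing $[F]$ with $g^\bullet[G]=[G]+\delta[F]$ for some integer $\delta\geq 1$ (the coefficient of $[G]$ stays $1$ since $g$ restricts to an isomorphism of the generic fibre), and a direct intersection computation gives $\deg_{H_0}(g^n)\sim\delta n$, so its rate is an integer $\geq 1$. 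For arbitrary non-algebraic $f$ let $k$ be the order of $\bar f\in\Aut(C)$; for the fixed curve $C$ this order is bounded by some $N=N(C)$ (if $g(C)\geq 2$ then $\Aut(C)$ is finite, while if $g(C)=1$ any finite-order element has order at most $6$, an infinite-order base translation forcing growth of the fibrewise Möbius part directly with rate $\geq 1$). Then $f^k$ is fibre-preserving and non-algebraic, so its rate is $\geq 1$ and hence the rate of $f$ is $\geq 1/k\geq 1/N$; thus $c_0=1/N$ works.

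The hard part will be making Step~3 fully rigorous: one must control the passage between the (generally non-algebraically-stable) map $f$ on $S$ and an algebraically stable model, where $f^\bullet$ becomes linear and its unipotent part can be read off, and verify both that the base automorphism contributes no growth on its own and that dividing the rate of $f^k$ by $k$ is legitimate. This is precisely the fibred analogue of the estimate of Blanc and D\'eserti, with $\C(x)$ replaced by $\C(C)$; the higher gonality of $C$ only strengthens the fibrewise degree bound and so does not obstruct the argument.
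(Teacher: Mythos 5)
Your Steps 1 and 2 agree with the paper: the non-rationality of $C$ forces every $f\in\Bir(S)$ to preserve the $\p^1$-fibration $\pi_1$, and the paper even works with the same ample class ($H=F_P+L$, your $F+G$), so that only cases (a) and (b) of Theorem~\ref{cremonathm} can occur. The decisive divergence is Step 3. The paper never touches the induced action on $\NS(S)$ as a dynamical object and needs no algebraically stable model: writing $(f^n)^*L=L+D$ with $D$ pulled back from $C$, it computes $\deg_H(f^n)=\deg(D)+2$, observes that \emph{because $f$ preserves a $\p^1$-fibration all base-points of $f^n$ have multiplicity one}, so the self-intersection of the total transform counts them, giving $\mathfrak{b}(f^n)=2\deg_H(f^n)+O(1)$; then Theorem~\ref{numberbp} (Blanc--D\'eserti: $\mathfrak{b}(f^n)=n\nu+O(1)$ with $\nu\in\Z_{\geq 0}$) yields $\deg_H(f^n)\sim(\nu/2)n$ with $\nu\geq 1$ whenever the degrees are unbounded. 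This gives the uniform bound $c\geq 1/2$ in one stroke, for every $f$, with no case analysis on $\bar f\in\Aut(C)$ and no stability issues.

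Your Step 3 has two genuine gaps exactly where this mechanism is missing. First, the reduction to fibre-preserving maps via a bound $N(C)$ on the order of $\bar f$ fails when $C$ is elliptic: translations by torsion points have arbitrarily large \emph{finite} order (the bound $6$ applies only to automorphisms fixing the origin), so the rates $1/k$ have no uniform positive lower bound; and the infinite-order case is dismissed with the unproven parenthetical that a base translation forces fibrewise growth of rate $\geq 1$ --- note that $\bar f$ of infinite order is perfectly compatible with $f$ algebraic (e.g.\ $f=\bar f\times\id$), so such a claim needs real content. Second, the integrality claim $\delta\geq 1$ for the rate of a fibre-preserving non-algebraic $g$ is precisely the algebraic-stability problem you yourself defer in the closing paragraph: on $S$ one has $(g^n)^*\neq (g^*)^n$ in general, and after conjugating to a stable model and returning to $H_0$ the degree-comparison constant between ample classes destroys integrality, leaving only positivity --- not enough for a uniform $c_0$ without, in effect, reproving Theorem~\ref{numberbp}. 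Your rescaling observation is correct as far as it goes (since $\deg_{\lambda H}=\lambda^2\deg_H$, any uniform $c_0>0$ can be normalized to $1/2$, so the constant is not intrinsic to the lemma), but it presupposes the uniform bound that is exactly the unproven point; the paper's base-point count is the missing ingredient that makes the lemma go through.
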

	
	To prove Lemma \ref{negkod}, we need some birational
	geometry. Let $f$ be a birational transformation of a
	projective surface $S$. Whenever we speak of base-points, we
	consider both, proper and infinitely near base-points.  A
	base-point $p$ of $f$ is called {\it persistent} if there
	exists an integer $N$ such that $p$ is a base-point of $f^k$
	for all $k\geq N$ but $p$ is not a base-point of $f^{-k}$ for
	any $k\geq N$. In \cite[Proposition~3.5]{MR3410471}, the authors show that $f$ has no persistent
	base-points if and only if $f$ is conjugate to an automorphism
	of a smooth projective surface. Another important fact from
	\cite{MR3410471} is the following:
	
	\begin{thm}[{\cite[Proposition~3.4]{MR3410471}}]\label{numberbp}
		Let $S$ be a smooth projective surface and $f\in\Bir(S)$. Denote by $\mathfrak{b}(f^n)$ the number of base-points of $f^n$. Then there exists a non-negative integer $\nu$ such that the set $\{\mathfrak{b}(f^n)-n\nu\mid n\geq 0\}\subset\Z$ is bounded. 
	\end{thm}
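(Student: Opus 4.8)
The plan is to compare the base-points of all iterates inside one fixed ambient space and to reduce the statement to the stabilization of a bounded integer sequence. First I would work in the bubble space of $S$, the set of all points of $S$ together with all points infinitely near, so that the finite base-point sets $\mathcal{B}(f^n)$ and $\mathcal{B}(f^{-n})$ of all positive and negative iterates become subsets of one fixed set and can be compared. Writing $f^{n+1}=f\circ f^n$, the standard formula for the base-points of a composition of birational maps of surfaces gives
\[
\mathfrak{b}(f^{n+1})=\mathfrak{b}(f^n)+\mathfrak{b}(f)-2c_n,\qquad c_n:=\big|\mathcal{B}(f^{-n})\cap\mathcal{B}(f)\big|,
\]
the cancelled base-points being exactly those points that are simultaneously a base-point of $f^{-n}$ (an image of a curve contracted by $f^n$) and a base-point of $f$. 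Since $\mathcal{B}(f)$ is a fixed finite set we have $0\le c_n\le\mathfrak{b}(f)$, and summing the recursion yields $\mathfrak{b}(f^n)=n\,\mathfrak{b}(f)-2\sum_{i=1}^{n-1}c_i$. Hence the whole theorem is equivalent to the claim that the integer sequence $(c_n)$ is eventually constant: if $c_n=c_\infty$ for $n\ge N$ then $\mathfrak{b}(f^n)=n\nu+O(1)$ with $\nu:=\mathfrak{b}(f)-2c_\infty$, and $\nu$ is automatically a non-negative integer because $\mathfrak{b}(f^n)\ge 0$ for every $n$.

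Second, because $\mathcal{B}(f)$ is finite it suffices to prove, for each fixed point $p$, that the indicator $n\mapsto\chi_p(n)=[\,p\in\mathcal{B}(f^{-n})\,]$ is eventually constant; then $c_n=\sum_{p\in\mathcal{B}(f)}\chi_p(n)$ stabilizes. Replacing $f$ by $f^{-1}$, this says that the base-point status of a fixed point $p$ under the positive iterates cannot oscillate forever. I would establish the per-point trichotomy that each $p$ is of exactly one type: persistent (a base-point of $f^k$ for all large $k$, in the sense recalled before the theorem), of finite lifespan (a base-point for only finitely many iterates), or eventually free (never a base-point for large iterates). In each of these three cases $\chi_p$ is eventually constant, so the content is exactly to exclude the oscillating fourth case.

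Third, to exclude oscillation and to identify the slope, I would analyse the forward dynamics of a single base-point along its orbit $p, f(p), f^2(p),\dots$ in the bubble space. The propagation rule is that $p$ is a base-point of $f^{n+1}$ if and only if either $p$ is already a base-point of $f^n$, or $f^n$ is regular at $p$ and sends it to a base-point of $f$; consequently every change of base-point status of $p$ is forced by the orbit meeting one of the two fixed finite sets $\mathcal{B}(f)$ and $\mathcal{B}(f^{-1})$. Tracking the images of the exceptional divisors in the Picard--Manin lattice and using that $f$ acts there by an isometry, one shows that once the orbit has left these finite sets the status of $p$ can change only boundedly often and then freezes. This yields both that the number of finite-lifespan base-points present at any single iterate is bounded independently of $n$, and that for large $n$ exactly $\nu$ new persistent base-points are created at each step.

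The decisive difficulty is contained in this last step: the careful bookkeeping of infinitely near base-points that a priori live on different blow-up models, and the proof that the temporary base-points have uniformly bounded lifespan --- equivalently, that $(c_n)$ stabilizes rather than oscillates. All degree-growth and intersection-theoretic considerations are auxiliary; the heart of the argument is the orbit analysis of base-points in the bubble space, where smoothness of $S$ is used so that every base-point is resolved by an ordinary blow-up of a point and the Picard--Manin formalism applies.
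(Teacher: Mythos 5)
Two things up front: the paper does not actually prove this statement---it imports it verbatim from Blanc--D\'eserti \cite[Proposition~3.4]{MR3410471}---so your proposal can only be measured against that cited argument, whose general philosophy (bubble space, orbits of base-points, persistent versus temporary base-points) you have correctly identified. Nevertheless there is a genuine gap, and it starts with your first displayed formula. The recursion $\mathfrak{b}(f^{n+1})=\mathfrak{b}(f^n)+\mathfrak{b}(f)-2c_n$ with $c_n=\bigl|\mathcal{B}(f^{-n})\cap\mathcal{B}(f)\bigr|$ is not ``the standard formula'': cancellation in a composition $g\circ f$ is not symmetric. On the pullback side one loses exactly $\bigl|\mathcal{B}(g)\cap\mathcal{B}(f^{-1})\bigr|$ points, but a point $p\in\mathcal{B}(f)$ disappears from $\mathcal{B}(gf)$ only when the curve onto which $f$ blows up $p$ is contracted by $g$, and these two counts differ in general. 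Concretely, take $f=\sigma\colon[x:y:z]\dashrightarrow[yz:xz:xy]$ and $g=A\sigma A^{-1}$ with $A\colon[x:y:z]\mapsto[x+z:y+z:z]$, so that $\mathcal{B}(g)=\{[1{:}0{:}0],[0{:}1{:}0],[1{:}1{:}1]\}$ and $c=\bigl|\mathcal{B}(g)\cap\mathcal{B}(\sigma^{-1})\bigr|=2$. Your formula predicts $\mathfrak{b}(g\sigma)=3+3-4=2$, but $g\sigma$ has degree $2$ and hence exactly three base-points; indeed one computes $\sigma A^{-1}\sigma=[x(z-y):y(z-x):(z-x)(z-y)]$, with base-points $[1{:}0{:}0],[0{:}1{:}0],[1{:}1{:}1]$ (here only the line $z=0$, among the three lines $x=0,y=0,z=0$ onto which $\sigma$ blows up its base-points, is contracted by $g$, so there is one left-side cancellation against two right-side ones). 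Consequently your claimed equivalence ``theorem $\Leftrightarrow$ $(c_n)$ eventually constant, with $\nu=\mathfrak{b}(f)-2c_\infty$'' is broken: the correct recursion carries two independent cancellation terms, and the stabilization of the second one is a further claim your argument never addresses.

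The deeper problem is that the heart of the theorem---ruling out oscillation, i.e.\ proving that the base-point status of each point along the iterates is eventually constant, equivalently that temporary base-points have uniformly bounded lifespan---is asserted rather than proved. Your third paragraph says ``tracking the images of the exceptional divisors in the Picard--Manin lattice and using that $f$ acts there by an isometry, one shows that \ldots\ the status of $p$ can change only boundedly often and then freezes,'' but acting by isometry gives no such thing: an isometry is perfectly compatible with the bubble-orbit of a point returning to the finite sets $\mathcal{B}(f)$ and $\mathcal{B}(f^{-1})$ infinitely often, and controlling precisely these returns (orbits terminating when they hit the base locus, eventual periodicity, infinitely near incidences) is where Blanc--D\'eserti do the actual combinatorial work in their proof. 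Your closing paragraph concedes that this step is ``the decisive difficulty''; since that step is exactly the content of the theorem and is nowhere carried out, what you have is a reasonable outline of the known strategy, resting on a composition formula that is false as stated.
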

	
	\begin{proof}[Proof of Lemma \ref{negkod}]
		Since $C$ is not rational, $f$ preserves the $\p^1$-fibration given by the first projection.
		We have $\Pic(S)\simeq\Pic(C)\oplus\Pic(\p^1)$, where
		the embedding of $\Pic(C)$ into $\Pic(S)$ is given by
		the pullback of the first projection $\pi_1\colon S\to
		C$ and the embedding of $\Pic(\p^1)$ into $\Pic(S)$ by
		the pullback of the second projection $\pi_2\colon
		S\to \p^1$. Let $P\in\Pic(C)$ and $Q\in\Pic(\p^1)$ be
		the divisor class of a  single point
		in $C$ and $\p^1$, respectively and let $F_P:= \pi_1^*P$ and $L:= \pi_2^*Q$ in $\Pic(S)$. Define the ample divisor $H:= F_P+L$. 
		
		If $\{\deg_H(f^n)\}_{n\in\Z_+}$ is bounded, then $f$ is algebraic, by Theorem \ref{cremonathm}. Assume now that the degree-sequence $\{\deg_H(f^n)\}_{n\in\Z_+}$ is unbounded. 
		Since $f$ preserves the fibration given by the first projection, we have that $f^*F_P=F_{P'}$, where $P'\in \Pic(C)$ is the divisor class of another point in $C$. Moreover, we have $(f^n)^*L=L+D$, for some $D\in\Pic(C)$ and hence $(f^n)^*(F_P+L)=L+F_{P'}+D$. It follows that $\deg_H(f^n)=(f^n)^*(F_P+L)\cdot(F_P+L)=\deg(D)+2$ and therefore $\deg(D)=\deg_H(f^n)-2$. We obtain that the total transform $(f^n)^*L$ has self-intersection $((f^n)^*L)\cdot ((f^n)^*L)=2\deg(D)+2=2(\deg_H(f^n)-1)$. Since $f$ preserves the $\p^1$-fibration, all base-points have multiplicity one. The divisor class $L$ has self-intersection $0$, hence we obtain that $f^n$ must have $2(\deg_H(f^n)-1)$ base-points. By Theorem~\ref{numberbp}, the number of base-points of $f^n$ grows asymptotically like $Kn$ for some integer $K$, hence $\deg_H(f^n)$ grows asymptotically like $(K/2)n$. 
	\end{proof}

	\begin{lem}\label{glnzdiv}
		Let $n>0$ and $A\in\GL_n(\Z)$ an element such that $A^k$ is divisible for some $k\neq0$. Then $A$ is of finite order.
	\end{lem}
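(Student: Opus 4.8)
The plan is to prove the a priori stronger statement that the identity is the only divisible element of $\GL_n(\Z)$; granting this, if $A^k$ is divisible for some $k\neq 0$ then $A^k=I$, and hence $A$ has finite order. I would first reduce to the case $k>0$, replacing $A$ by $A^{-1}$ and $k$ by $-k$ if necessary, which is harmless since $A$ has finite order precisely when $A^{-1}$ does.

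The key observation I would exploit is that divisibility is inherited by every finite quotient, while a finite group has no nontrivial divisible element. Concretely, suppose toward a contradiction that $A^k\neq I$. Then the integer matrix $A^k-I$ is nonzero, so it has an entry $c\neq 0$; choosing a prime $q$ that does not divide $c$ and letting $\pi_q\colon \GL_n(\Z)\to \GL_n(\Z/q\Z)$ be the reduction homomorphism, we get $\pi_q(A^k)\neq I$ because $A^k\not\equiv I \pmod q$. On the other hand, let $e$ denote the exponent of the finite group $\GL_n(\Z/q\Z)$, i.e.\ the least common multiple of the orders of its elements. Since $A^k$ is divisible, it is in particular divisible by $e$: there is $B\in\GL_n(\Z)$ with $B^e=A^k$. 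Applying $\pi_q$ yields $\pi_q(B)^e=\pi_q(A^k)$, and since $\pi_q(B)\in \GL_n(\Z/q\Z)$ we have $\pi_q(B)^e=I$ by definition of the exponent. Hence $\pi_q(A^k)=I$, contradicting the previous sentence. Therefore $A^k=I$, and $A$ has finite order.

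I expect the only real point to get right to be the hypothesis that the roots $B$ of $A^k$ may be taken inside $\GL_n(\Z)$: this is exactly what ``$A^k$ is divisible'' means in the ambient group $\GL_n(\Z)$, and it is what makes the reduction-modulo-$q$ argument work, since $\pi_q$ then carries each root of $A^k$ to a genuine root in the finite group $\GL_n(\Z/q\Z)$. With integrality of the roots in hand, no information about the eigenvalues of $A$ is required. Should one instead only know weaker spectral data—for instance that the characteristic polynomial of $A$ is monic with integer coefficients and that some power of $A$ has eigenvalues of bounded Mahler measure—one could alternatively invoke Kronecker's theorem to force all eigenvalues of $A$ to be roots of unity and then separately rule out a nontrivial unipotent part; but the finite-quotient argument above is shorter and self-contained, so that is the route I would take.
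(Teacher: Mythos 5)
Your proof is correct and follows essentially the same route as the paper's: reduce modulo a prime $q$ chosen so that the element survives in $\GL_n(\mathbb{F}_q)$, then note that no nontrivial element of a finite group is divisible by its exponent (the paper uses the group order $|\GL_n(\mathbb{F}_q)|$ instead of the exponent, which is an immaterial difference). Your formulation via the stronger statement that $I$ is the only divisible element of $\GL_n(\Z)$ is a clean packaging of the same idea, and your handling of the roots $B$ lying in $\GL_n(\Z)$ is exactly right.
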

	
	\begin{proof}
		It is enough to show that there exists no divisible
		element of infinite order in $\GL_n(\mathbb{Z})$. Let
		$B\in\GL_n(\Z)$ be of infinite order. For a prime $p$
		let  $\varphi_p\colon \GL_n(\Z)\to\GL_n(\mathbb{F}_p)$
		be the homomorphism given by reduction modulo $p$. We
		may choose $p$ such that $B$ is not contained in the
		kernel of $\varphi_p$. The image $\varphi_p(B)$ is
		then not divisible by $k:=
		|\GL_n(\mathbb{F}_p)|$. Hence, $B$ is not divisible by
		$k$. 
	\end{proof}
	
	\begin{lem}\label{divaut}
		Let $X$ be a projective variety and $f\in\Aut(X)$ an element such that $f^k$ is divisible for some $k\in\Z_{>0}$. Then $f$ is algebraic. 
	\end{lem}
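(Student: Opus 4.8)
The plan is to linearise the problem by letting $\Aut(X)$ act on the Néron--Severi group and then to transport divisibility through this action. Write $\rho\colon\Aut(X)\to\GL(\NS(X))$ for the natural homomorphism recording the action on $\NS(X)$ modulo torsion; since $\NS(X)$ is a finitely generated abelian group, its free part is isomorphic to $\Z^{r}$ for $r=\rk\NS(X)$, so $\rho$ takes values in $\GL_{r}(\Z)$. Because $\rho$ is a group homomorphism and $f^{k}$ is divisible, the element $\rho(f)^{k}=\rho(f^{k})$ is again divisible in $\GL_{r}(\Z)$: for each $n$ a solution $g$ of $g^{n}=f^{k}$ in $\Aut(X)$ yields $\rho(g)^{n}=\rho(f)^{k}$, and $\rho(g)\in\GL_{r}(\Z)$. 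Lemma~\ref{glnzdiv} then forces $\rho(f)$ to have finite order, say $\ell$.

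Next I would produce an $f$-invariant polarisation. Fix any ample class $H_{0}\in\NS(X)$ and set $H=\sum_{i=0}^{\ell-1}\rho(f)^{i}H_{0}$. Each summand $\rho(f)^{i}H_{0}$ is ample, being the image of an ample class under an automorphism, and the ample cone is stable under positive sums, so $H$ is ample; moreover $\rho(f)H=H$ because $\rho(f)^{\ell}=\id$. Thus $f$ preserves the numerical class $H$, i.e.\ $f$ lies in the subgroup $\Aut_{[H]}(X)=\{\,g\in\Aut(X)\mid g^{*}H\equiv H\,\}$ of automorphisms fixing the polarisation $H$. Averaging over the finite group $\langle\rho(f)\rangle$ is exactly what lets us treat $f$ itself rather than merely a power of it.

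Finally I would invoke the algebraicity of polarisation-preserving automorphisms. The subgroup $\Aut_{[H]}(X)$ is an algebraic subgroup of $\Aut(X)$: the automorphisms preserving $H$ form a bounded family, since their graphs in $X\times X$ have fixed Hilbert polynomial with respect to the polarisation induced by $H$, so $\Aut_{[H]}(X)$ is of finite type; equivalently, it is a finite extension of the connected algebraic group $\Aut^{0}(X)$, which in the smooth case is the Fujiki--Lieberman theorem. Since $f\in\Aut_{[H]}(X)$, the element $f$ is contained in an algebraic subgroup and is therefore algebraic, as desired. I expect the main point to be precisely this last step---the finiteness of the number of connected components of $\Aut_{[H]}(X)$---whereas the reduction to a single integral linear map via $\rho$ together with Lemma~\ref{glnzdiv}, and the construction of the invariant class $H$, are formal.
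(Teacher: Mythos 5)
Your proof is correct, and its first half coincides exactly with the paper's: both linearise via the representation $\rho\colon\Aut(X)\to\GL_r(\Z)$ on the N\'eron--Severi lattice, transport divisibility through $\rho$, and apply Lemma~\ref{glnzdiv} to conclude that $\rho(f)$ has finite order. The two arguments diverge only at the finish. The paper cites the fact that $\ker(\rho)$ is an algebraic group (Brion's notes), observes that some power $f^n$ lies in $\ker(\rho)$, and concludes ``in particular, $f$ is algebraic'' --- a step that implicitly uses that the subgroup generated by $f$ and the normal algebraic subgroup $\ker(\rho)$ is a finite extension of $\ker(\rho)$, hence again algebraic. You instead average an ample class over the finite cyclic group $\langle\rho(f)\rangle$ to produce an $f$-invariant ample class $H$, and then invoke algebraicity (finite type plus finitely many components) of the stabilizer $\Aut_{[H]}(X)$, justified by the boundedness of graphs with fixed Hilbert polynomial. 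This buys you two things: $f$ itself, not merely a power, is placed directly inside an algebraic subgroup, so the paper's implicit finite-extension step disappears; and the fact you invoke is essentially the statement from which the algebraicity of $\ker(\rho)$ is deduced in the first place, since that kernel fixes every ample class. So both proofs rest on the same underlying theorem; yours is marginally more self-contained at the last step, the paper's marginally shorter. One small remark: your hedge restricting Fujiki--Lieberman to the smooth case is unnecessary here --- for projective, possibly singular, varieties the boundedness argument you sketch (or the reference the paper cites) yields algebraicity of $\Aut_{[H]}(X)$ in the generality the lemma requires.
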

	
	\begin{proof}
		The kernel of the action of the group $\Aut(S)$ on the Neron-Severi lattice $\NS(X)$ is an algebraic group (see for example \cite{notesBrion}). Hence we obtain a group homomorphism $\rho\colon \Aut(X)\to \GL_n(\Z)$, where $n$ is the Picard rank of $X$ and $\ker(\rho)$ is an algebraic group. So if $f^k\in\Bir(X)$ is a divisible element, then Lemma \ref{glnzdiv} shows that $f^n$ is contained in the kernel of $\rho$ for some $n>0$. In particular, $f$ is algebraic.
	\end{proof}
	
	\begin{lem}\label{poskod}
		Let $S$ be a projective surface of non-negative Kodaira dimension and $f\in\Bir(S)$. If $f^k$ is divisible for some $k\neq 0$, then $f$ is algebraic. 
	\end{lem}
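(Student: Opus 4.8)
The plan is to reduce the statement about $\Bir(S)$ to a statement about the automorphism group of a minimal model and then to invoke Lemma~\ref{divaut}. Since $\Bir(S)$ depends only on the birational class of $S$, I would first replace $S$ by a smooth projective model without changing $\Bir(S)$. As $S$ has non-negative Kodaira dimension, it admits a minimal model $S_0$, which is unique in its birational class and contains no $(-1)$-curves. The key classical input is that for surfaces of non-negative Kodaira dimension every birational self-map of $S_0$ is in fact an automorphism, so that $\Bir(S)\simto\Bir(S_0)=\Aut(S_0)$; here the group isomorphism $\Bir(S)\simto\Bir(S_0)$ is induced by conjugation by a fixed birational map $\phi\colon S\dashrightarrow S_0$.

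Next I would transport the hypothesis along this isomorphism. Divisibility is a purely group-theoretic notion, so if $f^k$ is divisible in $\Bir(S)$, then its image $\tilde f:=\phi\circ f\circ\phi^{-1}\in\Aut(S_0)$ satisfies that $\tilde f^{\,k}$ is again divisible. Now $S_0$ is a projective variety and $\tilde f\in\Aut(S_0)$, so Lemma~\ref{divaut} applies directly and shows that $\tilde f$ is algebraic, i.e.\ contained in an algebraic subgroup $G\subset\Aut(S_0)\subset\Bir(S_0)$.

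Finally I would transport the conclusion back. Conjugation by the fixed birational map $\phi$ sends a morphism $A\to\Bir(S_0)$ to a morphism $A\to\Bir(S)$, and hence maps the algebraic subgroup $G\subset\Bir(S_0)$ to the algebraic subgroup $\phi^{-1}\circ G\circ\phi\subset\Bir(S)$, which contains $f$. Therefore $f$ is algebraic, as claimed. I expect the main obstacle to be the second step: establishing cleanly that $\Bir(S)=\Aut(S_0)$ in the non-negative Kodaira dimension case (via uniqueness of the minimal model and regularity of birational self-maps of minimal surfaces with $\kappa\geq 0$), and then verifying that the conjugation isomorphism between $\Bir(S)$ and $\Bir(S_0)$ preserves not only divisibility, which is automatic, but also, more delicately, the property of being an algebraic subgroup.
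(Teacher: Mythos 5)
Your proposal is correct and takes essentially the same route as the paper: pass to the unique minimal model $S'$ in the birational class, use the classical fact that $\Bir(S)=\Bir(S')=\Aut(S')$ for surfaces of non-negative Kodaira dimension, and conclude with Lemma~\ref{divaut}. The details you flag as delicate---that conjugation by a fixed birational map preserves divisibility and carries algebraic subgroups of $\Bir(S')$ to algebraic subgroups of $\Bir(S)$---are left implicit in the paper's two-line proof, and your verification of them is exactly right.
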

	
	\begin{proof}
		Since the Kodaira dimension of $S$ is non-negative, there
		exists a unique minimal model $S'$ in the birational
		equivalence class of $S$ and we have
		$\Bir(S)=\Bir(S')=\Aut(S')$ (see
		\cite[Corollary~10.22]{MR1805816}). The proof now follows from Lemma \ref{divaut}. 
	\end{proof}
	
	\begin{lem}\label{halphen}
		Let $f\in\Bir(\p^2)$ be an element such that $\deg(f^n)\sim cn^2$. Then $f^k$ is not divisible for all $k>0$.
	\end{lem}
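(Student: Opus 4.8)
The plan is to argue by contradiction, exploiting that a quadratic-growth element of $\Bir(\p^2)$ preserves a \emph{unique} Halphen pencil and is regularizable on a rational elliptic surface of Picard rank $10$, and then to derive a contradiction from the integrality of the intersection form on the N\'eron--Severi lattice. First I would use Theorem~\ref{cremonathm}(c): the hypothesis $\deg(f^n)\sim cn^2$ forces $f$ to be parabolic, to preserve a unique pencil $\mathcal E$ whose general member is an elliptic (Halphen) curve, and to be conjugate, via a \emph{fixed} birational map $\pi\colon\p^2\dashrightarrow Y$ resolving the base points of $\mathcal E$, to an automorphism of a relatively minimal rational elliptic surface $Y\to\p^1$ (this is part of the theory underlying Theorem~\ref{cremonathm}; see \cite{diller2001dynamics, cantatrev, blanc2016dynamical}). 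Every such $Y$ has Picard rank $10$, so $\NS(Y)\cong\Z^{10}$ with its fixed integral intersection form. Assuming $f^k$ is divisible for some $k>0$, the element $F:=\pi f^k\pi^{-1}\in\Aut(Y)$ is fixed and again has quadratic growth, say $\deg_H(F^m)\sim c_F m^2$ with $c_F>0$ for a fixed ample class $H$ on $Y$.

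Next I would show all roots live on this one surface. For each $n$ choose $g_n\in\Bir(\p^2)$ with $g_n^n=f^k$. Then $g_n$ commutes with $f^k$, so $g_n(\mathcal E)$ is again $f^k$-invariant, and by uniqueness of the invariant pencil $g_n(\mathcal E)=\mathcal E$. Hence $g_n$ preserves the fibration of $Y$, and since $Y\to\p^1$ is relatively minimal, $G_n:=\pi g_n\pi^{-1}$ is an automorphism of the \emph{same} $Y$, with $G_n^n=F$. Comparing growth of $G_n$ (which is quadratic, being a root of the quadratic $F$) gives $\deg_H(G_n^{nm})=\deg_H(F^m)\sim c_Fm^2$ and $\deg_H(G_n^{j})\sim c_n j^2$; setting $j=nm$ yields $c_n=c_F/n^2\to0$.

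The crux is to contradict $c_n\to 0$ by showing $c_n$ is bounded below by a universal positive constant. The induced isometry $G_n^{*}$ of $\NS(Y)\cong\Z^{10}$ has polynomial growth, hence is quasi-unipotent; its eigenvalues are roots of unity, and the characteristic polynomial being a monic degree-$10$ integer polynomial forces each eigenvalue to be a primitive $d$-th root of unity with $\varphi(d)\le 10$. Thus all eigenvalues are $R$-th roots of unity for the fixed integer $R:=\operatorname{lcm}\{d:\varphi(d)\le 10\}$, so $u:=(G_n^{*})^{R}$ is an integral matrix with characteristic polynomial $(x-1)^{10}$, i.e.\ unipotent. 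Writing $N:=u-I$, quadratic growth forces the largest Jordan block to have size exactly $3$, so $N^3=0$ and $N^2\neq0$, and $u^m=I+mN+\binom{m}{2}N^2$ gives
\[
\deg_H(G_n^{Rm})=(u^mH)\cdot H\sim\tfrac{m^2}{2}\,(N^2H)\cdot H .
\]
Here $(N^2H)\cdot H=((u-I)^2H)\cdot H$ is a positive integer, since the intersection form, $u$ and $H$ are all integral, so $(N^2H)\cdot H\ge 1$. Comparing with $\deg_H(G_n^{Rm})\sim c_nR^2m^2$ gives $c_n\ge \tfrac{1}{2R^2}$ for every $n$, contradicting $c_n=c_F/n^2\to0$. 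Hence no power $f^k$ is divisible.

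I expect the main obstacles to be the two geometric inputs rather than the final estimate: namely, that all the roots $g_n$ can be realised as automorphisms of the \emph{one} fixed surface $Y$ (which rests on uniqueness of the invariant Halphen pencil together with relative minimality of the elliptic fibration), and that the action on $\NS(Y)$ is quasi-unipotent with a single Jordan block of size $3$. Granting these, the contradiction is forced purely by the integrality of $\NS(Y)$ and the bound $\varphi(d)\le 10$ on the orders of the relevant roots of unity.
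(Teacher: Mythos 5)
Your proof is correct, and its first half coincides with the paper's: both pass to the Halphen surface, use uniqueness of the $f$-invariant (resp.\ $f^k$-invariant) elliptic pencil to show that every root $g_n$ preserves it, and conclude that all roots become automorphisms of the \emph{same} surface $Y$ (the paper gets regularity from $g$ preserving the class $-mK_S$ of self-intersection $0$, you from relative minimality of the elliptic fibration --- both standard and valid). The endgame, however, is genuinely different. The paper finishes \emph{qualitatively}: once all roots are automorphisms, divisibility of $f^k$ in $\Bir(S)$ equals divisibility in $\Aut(S)$, and Lemma~\ref{divaut} (resting on Lemma~\ref{glnzdiv}: $\GL_n(\Z)$ has no divisible elements of infinite order, proved by reduction mod $p$) shows a divisible automorphism is algebraic, contradicting unbounded degrees. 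You instead prove a \emph{quantitative} statement: for any automorphism of $Y$ with quadratic degree growth, quasi-unipotence of the $\NS(Y)\cong\Z^{10}$ action, the bound $\varphi(d)\le 10$ on orders of eigenvalue roots of unity (giving the fixed exponent $R$), the size-$3$ Jordan block, and integrality of $(N^2H)\cdot H\ge 1$ force the leading coefficient to satisfy $c\ge 1/(2R^2)$, which is incompatible with $c_n=c_F/n^2\to 0$. What each buys: the paper's route is shorter and reuses machinery it needs anyway for the other cases of Lemma~\ref{divalg}; yours is sharper --- it only needs an $n$-th root for a \emph{single} sufficiently large $n$ (not divisibility by all integers), and it produces a uniform lower bound on quadratic leading coefficients in the spirit of the Blanc--D\'eserti bound (Theorem~\ref{blancdeserti}) quoted for the linear case. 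Two small points to tidy, neither a gap: take $H$ to be an \emph{integral} ample class so that $(N^2H)\cdot H\in\Z$, and note that you need not establish a genuine limit $\deg_H(G_n^j)/j^2\to c_n$ --- it suffices to compare the exact polynomial $\deg_H(G_n^{Rm})=H^2+m\,(NH)\cdot H+\binom{m}{2}(N^2H)\cdot H$ along $m\in n\Z$ with $\deg_H(G_n^{nRm'})=\deg_H(F^{Rm'})=O(m'^2)\cdot$const, which already yields $(N^2H)\cdot H\le C/n^2<1$ for large $n$, contradicting integrality and the nonvanishing of $(N^2H)\cdot H$ forced by quadratic growth along that subsequence.
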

	
	\begin{proof}
		If $\deg(f^n)\sim cn^2$, then $f$ is conjugate to an automorphism of a Halphen surface, i.e. a smooth projective surface $S$ such that the linear system $\vert -mK_S\vert$ is one-dimensional, has no fixed component, and is base-point free (see for example \cite[Section 2]{MR2904576} for details). In this case, $\vert -mK_S\vert$ defines the unique pencil of elliptic curves that is preserved by $f$. Let $g\in\Bir(S)\simeq\Bir(\p^2)$ and $n\in\Z_{>0}$ be such that $g^n=f$. Since $\deg(g^n)\sim c'n^2$ for some constant $c'>0$, the transformation $g$ and therefore $g^n$ preserve a unique pencil of elliptic curves, which therefore has to be $\vert -mK_S\vert$. In particular, since $-mK_S$ has self-intersection $0$, we obtain that $g$ preserves $K_S$  and is therefore an automorphism of $S$. In other words, all elements in $\Bir(S)$ that divide $f$ are automorphisms of $S$. Hence, for all $k>0$ we have that $f^k$ is divisible in $\Bir(S)$ if and only if $f$ is divisible in $\Aut(S)$. So if $f^k$ is divisible, then $f^k$ is algebraic by Lemma \ref{divaut}, which is a contradiction to $\{\deg(f^n)\}$ being unbounded.
	\end{proof}
	
	\begin{lem}\label{divalg}
		Let $S$ be a projective surface and $f\in\Bir(S)$ an
		element such that $f^k$ is divisible for some $k>
		0$. Then $f$ is algebraic in
		$\Bir(S)$.
	\end{lem}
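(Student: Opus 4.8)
The plan is to invoke the classification of degree growth (Theorem~\ref{cremonathm}) and to rule out, under the divisibility hypothesis, every case except the bounded one, after first reducing to two model surfaces. I first dispose of the case $\kappa(S)\geq 0$ immediately by Lemma~\ref{poskod}. Hence I may assume $S$ has negative Kodaira dimension, so $S$ is birationally ruled. Since both algebraicity and divisibility of $f$ depend only on the group $\Bir(S)$, and hence only on the birational class of $S$, I may replace $S$ by a convenient model: either $S=\p^2$ (the rational case) or $S=C\times\p^1$ with $C$ a smooth non-rational curve. I fix the ample class $H$ to be the line class on $\p^2$, respectively the class produced in Lemma~\ref{negkod}, so that the lower bound $c\geq 1/2$ for linear growth is available. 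For every $m>0$ divisibility yields an $m$-th root $g_m$ of $f^k$, i.e. $g_m^m=f^k$. The goal is then to show that $f$ falls into case~(a) of Theorem~\ref{cremonathm}, which by that theorem is exactly being algebraic.

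Next I would eliminate the quadratic and exponential cases. On $C\times\p^1$ Lemma~\ref{negkod} already rules both out, so I may assume $S=\p^2$ here. If $\deg_H(f^n)\sim cn^2$, then Lemma~\ref{halphen} says that no power of $f$ is divisible, contradicting the hypothesis. If $\deg_H(f^n)$ grows exponentially, then so does $\deg_H((f^k)^n)$; each root commutes with $f^k$, since $g_m f^k=g_m^{m+1}=f^k g_m$, so all the $g_m$ lie in $\Cent(f^k)$, which by Theorem~\ref{cantat} is virtually cyclic. Thus $f^k$ would be a divisible element of infinite order in a virtually cyclic group, which is impossible by the same elementary reduction-modulo-a-finite-index-subgroup argument used in Lemma~\ref{glnzdiv}. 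This rules out exponential growth.

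The remaining and decisive case is linear growth, and this is where I expect the main work. Suppose $\deg_H(f^n)\sim cn$, so that $\deg_H(f^{kn})\sim (ck)n$. Since taking powers preserves the growth type in Theorem~\ref{cremonathm}, and $g_m^m=f^k$ has linear growth, each $g_m$ must itself have linear growth, say $\deg_H(g_m^n)\sim c_m n$. Comparing $\deg_H(g_m^{mn})\sim (c_m m)n$ with $\deg_H(f^{kn})\sim (ck)n$ forces $c_m=ck/m$. But Theorem~\ref{blancdeserti} on $\p^2$, respectively Lemma~\ref{negkod} on $C\times\p^1$, gives $c_m\geq 1/2$ for every $m$, whence $m\leq 2ck$ for all $m>0$, a contradiction once $m$ is large. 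Therefore the linear case cannot occur either.

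Having excluded the linear, quadratic and exponential cases, the sequence $\{\deg_H(f^n)\}$ must be bounded, so $f$ lies in case~(a) of Theorem~\ref{cremonathm} and is algebraic. The crux is the linear case: the uniform lower bound $c_m\geq 1/2$, valid on both model surfaces, is precisely what clashes with the roots having ever-smaller growth constant $c_m=ck/m$, and this is the step on which the whole argument turns.
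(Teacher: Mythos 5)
Your proposal is correct and follows essentially the same route as the paper: reduce Kodaira dimension $\geq 0$ to Lemma~\ref{poskod}, then run through the four cases of Theorem~\ref{cremonathm} on $\p^2$ (resp.\ use Lemma~\ref{negkod} on $C\times\p^1$), killing quadratic growth by Lemma~\ref{halphen}, exponential growth by the centralizer Theorem~\ref{cantat}, and linear growth by the lower bound $c\geq 1/2$ of Theorem~\ref{blancdeserti} against the roots' shrinking constants $c_m=ck/m$. If anything, your version is slightly more careful than the paper's, which works with roots of $f$ rather than of $f^k$ and leaves the passage to the birational models and the virtually-cyclic divisibility argument implicit.
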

	
	\begin{proof}
		First we consider the case, where $S$ is rational. Let
		$f\in\Bir(S)$ and $H$ an ample divisor on $S$. If $f$ is of
		finite order, then $f$ is algebraic. So we may assume that $f$ is of
		infinite order. We consider the four cases, given by
		Theorem~\ref{cremonathm}. If $\{\deg_H(f^n)\}$ is bounded,
		Theorem~\ref{cremonathm} implies that $f$ is algebraic. If
		$\deg(f^n)\sim cn$, assume that there is a $g\in\Bir(\p^2)$
		and a $l\geq 0$ such that $g^l=f$. It follows that
		$\deg_H(g^n)\sim \frac{c}{l} n$. By
		Theorem~\ref{blancdeserti}, the constant $c$ has to be at
		least $1/2$, so $l\leq 2c$ and  $f$ is only divisible by finitely many
		integers $l$.The case $\deg_H(f^n)\sim cn^2$ is not possible by Lemma~\ref{halphen}.   Finally, if
		$\deg(f^n)\sim c\lambda^n$, we observe that every element
		that divides $f$ centralizes $f$. By Theorem
		\ref{cantat}, there are only finitely many elements
		$g\in\Bir(\p^2)$ that divide $f$ and the same holds for any iterate of $f$.
		
		If $S$ is non-rational and of Kodaira dimension $-\infty$, we use  Lemma \ref{negkod} and proceed with a similar argument as in the rational case.
		
		If $S$ is of Kodaira dimension $\geq 0$, the result follows from Lemma \ref{poskod}.
	\end{proof}
	
	An algebraic
	group $H$ is called {\it anti-affine} if
	$\mathcal{O}(H)=\C$. 	 If $G$ is an arbitrary connected
	algebraic group, there exists a central anti-affine
	group $G_{ant}\subset G$ such that $G/G_{ant}$ is
	linear  (see \cite{MR2488561}). Denote by $\GA$ the additive group $(\C,+)$ of the field of complex numbers. 
	
	\begin{lem}\label{lineardiv}
		Let $G$ be an algebraic group and $g\in G$. Then there exists a $k>0$ such that $g^k$ is divisible.
	\end{lem}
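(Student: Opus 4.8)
The plan is to reduce the statement to a single structural fact: over $\C$, every connected commutative algebraic group is divisible as an abstract group. Granting this, the lemma follows by placing a power of $g$ inside such a group. Concretely, I would set $H:=\overline{\langle g\rangle}$, the Zariski closure in $G$ of the cyclic subgroup generated by $g$. The closure of a commutative subgroup of an algebraic group is again a closed commutative subgroup (the conditions ``$x$ commutes with $a$'' are closed, so one passes from $\langle g\rangle$ to its closure in two steps), hence $H$ is a commutative algebraic subgroup of $G$. Let $H^0$ be its identity component. The component group $H/H^0$ is finite, and since $\langle g\rangle$ is dense in $H$ its image is dense, hence all, of the finite discrete group $H/H^0$; therefore $H/H^0$ is cyclic, generated by the class of $g$. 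Setting $k:=|H/H^0|$ we get $g^k\in H^0$.

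It then remains to prove that $H^0$, a connected commutative algebraic group over $\C$, is divisible, for then $g^k\in H^0$ is divisible by every $n$, which is exactly the assertion. The approach I would take is to consider the $n$-th power map $[n]\colon H^0\to H^0$, $x\mapsto x^n$, which is a homomorphism because $H^0$ is commutative. Its kernel is the set of $n$-torsion points of $H^0$. I would check this kernel is finite using the structure of $H^0$: by the decomposition recalled just before the lemma, $H^0$ has a central anti-affine subgroup with connected commutative linear quotient, the latter being a product $T\times U$ of a torus $T$ and a unipotent group $U\cong\GA^r$, while the anti-affine part is (a semiabelian-type extension of) an abelian variety. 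On each constituent the $n$-torsion is finite in characteristic zero (for $T$ it is $\mu_n^r$, for $U$ it is trivial, for the abelian variety it is $(\Z/n)^{2g}$), so the $n$-torsion of $H^0$ is finite. Consequently $[n]$ has finite kernel, its image is a closed connected subgroup of $H^0$ of full dimension, and hence equals $H^0$. Thus $[n]$ is surjective and $H^0$ is divisible, giving for each $n>0$ an $h\in H^0\subset G$ with $h^n=g^k$.

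The main obstacle, and essentially the only real content, is the divisibility of connected commutative algebraic groups, which rests on the finiteness of the $n$-torsion; this is precisely where working over $\C$ (characteristic zero) is used, since in positive characteristic $[p]$ on $\GA$ is the zero map and divisibility fails. An equivalent route avoiding the dimension count is to note that $T$, $U$, and the abelian variety are each divisible, and since divisible abelian groups are injective $\Z$-modules the extension defining $H^0$ splits abstractly, so $H^0$ is divisible; I would use whichever is cleaner in the final write-up. I would also remark that no connectedness hypothesis on $G$ itself is needed, since the entire argument is carried out inside the subgroup $H$ and its identity component $H^0$.
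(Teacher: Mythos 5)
Your proof is correct, but it takes a genuinely different route from the paper's at the decisive step. The paper likewise begins with $A=\overline{\langle g\rangle}$, but then argues in two cases: when $G$ is linear it invokes the structure theorem for commutative linear algebraic groups, $A\simeq\GM^{n_1}\times\GA^{n_2}\times H$ with $H$ finite, sets $k=|H|$, and observes that $g^k$ lands in the divisible group $\GM^{n_1}\times\GA^{n_2}$; for general $G$ it cites Brion's result that the central anti-affine subgroup $G_{ant}$ is divisible (\cite[Lemma~1.6]{MR2488561}) and then lifts $n$-th roots of $g^k$ from the linear quotient $G/G_{ant}$ through the extension by an explicit computation --- which is exactly your ``divisible abelian groups are injective $\Z$-modules'' aside made concrete. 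You instead prove the single unified statement that every connected commutative algebraic group over $\C$ is divisible, via the $n$-th power isogeny: finiteness of the $n$-torsion (the one place characteristic zero enters, as you correctly flag with the $[p]$-on-$\GA$ example) forces $[n]$ to have closed connected image of full dimension, hence to be surjective. Your route buys self-containedness --- no appeal to Brion's divisibility lemma, and for the torsion bound Chevalley's theorem would serve just as well as the anti-affine decomposition --- and it makes the role of char $0$ explicit; the paper's route is shorter given the citation and needs no structure theory of $H^0$ beyond what it quotes. Your reduction $g^k\in H^0$ with $k=|H/H^0|$ parallels the paper's choice $k=|H|$, and your closing remark is apt: the paper's opening reduction ``we may assume $G$ is connected'' is indeed superfluous once one works inside $\overline{\langle g\rangle}$ and its identity component, as you do.
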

	
	\begin{proof}
		We may assume that $G$ is connected. If $G$ is linear, consider the Zariski-closure $A:=\overline{\langle g\rangle}$, which is a commutative subgroup of $G$. Hence, $A\simeq \mathbb{G}_m^{n_1}\times \mathbb{G}_a^{n_2}\times H$, for some $n_1\geq 0$, $n_2\in\{0,1\}$ and a finite group $H$. Let $k$ be the order of $H$. Then $g^k$ is contained in $U\simeq \mathbb{G}_m^{n_1}\times \mathbb{G}_a^{n_2}\times \{\id\}\subset A$, which is a group in which every element is divisible.
		
		Let $G_{ant}\subset G$ be a central anti-affine group such that $G/G_{ant}$ is linear. By \cite[Lemma
		1.6]{MR2488561}, every element in $G_{ant}$ is
		divisible. Let now $g\in G$ be arbitrary. As
		$G/G_{ant}$ is linear there exists a $k$ such that the
		class of $[g^k]$ is divisible in $G/G_{ant}$, i.e.\,for
		every $n\geq 0$ there exists an element $f\in G$ such
		that $f^nh=g^k$ for some $h\in G_{ant}$. Since
		$G_{ant}$ is divisible, there is a $h'\in G_{ant}$
		satisfying $h'^n=h$ and hence $(fh')^n=g^k$,
		i.e.\,$g^k$ is divisible.
	\end{proof}

	\begin{proof}[Proof of Theorem \ref{divisibility}]
		By Lemma \ref{divalg}, if for  $f\in\Bir(S)$ there exists a $k>0$ such that $f^k$ is divisible, then $f$ is algebraic. On the other hand, let $f\in\Bir(S)$ be an algebraic element. Then $f$ is contained in an algebraic subgroup $G\subset\Bir(S)$. By Lemma~\ref{lineardiv}, we obtain that $f^k$ is divisible for some $k>0$. 
	\end{proof}

	With the help of Theorem \ref{divisibility} we are now able to prove one of the main tools of this paper:

	\begin{prop}\label{alg}
		Let $S_1$ and $S_2$ be normal affine surfaces,
		$\phi\colon \Aut(S_1)\to\Aut(S_2)$ an abstract group  homomorphism and
		$g\in\Aut(S_1)$ an algebraic element. Then $\varphi(g)$ is an algebraic element
		in $\Aut(S_2)$.
	\end{prop}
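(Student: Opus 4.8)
The plan is to exploit the purely group-theoretic characterization of algebraic elements furnished by Theorem~\ref{divisibility}, combined with the fact that divisibility is transported by any group homomorphism. The only delicate point is to arrange that the relevant divisibility takes place inside $\Aut$, and not merely inside the larger group $\Bir$, so that the homomorphism $\varphi$ can be applied to the dividing elements.

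First I would note that since $g\in\Aut(S_1)$ is algebraic, by definition it lies in some algebraic subgroup $G\subset\Aut(S_1)$. Applying Lemma~\ref{lineardiv} to the algebraic group $G$ and the element $g\in G$, we obtain an integer $k>0$ such that $g^k$ is divisible \emph{in} $G$: for every $n\in\Z_{>0}$ there is $h_n\in G$ with $h_n^n=g^k$. Because $G\subset\Aut(S_1)$, each witness $h_n$ lies in $\Aut(S_1)$, so $g^k$ is in fact divisible in $\Aut(S_1)$.

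Next I would apply $\varphi$. Since $\varphi(h_n)^n=\varphi(h_n^n)=\varphi(g^k)=\varphi(g)^k$ for every $n$, the element $\varphi(g)^k$ is divisible in $\varphi(\Aut(S_1))\subset\Aut(S_2)$, hence divisible in $\Aut(S_2)$, and a fortiori divisible in $\Bir(S_2)$ since $\Aut(S_2)\subset\Bir(S_2)$. Now the hypothesis of Theorem~\ref{divisibility} for the surface $S_2$ is met with $f=\varphi(g)$: an iterate, namely $\varphi(g)^k$, is divisible. Therefore $\varphi(g)$ is algebraic in $\Bir(S_2)$. Finally, since $S_2$ is affine and $\varphi(g)\in\Aut(S_2)$, Corollary~\ref{algelement} upgrades this to the statement that $\varphi(g)$ is algebraic in $\Aut(S_2)$, as required.

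The step I expect to be the crux is the first one: it is essential to invoke Lemma~\ref{lineardiv} directly on the algebraic group $G\subset\Aut(S_1)$ rather than simply quoting the ``algebraic $\Rightarrow$ divisible'' direction of Theorem~\ref{divisibility}. The latter would only guarantee that $g^k$ is divisible in $\Bir(S_1)$, whose witnesses need not lie in $\Aut(S_1)$; such witnesses are outside the domain of $\varphi$ and hence useless for transport. Keeping the divisibility inside $\Aut(S_1)$ is exactly what makes the group-homomorphism argument go through. Everything else is formal: divisibility passes through homomorphisms, and the two equivalences (Theorem~\ref{divisibility} and Corollary~\ref{algelement}) close the loop on the target surface $S_2$.
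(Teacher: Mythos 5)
Your proposal is correct and follows essentially the same route as the paper's proof: pass from algebraicity to divisibility of a power, transport divisibility through $\varphi$, then apply Theorem~\ref{divisibility} and Corollary~\ref{algelement} on the target surface. The point you single out as the crux --- invoking Lemma~\ref{lineardiv} on the algebraic subgroup $G\subset\Aut(S_1)$ so that the divisibility witnesses lie in $\Aut(S_1)$ rather than merely in $\Bir(S_1)$ --- is exactly what the paper's terser phrase ``therefore a power of $g$ divisible'' relies on implicitly, so you have correctly identified and made explicit the only delicate step.
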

	
	\begin{proof}
		The element $g$ is algebraic in $\Aut(S_1)$ and therefore a power of $g$
		divisible. Since divisibility is preserved by group homomorphisms,
		we obtain that $\phi(g)$ is divisible in $\Aut(S_2)$. Therefore, by
		Theorem~\ref{divisibility}, $\phi(g)$ is algebraic in
		$\Bir(Y)$. Corollary~\ref{algelement} implies that $\phi(g)$ is
		algebraic in $\Aut(S_2)$.
	\end{proof}

	\section{Root subgroups of affine varieties}\label{toric}

	\subsection{Root subgroups}

	In this section we describe  {\it root subgroups} of
	$\Aut(X)$ for a given affine variety $X$ with respect to a torus.
	
	\begin{defin}
		Let $\TT \subset \Aut(X)$ be a torus in $\Aut(X)$, i.e. a closed
		algebraic subgroup isomorphic to a torus.  A closed subgroup
		$U \subset \Aut(X)$ isomorphic to $\GA$ is called a {\it root
			subgroup} with respect to $\TT$ if the normalizer of $U$ in
		$\Aut(X)$ contains $\TT$.
		
		Since $\GA$ contains no non-trivial closed normal subgroups, every non-trivial regular action is faithful. Hence such a
		subgroup $U$ is equivalent to a non-trivial {\it normalized} $\GA$-action on $X$,
		i.e.\,a $\GA$-action on $X$ whose image in $\Aut(X)$ is normalized
		by $\TT$.
	\end{defin}
	
	Let $U\subset \Aut(X)$ be a root subgroup with respect to $\TT$. Since
	$\TT$ is in the normalizer, we can define an action
	$\varphi\colon\TT\rightarrow \Aut(U)$ of $\TT$ on $U$ given by
	$t.s=t\circ s\circ t^{-1}$ for all $t\in \TT$ and $s\in U$.
	Furthermore, since $\Aut(U)\simeq \GM$, such an action corresponds to a
	character of the torus $\chi\colon\TT\rightarrow \GM$, which does not
	depend on the choice of automorphism between $\Aut(U)$ and
	$\GM$. This character is
	called the \emph{weight character} of $U$. The algebraic subgroups $T$ and $U$ span an algebraic subgroup in $\Aut(X)$
	isomorphic to $\GA\rtimes_\chi T$.

	Assume that the algebraic torus $\TT$ acts linearly and regularly on a
	vector space $A$ of countable dimension. We say that $A$ is {\it
		multiplicity-free} if the weight spaces $A_{\chi}$ are all of
	dimension less or equal than one for every character
	$\chi\colon\TT\rightarrow \GM$ of the torus $\TT$. In our proof of
	Theorem \ref{mainhighdim}, we will use the following lemma that is due to
	Kraft:
	
	\begin{lem} [{\cite[Lemma~6.2]{MR3738084}}] \label{Kr15} %
		Let $X$ be a normal affine variety and let $\TT \subset \Aut(X)$ be
		a torus. If there exists a root subgroup $U \subset \Aut(X)$ with
		respect to $T$ such that $\mathcal{O}(X)^{U}$ is multiplicity-free,
		then $\dim \TT \le \dim X \le \dim \TT +1$.
	\end{lem}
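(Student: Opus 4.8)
The plan is to compute the transcendence degree of the invariant ring $\mathcal{O}(X)^{U}$ in two different ways and to read off both inequalities from a comparison of transcendence degrees with lattice ranks. Throughout I write $M=X^{*}(\TT)$ for the character lattice, so $\dim\TT=\rk M$, and I use that the $\TT$-action grades $\mathcal{O}(X)=\bigoplus_{\chi\in M}\mathcal{O}(X)_{\chi}$. For the lower bound $\dim\TT\le\dim X$, note that since $\TT\subset\Aut(X)$ acts faithfully, an element acting trivially would have to satisfy $\chi(t)=1$ for every weight $\chi$ occurring in $\mathcal{O}(X)$; hence the occurring weights generate a finite-index subgroup of $M$ and in particular span $M\otimes\QQ$. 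Choosing homogeneous $f_{1},\dots,f_{d}\in\mathcal{O}(X)$ with $d=\dim\TT$ whose weights are $\QQ$-linearly independent, one checks they are algebraically independent: in any polynomial relation the distinct monomials in the $f_{i}$ lie in pairwise distinct graded pieces, so each graded component vanishes separately, and since $\mathcal{O}(X)$ is a domain every coefficient is $0$. Thus $\dim X=\tdeg_{\CC}\mathcal{O}(X)\ge\dim\TT$.

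For the upper bound I would pass to the locally nilpotent derivation $\partial$ of $\mathcal{O}(X)$ defining the nontrivial $\GA$-action $U$, so that $\mathcal{O}(X)^{U}=\ker\partial$; by the standard theory of locally nilpotent derivations $\tdeg_{\CC}\ker\partial=\dim X-1$, since the generic $U$-orbit is one-dimensional. As $\TT$ normalizes $U$, the derivation $\partial$ is homogeneous for the $M$-grading (of weight equal to the weight character $\chi_{0}$ of $U$), so $R:=\ker\partial=\bigoplus_{\chi\in S}R_{\chi}$ is a graded domain, where $S=\{\chi\in M : R_{\chi}\neq 0\}$ is a submonoid of $M$. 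Let $N=\Z S\subseteq M$ be the subgroup it generates, of rank $r\le\rk M=\dim\TT$.

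Here the multiplicity-free hypothesis $\dim R_{\chi}\le 1$ enters, and the goal is to show $\tdeg_{\CC}R=r$. On the one hand, $\Z$-independent weights $\chi_{1},\dots,\chi_{r}\in S$ with chosen nonzero $g_{i}\in R_{\chi_{i}}$ give $r$ algebraically independent elements by the same graded argument, so $\tdeg_{\CC}R\ge r$. On the other hand, localizing $R$ at all its nonzero homogeneous elements yields an $N$-graded domain in which any two nonzero elements of the same weight differ by a scalar (by multiplicity-freeness), so its degree-zero part is $\CC$ and each graded piece is one-dimensional; this localization is therefore a twisted group algebra of $N$, whose fraction field has transcendence degree exactly $\rk N=r$. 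Since localization does not change the fraction field, $\tdeg_{\CC}R\le r$. Combining the two computations, $\dim X-1=\tdeg_{\CC}\ker\partial=r\le\dim\TT$, which is the desired upper bound.

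The main obstacle is precisely the identity $\tdeg_{\CC}\ker\partial=\rk(\Z S)$: without the bound $\dim R_{\chi}\le 1$ the invariant ring could carry additional transcendence (for instance two invariants of the same $\TT$-weight), and its transcendence degree could be as large as $\dim X-1$ irrespective of $\dim\TT$. Thus the heart of the argument is to make rigorous the twisted-group-algebra description of a multiplicity-free graded domain and the collapse of its degree-zero part to $\CC$; once this is in place, the two transcendence-degree estimates close the proof.
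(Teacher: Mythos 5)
Your proposal is mathematically correct, but note that the paper itself gives no proof of this lemma at all: it is imported verbatim as \cite[Lemma~6.2]{MR3738084}, a result of Kraft, so there is no internal argument to compare against. Your argument is a complete, self-contained substitute. The lower bound $\dim\TT\le\dim X$ is the standard faithfulness argument (occurring weights span $M\otimes\QQ$, and homogeneous elements with $\QQ$-independent weights are algebraically independent in a domain), and it is sound. For the upper bound, the two key ingredients both check out: (i) $\tdeg_{\CC}\ker\partial=\dim X-1$ for a nonzero locally nilpotent derivation on an affine domain is standard, and the homogeneity of $\partial$ (which in the paper's framework is exactly Proposition~\ref{root-hom}) makes $R=\ker\partial$ an $M$-graded subalgebra; (ii) your multiplicity-free step is correct, and the one place where you gesture rather than compute --- that the homogeneous localization is a ``twisted group algebra'' with $\tdeg=\rk N$ --- is easily made rigorous: if $f_i\in R_{\chi_i}$, $g_i\in R_{\chi_i'}$ with $\chi_1-\chi_1'=\chi_2-\chi_2'$, then $f_1g_2$ and $f_2g_1$ lie in the same weight space of $R$, hence are proportional, so each graded piece of the localization is one-dimensional and the degree-zero piece is $\CC$; choosing units $u_{\mu_1},\dots,u_{\mu_r}$ over a basis of $N=\Z S$ (and using that $S-S=\Z S$ since $0\in S$), every homogeneous element is a scalar times a Laurent monomial in the $u_{\mu_i}$, so the localization is a Laurent polynomial ring in $r$ variables and $\tdeg_{\CC}R=r\le\rk M=\dim\TT$, giving $\dim X-1\le\dim\TT$. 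You also correctly identify the multiplicity-free hypothesis as the crux: without $\dim R_\chi\le 1$ the degree-zero part of the localization need not collapse to $\CC$ and the bound fails. As a minor remark, normality of $X$ is never used in your argument, which is consistent with the lemma being a general graded-algebra statement; altogether this is a valid proof in the same spirit as Kraft's original one.
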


	Additive group actions, or, equivalently, unipotent one-parameter subgroups on affine varieties can be described by a
	certain kind of derivations. We recall some of the basics here (see
	\cite{MR2259515} for details). Let $\lambda\colon\GA \to \Aut(X)$ be a
	$\GA$-action on an affine variety $X$. This action induces a
	derivation on the level of regular functions by
	\[\delta_\lambda\colon\OO(X)\rightarrow\OO(X),\qquad
	f\mapsto \left[\frac{d}{ds}\lambda{(s)}^*(f)\right]_{s=0},
	\]
where $\GA=\spec(\C[s])$.
	This derivation has the property that for every $f\in \OO(X)$ there exists an
	$\ell\in \NN$ with $\delta_\lambda^\ell(f)=0$. Derivations having this
	property are called {\it locally nilpotent}. Furthermore, every
	$\GA$-action on $X$ arises from such a locally nilpotent derivation
	$\delta$ and the $\GA$-action
	$\alpha_\delta\colon\GA\times X\rightarrow X$ is recovered from $\delta$
	via
	\[\left(\alpha_\delta(s)\right)^*\colon\OO(X)\rightarrow \OO(X)[s],
	\qquad
	f\mapsto\exp(s\delta)(f):=\sum_{i=0}^\infty\frac{s^i\delta^i(f)}{i!}\,.\]

	Let $\TT\subseteq \Aut(X)$ be an algebraic torus. The choice of such a
	$T$ is equivalent to fixing an $M$-grading on the ring $\OO(X)$ of
	regular functions, where $M$ is the character lattice of the torus. We
	follow the standard convention to consider $M$ as an abstract additive
	lattice and to denote the character corresponding to $m\in M$ by
	$\chi^m$.
	
	Recall that a linear map $\delta\colon A\rightarrow B$ between
	$M$-graded $\KK$-vector spaces is called {\it homogeneous} if there
	exists an $e\in M$ such that for every homogeneous element $f$ of
	degree $m$, the image $\delta(f)$ is homogeneous of degree $m+e$. We
	call the element $e\in M$ the {\it degree} of $\delta$ and denote it
	by $\deg\delta$.

	The next proposition states that root subgroups are in one to one
	correspondence with locally nilpotent derivations that are homogeneous
	with respect to the $M$-grading of $\OO(X)$. A proof can be found in
	\cite[Lemma~2]{MR2852493}.
	
	\begin{prop}\label{root-hom}
		Let $X$ be an affine variety and fix a torus $\TT\subseteq
		\Aut(X)$. The  unipotent one-parameter subgroup corresponding to a locally nilpotent derivation $\delta$ on $\OO(X)$ is
		a root subgroup with respect to $\TT$ if and only if it is homogeneous with respect to
		the $M$-grading on $\OO(X)$ given by $\TT$. The weight of the
		corresponding root subgroup is $\chi^{\deg\delta}$.
	\end{prop}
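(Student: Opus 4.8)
The plan is to reduce the statement to a computation with the locally nilpotent derivation $\delta$ by analysing how $\TT$ acts on $U$ by conjugation. Write $U=\{\alpha_\delta(s)\mid s\in\GA\}$, so that $\alpha_\delta(s)^*=\exp(s\delta)$ on $\OO(X)$. For $t\in\TT$ I would first record the conjugation formula: since $(a\circ b)^*=b^*\circ a^*$ for automorphisms, one gets
\[
\bigl(t\circ\alpha_\delta(s)\circ t^{-1}\bigr)^*=(t^{-1})^*\circ\exp(s\delta)\circ t^*=\exp\!\bigl(s\,(t^{-1})^*\delta\,t^*\bigr),
\]
where the last equality uses that conjugation by the algebra automorphism $t^*$ commutes with the power series defining $\exp$. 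Hence conjugation by $t$ carries $U$ to the one-parameter subgroup associated to the locally nilpotent derivation $\delta_t:=(t^{-1})^*\delta\,t^*$. Thus $\TT$ normalises $U$ if and only if $\delta_t$ is a scalar multiple of $\delta$ for every $t$, because the only automorphisms of $\GA$ are the scalar multiplications; and in that case the scalar $\chi(t)$ with $\delta_t=\chi(t)\delta$ is precisely the weight character, since then $t\circ\alpha_\delta(s)\circ t^{-1}=\alpha_\delta(\chi(t)s)$.

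For the direction homogeneous $\Rightarrow$ root subgroup, suppose $\delta$ has degree $e=\deg\delta$, so $\delta(\OO(X)_m)\subseteq\OO(X)_{m+e}$. With the grading convention $t^*f=\chi^{-m}(t)\,f$ for $f\in\OO(X)_m$ (so that $t$ acts on $\OO(X)_m$ by $\chi^m$), a one-line computation on a homogeneous $f\in\OO(X)_m$ gives
\[
\delta_t(f)=(t^{-1})^*\delta\bigl(\chi^{-m}(t)f\bigr)=\chi^{-m}(t)\,\chi^{m+e}(t)\,\delta(f)=\chi^{e}(t)\,\delta(f),
\]
so $\delta_t=\chi^e(t)\,\delta$. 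By the reduction above, $\TT$ normalises $U$ and the weight character is $\chi^{e}=\chi^{\deg\delta}$, as claimed.

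For the converse I would decompose $\delta=\sum_{e\in M}\delta_e$ into its homogeneous components, where $\delta_e$ sends $\OO(X)_m$ to $\OO(X)_{m+e}$ and the sum is finite on each homogeneous element. The same computation as above yields $\delta_t=\sum_e\chi^e(t)\,\delta_e$ for every $t\in\TT$. If $U$ is a root subgroup then $\delta_t=\chi(t)\,\delta=\sum_e\chi(t)\,\delta_e$, and comparing components in the distinct graded pieces $\OO(X)_{m+e}$ forces $\chi^e(t)\delta_e=\chi(t)\delta_e$ for all $t$ and all $e$. Since $e\mapsto\chi^e$ is injective on $M$, at most one component $\delta_{e_0}$ with $\chi^{e_0}=\chi$ is nonzero, so $\delta=\delta_{e_0}$ is homogeneous of degree $e_0$ and $\chi=\chi^{e_0}=\chi^{\deg\delta}$.

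The only genuinely delicate point is the interchange of conjugation and the exponential in the first display; this is legitimate precisely because $\delta$ is locally nilpotent, so $\exp(s\delta)$ acts on each $f$ as a polynomial in $s$ and $(t^{-1})^*\delta^i t^*=\delta_t^{\,i}$ term by term, whence $(t^{-1})^*\delta\,t^*$ is again locally nilpotent and the power series matches. Everything else is a finite computation within the weight-space decomposition together with linear independence of the characters $\chi^e$, so homogeneity of $\delta$ is equivalent to $\TT$ normalising $U$, with the weight read off as $\chi^{\deg\delta}$.
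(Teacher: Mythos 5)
Your proof is essentially correct, and it is worth pointing out that the paper itself contains no proof of this proposition: it refers to \cite[Lemma~2]{MR2852493}. Your argument is the standard one from that literature, reproduced faithfully and in full: the conjugation identity $\bigl(t\circ\alpha_\delta(s)\circ t^{-1}\bigr)^*=\exp\bigl(s\,(t^{-1})^*\delta\,t^*\bigr)$, valid because $\exp(s\delta)$ acts polynomially in $s$ on each function and conjugation by the algebra automorphism $t^*$ commutes with it term by term; the reduction of the normalization condition to $\delta_t\in\C^*\delta$; and, for the converse, the decomposition of $\delta$ into $M$-homogeneous components together with the injectivity of $e\mapsto\chi^e$ on the character lattice of $\TT$. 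You are also right that the decomposition step uses only the linear componentwise pieces $\delta_e$ (each of which is a derivation by comparing graded components of the Leibniz rule), with no local nilpotency of the components required.

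Two points should be tightened. First, ``the only automorphisms of $\GA$ are the scalar multiplications'' is true for \emph{algebraic} group automorphisms, not abstract ones: $(\C,+)$ has a profusion of $\Q$-linear abstract automorphisms. So in the direction ``root subgroup $\Rightarrow\delta_t\in\C^*\delta$'' you need to observe that conjugation by $t$ is an ind-group automorphism of $\Aut(X)$ whose restriction to the closed algebraic subgroup $U$ (closed by the correspondence of Theorem~\ref{ind-group-prop}) is a morphism, hence an algebraic automorphism of $U\simeq\GA$; this gives $t\circ\alpha_\delta(s)\circ t^{-1}=\alpha_\delta(\chi(t)s)$ with $\chi(t)\in\C^*$, and comparing the $s$-linear terms of $\exp(s\delta_t)=\exp(\chi(t)s\delta)$ yields $\delta_t=\chi(t)\delta$. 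Second, the sign of the weight is convention-dependent, and you were right to flag your grading convention explicitly: with the comorphism convention the paper uses in its toric subsection, namely $t^*\chi^m=\chi^m(t)\chi^m$, the same computation gives $\delta_t=\chi^{-e}(t)\delta$ and hence weight $\chi^{-\deg\delta}$; your convention $t^*f=\chi^{-m}(t)f$ is exactly the one under which the stated formula $\chi^{\deg\delta}$ is literally true. This ambiguity is harmless for everything the paper subsequently does with weights (only the injectivity of the map $e\mapsto$ weight and the kernels of the weight characters are ever used, and both are invariant under replacing $\chi$ by $\chi^{-1}$), but you should be aware that your sign choice differs from the convention fixed elsewhere in the paper.
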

	
We will also use the following result from \cite[Lemma~3]{makar1998locally} and its generalization in \cite[Lem\-ma~1.10]{MR2657447}:

\begin{lem}\label{ML}
	If a variety $X$ admits a non-trivial $\GA$-action, then $\Aut(X)$ contains root subgroups with respect to {any} torus $T\subset\Aut(X)
$.\end{lem}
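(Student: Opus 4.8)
The plan is to translate the statement entirely into the language of locally nilpotent derivations and then to extract a \emph{homogeneous} locally nilpotent derivation from the given one; by Proposition~\ref{root-hom} a homogeneous locally nilpotent derivation is exactly what corresponds to a root subgroup with respect to $T$. So fix the torus $T\subset\Aut(X)$, let $M$ be its character lattice and write the induced grading $\OO(X)=\bigoplus_{m\in M}\OO(X)_m$. Via the dictionary recalled above, the non-trivial $\GA$-action on $X$ produces a non-zero locally nilpotent derivation $\delta$ on $\OO(X)$, and the goal is to manufacture from $\delta$ a non-zero $M$-homogeneous locally nilpotent derivation.

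First I would decompose $\delta$ into $M$-homogeneous components $\delta=\sum_{e\in M}\delta_e$, where $\delta_e(\OO(X)_m)\subseteq\OO(X)_{m+e}$. Each $\delta_e$ is again a derivation, since extracting the degree-$(m+m'+e)$ part of the Leibniz identity for homogeneous $f,g$ shows $\delta_e(fg)=\delta_e(f)g+f\delta_e(g)$. Only finitely many $\delta_e$ are non-zero: choosing finitely many homogeneous algebra generators $g_1,\dots,g_s$ of $\OO(X)$ of degrees $m_1,\dots,m_s$, a derivation is determined by the $\delta(g_i)$, and $\delta_e\neq0$ forces $\delta_e(g_i)=(\delta g_i)_{m_i+e}\neq0$ for some $i$, so that $e$ lies in the finite set $E:=\{e\mid\delta_e\neq0\}\subseteq\bigcup_i(\operatorname{supp}\delta(g_i)-m_i)$.

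Next I would reduce the $M$-grading to a $\Z$-grading that separates $E$. As the cocharacter lattice $N=\operatorname{Hom}(\GM,T)$ pairs perfectly with $M$ and $E$ is finite, the finitely many rank-deficient sublattices $\{\lambda\mid\langle\lambda,e-e'\rangle=0\}$ with $e\neq e'$ in $E$ cannot cover $N$, so there is a cocharacter $\lambda$ for which $e\mapsto\langle\lambda,e\rangle$ is injective on $E$. This $\lambda$ induces a $\Z$-grading $\OO(X)=\bigoplus_{i\in\Z}B_i$ with $B_i=\bigoplus_{\langle\lambda,m\rangle=i}\OO(X)_m$, with respect to which $\delta=\sum_i\delta^{(i)}$, and by injectivity each non-zero component $\delta^{(i)}$ equals a single $\delta_e$.

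The heart of the matter, and the step I expect to be the main obstacle, is to show that the \emph{extremal} component is itself locally nilpotent. Set $i_0=\min\{i\mid\delta^{(i)}\neq0\}$ and $\eta:=\delta^{(i_0)}$. For homogeneous $f\in B_j$, expanding $\delta^n(f)$ as a sum of products of $n$ components and extracting the lowest-degree part $j+ni_0$ leaves only $\eta^n(f)$, because any product using some $\delta^{(i)}$ with $i>i_0$ raises the degree strictly more. Since $\delta$ is locally nilpotent, $\delta^n(f)=0$ for $n\gg0$, hence its lowest component $\eta^n(f)=0$; passing from homogeneous $f$ to arbitrary elements (finite sums of homogeneous pieces) shows $\eta$ is locally nilpotent. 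As $\eta=\delta_{e_0}$ for the unique $e_0\in E$ with $\langle\lambda,e_0\rangle=i_0$, it is a non-zero $M$-homogeneous locally nilpotent derivation, and Proposition~\ref{root-hom} identifies its unipotent one-parameter subgroup as a root subgroup with respect to $T$. I would also note the degenerate case where $T$ is trivial is immediate, as then every derivation is homogeneous and the normalization condition on $U$ is vacuous.
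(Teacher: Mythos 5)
Your proof is correct, and it is essentially the argument behind the paper's citation: the paper does not prove Lemma~\ref{ML} itself but refers to \cite[Lemma~3]{makar1998locally} and \cite[Lemma~1.10]{MR2657447}, whose proofs proceed exactly as you do --- decompose the locally nilpotent derivation into $M$-homogeneous components, choose a cocharacter separating the finitely many weights so as to totally order them, show the extremal component is again locally nilpotent by extracting the extremal-degree part of $\delta^n(f)$, and invoke the correspondence of Proposition~\ref{root-hom}. In particular your treatment of the key step (no cancellation can occur in the degree $j+ni_0$ component, so local nilpotence passes to $\eta$) is exactly the standard argument of the cited lemmas.
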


	\subsection{Root subgroups of affine toric varieties}
	
	An affine toric variety is a normal affine
	variety endowed with a
	faithful action of an algebraic torus $\TT$ that acts with an open
	orbit. An affine toric variety $X$ is called \emph{non-degenerate} if
	it has no torus factor, i.e.\,if it is not isomorphic to
	$Y\times \A^1_*$ for some variety $Y$, where
	$\A^1_*=\A^1\setminus \{0\}$.
	
	In this section we first recall the well known description of affine
	toric varieties by means of strongly convex rational polyhedral cones,
	details can be found in reference texts about toric geometry such as
	\cite{MR922894, MR1234037, MR2810322}.  Then we provide a description
	of root subgroups of the automorphism group of an affine toric variety.
	
	Let $M$ and $N$ be dual lattices of rank $n$ and consider the duality
	pairing $M\times N\rightarrow \Z$, defined by
	$(m,p)\mapsto \langle m,p\rangle=p(m)$.  Let $M_\RR=M\otimes_\Z \RR$
	and $N_\RR=N\otimes_\Z \RR$ be the corresponding real vector spaces
	and let $\TT$ be the algebraic torus
	$\TT=\spec\KK[M]=N\otimes_\Z\KK^*\simeq\GM^n$. With this choice, $M$ is
	the character lattice of $\TT$ and $N$ the lattice of 1-parameter
	subgroups of $\TT$.
	
	By a well known construction, affine toric varieties can be described
	via strongly convex rational polyhedral cones in the vector space
	$N_\RR$. Let $\sigma$ be a strongly convex rational polyhedral cone in
	$N_\RR$ and let $\KK[\sigma^\vee\cap M]$ be the semigroup algebra
	$\KK[\sigma^\vee\cap M]=\bigoplus_{m\in \sigma^\vee\cap M}\KK\chi^m$,
	where the multiplication rule is given by
	$\chi^m\cdot\chi^{m'}=\chi^{m+m'}$ and $\chi^0=1$.  In the following,
	we denote $\sigma^\vee\cap M$ by $\sigma^\vee_ M$.
	
	The main result about affine toric varieties is that
	$X_\sigma:=\spec \KK[\sigma^\vee_M]$ is an affine toric variety, where
	the comorphism
	$\alpha^*\colon\KK[\sigma^\vee_M]\rightarrow\KK[M]\otimes_\KK\KK[\sigma^\vee_M]$
	of the $\TT$-action is given by
	$\chi^m\mapsto\chi^m\otimes\chi^m$. Furthermore, every affine toric
	variety arises via this construction.
	
	We now describe root subgroups of the automorphism group of a toric
	variety. Let $\sigma\subseteq N_\RR$ be a strongly convex rational
	polyhedral cone. Following the usual convention, we identify a ray
	$\rho\subseteq\sigma$ with its shortest non-trivial vector, called its
	primitive vector. The set of all the rays of $\sigma$ is denoted by
	$\sigma(1)$.
	\begin{defin}
		We say that a lattice vector $\alpha\in M$ is a {\it root} of
		$\sigma$ if there exists $\rho_\alpha\in \sigma(1)$ such that
		$\langle \alpha,\rho_\alpha\rangle = -1$ and
		$\langle \alpha,\rho\rangle \geq 0$, for every $\rho\in\sigma(1)$
		different from $\rho_\alpha$. We call the ray $\rho_\alpha$ the
		\emph{distinguished ray} of the root $\alpha$. We denote by
		$\RT(\sigma)$ the set of all roots of $\sigma$ and by
		$\RT_\rho(\sigma)$ the set of all roots of $\sigma$ with
		distinguished ray $\rho$.
	\end{defin}
	
	Let $\alpha\in \RT(\sigma)$. One checks that the linear map given by
	$$\delta_{\alpha}\colon\KK[\sigma_M^\vee]\rightarrow\KK[\sigma_M^\vee],\quad \chi^m\mapsto
	\langle m,\rho_\alpha\rangle\cdot\chi^{m+\alpha}$$ is a homogeneous
	locally nilpotent derivation of the algebra
	$\KK[\sigma_M]$. Furthermore, it was proven implicitly in
	\cite{MR0284446} and explicitly in \cite[Theorem~2.7]{MR2657447} that
	every homogeneous locally nilpotent derivation of the algebra
	$\KK[\sigma_M^\vee]$ arises this way. We summarize these results in
	the following proposition.
	
	\begin{prop} \label{liendo} Let $X_\sigma$ be the affine toric variety
		given by a strongly convex rational polyhedral cone
		$\sigma\subset N_\RR$.  The root subgroups of $\Aut(X_\sigma)$ with
		respect to $\TT$ are in one to one correspondence with the roots of
		the cone $\sigma$. The correspondence is given by assigning to every
		$\alpha\in\RT(\sigma)$ the root subgroup whose homogeneous locally
		nilpotent derivation is $\delta_\alpha$ with weight $\chi^\alpha$.
	\end{prop}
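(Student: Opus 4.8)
The plan is to read the statement off from the two ingredients that have just been assembled: the general correspondence between root subgroups and homogeneous locally nilpotent derivations furnished by Proposition~\ref{root-hom}, and the explicit classification of the latter on a toric variety recalled immediately before the statement. The only genuine work is bookkeeping — matching the two lists and checking that the assignment $\alpha\mapsto\delta_\alpha$ is a bijection with the asserted weights.

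First I would recall that for each root $\alpha\in\RT(\sigma)$ the map $\delta_\alpha\colon\chi^m\mapsto\langle m,\rho_\alpha\rangle\chi^{m+\alpha}$ is a homogeneous locally nilpotent derivation of $\KK[\sigma_M^\vee]$, and that by the results of \cite{MR0284446} and \cite[Theorem~2.7]{MR2657447} every homogeneous locally nilpotent derivation of $\KK[\sigma_M^\vee]$ is a nonzero scalar multiple of $\delta_\alpha$ for a unique root $\alpha$. Since multiplying a locally nilpotent derivation by a nonzero scalar only reparametrizes the associated $\GA$-action and hence leaves its image in $\Aut(X_\sigma)$ unchanged, the homogeneous locally nilpotent derivations taken up to scalar correspond exactly to the family $\{\delta_\alpha\}_{\alpha\in\RT(\sigma)}$.

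Next I would read off the degree. From the defining formula $\delta_\alpha$ shifts the $M$-degree of every homogeneous element by $\alpha$, so $\deg\delta_\alpha=\alpha$; in particular $\delta_\alpha\neq 0$ because $\sigma^\vee$ is full-dimensional and so some $m\in\sigma_M^\vee$ has $\langle m,\rho_\alpha\rangle\neq 0$. By Proposition~\ref{root-hom} the subgroup generated by $\delta_\alpha$ is then a root subgroup with respect to $\TT$ with weight character $\chi^{\deg\delta_\alpha}=\chi^\alpha$, which gives the assignment $\alpha\mapsto U_\alpha$ together with the stated weight. Conversely, Proposition~\ref{root-hom} identifies an arbitrary root subgroup with respect to $\TT$ with a homogeneous locally nilpotent derivation, which by the previous paragraph is a scalar multiple of some $\delta_\alpha$; this yields surjectivity of $\alpha\mapsto U_\alpha$ onto the set of root subgroups.

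It remains to verify injectivity, and here the degree computation does all the work: if $\alpha\neq\alpha'$ are roots, then $\delta_\alpha$ and $\delta_{\alpha'}$ are nonzero homogeneous derivations of distinct degrees $\alpha\neq\alpha'$, hence not proportional, hence generate distinct unipotent subgroups $U_\alpha\neq U_{\alpha'}$. Assembling these observations produces the claimed bijection $\RT(\sigma)\leftrightarrow\{\text{root subgroups with respect to }\TT\}$ carrying $\alpha$ to the root subgroup of $\delta_\alpha$ with weight $\chi^\alpha$. I do not expect a serious obstacle, since the substantive input — the \emph{completeness} of the family $\{\delta_\alpha\}$ among homogeneous locally nilpotent derivations — is precisely the cited theorem; the one point to handle carefully is the passage from a derivation to the subgroup it generates, i.e.\ remembering that $\delta_\alpha$ determines a root subgroup only up to a nonzero scalar.
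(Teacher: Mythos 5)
Your proposal is correct and follows exactly the route the paper intends: the paper gives no separate proof of Proposition~\ref{liendo}, presenting it as a summary of Proposition~\ref{root-hom} combined with the cited classification (Demazure; \cite[Theorem~2.7]{MR2657447}) of homogeneous locally nilpotent derivations on $\KK[\sigma_M^\vee]$ as scalar multiples of the $\delta_\alpha$. Your additional bookkeeping --- the scalar ambiguity, the computation $\deg\delta_\alpha=\alpha$ giving the weight $\chi^\alpha$, and injectivity via distinct degrees --- correctly fills in the details the paper leaves implicit.
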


	From Proposition~\ref{liendo} it follows that if the 
	root subgroups corresponding to $\delta_\alpha$ and $\delta_{\alpha'}$ have the same distinguished ray, the corresponding derivations have the same kernel, and therefore the two root subgroups
	commute in this case. The following
	corollary follows directly from Proposition~\ref{liendo} since all
	tori of dimension $\dim X_{\sigma}$ in $\Aut(X_\sigma)$ are conjugate
	for an affine toric variety $X_\sigma$ (see \cite{MR655409},
	\cite{MR1626643}, and also \cite{MR2000454}).

	\begin{cor} \label{diff-toric} Let $X_\sigma$ be an affine toric
		variety and let $\TT \subset \Aut(X_\sigma)$ be any maximal torus.
		Then all the root subgroups of $\Aut(X_\sigma)$ with respect to
		$\TT$ have different weights. Moreover, if $X$ is not isomorphic to a torus, then it has infinitely many root subgroups.
	\end{cor}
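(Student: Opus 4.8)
The plan is to derive both assertions from Proposition~\ref{liendo} together with the conjugacy of maximal tori recalled just before the statement. I first work with the torus $\TT_0$ coming from the toric structure, $\TT_0=\spec\KK[M]$, for which Proposition~\ref{liendo} applies directly: its root subgroups are indexed by the roots $\alpha\in\RT(\sigma)$, and the one attached to $\alpha$ has weight character $\chi^\alpha$. Since, by the very convention for the $M$-grading, the characters satisfy $\chi^{\alpha}=\chi^{\alpha'}$ if and only if $\alpha=\alpha'$, distinct roots give distinct weights. Hence the root subgroups of $\Aut(X_\sigma)$ with respect to $\TT_0$ have pairwise different weights.

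To pass to an arbitrary maximal torus $\TT$ I would use that $\dim\TT_0=\dim X_\sigma$ and that all tori of this dimension in $\Aut(X_\sigma)$ are conjugate, so there is $g\in\Aut(X_\sigma)$ with $g\TT_0 g^{-1}=\TT$. Conjugation by $g$ is a group automorphism of $\Aut(X_\sigma)$ that carries root subgroups with respect to $\TT_0$ bijectively onto root subgroups with respect to $\TT$, transforming weight characters through the group isomorphism $\TT\to\TT_0$, $t\mapsto g^{-1}tg$, which induces a bijection on character lattices. As bijections on characters preserve distinctness, the root subgroups with respect to $\TT$ again have pairwise different weights, which proves the first assertion.

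For the second assertion it suffices, again by Proposition~\ref{liendo}, to show that $\RT(\sigma)$ is infinite whenever $X_\sigma$ is not a torus, i.e.\ whenever $\sigma\neq\{0\}$; here I assume $n:=\dim X_\sigma\ge 2$, the affine line $\A^1$ being a genuine exception with a single root that must be treated separately. Fix a ray $\rho\in\sigma(1)$. The face $F:=\sigma^\vee\cap\rho^\perp$ is the facet of $\sigma^\vee$ dual to the ray $\rho$, hence it is $(n-1)$-dimensional and spans $\rho^\perp$. The key point, which I regard as the main obstacle, is to produce a lattice vector $m_0\in M$ in the relative interior of $F$, that is with $\langle m_0,\rho\rangle=0$ and $\langle m_0,\rho'\rangle>0$ for \emph{every} ray $\rho'\neq\rho$ simultaneously. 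Such an $m_0$ exists because no ray $\rho'\neq\rho$ can vanish identically on $F$: otherwise $\rho'$ would be proportional to $\rho$, contradicting strong convexity. Thus the relative interior of $F$ is a nonempty open subset of $\rho^\perp$ and therefore meets the full-rank lattice $\rho^\perp\cap M$.

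Finally I choose $\alpha_0\in M$ with $\langle\alpha_0,\rho\rangle=-1$, which is possible since $\rho$ is primitive, and set $\alpha_k:=\alpha_0+k\,m_0$. Then $\langle\alpha_k,\rho\rangle=-1$ for all $k$, while $\langle\alpha_k,\rho'\rangle=\langle\alpha_0,\rho'\rangle+k\langle m_0,\rho'\rangle$ tends to $+\infty$ for each of the finitely many rays $\rho'\neq\rho$; hence for all sufficiently large $k$ one has $\langle\alpha_k,\rho'\rangle\ge 0$, so $\alpha_k$ is a root with distinguished ray $\rho$. This produces infinitely many distinct elements of $\RT(\sigma)$, and by Proposition~\ref{liendo} infinitely many root subgroups with respect to $\TT_0$; the conjugacy argument of the second paragraph then yields infinitely many root subgroups with respect to any maximal torus. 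The only substantive step is the existence of $m_0$ with strict positivity against all remaining rays at once; everything else is bookkeeping.
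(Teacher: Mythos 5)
Your proof is correct and takes essentially the same route as the paper, which disposes of the corollary in one line as a direct consequence of Proposition~\ref{liendo} together with the conjugacy of all $\dim X_\sigma$-dimensional tori in $\Aut(X_\sigma)$; your last two paragraphs simply make explicit the infinitude of $\RT(\sigma)$ that the paper leaves implicit, and your construction (translating a fixed $\alpha_0$ with $\langle\alpha_0,\rho\rangle=-1$ by lattice points $m_0$ in the relative interior of the facet $\sigma^\vee\cap\rho^\perp$) is the same mechanism as Remark~2.5 of \cite{MR2657447}, which the paper invokes later in Lemma~\ref{det-roots}. Your caveat about the affine line is well taken: $\A^1$ is toric, is not a torus, and has exactly one root (namely $\alpha=-1$), so the ``infinitely many'' clause fails there as literally stated --- a harmless oversight in the paper, since the corollary is only ever applied in dimension at least two.
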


	\subsection{Root subgroups of affine toric surfaces}\label{toricsurf}
	
	In this section we prove our first main result stated in
	Theorem~\ref{toricthm}. The next lemma is known and can, for example,
	be found in \cite[Lemma 10]{kraft2017affine}.
	
	\begin{lem}\label{toruscentralizer}
		Let $X$ be an affine toric variety and let $T\subset\Aut(X)$ be a
		torus of dimension $\dim X$. Then the centralizer of $T$ in
		$\Aut(X)$ equals $T$.
	\end{lem}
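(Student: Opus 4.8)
The plan is to translate the condition of centralising $T$ into a condition on the comorphism, where it becomes transparent. We may assume $T$ is the torus defining the toric structure: indeed a faithful torus of dimension $\dim X$ acting on the irreducible variety $X$ has finite generic stabiliser and hence a dense orbit isomorphic to $T$, so $(X,T)$ is itself an affine toric variety (alternatively one invokes the conjugacy of maximal tori recalled after Proposition~\ref{liendo}). Writing $X=\spec\KK[\sigma^\vee_M]$, the coordinate ring $\OO(X)=\bigoplus_{m\in\sigma^\vee_M}\KK\chi^m$ carries the $M$-grading that is precisely the weight decomposition of the $T$-action, and each nonzero weight space $\OO(X)_m=\KK\chi^m$ is one-dimensional.

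First I would take $g\in\Cent_{\Aut(X)}(T)$ and note that $g^*$ commutes with $t^*$ for every $t\in T$. Since the weight spaces are the common eigenspaces of the family $\{t^*\}_{t\in T}$, the operator $g^*$ must preserve each $\OO(X)_m$; as these are one-dimensional, there are scalars $c_m\in\KK^*$ with $g^*(\chi^m)=c_m\chi^m$. The fact that $g^*$ is an algebra homomorphism then forces $c_m c_{m'}=c_{m+m'}$, so $m\mapsto c_m$ is a homomorphism of semigroups $\sigma^\vee_M\to(\KK^*,\cdot)$. Using that $\sigma$ is strongly convex, hence $\sigma^\vee$ full-dimensional and $\sigma^\vee_M$ a generating set of the lattice $M$, this extends uniquely to a group homomorphism $\phi\colon M\to\KK^*$, i.e.\ to a point $t_\phi$ of $T=\spec\KK[M]=\operatorname{Hom}(M,\KK^*)$. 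By construction $t_\phi$ acts on $\OO(X)$ by $\chi^m\mapsto\phi(m)\chi^m=c_m\chi^m$, so $g^*=t_\phi^*$ on a spanning set and therefore $g=t_\phi\in T$. The reverse inclusion $T\subseteq\Cent_{\Aut(X)}(T)$ is immediate since $T$ is commutative.

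The verifications that the $M$-grading coincides with the weight decomposition and that an automorphism commuting with a diagonalisable family preserves its eigenspaces are routine. The step that carries the real content, and which I expect to be the main obstacle to make watertight, is the passage from the diagonal scalars $(c_m)$ back to an honest element of $T$: this relies essentially on $\sigma^\vee_M$ generating all of $M$, which is exactly the strong convexity of $\sigma$. If $\sigma^\vee$ failed to be full-dimensional the scalars would only pin down a character on a proper sublattice, and a centralising $g$ need not come from $T$; so this is precisely where the hypothesis $\dim T=\dim X$ (equivalently, the presence of a dense $T$-orbit) is indispensable.
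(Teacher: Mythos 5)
Your proof is correct. Note that the paper itself gives no argument for this lemma: it is stated as known and attributed to \cite[Lemma~10]{kraft2017affine}, so there is no in-paper proof to compare against; your write-up is essentially the standard argument one would find in that reference. The key chain is sound: the $T$-weight decomposition of $\OO(X)$ is multiplicity-free ($\OO(X)_m=\KK\chi^m$ for $m\in\sigma^\vee_M$), a centralising $g$ has $g^*$ commuting with all $t^*$ and hence preserving each one-dimensional weight space, the resulting scalars form a monoid homomorphism $c\colon\sigma^\vee_M\to\KK^*$, and strong convexity of $\sigma$ makes $\sigma^\vee$ full-dimensional, so $M=\sigma^\vee_M-\sigma^\vee_M$ and $c$ extends (well-definedly, by the relation $c_mc_{n'}=c_nc_{m'}$ when $m-m'=n-n'$) to a character $\phi\in\operatorname{Hom}(M,\KK^*)=T$ with $g^*=t_\phi^*$ on the spanning set $\{\chi^m\}$. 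Your preliminary reduction is also fine and worth having spelled out: for an abelian group the generic stabiliser fixes a dense open set pointwise, hence all of $X$, hence is trivial by faithfulness, so the generic orbit has dimension $\dim T=\dim X$ and is open and isomorphic to $T$; since $X$ is normal (toric varieties are normal by the paper's definition), $(X,T)$ is again toric, so you may grade $\OO(X)$ by $T$ itself without invoking conjugacy of maximal tori --- though the conjugacy statement with references is indeed recalled in the paper just before Corollary~\ref{diff-toric}, so either route is available. Your closing diagnosis is also accurate: it is exactly the generation of $M$ by $\sigma^\vee_M$, i.e.\ the presence of a dense $T$-orbit, that lets the diagonal scalars be realised by an honest point of $T$, and this is where the hypothesis $\dim T=\dim X$ enters.
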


	\begin{lem}\label{times}
		Let $X$ be an affine variety and $G,H \subset \Aut(X)$ algebraic
		subgroups such that $G$ normalizes $H$. Then $GH \subset \Aut(X)$ 	is an
		algebraic subgroup.
	\end{lem}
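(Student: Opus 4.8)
The plan is to realize $GH$ as the image of a homomorphism whose source is an honest algebraic group, built from $H$ and $G$ via the conjugation action, and then to invoke that images of homomorphisms of algebraic groups are algebraic subgroups.

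First I would record that conjugation makes $G$ act algebraically on $H$. Since $G$ and $H$ are algebraic subgroups of the ind-group $\Aut(X)$, they are closed and contained in a common filter set, and the multiplication and inversion maps of $\Aut(X)$ are morphisms of ind-varieties. Hence $(g,h)\mapsto ghg^{-1}$ is a morphism of ind-varieties $G\times H\to\Aut(X)$ whose image lies in a single filter set. Because $G$ normalizes $H$, this image is contained in the closed algebraic subgroup $H$, so the map corestricts to a morphism of algebraic varieties $G\times H\to H$. This is precisely an action of $G$ on $H$ by algebraic group automorphisms.

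Next I would form the semidirect product $P:=H\rtimes G$, which is again a linear algebraic group with underlying variety $H\times G$. The map $\mu\colon P\to\Aut(X)$, $(h,g)\mapsto hg$, is a homomorphism of abstract groups: writing the multiplication in $P$ as $(h_1,g_1)(h_2,g_2)=(h_1\, g_1h_2g_1^{-1},\, g_1g_2)$, one checks directly that $\mu((h_1,g_1)(h_2,g_2))=h_1g_1h_2g_2=\mu(h_1,g_1)\mu(h_2,g_2)$. Moreover $\mu$ is a morphism of ind-varieties, since it is the composite of the inclusion $H\times G\hookrightarrow\Aut(X)\times\Aut(X)$ with the multiplication morphism of $\Aut(X)$. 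Its image is exactly $HG$, which coincides with $GH$ because $G$ normalizes $H$. Thus $GH$ is the image of an ind-group homomorphism from an algebraic group.

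It remains to see that this image is an algebraic subgroup. By Theorem~\ref{ind-group-prop}, the ind-group homomorphism $\mu$ corresponds to a regular action of $P$ on $X$; passing to the quotient by the closed normal subgroup $\ker\mu=\mu^{-1}(\id)$ yields a faithful regular action of the linear algebraic group $P/\ker\mu$, which by the same theorem is realized as an algebraic subgroup of $\Aut(X)$ with image $GH$. Alternatively, since $\mu(P)$ lies in a filter set $\mathcal{G}_N$ and $\Aut(X)$ is a topological group, the closure $Q:=\overline{GH}\subseteq\mathcal{G}_N$ is a closed, hence algebraic, subgroup; the corestriction $\mu\colon P\to Q$ is then a homomorphism of linear algebraic groups, whose image is closed and dense, so $GH=Q$. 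The step deserving the most care is the first one --- verifying that conjugation is genuinely a morphism of varieties into $H$, which is exactly where the closedness of the algebraic subgroup $H$ and the fact that $G$ and $H$ lie in a common filter set are used; everything afterwards reduces to the standard fact that the image of a homomorphism of (linear) algebraic groups is a closed subgroup.
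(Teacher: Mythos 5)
Your proof is correct, but it is a genuinely different and considerably heavier route than the one in the paper. The paper's proof is three lines: fixing a filtration $W_1\subset W_2\subset\cdots$ of $\Aut(X)$, it notes that $G\subset W_i$ and $H\subset W_j$ for some $i,j$, hence $GH\subset W_iW_j\subset W_k$ for some $k$ (using that multiplication is a morphism of ind-varieties, so the image of $W_i\times W_j$ lies in a filter set). That is, the paper only verifies the containment-in-a-filter-set condition from its definition of algebraic subgroup, taking the subgroup property (immediate from $G$ normalizing $H$) and the closedness of $GH$ as standard: $GH$ is a constructible subgroup of $\Aut(X)$ lying in $W_k$, so its closure is a closed subgroup, hence algebraic, and a subgroup containing a dense open subset of an algebraic group is the whole group. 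Your construction --- checking that conjugation corestricts to a morphism $G\times H\to H$, forming $P=H\rtimes G$, and realizing $GH$ as $\mu(P)$ --- is more elaborate, but it buys precisely what the paper leaves implicit: via Theorem~\ref{ind-group-prop} and the quotient $P/\ker\mu$ (or, in your alternative, the closed-image theorem applied to $\mu\colon P\to \overline{GH}$) you actually establish that $GH$ is \emph{closed} and carries the structure of an algebraic group, rather than only that it sits inside a filter set. So your argument is a complete, self-contained version of what the paper asserts tersely; the cost is invoking the correspondence between regular actions and ind-group homomorphisms, which the paper's one-line proof does not need.
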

	\begin{proof}
		Let $W_1\subset W_2\subset\cdots$ be  a filtration 
		of the ind-group $\Aut(X)$.
		Since $G$ and $H$
		are algebraic subgroups of $\Aut(X)$, $G \subset W_i$,
		$H \subset W_j$ for some $i$ and $j$. Therefore,
		$GH\subset W_iW_j\subset W_k$ for some $k$.
	\end{proof}

	\begin{lem}\label{obvious}
		Let $G$ be a connected one-dimensional affine algebraic group.  If
		$G$ acts regularly and non-trivially by algebraic group
		automorphisms on a connected one-dimensional affine algebraic group
		$H$, then $G \simeq \GM$ and $H \simeq \GA$.
		
		If $G$ acts regularly and non-trivially by algebraic group
		automorphisms on $\GM\times\GA$ or on $(\GM)^2$, then $G$ is
		isomorphic to $\GM$.
	\end{lem}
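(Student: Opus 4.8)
The plan is to combine the classification of connected one-dimensional affine algebraic groups with explicit knowledge of the relevant automorphism groups. Over $\CC$ such a group is isomorphic either to $\GA$ or to $\GM$; moreover $\Aut(\GA)\cong\GM$ (acting by homotheties), $\Aut(\GM)\cong\Z/2\Z$, and there are no non-trivial algebraic group homomorphisms $\GA\to\GM$ or $\GM\to\GA$. The mechanism used throughout is that a regular action of a \emph{connected} algebraic group by algebraic group automorphisms on a group $K$ with discrete automorphism group must be trivial; I justify this below and refer to it as the \emph{discreteness principle}.

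For the first assertion I would first rule out $H\cong\GM$: in that case the action yields a homomorphism $G\to\Aut(\GM)\cong\Z/2\Z$, which is trivial by the discreteness principle, contradicting non-triviality. Hence $H\cong\GA$, and the action then corresponds to a character $\chi\colon G\to\Aut(\GA)\cong\GM$. If $G\cong\GA$ this character is trivial (there are no non-constant characters of $\GA$), again a contradiction; therefore $G\cong\GM$, as claimed.

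For the second assertion, first take $K=\GM\times\GA$. Here the maximal torus $T=\GM\times\{0\}$ (the semisimple elements) and the unipotent part $U=\{1\}\times\GA$ (the unipotent elements) are characteristic subgroups, preserved by Jordan decomposition, with $K=T\cdot U$; hence $G$ acts on each of them. The induced action on $T\cong\GM$ is trivial by the discreteness principle. If $G\cong\GA$, then the induced action on $U\cong\GA$ is a character of $\GA$, hence also trivial, so $G$ fixes $T$ and $U$ pointwise and therefore acts trivially on $K$, contradicting the hypothesis; thus $G\cong\GM$. In the remaining case $K=(\GM)^2$ we have $\Aut(K)\cong\GL_2(\Z)$, which is discrete, so by the discreteness principle no connected group can act non-trivially on $(\GM)^2$ by algebraic group automorphisms; the hypothesis is then never satisfied and the assertion holds vacuously.

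I expect the only non-formal point to be the justification of the discreteness principle for tori, equivalently the fact that the automorphism group scheme of a torus is the constant group $\GL_n(\Z)$. This can be checked directly: pulling back a character $\chi^m$ along the action morphism and writing the result as $\sum_{m'} f_{m'}\otimes\chi^{m'}$ with $f_{m'}\in\OO(G)$, the requirement that each $g\in G$ act by a group automorphism forces exactly one coefficient $f_{m'}(g)$ to be non-zero for every fixed $g$. The sets $\{f_{m'}\neq 0\}$ therefore form a disjoint open cover of the irreducible variety $G$, so only the member containing the identity is non-empty; consequently the action fixes every character and is trivial.
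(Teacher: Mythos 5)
Your proof is correct, and its overall skeleton matches the paper's: classify connected one-dimensional groups as $\GA$ or $\GM$, use $\Aut(\GA)\simeq\GM$, $\Aut(\GM)\simeq\Z/2\Z$, $\Aut((\GM)^2)\simeq\GL_2(\Z)$, and let connectedness kill any map to a discrete automorphism group, with the $(\GM)^2$ case holding vacuously in both treatments. The genuine divergence is the $\GM\times\GA$ case: the paper writes down an explicit family $(x,y)\mapsto(x^{\pm1},c(x,x^{-1})y)$ containing all algebraic group automorphisms and reads off the absence of nontrivial $\GA$-actions, whereas you argue structurally, decomposing $\GM\times\GA$ via Jordan decomposition into its characteristic semisimple and unipotent parts $T\simeq\GM$ and $U\simeq\GA$, killing the induced action on $T$ by rigidity of tori and on $U$ by the absence of nonconstant characters of $\GA$, and then recovering triviality on all of $K=T\cdot U$ from multiplicativity of each automorphism. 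Your route is slightly sharper: the homomorphism condition in fact forces $c$ to be constant, so $\Aut(\GM\times\GA)\simeq\Z/2\Z\times\GM$, and your characteristic-subgroup argument establishes exactly this without the explicit coordinate description, which in the paper is a superset of the true automorphism group with the constraint on $c$ left implicit. You also supply a proof of the ``discreteness principle'' for tori (pulling back characters along the action morphism and using linear independence of characters plus irreducibility of $G$) that the paper invokes tacitly; the only micro-gloss there is that after connectedness isolates the single surviving coefficient $f_m$, one should note $f_m\equiv 1$ by evaluating at the identity of the torus --- immediate, and not a gap.
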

	
	\begin{proof}
		Every connected one-dimensional affine algebraic group is isomorphic
		to $\GM$ or to $\GA$. The proof of the first part of the statement    follows now directly from the fact
		that the group of algebraic group automorphisms of $\GM$ is
		isomorphic to $\Z/2\Z$ and the group of algebraic group
		automorphisms of $\GA$ is isomorphic to $\GM$.
		
		Every algebraic group automorphism  of $\GM\times\GA$ is  of the form
		$(x,y)\mapsto (x^{\pm 1}, c(x,x^{-1})y)$, where
		$c(x, x^{-1})\in\C[x,x^{-1}]^*$ is an invertible function on
		$\GM$. In particular, there is no non-trivial $\GA$-action by
		algebraic group automorphisms on $\GM\times\GA$.
		
		Since the group of algebraic group automorphisms of $(\GM)^2$ is
		isomorphic to $\GL_2(\Z)$, no one-parameter group acts on $(\GM)^2$
		by algebraic group automorphisms.
	\end{proof}

	Let $G_{d,e} = \langle g \rangle$ be the cyclic subgroup of order $d$ of
	$\Aut(\A^2)$ that is given by $g\colon (x,y) \mapsto (\xi^e x,\xi y)$,
	where $\xi$ is a $d$-th primitive root of unity, $0\leq e<d$ and
	$(e,d)=1$. Every affine toric surface is either isomorphic to
	$\mathbb{A}_*^1\times \mathbb{A}_*^1$, to
	$\mathbb{A}^1 \times \mathbb{A}^1_*$, or to some
	$X_{d,e} = \mathbb{A}^2/G_{d,e}$.  Furthermore, the toric surface
	$X_{d,e}$ is described in standard correspondence between toric
	varieties and convex polyhedral rational cones by the cone $\sigma$
	spanned by $\rho_1=\beta^*_2$ and $\rho_2=d\beta^*_1-e\beta^*_2$ in
	$N_\RR$, where $\{\beta^*_1,\beta^*_2\}$ is a $\Z$-basis of the
	1-parameter subgroup lattice $N$ of the 2-dimensional torus
	(\cite[Proposition~10.1.3]{MR2810322}). Let $\{\beta_1,\beta_2\}$ be
	the corresponding dual $\Z$-basis of the character lattice $M$ of the
	2-dimensional torus. By \cite[Proposition~10.1.3]{MR2810322} we have
	the following lemma:
	
	\begin{lem}\label{isom-toric-surfaces}
		The toric surface $X_{d,e}$ is isomorphic to $X_{d',e'}$ if and only
		if $d'=d$, and $e=e'$ or $ee'=1\mod d$.
	\end{lem}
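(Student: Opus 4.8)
The plan is to pass from the isomorphism of surfaces to an equivalence of the defining cones under a lattice automorphism, and then to carry out the corresponding finite computation in $\GL_2(\Z)$. First I would invoke the standard principle that two affine toric varieties attached to cones $\sigma,\sigma'$ in the same lattice $N$ are isomorphic as toric varieties exactly when there is a lattice automorphism $A\in\GL_2(\Z)$ with $A(\sigma)=\sigma'$. To reduce the abstract isomorphism in the statement to a toric one, I would use that all maximal tori of $\Aut(X_{d,e})$ are conjugate (as recalled just before Corollary~\ref{diff-toric}): an abstract isomorphism $X_{d,e}\to X_{d',e'}$ can be post-composed with an inner automorphism so that it carries the fixed torus $\TT$ to $\TT$, and the resulting isomorphism is then $\TT$-equivariant, hence toric. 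Thus it suffices to classify the cones $\sigma_{d,e}$ up to the action of $\GL_2(\Z)$.

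In the basis $\{\beta^*_1,\beta^*_2\}$ of $N$, the cone $\sigma_{d,e}$ has primitive ray generators $u_1=(0,1)$ and $u_2=(d,-e)$. The quantity $|\det(u_1,u_2)|=d$ is the index of the sublattice spanned by the primitive generators, and so is invariant under $\GL_2(\Z)$; this yields the necessity of $d=d'$. Assuming now $d=d'$, let $s_1=(0,1)$ and $s_2=(d,-e')$ be the generators of $\sigma_{d,e'}$. Any lattice automorphism $A$ with $A(\sigma_{d,e})=\sigma_{d,e'}$ must biject the two rays, so I would split into two cases. If $A$ sends the first ray to the first ray, then $A$ fixes $(0,1)$, forcing $A=\begin{pmatrix} 1 & 0 \\ c & 1\end{pmatrix}$ (the unimodularity and the positivity of the first coordinate of $u_2$ make the top-left entry $+1$); comparing $Au_2$ with $s_2$ gives $e'\equiv e\pmod d$, hence $e'=e$ by the normalization $0\le e,e'<d$. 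If instead $A$ swaps the rays, writing $A=\begin{pmatrix} a & b \\ c & f\end{pmatrix}$ and imposing $Au_1=s_2$, $Au_2=s_1$ yields $b=d$, $f=-e'$, $a=e$, together with the integrality requirement $c=(1-ee')/d\in\Z$; this is solvable precisely when $ee'\equiv 1\pmod d$, and one checks that then $\det A=-1$ automatically. Conversely, each of the two conditions produces an explicit $A\in\GL_2(\Z)$ realizing the isomorphism, so combining the cases gives the stated criterion.

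The main point to watch is the interplay of the normalization $0\le e<d$ with $\gcd(e,d)=1$, which is exactly what upgrades the congruences $e'\equiv e$ and $ee'\equiv 1\pmod d$ into the sharp alternatives in the statement. The only genuinely conceptual step is the reduction from abstract to toric isomorphism via conjugacy of maximal tori; once that is in place, the remainder is the elementary $\GL_2(\Z)$ calculation sketched above.
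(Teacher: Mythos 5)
Your proof is correct. The paper does not actually prove this lemma itself --- it invokes \cite[Proposition~10.1.3]{MR2810322} --- and your argument (reducing an abstract isomorphism to a toric one via the conjugacy of maximal tori in $\Aut(X_\sigma)$, which the paper also records just before Corollary~\ref{diff-toric}, then classifying the cones with rays $(0,1)$ and $(d,-e)$ up to $\GL_2(\Z)$ by the two-case ray-preserving/ray-swapping computation, including the correct observations that $|{\det}|=d$ is invariant and that $\det A=-1$ holds automatically in the swapping case) is precisely the standard proof behind that citation.
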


	For the proof of Theorem~\ref{toricthm} we need to describe
        the characters of root subgroups of  the automorphism groups of affine toric surfaces
        $X_{d,e}$. By Proposition~\ref{liendo}, root subgroups {of the
          automorphism group of} a toric variety are uniquely
        determined by their weight characters. Let $e'$ and $a$ be the
        unique integers with $0 \leq e'<d$ such that $(e',d)=1$ and
        $ee'=1+ad$. We have the following lemma.
	
	\begin{lem} \label{weight-toric-surface} %
          The weight characters of the root subgroups of
          $\Aut(X_{d,e})$ are:
          \begin{itemize}
          \item with distinguished ray $\rho_1$ the characters
            $\chi^\alpha$ with $\alpha=-\beta_2+l\cdot\beta_1$, for
            all $l\in \Z_{\geq 0}$; and
          \item with distinguished ray $\rho_2$ the characters
            $\chi^\alpha$ with
            $\alpha=(a\beta_1+e'\beta_2)+k\cdot(e\beta_1+d\beta_2)$,
            for all $k \in \Z_{\geq 0}$.
          \end{itemize}
	\end{lem}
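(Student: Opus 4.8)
The plan is to invoke Proposition~\ref{liendo}, which puts the root subgroups of $\Aut(X_{d,e})$ with respect to $\TT$ in bijection with the roots of the defining cone $\sigma$, the weight of the subgroup attached to a root $\alpha$ being $\chi^\alpha$. Thus it suffices to enumerate all $\alpha\in M$ satisfying the defining conditions of a root of the cone $\sigma$ spanned by $\rho_1=\beta_2^*$ and $\rho_2=d\beta_1^*-e\beta_2^*$, sorted by distinguished ray. First I would write a general $\alpha\in M$ in the basis dual to $\{\beta_1^*,\beta_2^*\}$ as $\alpha=p\beta_1+q\beta_2$ with $p,q\in\Z$, and record the two pairings
\[
\langle\alpha,\rho_1\rangle=q,\qquad \langle\alpha,\rho_2\rangle=pd-qe,
\]
which are immediate from $\langle\beta_i,\beta_j^*\rangle=\delta_{ij}$. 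The whole statement then reduces to analyzing the root conditions $\langle\alpha,\rho_\alpha\rangle=-1$ and $\langle\alpha,\rho\rangle\ge 0$ for the remaining ray, in each of the two cases.

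For the distinguished ray $\rho_1$ the conditions read $q=-1$ and $pd-qe=pd+e\ge 0$. Since $0\le e<d$, the inequality $pd\ge -e>-d$ forces $p\ge 0$, so the roots with distinguished ray $\rho_1$ are exactly $\alpha=-\beta_2+l\beta_1$ for $l\in\Z_{\ge 0}$, as claimed. For the distinguished ray $\rho_2$ the conditions become $pd-qe=-1$ and $q\ge 0$. Here the first equation is the congruence $qe\equiv 1\pmod d$, whose non-negative solutions are $q=e'+kd$ for $k\in\Z_{\ge 0}$, using that by definition $e'$ is the inverse of $e$ modulo $d$ with $0\le e'<d$. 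Substituting this back and using the relation $ee'=1+ad$ gives $pd=qe-1=(e'+kd)e-1=d(a+ke)$, hence $p=a+ke$. Therefore $\alpha=(a+ke)\beta_1+(e'+kd)\beta_2=(a\beta_1+e'\beta_2)+k(e\beta_1+d\beta_2)$, which is precisely the asserted list.

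The argument is a direct Diophantine computation, so I do not anticipate a genuine obstacle; the only step requiring a little care is the $\rho_2$ case, where one must recognize the integral equation $pd-qe=-1$ as the congruence $qe\equiv 1\pmod d$, solve it via the modular inverse $e'$, and then recover $p$ by feeding $ee'=1+ad$ back into $pd=qe-1$. Everything else is a mechanical unwinding of the definition of a root of $\sigma$ together with the constraint $0\le e<d$, after which Proposition~\ref{liendo} translates the list of roots into the list of weight characters.
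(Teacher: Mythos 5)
Your proposal is correct and follows essentially the same route as the paper: both invoke Proposition~\ref{liendo} to reduce to enumerating the roots of $\sigma$, write $\alpha=p\beta_1+q\beta_2$, and solve the conditions $\langle\alpha,\rho_i\rangle=-1$, $\langle\alpha,\rho_j\rangle\ge 0$ for each choice of distinguished ray. The only difference is that you spell out the Diophantine step $pd-qe=-1$, $q\ge 0$ via the modular inverse $e'$ and the relation $ee'=1+ad$, which the paper compresses into ``a straightforward computation.''
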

	
	\begin{proof}
          By Proposition~\ref{liendo}, weight characters $\chi^\alpha$
          correspond to vectors $\alpha\in M$ with
          $\langle \alpha,\rho_\alpha\rangle=-1$ for some ray
          $\rho_\alpha$ in $\sigma(1)$ and
          $\langle \alpha,\rho\rangle\geq 0$ for all the other
          rays. The ray $\rho_\alpha$ is called the distinguished
          ray. In this case we have only two rays: $\rho_1=\beta^*_2$
          and $\rho_2=d\beta^*_1-e\beta^*_2$. Assume that $\alpha$ is
          of the form $c_1\beta_1+c_2\beta_2$ and let $\rho_1$
          be {a} distinguished ray. Then we have $c_2=-1$ and
          $c_1d-c_2e\geq 0$. This yields the first family in the
          lemma. Let now $\rho_2$ be the distinguished ray. Then we
          have $c_1d-c_2e=-1$ and $c_2\geq 0$. A straightforward
          computation yields the second family in the lemma.
	\end{proof}

	\subsection{Root subgroups of non-toric
		$\GM$-surfaces} \label{sec:non-toric}

              A $\GM$-surface is a surface $S$ together with a given
              faithful regular $\GM$-action on $S$. In
              \cite{MR2020670} a classification of normal affine
              $\GM$-surfaces was given and later this classification
              was generalized in \cite{AlHa06,AHS08} to a
              classification of normal varieties endowed with a torus
              action, the so called \emph{T-varieties}. Recall that
              the \emph{complexity} of a torus action is the
              codimension of a general orbit of the torus $T$. The
              case of complexity zero corresponds to the usual toric
              varieties. In this context, T-varieties of complexity
              one can be seen as a first generalization of toric
              varieties. The case of $\GM$-surfaces is particularly
              well understood.  In this section, we are interested in
              root subgroups of affine $\GM$-surfaces. In
              \cite{MR2196000} a complete classification of such root
              subgroups has been given, which was later generalized to
              arbitrary affine $T$-varieties by the first author in
              \cite{MR2657447}. We recall here the main features of
              the classification that we need in this paper.
	
	\begin{defin}
          The $\GM$-surfaces are classified according to the dynamical
          behavior of the $\GM$-action.  A $\GM$-surface is
          \emph{elliptic} if the $\GM$-action has an attractive fixed
          point, \emph{parabolic} if the $\GM$-action has infinitely
          many fixed points and \emph{hyperbolic} if the $\GM$-action
          has at most finitely many fixed points none of which is
          attractive.
	\end{defin}
	
	In more algebraic terms, a $\GM$-action
        $\alpha\colon\GM\times S\rightarrow S$ on an affine surface
        $S$ gives rise to a $\Z$-grading of the ring of regular
        functions given by
	\[\OO(S)=\bigoplus_{i\in \Z}A_i, \quad\mbox{where}\quad A_i=\left\{f\in\OO(S)\mid \alpha^*(f)=t^i\cdot f \right\} .\]
	
	Elements in $A_i$ are called the \emph{semi-invariants of
          weight $i\in \Z$}. In terms of the $\Z$-grading, a
        $\GM$-surface is hyperbolic if and only if there exist
        non-trivial semi-invariants  with weights  of different signs. In this case
        the generic orbit-closures of $\GM$ are isomorphic to $\A^1_*$. If all
        semi-invariants that are not invariants have  weights of the same sign,
        then the normalizations of the generic orbit-closures {of $\GM$} are
        isomorphic to $\A^1$. If the only invariant functions are the
        constants, we are in the elliptic case. Finally, in the
        parabolic case the ring of invariant functions has
        transcendence degree 1 and so there is a curve of $\GM$-fixed
        points in the surface.
	
 \medskip

 Our next goal is to prove a lemma stating that the centralizer of the acting torus
 $T$ on a non-toric affine $\GM$-surface is a finite extension of
 $T$. For the proof we need to introduce some terminology.  Let $S$ be
 a normal quasi-projective $\GM$-surface. Recall that the {\it
   isotropy group} of a $\GM$-orbit of $S$ is the subgroup of $\GM$ of
 elements fixing the orbit point-wise.  Since the $\GM$-action is
 faithful, all but finitely many $1$-dimensional orbits have trivial
 isotropy. Assume that $S$ admits a categorical quotient
 $\pi\colon S\to C$ to a smooth curve $C$. Then each fiber of the
 quotient map is composed of a union of finitely many $1$-dimensional
 orbit closures since every fixed point of a $\GM$-action is contained
 in a 1-dimensional orbit closure.  We say that a fiber of the
 quotient map $\pi^{-1}(z)$, $z\in C$ is \emph{special} if it contains
 more than one orbit  closure or one orbit {closure} with non trivial isotropy.

 \begin{rem} \label{rem-upgrading} %
   Let $S$ be a normal quasi-projective $\GM$-surface admitting
     a categorical quotient $\pi\colon S\to C$ to a smooth curve
     $C$. It is straightforward to verify that if $C$ admits a {faithful}
     $\GM$-action such that all the points $z$ lying below a special
     orbit $\pi^{-1}(z)$ are fixed, then this $\GM$-action lifts
     to $S$ and so $S$ is toric. It turns out that the converse is
     also true by virtue of \cite[Section~11]{AlHa06}.
 \end{rem}
	
\begin{lem}\label{centralizer}
  Let $S$ be a non-toric normal affine $\GM$-surface and denote by
  $T\subset\Aut(S)$ the subgroup isomorphic to $\GM$ induced by the
  $\GM$-action. Then the centralizer $Z$ of $T$ in $\Aut(S)$ contains
  $T$ as a normal subgroup of finite index.
\end{lem}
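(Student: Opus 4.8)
The plan is to identify $Z$ with the group of $\GM$-equivariant automorphisms of $S$ and to analyze its action on the categorical quotient. Writing $\OO(S)=\bigoplus_{i\in\Z}A_i$ for the $\Z$-grading induced by $T$, an automorphism $g$ lies in $Z$ if and only if $g^*$ is a graded algebra automorphism, i.e.\ $g^*(A_i)=A_i$ for all $i$. Note that $Z$ is closed in $\Aut(S)$, being an intersection of centralizer conditions exactly as in Lemma~\ref{closed}. Since $g^*$ preserves $A_0=\OO(C)$, where $\pi\colon S\to C=\spec A_0$ is the categorical quotient, one obtains a homomorphism $r\colon Z\to\Aut(C)$, $g\mapsto\bar g$. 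First I would show that $\ker r=T$: an element of the kernel fixes $A_0$ pointwise and acts $A_0$-linearly on each $A_i$; since the generic fibre of $\pi$ is one-dimensional, $A_i\otimes_{A_0}\mathrm{Frac}(A_0)$ is one-dimensional, so $g^*$ acts there by a scalar $c_i\in\C^*$ with $c_ic_j=c_{i+j}$, and faithfulness of the $\GM$-action (the weights generate $\Z$) forces $c_i=t^i$ for a single $t\in\GM$, whence $g\in T$. As $T=\ker r$ it is normal, indeed central, in $Z$, and it remains to prove that the image $r(Z)$ is finite.

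The key point is that $r(Z)$ preserves the finite set $\Sigma\subset C$ of points lying below special fibres, together with the isotropy and multiplicity data attached to them. For an affine curve $C$ the group $\Aut(C)$ is a linear algebraic group of dimension at most $2$, so if $r(Z)$ were infinite its Zariski closure would be positive-dimensional and its identity component $H$ would be a connected one-dimensional subgroup of $\Aut(C)$ fixing every point of $\Sigma$ and preserving the data. A nontrivial $\GA$ cannot occur, since a nontrivial $\GA$-action on a smooth curve forces $C\cong\A^1$ with the translation action, which fixes no closed point, contradicting that $H$ fixes $\Sigma$ (when $\Sigma\neq\emptyset$). Thus $H\cong\GM$, giving a faithful $\GM$-action on $C$ fixing all points below special fibres; by Remark~\ref{rem-upgrading} this $\GM$-action lifts to $S$, forcing $S$ to be toric---a contradiction. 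If instead $\Sigma=\emptyset$, then $\pi$ has no special fibres and $S$ is a trivial bundle $C\times\A^1_*$ or $C\times\A^1$; in the only cases where $\Aut(C)$ is infinite ($C\cong\A^1$ or $C\cong\GM$) this surface is again toric, a contradiction. Hence $r(Z)$ is finite and $T$ has finite index in $Z$.

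It remains to treat the elliptic case, where $A_0=\C$ and the categorical quotient is a point, so the argument above does not apply verbatim. Here I would use the Dolgachev--Pinkham presentation $\OO(S)=\bigoplus_{i\geq 0}H^0(C_0,\lfloor iD\rfloor)$ with $C_0$ a smooth projective curve and $D$ an ample $\Q$-divisor, so that equivariant automorphisms act on $C_0=\operatorname{Proj}\OO(S)$ preserving $D$ and the finite support $\Sigma_0$ of its fractional part. The same dichotomy---kernel $T$, and image a subgroup of the algebraic group $\Aut(C_0)$ preserving $\Sigma_0$---reduces finiteness of the image to ruling out a one-parameter subgroup of $\Aut(C_0)$ preserving $(D,\Sigma_0)$, which by the same upgrading principle would make $S$ toric.

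I expect the main obstacle to be the step that upgrades an infinite image to a genuine one-parameter subgroup: one must pass from an abstract infinite subgroup of $\Aut(C)$ to a positive-dimensional connected group, which relies on $\Aut(C)$ being algebraic and on its identity component fixing the special points pointwise, and then invoke Remark~\ref{rem-upgrading} to contradict non-toricity. The elliptic case requires the analogous upgrading statement on the projective base $C_0$, which is the most delicate point, since Remark~\ref{rem-upgrading} is phrased for a curve quotient rather than for a projective base.
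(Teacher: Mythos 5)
Your overall route is the same as the paper's: pass to the action of $Z$ on the categorical quotient, identify the kernel of $Z\to\Aut(C)$ with $T$ via the scalar action on the graded pieces, and prove finiteness of the image by combining the preserved finite set $\Sigma$ of special points with Remark~\ref{rem-upgrading}. Your Zariski-closure argument (infinite image $\Rightarrow$ positive-dimensional connected stabilizer of $\Sigma$, which must be $\GM$, which upgrades to a toric structure) is a clean, uniform replacement for the paper's case-by-case discussion of rational and non-rational bases --- but only as long as $\Sigma\neq\emptyset$ or the connected group in question is affine.

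The genuine gap is exactly the case the paper isolates as the delicate one: an elliptic $\GM$-surface whose quotient curve $C$ is an elliptic curve and whose quotient map has \emph{no} special fibers, i.e.\ $\Sigma_0=\emptyset$. There the identity component of $\Aut(C)$ is the elliptic curve itself, acting by translations --- neither $\GA$ nor $\GM$ --- so your dichotomy fails and the ``upgrading principle'' yields nothing: Remark~\ref{rem-upgrading} concerns only $\GM$-actions on the base, and an abstract infinite group of translations produces no contradiction with non-toricity (there are no special points for it to move). Note also that your reduction asserts the image ``preserves $D$''; by \cite[Corollary~8.9]{AlHa06}, which the paper invokes, elements of the image correspond to automorphisms $\phi$ of $C$ with $\phi^*(D)$ merely \emph{linearly equivalent} to $D$, so translations cannot be excluded by looking at the support of $D$. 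The paper closes this case by an explicit computation in $\operatorname{Pic}^0(C)$: writing $S=\spec\bigoplus_i H^0(C,\OO_C(iD))$ (Demazure), one finds $D-\phi_p^*(D)$ linearly equivalent to $\deg(D)\cdot([e]-[p])$, which is principal for only finitely many $p$; hence only finitely many translations lift to $Z$. Without this linear-equivalence computation your argument does not establish finiteness here. A secondary point: in your kernel computation the scalar $c_i$ is a priori an element of $\OO(C)^*$ (an invertible function on the base), not of $\C^*$, and when $\OO(C)^*\neq\C^*$ (e.g.\ $C\subseteq\A^1_*$) the claim $c_i\in\C^*$ is not automatic --- the paper's own step ``$H=T$'' is equally terse at this point, so you match the paper's level of detail there, but the assertion should not be presented as immediate.
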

	
\begin{proof}
  We first treat the parabolic and hyperbolic cases.  In { these cases}
     {$C = S/\!\!/T$ }  is a smooth affine
    curve, because $S$ is normal and $\GM$ is reductive. In both
    cases, being non-toric implies by Remark~\ref{rem-upgrading} that
    $C$ is either non-rational, or $C$ is a proper subset of $\A^1_*$,
    or $C=\A^1_*$ and the quotient map has at least one special fiber,
    or $C=\A^1$ and the quotient map has at least two special fibers.
		
  {In the elliptic case, it is well known that $S$ is smooth if
    and only if $S$ is the affine space (see
    \cite[Theorem~2.5]{MR644276}) and this is excluded since it is
    toric. It follows that every automorphism of $S$ must fix the only
    fixed point $\bar{0}$ that is also the only singular point of
    $S$. Hence, every automorphism induces an automorphism of
    $S_0=S\setminus\{\bar{0}\}$ . In this case, there is also a
    categorical quotient $\pi\colon S_0\to C$ to a smooth projective
    curve $C$. In this case, being non-toric by
    Remark~\ref{rem-upgrading} implies that $C$ is non-rational or
    $C=\mathbb{P}^1$ and the quotient map $\pi$ has at least three
    special fibers.}
		
  In all the three cases, the centralizer $Z$ of $T$ induces an action
  on the categorical quotient $C$. Let $G$ be the image of the
  centralizer action in $\Aut(C)$. We will show that $G$ is finite. If
  $C$ is a proper subset of $\A^1_*$, or $C$ is non-rational, or $C$
  is not an elliptic curve, then $\Aut(C)$ is already finite.  Assume
  first that $C$ is rational. Let $F\subset C$ be the image of the
  special fibers of $\pi$ in $C$. This set $F$ must be preserved by
  $G$.  Hence, $G$ is a finite group since it fixes a finite set of
  cardinality at least one if $C=\A^1_*$, cardinality at least two if
  $C=\A^1$ and cardinality at least three if $C=\mathbb{P}^1$. Assume
  now that $C$ is an elliptic curve. If $F$ is non empty, then it is
  preserved by $G$. Again it follows that $G$ is finite since the
  automorphism group of an elliptic curve fixing a point is
  finite. 
  
  The only remaining case is the case where there are no
  special fibers and the quotient curve is elliptic.
  This case was already studied by Demazure in
  \cite[Theorem~3.5]{Dem88}. The surface $S$ is isomorphic to the ring
  of sections of an ample divisor $D$ on $C$, i.e.,
  \[S=\spec \bigoplus_{i\in \Z_{>0}} A_i,\quad\mbox{where}\quad
    A_i=H^0(C,\OO_C(iD))\,.\] %
  Now, by \cite[Corollary~8.9]{AlHa06}, elements in the image
  $G\subset \Aut(C)$ of the centralizer $Z$ are in one-to-one
  correspondence with automorphisms $\phi\colon C\to C$ such that $D$
  is linearly equivalent to $\phi^*(D)$. By definition, $D$ is
  linearly equivalent to $\phi^*(D)$ if and only of $D-\phi^*(D)$ is
  principal.  Since the automorphism group of the elliptic curve consists of translations up to finite index, so we only need to consider  translations in order to prove that $G$ is finite. Let $\phi_p$ be the translation by an element $p$ in the
  elliptic curve $C$. We claim that $D-\varphi_p^*(D)$ is principal if
  and only of $p$ is the identity $e\in C$. This  {claim implies} that $G$ is
  finite since it is contained in the automorphisms of $C$ that fix
  $e$.   {To prove the claim we first note that}  the isomorphism of an elliptic curve
  $C\to\operatorname{Pic}^0(C)$ into the connected component of its
  Picard scheme is given by $p\mapsto [p]-[e]$, where $e\in C$ is the
  identity. This yields that $[p+q]$ is mapped to $[p]+[q]-[e]$ and
  now a straightforward computation yields $D-\phi_p^*(D)$ is linearly
  equivalent to $\deg(D)\cdot([e]-[p])$. Now, this last divisor is
  principal if and only if $p=e$. This proves the claim.

  To conclude the proof, remark that the generic fibers of the
  categorical quotient are always affine rational curves endowed with
  a $T$-action. This yields the following exact sequence
  $ 0\rightarrow H\rightarrow Z\rightarrow G\rightarrow 0$, where the
  elements in $H\subset\Aut(S)$ preserve the fibration
  $S\rightarrow S /\!\!/ T$ fiberwise. Since $H$ commutes with $T$ and the generic fibers are rational curves, we
  obtain that $H=T$.
\end{proof}

        In the sequel, we will require the following corollary of the
        above lemma.
        
	\begin{cor}\label{toricma}
          If $\GA \times \GM$ acts faithfully on a normal affine
          surface $X$, then $X$ is toric.
	\end{cor}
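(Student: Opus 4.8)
The plan is to deduce this quickly from Lemma~\ref{centralizer}, so that almost all of the work has already been carried out there. First I would observe that a faithful action of $\GA\times\GM$ on $X$ restricts to faithful actions of each factor. Indeed, the kernel of the $\GM$-action is a closed subgroup of $\GM$; if it were nontrivial it would contain some $\zeta\neq e$, and then $(0,\zeta)$ would act trivially on $X$, contradicting faithfulness of the total action. The same reasoning, using that the only proper closed subgroup of $\GA$ is trivial, shows the $\GA$-action is faithful. By Theorem~\ref{ind-group-prop} these faithful regular actions realize a torus $T\simeq\GM$ and a closed connected one-dimensional subgroup $U\simeq\GA$ inside $\Aut(X)$. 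Since the acting group is a direct product, every element of $U$ commutes with every element of $T$, so $U$ is contained in the centralizer $Z$ of $T$ in $\Aut(X)$. In particular $X$ is a normal affine $\GM$-surface.

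Next I would argue by contradiction and assume $X$ is not toric. Then Lemma~\ref{centralizer} applies and gives that $Z$ contains $T$ as a normal subgroup of finite index. As $T\simeq\GM$ is connected, it coincides with the identity component $Z^0$ of $Z$. The subgroup $U$ is connected and contains the identity, hence $U\subseteq Z^0=T$. Thus $U$ is a nontrivial closed connected subgroup of the one-dimensional torus $T\simeq\GM$; by dimension it must equal $T$, which forces $\GA\simeq\GM$ and is absurd (equivalently, $\GM$ contains no nontrivial unipotent element). This contradiction shows that $X$ must be toric.

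The main obstacle is really packaged into Lemma~\ref{centralizer}, whose proof treats the elliptic, parabolic and hyperbolic cases separately and controls the action of the centralizer on the categorical quotient; granting that lemma, the remaining deduction is the short dimension-and-connectedness argument above. The only points requiring a little care are the reduction to faithful one-parameter subgroups and the fact that a connected subgroup meets only the identity component of $Z$, both of which are immediate from the relevant definitions.
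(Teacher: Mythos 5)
Your proof is correct and follows exactly the route the paper intends: the paper states this result without proof as an immediate consequence of Lemma~\ref{centralizer}, and your deduction (the $\GA$-factor lands in the centralizer $Z$ of the one-dimensional torus $T$, which for non-toric $X$ is a finite extension of $T$, forcing the connected group $U\simeq\GA$ into $T\simeq\GM$, a contradiction) is precisely the intended argument. The two points you flag at the end --- faithfulness of the restricted actions and a connected subgroup lying in the identity component of $Z$ --- are indeed routine, since restrictions of faithful actions are automatically faithful and the finitely many cosets of the closed subgroup $T$ in $Z$ are closed and disjoint.
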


	In this section we are only interested in root subgroups of non-toric
	$\GM$-surfaces. This considerably restricts the root subgroups we
	encounter. In the elliptic case, by \cite[Theorem~3.3]{MR2196000}
	only toric surfaces admit root subgroups in their automorphism
	group. In the non-toric parabolic case, by \cite[Theorems~3.12 and
	3.16]{MR2196000} the ring of invariants of any root subgroup equals
	the ring of $\GM$-invariants. In the language of \cite{MR2196000} and
	\cite{MR2657447} these root subgroups are called \emph{fiber type}.
	Finally, in the hyperbolic case, by \cite[Lemma~3.20]{MR2196000} the
	ring of invariants of any root subgroup intersected with the ring of
	$\GM$-invariants is only the constants. In the language of
	\cite{MR2196000} and \cite{MR2657447} these root subgroups are called
	\emph{horizontal type}.

\begin{lem} \label{different-weight} Let $S$ be a non-toric
  $\GM$-surface. Then
  \begin{enumerate}
  \item The surface $S$ admits root subgroups of different weights if
    and only if $S$ is hyperbolic.
  \item The surface $S$ does not admit root subgroups of positive and
    negative weights with the same generic orbits.
  \item If $S$ is hyperbolic, then all root subgroups have different
    weights.
  \end{enumerate}

\end{lem}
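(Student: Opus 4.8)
The plan is to use Proposition~\ref{root-hom}, which identifies root subgroups with respect to $T$ with homogeneous locally nilpotent derivations $\delta$ of $\OO(S)$ whose weight is the degree $\deg\delta$, and to run the trichotomy together with the type restrictions recalled above. Throughout I set $B_\delta:=\ker\delta$ and recall that $B_\delta$ is factorially, hence algebraically, closed in $\OO(S)$ and of transcendence degree one. If $\delta$ is of horizontal type then $B_\delta\cap A_0=\C$, and I first record a soft observation used repeatedly: a $\Z$-graded domain $B$ with $B_0=\C$ and transcendence degree one has $\operatorname{Frac}(B)_0=\C$ (a nonzero homogeneous element of degree $0$ in $\operatorname{Frac}(B)$ would make a nonzero-degree homogeneous element transcendental over a field of positive transcendence degree, exceeding total transcendence degree one); in particular $\dim_\C B_d\le 1$ for all $d$. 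In the elliptic case there are no root subgroups at all by \cite[Theorem~3.3]{MR2196000}, so every assertion holds vacuously.

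For the parabolic case, after replacing the $\GM$-action by its inverse if necessary I assume $\OO(S)=\bigoplus_{i\ge 0}A_i$, and every root subgroup is of fiber type with $B_\delta=A_0$ by \cite[Theorems~3.12 and~3.16]{MR2196000}. Localising at $A_0\setminus\{0\}$ and writing $K:=\operatorname{Frac}(A_0)$, the generic fibre of $S\to\spec A_0$ is a one-dimensional graded $K$-algebra; since the $\GM$-action is faithful the generic orbit has trivial isotropy, so this algebra is $K[t]$ with $\deg t=1$. As $A_0\subseteq\ker\delta$, the derivation $\delta$ extends to a nonzero $K$-linear locally nilpotent derivation of $K[t]$, hence $\delta=c\,\partial_t$ with $c\in K$; because $K=K_0$ sits in degree $0$ this forces $\deg\delta=-1$. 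Thus all parabolic root subgroups share the weight $-1$, so $S$ admits no root subgroups of different weights. This proves $(a)$ in the non-hyperbolic cases (together with the contrapositive: different weights force hyperbolicity), and makes $(b)$ and $(c)$ vacuous there.

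It remains to treat the hyperbolic case, where every root subgroup is of horizontal type by \cite[Lemma~3.20]{MR2196000}. For $(a)$, assuming $S$ carries at least one root subgroup $\delta$ (guaranteed in the applications, e.g. by a $\GA$-action via Lemma~\ref{ML}), choose a nonconstant homogeneous $g\in B_\delta$; then $\deg g\neq 0$ since $(B_\delta)_0=\C$, and $g\delta$ is again a homogeneous locally nilpotent derivation, of weight $\deg\delta+\deg g\neq\deg\delta$, giving two root subgroups of different weights. For $(c)$, if $\delta_1,\delta_2$ are horizontal with $B_{\delta_1}=B_{\delta_2}=:B$ and equal weights, then over $L:=\operatorname{Frac}(B)$ both are $L$-derivations of the one-dimensional algebra $\OO(S)\otimes_B L$, so $\delta_2=h\delta_1$ with $h\in L$ homogeneous of degree $\deg\delta_2-\deg\delta_1=0$; by the observation above $h\in L_0=\C$, so $\delta_1$ and $\delta_2$ define the same root subgroup. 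For $(b)$, two root subgroups have the same generic orbits exactly when $\operatorname{Frac}(B_{\delta_1})=\operatorname{Frac}(B_{\delta_2})$, and since kernels are algebraically closed in $\OO(S)$ this is equivalent to $B_{\delta_1}=B_{\delta_2}$.

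Thus $(b)$, and the missing part of $(c)$, reduce to a sign analysis of weights in terms of the kernel, which I expect to be the main obstacle. The first step is to show $B_\delta$ is singly graded: if it contained homogeneous $f,g$ with $\deg f=a>0>b=\deg g$, then $f^{-b}g^{a}\in(B_\delta)_0=\C$ is a nonzero constant, forcing $f$ to be a homogeneous unit of $\OO(S)$ of nonzero degree and hence a torus factor of $S$, a case one excludes or handles separately. Once $B_\delta$ is, say, positively graded, the fibre computation $\delta=c\,\partial_\tau$ on the generic $U$-orbit (where $\deg\tau$ has the opposite sign and, for integral $\delta$, $\deg c$ the same sign as $B_\delta$) should pin down the sign of $\deg\delta$ from the grading sign of $B_\delta$, yielding $(b)$ via the reduction above. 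For the last assertion of $(c)$, that root subgroups with \emph{different} kernels cannot share a weight, I would invoke the explicit description of horizontal-type root subgroups and their weight formulas in \cite{MR2196000} and \cite[Section~3]{MR2657447}, from which pairwise distinctness of all horizontal weights is read off. Making these final two statements precise and self-contained is the part that genuinely requires the fine structure of the classification, rather than the soft localisation arguments carrying the rest of the proof.
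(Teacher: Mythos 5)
Your part (a) and the same-kernel half of (c) are correct and essentially coincide with the paper's proof: the elliptic case is vacuous, and in the hyperbolic case the paper uses exactly your trick of multiplying $\delta$ by a nonconstant homogeneous $g\in\ker\delta$, whose degree is nonzero because $(\ker\delta)_0=\C$ for horizontal type. For the parabolic case the paper simply cites \cite[Theorem~3.12]{MR2196000} for constancy of the weight, so your localisation argument on the generic fibre is a small gain in self-containedness (and you rightly flag, as the paper silently assumes, that the hyperbolic direction of (a) presupposes the existence of at least one root subgroup). Your reduction of ``same generic orbits'' to ``same kernels'' via factorial closedness also matches the paper's reduction to equal rings of invariants.

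The genuine gap is in (b), and in the different-kernel half of (c), which you explicitly leave as a plan rather than a proof. Two concrete problems. First, your step ``$f$ is a homogeneous unit of $\OO(S)$ of nonzero degree and hence a torus factor of $S$'' is not justified: a semi-invariant unit of degree $a$ with $|a|>1$ only exhibits $S$ as a $\mu_a$-twisted product over $\A^1_*$ (a Seifert-type $\GM$-surface), not an honest factor $\A^1_*\times C$, so non-toricity does not visibly dispose of this case. This is exactly the point where the paper instead invokes \cite[Corollary~3.27~(ii)]{MR2196000} as a black box. Second, the sign-link between $\deg\delta$ and the grading sign of $\ker\delta$, which you say the fibre computation ``should pin down'', is never executed; the paper sources it from the classification as well (it asserts that positive weight yields a nontrivial $U$-invariant semi-invariant of positive degree) and then concludes with what is in substance your singly-graded computation: with equal kernels, homogeneous kernel elements $f$ of degree $n>0$ and $f'$ of degree $-n'<0$ give $f^{n'}(f')^{n}$ invariant under both $\GM$ and $U$, hence a nonzero constant by horizontality, making $f$ a unit of nonzero degree and contradicting \cite[Corollary~3.27~(ii)]{MR2196000}. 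For (c), the paper's entire proof is the citation of \cite[Theorem~3.22~(iii)]{MR2196000}, namely that hyperbolic root subgroups are uniquely determined by their weights --- the same move you defer to --- so your deferral is the right instinct, but as written (b) and the different-kernel half of (c) remain unproven, and the one route you sketch toward making them self-contained contains the flawed torus-factor step above.
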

	
\begin{proof}
  Recall that in the non-toric case, only parabolic and hyperbolic
  $\GM$-surfaces admit root subgroups. By
  \cite[Theorem~3.12]{MR2196000} all root subgroups of a parabolic
  surface have the same weight equal to $1$ or $-1$. Furthermore, in
  the hyperbolic case, given any root subgroup $U$ with corresponding
  locally nilpotent derivation $\delta$ we obtain another root
  subgroup $U$ with different weight by considering the locally
  nilpotent derivation $f\cdot\delta$ where $f$ is a non-constant
  element in $\ker\delta$ that is also semi-invariant, but can not be
  $\GM$-invariant, since the action is hyperbolic. This proves the
  first statement.

  We now show (b). Assume now that $S$ admits two root subgroups $U$
  and $U'$ with the same generic orbits and assume also that the
  weight of $U$ is positive and the weight of $U'$ is negative. By
  part (a), we have that $S$ is hyperbolic. Moreover, the conditions
  to have the same generic orbits implies that the rings of invariants
  $\OO(S)^{U}$ and $\OO(S)^{U'}$ coincide. Furthermore, since $U$ has
  positive weight, there exists a non-trivial $U$-invariant $f$ that
  is $\GM$-semi-invariant of positive degree $n$. Similarly, since
  $U'$ has negative weight, there exists a non-trivial $U$-invariant
  $f'$ that is $\GM$-semi-invariant of negative degree $-n'$. We thus
  obtain that $f^{n'}(f')^{n}$ is a non-trivial invariant by $\GM$ and
  $U$. Since $S$ is hyperbolic, we conclude that $f^{n'}(f')^{n}$ is a
  non-zero constant and so $f$ is a unit that is $\GM$-semi-invariant
  of non-zero degree. Part (b) now follows from
  \cite[Corollary~3.27~(ii)]{MR2196000}

  The third statement follows directly from the main classification
  theorem for hyperbolic surfaces \cite[Theorem~3.22~(iii)]{MR2196000}
  since the root subgroup is uniquely determined by  {its} weight.
\end{proof}
	
\section{Automorphisms of automorphism groups of toric
  surfaces}\label{autaut}
	
	In this section we look at group automorphisms of $\Aut(S)$,
        where $S$ is a toric surface. Recall that all toric surfaces
        are defined over the field $\QQ$ of rational numbers
        (\cite{MR0284446}). Hence, all field automorphisms $\tau$
        of $\C$ induce a base-change and hence a permutation of the
        $\C$-points of $S$. By conjugating elements in $\Aut(S)$ by
        this permutation, we obtain a group automorphism of $\Aut(S)$,
        which, by abuse of notation, we denote by $\tau$ as well. If
        $G\subset\Aut(S)$ is an algebraic subgroup, then $\tau(G)$ is,
        by base-change, again an algebraic subgroup of $\Aut(S)$. Note
        that if $\tau(G)=G$ and if we chose any coordinates of $G$,
        then the restriction of $\tau$ to $G$ is just given by
        applying $\tau$ to the coordinates of $G$.
	
	The goal of this section is to prove the following theorem:
	
	\begin{prop}\label{groupautomorphismtoric}
		Let $S$ be a toric surface and $\varphi$ a group automorphism of $\Aut(S)$. Let $G\subset\Aut(S)$ be an
		algebraic subgroup.
		Then $\varphi(G)$ is again an algebraic group that is isomorphic to $G$.
		
		Moreover, if $S$ is not isomorphic to $\A^1_*\times\A^1_*$, then there exists a field-automorphism $\tau$ of $\C$ such that the restriction of $\tau\circ\varphi$ to $G$ is an algebraic morphism.
	\end{prop}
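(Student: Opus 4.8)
The plan is to exploit the structural results already assembled: Proposition~\ref{alg} guarantees that $\varphi$ preserves algebraic elements in both directions (apply it to $\varphi$ and to $\varphi^{-1}$), and Theorem~\ref{divisibility} together with Lemma~\ref{lineardiv} tells us that algebraicity is equivalent to a purely group-theoretic divisibility condition. First I would dispose of the case where $G$ is a one-dimensional group, since every connected algebraic subgroup is generated by its one-dimensional subgroups (tori and root subgroups). For a torus $T$ of dimension $\dim X$, Lemma~\ref{toruscentralizer} characterizes $T$ as its own centralizer, and Lemma~\ref{closed} shows such a $T$ is closed; this group-theoretic rigidity is the leverage for showing $\varphi(T)$ is again a maximal torus. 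The upshot of the first part should be that $\varphi$ sends algebraic subgroups to algebraic subgroups, and that an abstract group isomorphism between two such linear algebraic subgroups, once we know both sides are algebraic, must be an isomorphism of algebraic groups up to a field automorphism of $\C$ (this is the classical fact that abstract isomorphisms of semisimple or reductive algebraic groups over $\C$ are algebraic modulo field automorphisms, cf.\ the Borel--Tits type rigidity).

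The heart of the argument is the second, sharper claim: producing a single field automorphism $\tau$ of $\C$ making $\tau\circ\varphi$ algebraic on $G$. I would first establish this for a fixed maximal torus $T\subset\Aut(S)$. Since $\varphi(T)$ is a maximal torus by the first part, and all maximal tori in $\Aut(X_\sigma)$ are conjugate (as recalled before Corollary~\ref{diff-toric}), we may normalize so that $\varphi(T)=T$. The restriction $\varphi|_T\colon T\to T$ is then an abstract group automorphism of $\GM^2$; but $\Aut_{\text{ab}}(\GM^2)$ contains field-automorphism twists, and the algebraic automorphisms form $\GL_2(\Z)$ (as used in Lemma~\ref{obvious}). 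The combinatorics of root subgroups is what pins down $\tau$: by Corollary~\ref{diff-toric} the root subgroups with respect to $T$ have distinct weights, and $\varphi$ permutes them (as it preserves root subgroups, these being $\GA$'s normalized by $T$), so $\varphi$ induces a permutation of the weight lattice points described explicitly in Lemma~\ref{weight-toric-surface}. Matching the two infinite families of weights forces the action of $\varphi|_T$ on the character lattice $M$ to be a specific element of $\GL_2(\Z)$, and the residual ambiguity on the coefficients must be absorbed by a field automorphism $\tau$ of $\C$ acting on the one-parameter coordinates of each $\GA$.

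Once $\tau$ is fixed on the torus-and-root-subgroup data, the plan is to propagate algebraicity of $\tau\circ\varphi$ from this generating configuration to an arbitrary algebraic subgroup $G$. Every connected algebraic subgroup of $\Aut(S)$ is built from tori and root subgroups via the semidirect products $\GA\rtimes_\chi T$ described after the definition of weight character, and Lemma~\ref{times} ensures such products stay algebraic; since $\tau\circ\varphi$ is already algebraic on the building blocks and respects the commutation relations (root subgroups sharing a distinguished ray commute, by the remark after Proposition~\ref{liendo}), it extends to an algebraic morphism on $G$. The main obstacle I anticipate is the rigidity step: showing that a \emph{single} $\tau$ works simultaneously for all root subgroups and the torus, rather than a priori different field automorphisms on different $\GA$-factors. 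This is exactly where the exclusion of $\A^1_*\times\A^1_*$ enters: that surface is the torus itself with no root subgroups (Corollary~\ref{diff-toric}), so there is no weight combinatorics to rigidify $\tau$, and indeed the statement's second assertion is claimed only for $S\not\simeq\A^1_*\times\A^1_*$. For the genuinely non-toric building blocks one leverages the fact that the weight characters in Lemma~\ref{weight-toric-surface} lie along the two rays with prescribed integer spacing, so any field-automorphism twist consistent with one root subgroup is automatically consistent with all of them sharing that ray, and the two rays are tied together through their common torus; carefully tracking this shared-torus constraint is the delicate bookkeeping that completes the proof.
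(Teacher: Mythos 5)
Your proposal founders at its central rigidity step. You invoke ``Borel--Tits type rigidity'' for abstract isomorphisms of algebraic groups, but that theory applies to semisimple groups; the groups that must be rigidified here are tori, copies of $\GA$, and their semidirect products, whose abstract automorphism groups are enormous (the abstract automorphisms of $\C^*$ and the $\Q$-linear automorphisms of $(\C,+)$ are wildly non-algebraic, even up to field automorphisms). In particular, an abstract automorphism of $T\simeq(\GM)^2$ does \emph{not} induce any map on the character lattice $M$, so your plan to ``match the two infinite families of weights'' and thereby pin down an element of $\GL_2(\Z)$ is circular: one can only speak of the lattice action after one already knows $\varphi|_T$ is algebraic up to a field automorphism. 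The paper's actual mechanism is Lemma~\ref{autosemidirect}, a hands-on computation showing that every abstract automorphism of $\GM\ltimes_\chi\GA$ (with $\chi$ non-trivial) is inner up to a field automorphism; the uniqueness of a \emph{single} $\tau$ is then obtained in Lemma~\ref{autauttorus} by applying this to $T_1\ltimes U_3$ and $T_2\ltimes U_3$, where $T_1,T_2$ are kernels of distinct weights and $U_3$ is a \emph{shared} root subgroup, using that the identity is the only field automorphism inducing an algebraic homomorphism; algebraicity on all of $T=T_1T_2$ and on every root subgroup (via transitivity of the $T$-action on $U\setminus\{\id\}$) follows. Your ``shared-torus bookkeeping'' gestures in this direction, but the proposal contains no workable substitute for Lemma~\ref{autosemidirect}, and without it the twist $\tau$ is never actually produced.

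The propagation step is also unsupported. It is false that every connected algebraic subgroup of $\Aut(X_{d,e})$ is generated by the fixed maximal torus and its root subgroups: algebraic subgroups can be conjugate by wild automorphisms, and one must also account for subgroups like $\PGL_2$ or $\Aff(2,\C)$-type groups. The paper handles this with the Arzhantsev--Zaidenberg amalgamated product structure of $\Aut(X_{d,e})$ together with the theorem that every algebraic subgroup is conjugate into one of the factors; each factor is exhausted by algebraic subgroups $A_k$ generated, with bounded word length, by the torus and finitely many root subgroups, and Lemma~\ref{oneparalg} (resting on Lemma~\ref{immulemma}, not Lemma~\ref{times}) transfers algebraicity of $\tau\circ\varphi$ to each $A_k$ and hence to $G$. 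Two further omissions: you never explain why $\varphi(G)$ is isomorphic to $G$ \emph{as an algebraic group} once $\tau\circ\varphi$ is algebraic --- a wild base change could a priori alter the isomorphism type, and the paper excludes this by the case-check of Lemma~\ref{basechangelemma} that all connected algebraic subgroups of $\Aut(S)$ are defined over $\Q$; and the surfaces $S\simeq\A^1_*\times\A^1_*$ and $S\simeq\A^1\times\A^1_*$ require separate arguments for the first assertion (Lemmas~\ref{autdeg} and~\ref{degen2}), whereas your sketch only records that the torus is excluded from the second assertion and proves nothing for these two cases.
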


	In \cite{MR2278755} it is shown that all group automorphisms of
	$\Aut(\A^2)$ are inner up to automorphisms of the base-field
	$\C$. Note that this result and our Proposition \ref{A2} will imply directly the following observation (which will not be used in the sequel):
	
	\begin{cor}\label{autcor}
		Let $S$ be an affine surface and
		$\varphi\colon\Aut(\A^2) \to \Aut(S)$ a group isomorphism. Then there exists an isomorphism
		$f\colon \A^2\to S$ and a field automorphism $\tau$ of $\C$ such that $\varphi(g)=f\tau(g)f^{-1}$ for all $g\in \Aut(\A^2)$.
	\end{cor}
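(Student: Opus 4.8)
The plan is to combine the characterization of $\A^2$ among affine surfaces provided by Proposition~\ref{A2} with the description of the group automorphisms of $\Aut(\A^2)$ obtained in \cite{MR2278755}, so that the proof reduces to bookkeeping of conjugations. First I would apply Proposition~\ref{A2}: since $\Aut(S)$ is isomorphic to $\Aut(\A^2)$ as an abstract group, it yields that $S$ is isomorphic to $\A^2$, hence an isomorphism of surfaces $f_0\colon\A^2\to S$. Conjugation by $f_0$ then defines a group isomorphism $\psi\colon\Aut(\A^2)\to\Aut(S)$, $g\mapsto f_0\,g\,f_0^{-1}$.

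Next I would consider the composite $\Theta:=\psi^{-1}\circ\varphi$, which is a group automorphism of $\Aut(\A^2)$. By the main result of \cite{MR2278755}, every such automorphism is inner up to a field automorphism of $\C$; that is, there exist $h\in\Aut(\A^2)$ and a field automorphism $\tau$ of $\C$ such that $\Theta(g)=h\,\tau(g)\,h^{-1}$ for all $g\in\Aut(\A^2)$, where $\tau(g)$ denotes the automorphism obtained by applying $\tau$ to the coordinates of $g$ as explained at the beginning of Section~\ref{autaut}. Here one uses that an inner automorphism and a field automorphism compose again to an inner automorphism followed by a field automorphism, since $\tau(hgh^{-1})=\tau(h)\tau(g)\tau(h)^{-1}$.

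Finally I would unwind the definitions: for every $g\in\Aut(\A^2)$ we have
\[
\varphi(g)=\psi(\Theta(g))=f_0\,h\,\tau(g)\,h^{-1}\,f_0^{-1}=(f_0 h)\,\tau(g)\,(f_0 h)^{-1}.
\]
Setting $f:=f_0\circ h$, which is an isomorphism $\A^2\to S$ as a composition of $h\in\Aut(\A^2)$ with $f_0$, gives $\varphi(g)=f\,\tau(g)\,f^{-1}$, as claimed. The only points requiring care are that the inner factor $h$ of $\Theta$ can be absorbed into the isomorphism $f_0$ to produce a single isomorphism $f$, and that the field automorphism $\tau$ furnished by \cite{MR2278755} is transported correctly through $\psi$; both are immediate once the conjugations are written out. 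No genuine obstacle remains beyond the two cited inputs, which is precisely why the statement is asserted to follow directly.
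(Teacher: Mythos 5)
Your proof is correct and is precisely the argument the paper intends: the paper derives Corollary~\ref{autcor} directly from Proposition~\ref{A2} (giving $S\simeq\A^2$) combined with the result of \cite{MR2278755} that every abstract group automorphism of $\Aut(\A^2)$ is inner up to a field automorphism of $\C$, which is exactly your reduction via $\Theta=\psi^{-1}\circ\varphi$. The paper leaves the bookkeeping implicit, but absorbing the inner factor $h$ into $f_0$ via $\tau(hgh^{-1})=\tau(h)\tau(g)\tau(h)^{-1}$ is the same computation you spell out.
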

	
	This corollary can be seen as an algebraic analogue to a result of
	Filipkiewicz (\cite{MR693972}) about isomorphisms between groups of
	diffeomorphisms of manifolds without boundary. 
	
	\begin{lem}\label{autosemidirect}
		Let $\chi\colon \GM\to\GM$ be a non-trivial character and $\varphi\colon \GM\ltimes_\chi\GA\to\GM\ltimes_\chi\GA$ be a group automorphism. Then there exists a field automorphism $\tau\colon\C\to\C$ such that $\varphi\circ\tau$  is inner.
	\end{lem}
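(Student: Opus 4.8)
The plan is to make the structure of $G := \GM \ltimes_\chi \GA$ completely explicit and then to reconstruct the field $\C$ from the bare group $G$. Since $\chi$ is non-trivial we may write $\chi(t) = t^n$ with $n \neq 0$; denote by $U \cong \GA$ the normal factor and by $T \cong \GM$ the complement, and record the multiplication $(u_1,t_1)(u_2,t_2) = (u_1 + t_1^n u_2,\, t_1 t_2)$ together with the conjugation formula $\mathrm{Inn}_{(v,r)}(u,t) = (r^n u + (1-t^n)v,\, t)$. A one-line commutator computation gives $[(0,t),(u,1)] = ((t^n-1)u,\,1)$, so, since $G/U \cong T$ is abelian, $U = [G,G]$ is characteristic. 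Hence any group automorphism $\varphi$ satisfies $\varphi(U) = U$ and induces both a group automorphism $\psi := \varphi|_U$ of $(\C,+)$ and a group automorphism $\sigma$ of $T \cong (\C^*,\cdot)$.

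First I would exploit compatibility with conjugation. Applying $\varphi$ to $(0,t)(u,1)(0,t)^{-1} = (t^n u,1)$ yields the semilinearity relation $\psi(t^n u) = \sigma(t)^n \psi(u)$. Setting $\rho(t^n) := \sigma(t)^n$ is well defined, because an automorphism of $\C^*$ preserves torsion and hence $\sigma(\mu_n) \subseteq \mu_n$; this produces a group automorphism $\rho$ of $\C^*$ with $\psi(su) = \rho(s)\psi(u)$ for all $s \in \C^*$. Putting $u=1$ gives $\psi(s) = c\,\rho(s)$ with $c := \psi(1) \in \C^*$, and then $\tau := \tfrac{1}{c}\psi$ is additive (because $\psi$ is) and multiplicative on $\C^*$ (because it coincides there with $\rho$), hence is a field automorphism of $\C$ with $\psi = c\,\tau$. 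This is the crux: turning the a priori wild additive automorphism $\psi$ into a scalar multiple of a genuine field automorphism.

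After replacing $\varphi$ by $\tilde\varphi := \varphi \circ \tau^{-1}$, so that $\tilde\varphi|_U$ becomes multiplication by $c$, I would show $\tilde\varphi$ is inner. The relation $\sigma(t)^n = \tau(t)^n$ forces the induced automorphism of $T$ to have the form $t \mapsto \omega(t)\,t$ for a homomorphism $\omega\colon \C^* \to \mu_n$; since $\C^*$ is divisible it admits no non-trivial homomorphism to the finite group $\mu_n$, so $\omega \equiv 1$ and $\tilde\varphi$ is the identity on $T = G/U$. Writing $\tilde\varphi(u,t) = (cu + h(t),\, t)$, the homomorphism property makes $h$ a $1$-cocycle, $h(t_1 t_2) = h(t_1) + t_1^n h(t_2)$; comparing this with the value at $t_2 t_1$ and using commutativity of $\C^*$ gives $(t_2^n - 1)h(t_1) = (t_1^n-1)h(t_2)$, so fixing any $t_0$ with $t_0^n \neq 1$ forces $h(t) = (t^n-1)b$ with $b := h(t_0)/(t_0^n-1)$. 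Finally, choosing $r$ with $r^n = c$ (possible since $\C$ is algebraically closed) one checks $\tilde\varphi = \mathrm{Inn}_{(-b,r)}$, whence $\varphi \circ \tau^{-1}$ is inner, which is the assertion with $\tau^{-1}$ in the role of $\tau$.

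The main obstacle I anticipate is the second paragraph: extracting a field automorphism from the interplay of $\psi$ and $\sigma$, and then checking that the residual ambiguities really disappear. These ambiguities are exactly a $\mu_n$-valued homomorphism $\omega$ and a cocycle $h$, and they are annihilated by two soft facts — the divisibility of $\C^*$ and the commutativity-forced triviality of $H^1(\C^*,\C)$. Recognizing that these are precisely the obstructions, and that they vanish thanks to the algebraic closedness and divisibility of $\C^*$, is where the real content lies.
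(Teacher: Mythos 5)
Your proof is correct and follows essentially the same route as the paper's: both identify the $\GA$-factor as a characteristic subgroup, reconstruct the field automorphism from additivity on $\GA$ combined with multiplicativity coming from the conjugation action of the torus, and kill the translation part via the same commutativity identity (your relation $(t_2^n-1)h(t_1)=(t_1^n-1)h(t_2)$ is exactly Equation~\eqref{notdep}). The only difference is organizational: you extract the field automorphism $\tau$ first and then exhibit the residual automorphism as inner (using divisibility of $\C^*$ to kill the $\mu_n$-valued homomorphism and an $n$-th root of $c$), whereas the paper first straightens $\varphi$ by the corresponding explicit inner conjugations and then shows that what remains is a coordinatewise field automorphism.
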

	
	\begin{proof}
		Choose coordinates $x$ and $y$ of $\GM$ and $\GA$ respectively, so every element in $\GM\ltimes_\chi\GA$ is of the form $(x,y)$ and composition is given by $(x,y)(x',y')=(xx', \chi(x)y'+y)$. Every non-trivial normal subgroup of $\GM\ltimes_\chi\GA$ contains $\GA$. Indeed, let $N\subset\GM\ltimes_\chi\GA$ be a non-trivial normal subgroup and let $(a,b)\in N\setminus \id$. Then for every $(c,d)\in \GM\ltimes_\chi\GA$ the element 
		\[
		(a,b)(c,d)(a,b)^{-1}(c,d)^{-1}=(1,f)
		\]
		is contained in $N$ with $f\neq 0$ for general $(a,b), (c,d)$. Now $(c,0)(1,f)(c^{-1},0)=(1,\chi(c)f)$ and hence $\GA\subset N$. With this characterization of $\GA$, it follows that $\varphi(\GA)=\GA$. 
		
		Now assume that the restriction of $\varphi$ to $\GM$ is given by $(a,0)\mapsto (\varphi_1(a),\varphi_2(a))$, where $\varphi_1\colon\GM\to\GM$ is a homomorphism of groups and $\varphi_2\colon\GM\to\GM$ a map such that 
		$\varphi_2(ab)=\chi(\varphi_1(a))\varphi_2(b)+\varphi_2(a)$.
		Since $ab=ba$ we obtain  
		$\chi(\varphi_1(a))\varphi_2(b)+\varphi_2(a) = \chi(\varphi_1(b))\varphi_2(a)+\varphi_2(b)$
		and hence
		\begin{equation}\label{notdep}
			\frac{\varphi_2(a)}{\chi(\varphi_1(a))-1}=\frac{\varphi_2(b)}{\chi(\varphi_1(b))-1}
		\end{equation}
		if $\varphi_1(a)$ and  $\varphi_1(b)$ are not contained in the kernel of $\chi$. By the observation above, the elements in $\GM\ltimes_\chi\GA$ of the form $(1,c)$ are contained in the image of $\GA$ and therefore not in the image of $\GM$. Since $\varphi$ is an isomorphism, we hence get that $\varphi_1(a)=1$ if and only if $a=1$. 
		
		One calculates for $a\neq 1$
		\[
		\left(1,\frac{\varphi_2(a)}{\chi(\varphi_1(a))-1}\right)\left(\varphi_1(a),\varphi_2(a)\right)\left(1,-\frac{\varphi_2(a)}{\chi(\varphi_1(a))-1}\right)=\left(\varphi_1(a),0\right).
		\]
		By Equation (\ref{notdep}), the element $\left(1,\frac{\varphi_2(a)}{\chi(\varphi_1(a))-1}\right)$ does not depend on $a$. So up to conjugation by an element from $\GA$ we may assume that $\varphi(\GM)=\GM$.
		Conjugating with a suitable element of $\GM$ we  can moreover assume that 
		$\varphi(1,1)=(1,1)$.
		
		Let $\rho\colon\C\to\C$ be the map such that $\varphi(1,c)=(1,\rho(c))$ and $\delta\colon\C\to\C$ the map such that $\delta(0)=0$ and $\varphi(c,0)=(\delta(c), 0)$ for $c\in\C^*$. This yields $\rho(a+b)=\rho(a)+\rho(b)$ and $\delta(cd)=\delta(c)\delta(d)$, since $\varphi$ is a homomorphism of groups. We calculate
		\[
		(1,\chi(\delta(c)))=(\delta(c),0)(1,1)(\delta(c)^{-1},0)=\varphi(c,0)\varphi(1,1)\varphi(c^{-1},0)=\varphi(1,\chi(c))=(1,\rho(\chi(c)))).
		\]
		Since $\delta(\chi(c))=\chi(\delta(c))$ for all $c\in\C^*$ and since $\chi$ is non-trivial and therefore surjective, we obtain that $\delta=\rho$.
		For $a,b,c\in\GA$ we have
		\begin{align*}
			(1,\chi(c)(a+b))&=(c,0)(1,a+b)(c^{-1},0)\\
			&=(c,0)(1,a)(c^{-1},0)(c,0)(1,b)(c^{-1},0)\\
			&=(1,\chi(c)a)(1,\chi(c)b)
		\end{align*}
		and therefore, again by surjectivity of $\chi$, we obtain $\rho(c(a+b))=\rho(ca)+\rho(cb)$. So $\rho$ is a field automorphism. 
		Hence the restriction of $\rho^{-1}(\varphi)$ to $\GM$ and to $\GA$ is the identity. We conclude that $\rho^{-1}(\varphi)=\id$. This implies the claim.
	\end{proof}
	
	\begin{lem}\label{root}
		Let $S_1$ and $S_2$ be two affine surfaces endowed with faithful regular actions of positive
		dimensional tori $T_1$ and $T_2$ respectively such that $T_1\subset\Aut(S_1)$ and $T_2\subset\Aut(S_2)$ are maximal tori. Assume that
		there is an isomorphism $\phi\colon \Aut(S_1) \to \Aut(S_2)$ such
		that $\phi(T_1) = T_2$.  Then any root subgroup of $\Aut(S_1)$ with
		respect to $T_1$ is sent by $\phi$ to a root subgroup of $\Aut(S_2)$
		with respect to $T_2$.
	\end{lem}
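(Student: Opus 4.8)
The plan is to realize the root subgroup $U$ as a group-theoretically distinguished subgroup of the algebraic group it spans together with $T_1$, and then to transport that description across $\phi$ using Theorem~\ref{main}. Throughout, write $\chi$ for the weight character of $U$ with respect to $T_1$.

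First I would check that $\chi$ is non-trivial. If $\chi$ were trivial, then $T_1$ would centralise $U$, so $U$ would be contained in the centraliser $Z$ of $T_1$ in $\Aut(S_1)$. Since $T_1$ is a maximal torus of the surface $S_1$, its dimension is $1$ or $2$. If $\dim T_1 = 2$, then $S_1$ is toric and Lemma~\ref{toruscentralizer} gives $Z = T_1$; if $\dim T_1 = 1$, then $S_1$ is non-toric (a toric surface carries a $2$-torus, contradicting maximality), and Lemma~\ref{centralizer} shows that the identity component of $Z$ is $T_1$. In either case $Z^\circ = T_1$ is a torus and hence contains no subgroup isomorphic to $\GA$, contradicting $U \cong \GA \subseteq Z$. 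Therefore $\chi$ is non-trivial.

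Next I would form the connected algebraic subgroup $G := \langle T_1, U\rangle$, which by the discussion following the definition of the weight character is isomorphic to $\GA \rtimes_\chi T_1$, and which is not unipotent since it contains the torus $T_1$. A direct commutator computation in $\GA \rtimes_\chi T_1$ shows that the commutator of two elements lies in the $\GA$-factor and equals an element of the form $u(1-\chi(t')) + u'(\chi(t)-1)$ in that factor; because $\chi$ is non-trivial we may choose $t$ with $\chi(t)\neq 1$, so these commutators sweep out all of $\GA$. Hence the derived subgroup $[G,G]$ is exactly $U$. This is the decisive point: $U$ is now pinned down inside $G$ in purely group-theoretic terms. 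Applying Theorem~\ref{main} to $G$ yields that $\phi(G)\subset\Aut(S_2)$ is an algebraic subgroup isomorphic to $G$ as an algebraic group. Since abstract group isomorphisms carry derived subgroups to derived subgroups, $\phi(U) = \phi([G,G]) = [\phi(G),\phi(G)]$ is the derived subgroup of the algebraic group $\phi(G)\cong \GA\rtimes_\chi T_1$, which, as $\chi$ is non-trivial, is a closed subgroup isomorphic to $\GA$.

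Finally, $\phi(U) = [\phi(G),\phi(G)]$ is characteristic in $\phi(G)$, and $T_2 = \phi(T_1)\subseteq\phi(G)$, so $T_2$ normalises $\phi(U)$. Thus $\phi(U)$ is a closed subgroup isomorphic to $\GA$ whose normaliser contains $T_2$, i.e.\ a root subgroup of $\Aut(S_2)$ with respect to $T_2$, as claimed. The main obstacle I expect is exactly that $\phi$ is only an abstract group isomorphism, so the algebraic structure of the \emph{unipotent} group $U$ cannot be transported directly (and Theorem~\ref{main} explicitly excludes unipotent groups). The commutator identity $U = [G,G]$, which holds precisely because the weight $\chi$ is non-trivial, is what allows one to bootstrap from the non-unipotent group $G$, to which Theorem~\ref{main} applies, down to its unipotent derived subgroup $U$.
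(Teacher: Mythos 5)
Your proposal has a genuine structural flaw: it is circular within the paper's logic. The decisive step invokes Theorem~\ref{main} to conclude that $\phi(G)$ is an algebraic subgroup isomorphic to $G=\GA\rtimes_\chi T_1$. But in this paper Theorem~\ref{main} is proved \emph{downstream} of Lemma~\ref{root}: in the non-toric case the proof of Theorem~\ref{main} for $G\simeq\GM\ltimes\GA$ and $G\simeq\GM\ltimes\GA^n$ goes through Theorem~\ref{semidirect-product}, whose part (b) (``root subgroups are mapped to root subgroups'') is exactly an application of Lemma~\ref{root}; and in the toric case it goes through Proposition~\ref{groupautomorphismtoric}, which rests on Lemma~\ref{autauttorus} and Proposition~\ref{tor}, both of which again cite Lemma~\ref{root}. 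So the very instance of Theorem~\ref{main} you need (the group generated by a maximal torus and a root subgroup) is obtained in the paper \emph{from} the lemma you are trying to prove. A second, independent problem is scope: Theorem~\ref{main} is stated only for \emph{normal} affine surfaces, whereas Lemma~\ref{root} assumes no normality and is in fact applied to possibly non-normal surfaces (in Proposition~\ref{tor} the second conclusion concerns the normalization of $S_2$, so $S_2$ is not assumed normal there). Your preliminary step establishing that $\chi$ is non-trivial has the same defect, since Lemma~\ref{toruscentralizer} and Lemma~\ref{centralizer} are both results about normal surfaces.

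The internal algebra of your argument is fine --- the commutator computation showing $[G,G]=U$ when $\chi$ is non-trivial is correct, and identifying $U$ as the derived subgroup is a clean way to transport it across an abstract isomorphism --- but the paper must, and does, argue without any such heavy input. Its proof is direct: since $T_1$ acts on $U$ by conjugation with exactly two orbits, $\{\id\}$ and $U\setminus\{\id\}$, the group $\phi(U)$ is normalized by $T_2=\phi(T_1)$ and $T_2$ acts on it with two orbits; each orbit is the image of a morphism from the algebraic group $T_2$, hence lies in a filter set of the ind-group $\Aut(S_2)$, and since orbits of algebraic group actions are open in their closures, $\phi(U)\setminus\{\id\}$ is an irreducible quasi-affine curve. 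Therefore $\overline{\phi(U)}=\phi(U)$ is a connected one-dimensional algebraic group, and since $\phi(U)$ contains at most one element of finite order (being isomorphic to the torsion-free group $\GA$), it must be $\GA$ rather than $\GM$. If you want to salvage your approach, you would have to prove the relevant special case of Theorem~\ref{main} independently --- which essentially amounts to reproving this lemma by the paper's orbit argument.
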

	
	\begin{proof}
		If $\Aut(S_1)$ contains no root subgroup, the lemma is trivially true. Otherwise, let $U$ be a root subgroup of $\Aut(S_1)$ with respect to $T_1$, then
		$T_1$ acts on $U$ with two orbits, namely $\{\id\}$ and $U\setminus\id$. Hence, $\phi(U)$ is a
		group normalized by $T_2 = \phi(T_1)$ which also acts on
		$\phi(U)$ with two orbits, which are contained in a filter set of the ind-group $\Aut(S_2)$. Since any orbit of the algebraic
		group action is open in its closure, we obtain, by considering the dimension of the kernel of the actions, that
		$\phi(U)\setminus\{\id\}$ is an irreducible quasi-affine curve. This implies that
		$\overline{\phi(U)}=\phi(U)$ is a connected one-dimensional algebraic
		group. Since
		$\phi(U)$ contains at most one root of unity,
		$\overline{\phi(U)}$ is isomorphic to $\GA$ and
		normalized by $\phi(T_1)=T_2$. 
	\end{proof}
	
		\begin{prop}\label{tor}
		Let $S_1$ be an affine toric surface that is not isomorphic to $\A_*^1\times\A_*^1$ and let $S_2$ be an affine surface. Let $T\subset\Aut(S_1)$ be a maximal torus. Assume that there is an isomorphism of groups $\phi\colon\Aut(S_1) \to \Aut(S_2)$. Then we have the following:
		\begin{itemize}
			\item The image $\phi(T)$ is a maximal torus of rank 2 in $\Aut(S_2)$.
			\item  The normalization of  $S_2$ is an affine toric surface that is not isomorphic to $\A_*^1\times\A_*^1$.
			\item If, moreover, $S_2$ is normal, then it is isomorphic to $\mathbb{A}^1_* \times \mathbb{A}^1$ if and only if $S_1$ is isomorphic to $\mathbb{A}^1_* \times \mathbb{A}^1$.
		\end{itemize}

	\end{prop}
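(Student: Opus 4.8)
The whole proposition reduces to the first bullet: once $\phi(T)$ is known to be a rank-two torus, the remaining two assertions follow by transporting the root-subgroup structure through $\phi$. So I would begin there. Since $S_1$ is a toric surface and $T$ is a maximal torus, Lemma~\ref{toruscentralizer} gives $\Cent_{\Aut(S_1)}(T)=T$. Centralizers are preserved by the abstract isomorphism $\phi$, so $\phi(T)$ equals its own centralizer in $\Aut(S_2)$, and by Lemma~\ref{closed} it is a closed subgroup. Moreover $T\cong\GM^2$ is divisible, hence so is $\phi(T)$; in particular every element of $\phi(T)$ is divisible, so by Theorem~\ref{divisibility} together with Corollary~\ref{algelement} (which only needs $S_2$ affine, not normal) each element of $\phi(T)$ is algebraic in $\Aut(S_2)$. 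Thus $\phi(T)$ is a closed, abelian, self-centralizing subgroup, abstractly isomorphic to $(\CC^{*})^{2}$, all of whose elements are algebraic.

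The main obstacle is upgrading this to an honest rank-two torus, and the engine I would use is a torsion-rank count. For each $n$ the $n$-torsion subgroup $T[n]\cong(\Z/n)^{2}$ is carried to a finite, hence algebraic, abelian subgroup $\phi(T[n])\cong(\Z/n)^{2}$ contained in $\phi(T)$. Choosing $t_0\in T$ whose powers are Zariski dense in $T$, and combining the algebraic group $\overline{\langle\phi(t_0)\rangle}$ with finitely many of the $\phi(T[n])$ by means of Lemma~\ref{times} (everything commutes inside the abelian group $\phi(T)$), I would produce a connected algebraic subgroup $A\subseteq\phi(T)$ whose identity component has the shape $\GM^{a}\times\GA^{b}$. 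A faithful action on the surface $S_2$ forces the torus rank $a\le 2$, whereas the presence of $p$-torsion of rank two for every prime $p$ (namely $\phi(T[p])\cong(\Z/p)^{2}$) forces $a\ge 2$ and $b=0$, since $\GM^{a}\times\GA^{b}$ has $p$-torsion of rank exactly $a$. Hence $A\cong\GM^{2}$. Finally $\phi(T)=\Cent(\phi(T))\subseteq\Cent(A)$, and a rank-two torus acting faithfully on an affine surface is self-centralizing (this is Lemma~\ref{toruscentralizer} after lifting the action to the normalization), so $\Cent(A)=A$ and therefore $\phi(T)=A\cong\GM^{2}$. I expect the delicate point to be organizing these commuting algebraic pieces into a \emph{single} algebraic torus, rather than merely verifying that each element is algebraic.

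For the second bullet I would feed $\phi(T)\cong\GM^2$ back through Theorem~\ref{ind-group-prop}, obtaining a faithful regular $\GM^2$-action on $S_2$; as $\dim S_2=2$ this action has a dense orbit, and the action lifts to the normalization $\tilde S_2$, which is then a normal affine surface with a dense torus orbit, i.e.\ an affine toric surface. To exclude $\tilde S_2\cong\A^1_*\times\A^1_*$, I would use that $S_1$, being toric and not a torus, carries root subgroups with respect to $T$ by Corollary~\ref{diff-toric}. Since $\phi(T)$ is now a maximal torus coming from a faithful torus action on the affine surface $S_2$, Lemma~\ref{root} applies and sends such a root subgroup to a nontrivial $\GA$-action on $S_2$. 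Its locally nilpotent derivation extends to the integral closure, producing a nontrivial $\GA$-action on $\tilde S_2$; as the algebraic torus $\GM^2$ admits no nontrivial $\GA$-action, we conclude $\tilde S_2\not\cong\A^1_*\times\A^1_*$, so the normalization of $S_2$ is an affine toric surface different from the torus.

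For the third bullet, assume $S_2$ is normal, so that $S_2=\tilde S_2$ is toric and not a torus, hence isomorphic either to $\A^1\times\A^1_*$ or to some $X_{d,e}$; likewise for $S_1$. Here I would use the distinguishing invariant coming from Lemma~\ref{weight-toric-surface}: the surface $\A^1\times\A^1_*$ has a cone consisting of a single ray, so all its root subgroups share one distinguished ray and therefore commute pairwise (the remark following Proposition~\ref{liendo}), whereas each $X_{d,e}$ has two rays and hence root subgroups with distinct distinguished rays, a direct computation with the derivations $\delta_\alpha$ showing that such a pair does not commute. Applying Lemma~\ref{root} to both $\phi$ and $\phi^{-1}$ (using $\phi(T)=T_2$ in each direction) yields a bijection between the root subgroups of $\Aut(S_1)$ with respect to $T$ and those of $\Aut(S_2)$ with respect to $\phi(T)$, and this bijection preserves commutation. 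Consequently the property ``all root subgroups commute pairwise'' is transported by $\phi$, so $S_2\cong\A^1\times\A^1_*$ if and only if $S_1\cong\A^1\times\A^1_*$.
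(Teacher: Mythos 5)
The genuine gap is in your proof of the first bullet, at the torsion-rank count, which is the heart of the proposition. The subgroup you assemble via Lemma~\ref{times} from $\overline{\langle\phi(t_0)\rangle}$ and the finite groups $\phi(T[p])$ is in general \emph{disconnected} (your phrase ``connected algebraic subgroup $A$ whose identity component has the shape $\GM^a\times\GA^b$'' is already symptomatic of the slippage), and nothing forces $\phi(T[p])$ into the identity component: the finite pieces may inject into the component group. For instance, $\GM\times(\Z/p)^2$ is a perfectly good commutative algebraic group whose $p$-torsion has rank $3$ while its torus rank is $1$, so the inequality ``rank of $p$-torsion $\le a$'' simply fails for disconnected groups; and since your group depends on $p$, you cannot choose $p$ coprime to the order of the component group to push the torsion inside. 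Hence ``$p$-torsion of rank two for every $p$ forces $a\ge 2$'' is unproved. Moreover, ``$b=0$'' cannot follow from torsion at all, since $\GA$-factors are invisible to torsion ($\GM^2\times\GA$ has $p$-torsion of rank exactly $2$); ruling out a $\GA$-factor requires Lemma~\ref{toruscentralizer}, namely that $\GM^2\times\GA$ cannot act faithfully on a surface because a maximal torus of a toric variety is self-centralizing. The paper avoids torsion entirely: since $S_1\not\simeq\A^1_*\times\A^1_*$, it picks two root subgroups $U_1,U_2$ with different weights $\chi_1,\chi_2$ and elements $t_i\in\ker\chi_i$ satisfying $t_1u_2t_1^{-1}=u_2^2$ and $t_2u_1t_2^{-1}=u_1^2$; these multiplicative relations survive the abstract isomorphism $\phi$, and Lemma~\ref{obvious} then forces each $\overline{\langle\phi(t_i)\rangle}^\circ$ to contain a copy of $\GM$, after which Lemma~\ref{toruscentralizer} pins down $\phi(T)=(\GM)^2$. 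If you want to salvage your argument, you should replace the torsion count by a relation-based step of this kind that certifies multiplicative (rather than merely divisible) behavior inside a closure.

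Your treatment of the remaining two bullets is essentially sound and close to the paper's, modulo the dependence on the first bullet: for the second, the paper excludes $\A^1_*\times\A^1_*$ via the existence of an element not normalizing the maximal torus rather than via a lifted $\GA$-action, but your route through Corollary~\ref{diff-toric} and Lemma~\ref{root} works equally well; for the third, your commutation criterion is exactly the paper's. One small inaccuracy there: it is not true that root subgroups of $X_{d,e}$ with distinct distinguished rays never commute --- for $X_{1,0}=\A^2$ the translations $(x+c,y)$ and $(x,y+c')$ have different distinguished rays and commute (the commutator computation with $\delta_{\alpha_1},\delta_{\alpha_2}$ degenerates precisely when $\langle\alpha_1,\rho_2\rangle=\langle\alpha_2,\rho_1\rangle=0$, which happens only in this case). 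Since $\A^2$ still possesses non-commuting root subgroups, the invariant ``all root subgroups commute pairwise'' does distinguish $\A^1\times\A^1_*$ from every $X_{d,e}$, so this slip is cosmetic.
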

	
	\begin{proof}
		
		Since $S_1$ is not isomorphic to $\A_*^1\times\A_*^1$, $\Aut(S_1)$ contains two root subgroups $U_1$ and $U_2$ with
		respect to the maximal torus $T$ with different non-trivial weights $\chi_1$ and $\chi_2$, by Proposition~\ref{liendo}. Since $\chi_1$ and $\chi_2$ are different, the restriction of $\chi_1$ to $\ker(\chi_2)$ and the restriction of $\chi_2$ to $\ker(\chi_1)$ are both surjective homomorphisms to $\GM$. Let $t_1\in \ker(\chi_1)$ be such that $\chi_2(t_1)=2$ and $t_2\in\ker(\chi_2)$ such that $\chi_1(t_2)=2$. Let $u_1\in U_1$ and $u_2\in U_2$ be two non-identity elements. Then $t_1$ commutes with $u_1$ and $t_2$ commutes with $u_2$, but $t_1u_2t_1^{-1}=u_2^2$ and $t_2u_1t_2^{-1}=u_2^2$. We now observe that the two algebraic groups $\overline{\langle \phi(t_1) \rangle}^\circ$ and $\overline{\langle \phi(t_2) \rangle}^\circ$ (by Proposition~\ref{alg}, the elements $\phi(t_1)$ and $\phi(t_2)$ are algebraic) commute with each other and do not coincide. For $i \in \{1,2\}$ the group $\overline{\langle \phi(u_i) \rangle}^\circ$ is isomorphic to $\GM^n\times\GA^m$, where $m\in\{0,1\}$ and $n\leq 2$. By Lemma~\ref{obvious}, we have that $\overline{\langle \phi(u_i) \rangle}^\circ$ is not isomorphic to  $(\GM)^n$.   Moreover, $\overline{\langle \phi(u_i) \rangle}^\circ$ is not isomorphic to  $(\GM)^2\times\GA$, since on a toric variety a maximal torus coincides with its centralizer (Proposition~\ref{toruscentralizer}). By applying again Lemma~\ref{obvious}, we obtain that $\overline{\langle \phi(t_1) \rangle}^\circ$ and $\overline{\langle \phi(t_2)\rangle}^\circ$ each contain a subgroup isomorphic to $\GM$. By Proposition~\ref{toruscentralizer}, $\phi(T)=(\GM)^2$, which proves the first statement. 
		
		By definition, the normalization of $S_2$ is a toric surface. Moreover, $\A_*^1\times\A_*^1$ is the only toric surface such that its automorphism group  normalizes its maximal torus. Since $S_1$ is not isomorphic to $\A_*^1\times\A_*^1$ by assumption, $\Aut(S_1)$ contains an element that does not normalize the maximal tours. By the first statement of the proposition, $\Aut(S_2)$ contains an element that does not normalize the maximal torus. Since $\Aut(S_2)$ is contained as a subgroup in the automorphism group of the normalization of $S_2$, the normalization of $S_2$ is therefore not isomorphic to $\A_*^1\times\A_*^1$. 
		
		By Lemma~\ref{root}, the isomorphism $\phi$ maps root subgroups of $S_1$ with respect to $T$ to root subgroups of $S_2$ with respect to $\phi(T)$. We observe that $\mathbb{A}^1_* \times \mathbb{A}^1$ is the only
		affine toric surface such that all its root subgroups commute, and thus obtain the last statement of the lemma.
	\end{proof}
	
	\begin{lem}\label{autauttorus}
		Let $S$ be a toric surface not isomorphic to the algebraic torus and let $\varphi$ be a group automorphism of  $\Aut(S)$. Then for any maximal torus $T\subset \Aut(S)$ and any root subgroup $U\subset\Aut(S)$ the images $\varphi(T)$ and $\varphi(U)$ are algebraic groups. Moreover,  there exists a field automorphism $\tau$ of $\C$ such that the restrictions of $\varphi\circ\tau$ to $T$ and $U$ are isomorphisms of algebraic groups.
	\end{lem}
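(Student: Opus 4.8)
The plan is to split the statement into its two assertions: first that $\varphi(T)$ and $\varphi(U)$ are algebraic, and then that a single field automorphism simultaneously straightens $\varphi$ on $T$ and on $U$. The first assertion is immediate from the results already in place. Applying Proposition~\ref{tor} to $\varphi$ itself, with $S_1=S_2=S$ (legitimate since $S$, not being the algebraic torus, is not isomorphic to $\A_*^1\times\A_*^1$), shows that $\varphi(T)$ is a maximal torus of rank $2$, in particular an algebraic subgroup. Taking $T_1:=T$ and $T_2:=\varphi(T)$ we are then in the setting of Lemma~\ref{root}, so $\varphi(U)$ is a root subgroup of $\Aut(S)$ with respect to $\varphi(T)$, hence isomorphic to $\GA$ and algebraic.

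For the field automorphism I would first normalise the situation: since all maximal tori of $\Aut(S)$ are conjugate and conjugations are algebraic, after composing $\varphi$ with inner automorphisms I may assume that $T$ is the standard torus of $S$, so that $U$ is one of the standard root subgroups $U_\alpha$ with weight character $\chi:=\chi^\alpha$, and $T$ together with each $U_\alpha$ is defined over $\QQ$ (hence preserved by every induced field automorphism). Writing $T':=\varphi(T)$, $U':=\varphi(U)$ and letting $\chi'$ be the weight of $U'$ with respect to $T'$, the defining relation $tut^{-1}=\chi(t)\cdot u$ of a root subgroup, after applying $\varphi$, becomes $\varphi(\chi(t)\cdot u)=\chi'(\varphi(t))\cdot\varphi(u)$. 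Using that $\chi$ is surjective onto $\GM$ and that $\ker\chi$ centralises $U$ (so $\varphi(\ker\chi)$ centralises $U'$ and therefore $\chi'\circ\varphi$ kills $\ker\chi$), this identity forces $\varphi|_U$ to be of the form $r\mapsto c\cdot\tau_0(r)$ for some $c\in\C^*$ and some field automorphism $\tau_0$ of $\C$, together with the compatibility $\chi'\circ\varphi|_T=\tau_0\circ\chi$.

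A single root subgroup only controls $\varphi|_T$ in the direction of $\chi$, so the next step is to pin down $\varphi|_T$ using several root subgroups with respect to $T$. By Lemma~\ref{weight-toric-surface} the weights $-\beta_2,\ -\beta_2+\beta_1,\ -\beta_2+2\beta_1$ are all roots sharing the distinguished ray $\rho_1$, and the first two already form a $\Z$-basis of $M$. By the remark following Proposition~\ref{liendo} the corresponding root subgroups commute, hence so do their $\varphi$-images, which therefore again all share one distinguished ray for $T'$. Running the computation of the previous paragraph for each of these three roots yields field automorphisms $\tau_0,\tau_1,\tau_2$, and the fact that the three weights differ by successive copies of $\beta_1$, combined with the additivity and multiplicativity of field automorphisms, forces $\tau_0=\tau_1=\tau_2=:\tau_*$. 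Since the images of the first two root subgroups have independent weights spanning $M'$ up to finite index, this determines $\varphi|_T$ completely: in the resulting coordinates it is $\tau_*$ applied in each factor, so $\varphi|_T$ is an isomorphism of algebraic groups after precomposition with $\tau_*^{-1}$. Consistency for the root subgroup $U$ of the statement is then automatic: feeding the now-known $\varphi|_T$ into $\chi'\circ\varphi|_T=\tau_0\circ\chi$ expresses $\tau_0$ through $\tau_*$ and an algebraic character, and since roots are primitive and $\tau_0$ is additive this forces $\tau_0=\tau_*$. Taking $\tau:=\tau_*^{-1}$ (induced on $\Aut(S)$), the restrictions of $\varphi\circ\tau$ to $T$ and to $U$ become a monomial map and a linear map respectively, hence isomorphisms of algebraic groups.

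I expect the genuine obstacle to be the reconciliation, into one field automorphism, of the a priori unrelated field automorphisms attached to different root subgroups. This is exactly why I would work with roots sharing a distinguished ray: the associated root subgroups commute and their weights differ by a fixed character, which reduces the matching to an elementary but essential identity between field automorphisms of $\C$. Once $\varphi|_T$ is thereby forced to be $\tau_*$-twisted monomial, every other root subgroup—in particular the given $U$, whatever its distinguished ray—is controlled automatically by its conjugation relation, and no non-commuting relations between the two families of roots are needed.
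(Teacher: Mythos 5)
Your first assertion is handled exactly as in the paper: Proposition~\ref{tor} gives that $\varphi(T)$ is a maximal torus, and Lemma~\ref{root} that $\varphi(U)$ is a root subgroup, so that part is fine. For the field automorphism you take a genuinely different route. The paper chooses three root subgroups $U_1,U_2,U_3$ with \emph{distinct} weights $\chi_1,\chi_2,\chi_3$, applies Lemma~\ref{autosemidirect} to the two semidirect products $\ker(\chi_1)\ltimes_{\chi_3}U_3$ and $\ker(\chi_2)\ltimes_{\chi_3}U_3$, matches the two resulting field automorphisms through the \emph{common} factor $U_3$ (using that the identity is the only field automorphism inducing an algebraic homomorphism), and then obtains algebraicity on $T=\ker(\chi_1)\cdot\ker(\chi_2)$ via Lemma~\ref{oneparalg}. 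You instead take three roots with a \emph{common distinguished ray}, re-derive the core computation of Lemma~\ref{autosemidirect} directly from the conjugation relation (your extraction of $\tau_0$ with $\chi'\circ\varphi|_T=\tau_0\circ\chi$ is correct), and reconcile the three field automorphisms by weight arithmetic; your reconstruction of $\varphi|_T$ from two basis characters then replaces the paper's product decomposition of $T$. The strategy is sound, and your closing observation that the given $U$ is then controlled automatically (primitivity of roots plus divisibility of $\C^*$ forces $\tau_0=\tau_*$) is correct.

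However, two steps at the crux are under-justified. First, you infer that the images $\varphi(U_0),\varphi(U_1),\varphi(U_2)$ share a distinguished ray from the remark after Proposition~\ref{liendo}; that remark gives only the implication (same ray $\Rightarrow$ commute), and the converse is false for \emph{pairs}: on $\A^2$ the root subgroups generated by $\partial/\partial x$ and $\partial/\partial y$ commute and have different distinguished rays. What you need is that three pairwise commuting, pairwise distinct root subgroups share a ray; this is true (a commutator computation with the derivations $\delta_\alpha$ shows a cross-ray commuting pair forces $\langle\alpha,\rho_{\alpha'}\rangle=\langle\alpha',\rho_{\alpha}\rangle=0$, which on $X_{d,e}$, $d>1$, and on $\A^1\times\A^1_*$ never happens, and on $\A^2$ leaves only the pair above, incompatible with a third commuting subgroup), but it genuinely uses all three subgroups and is not in the cited remark. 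Second, both ``forces $\tau_0=\tau_1=\tau_2$'' and ``this determines $\varphi|_T$ completely'' are asserted where the real work lies. They can be proved: writing $g=\chi^{\beta_1'}\circ\varphi|_T$ and $a,b$ for the successive differences of the image weights along $\beta_1'$, your relations give $\tau_1(s)=g(s,1)^{a}$ and $\tau_2(s)^2\tau_1(s)^{-1}=g(s,1)^{b}$; evaluating at $s=2$ forces $b=a$, divisibility of $\C^*$ kills the resulting finite-order multiplicative ratios (yielding $\tau_0=\tau_1=\tau_2$), and the non-existence of a multiplicative $a$-th root of the identity on $\C^*$ (for $\zeta$ a primitive $a$-th root of unity, $x(\zeta)^a=\zeta$ gives $1=x(\zeta^a)=x(\zeta)^a=\zeta$) forces $|a|=1$. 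Only with $|a|=1$ is the pair $(\chi_0',\chi_1')$ an isomorphism $T'\to\GM^2$; otherwise it has a kernel of order $|a|$ and your two character relations determine $\varphi|_T$ only up to that finite kernel, so ``spanning $M'$ up to finite index'' is not by itself enough to conclude that $\varphi|_T$ is $\tau_*$-twisted monomial. (A smaller point: Lemma~\ref{weight-toric-surface} is stated for $X_{d,e}$; for $\A^1\times\A^1_*$ the roots are $-\beta_2+l\beta_1$ with $l\in\Z$, so your three chosen roots still exist and the argument goes through.) With these two repairs your proof is complete, though appreciably heavier at exactly the points where the paper's use of Lemma~\ref{autosemidirect} on two semidirect products glued along one root subgroup avoids all weight bookkeeping.
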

	
	\begin{proof}
		Since all maximal tori are conjugate in $\Aut(S)$, it is enough to show the statement for a fixed maximal torus $T\subset\Aut(S)$.
		Let $U_1, U_2, U_3\subset\Aut(S)$ be root subgroups with respect to  $T\subset \Aut(S)$ with different weights $\chi_1$, $\chi_2$ and $\chi_3$. Define the three 1-dimensional subtori $T_1=\ker(\chi_1)$, $T_2=\ker(\chi_2)$ and $T_3=\ker(\chi_3)$. Since the weights are different, the intersections $T_i\cap T_j$ are finite for $i\neq j$. 
		By Proposition \ref{tor}, $\varphi(T)$ is a maximal torus of $\Aut(S)$.
		By Lemma~\ref{root}, $\varphi(U_1)$, $\varphi(U_2)$ and $\varphi(U_3)$ are again root subgroups with respect to the maximal torus $\varphi(T)$ and since the $\varphi(T_i)$ commute with $\varphi(U_i)$, we obtain that the $\varphi(T_i)$ are closed one-dimensional subtori of $\varphi(T)$. By Lemma~\ref{autosemidirect}, there exists a field automorphism $\tau_1$ of $\C$ such that the restriction of $\varphi\circ \tau_1$ to $T_1\ltimes_{\chi_3} U_3$ is an algebraic isomorphism. Similarly, there exists a field automorphism $\tau_2$ of $\C$ such that the restriction of $\varphi\circ \tau_{{2}}$ to $T_2\ltimes_{\chi_3} U_3$ is an algebraic isomorphism. Since the restrictions of both, $\varphi\circ\tau_1$ and $\varphi\circ\tau_2$ to $U_3$ are algebraic, we obtain that $\tau_1=\tau_2$ using that the identity is the only field automorphism inducing an algebraic homomorphism.
		It follows that the restriction of $\varphi \circ \tau_1$ to $T_1$ and {to} $T_2$ is algebraic. Since every element in $T$ is  a product of an element of $T_1$ with an element of $T_2$, we obtain that the restriction of $\varphi\circ\tau_1$ to $T$. Let $U$ be any root subgroup with respect to $T$. Since $T$ acts transitively on $U\setminus\{\id\}$, this implies that the restriction of $\varphi\circ\tau_1$ to any root subgroup $U$ is an isomorphism of algebraic groups to the algebraic group $\varphi\circ\tau_1(U)$.
	\end{proof}
	
	\begin{lem}\label{autdeg}
		Let $\varphi$ be an automorphism of $\Aut(\A^1_*\times\A^1_*)$. Then $\varphi$ maps algebraic groups to algebraic groups.
	\end{lem}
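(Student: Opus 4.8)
The plan is to exploit the very explicit description of the ambient ind-group. First I would recall that for the two-dimensional torus $\A^1_*\times\A^1_*\cong\GM^2$ one has $\Aut(\A^1_*\times\A^1_*)\cong\GM^2\rtimes\GL_2(\Z)$, where $\GM^2$ acts by translations and a matrix $A\in\GL_2(\Z)$ acts by the associated monomial transformation. In the canonical ind-group structure the identity component is $\GM^2$ and the filter sets are the finite unions of cosets $\GM^2\cdot A$, $A\in\GL_2(\Z)$; consequently the projection $p\colon\GM^2\rtimes\GL_2(\Z)\to\GL_2(\Z)$ is locally constant on every filter set. Hence, if $H\subset\Aut(\A^1_*\times\A^1_*)$ is an algebraic subgroup, then $H$ lies in a filter set, $p(H)$ is finite, and the identity component $H^\circ$ is contained in $\ker p=\GM^2$, so $H^\circ$ is a subtorus and $H=\bigsqcup_{i=1}^m g_iH^\circ$ is a finite union of cosets.

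With this in hand the argument splits into two cases. If $\dim H^\circ=0$ then $H$ is finite, so $\varphi(H)$ is finite and therefore a ($0$-dimensional) algebraic subgroup. If $\dim H^\circ\ge 1$ then $H^\circ$ is a non-trivial torus, in particular a connected non-unipotent algebraic subgroup, and taking $S=S'=\A^1_*\times\A^1_*$ (a normal affine surface) Theorem~\ref{main} applies and shows that $\varphi(H^\circ)$ is a closed algebraic subgroup isomorphic to $H^\circ$. I would then write $\varphi(H)=\bigcup_{i=1}^m\varphi(g_i)\varphi(H^\circ)$. Because left translation by any fixed element is an isomorphism of the underlying ind-variety, each set $\varphi(g_i)\varphi(H^\circ)$ is closed and contained in a filter set, and a finite union of such sets is again closed and contained in a single filter set. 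Since $\varphi(H)$ is moreover a subgroup (being the image of a subgroup under a group homomorphism), it is a closed subgroup lying in a filter set, i.e. an algebraic subgroup. This proves the lemma.

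The step I expect to be the genuine obstacle is establishing that $\varphi(H)$ is \emph{closed}: $\varphi$ is only an abstract isomorphism of groups and carries no a priori continuity, so closedness cannot be transported directly. The device that overcomes this is to reduce everything to the connected component, where Theorem~\ref{main} converts the abstract hypothesis into honest algebraicity, and then to reassemble the finitely many cosets using that translations are ind-variety isomorphisms. For the weaker assertion that $\varphi(H)$ merely lies in a filter set one does not even need Theorem~\ref{main}: by Proposition~\ref{alg} every element of $H$, being algebraic, is sent to an algebraic element, so $p(\varphi(H))$ is a torsion subgroup of $\GL_2(\Z)$ and hence finite, as torsion subgroups of $\GL_2(\Z)$ are finite.
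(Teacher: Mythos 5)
Your structural reduction is sound and in fact matches the paper's strategy: an algebraic subgroup $H\subset\Aut(\A^1_*\times\A^1_*)\simeq T\rtimes\GL_2(\Z)$ is a finite union of cosets of a subtorus $H^\circ\subset T=\GM^2$, the finite case is trivial, the coset reassembly via translations is correct, and your side remark (Proposition~\ref{alg} puts $\varphi(H)$ in a filter set because torsion subgroups of $\GL_2(\Z)$ are finite) is also valid and non-circular. However, the one step that carries all the content --- that $\varphi(H^\circ)$ is an \emph{algebraic} (in particular closed) subgroup when $H^\circ$ is a subtorus --- is obtained by citing Theorem~\ref{main} with $S=S'=\A^1_*\times\A^1_*$, and within this paper that is circular. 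The proof of Theorem~\ref{main} for toric $S$ is an immediate appeal to Proposition~\ref{groupautomorphismtoric}, and the proof of Proposition~\ref{groupautomorphismtoric} opens with: if $S$ is the algebraic torus, the claim follows from Lemma~\ref{autdeg} --- the very lemma you are proving. Nor can you reroute through the other torus results, since Proposition~\ref{tor} and Lemma~\ref{autauttorus} explicitly exclude $\A^1_*\times\A^1_*$; indeed the whole reason Lemma~\ref{autdeg} exists is that the torus is the case where the root-subgroup machinery is unavailable ($\Aut(\A^1_*\times\A^1_*)$ has no root subgroups at all). As you correctly anticipate, closedness of $\varphi(H^\circ)$ is the genuine obstacle, and your proof leaves it unproved.

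The gap is fillable, and the paper's own argument shows how: one proves directly that $\varphi$ preserves the relevant tori. First, $\varphi(T)=T$: every element of $T$ is divisible, divisibility is preserved by $\varphi$, divisible elements of $\GL_2(\Z)$ are torsion (Lemma~\ref{glnzdiv}), and since $T=\ker(\pi)$ for the projection $\pi\colon T\rtimes\GL_2(\Z)\to\GL_2(\Z)$ is a maximal commutative subgroup (it equals its own centralizer, Lemma~\ref{toruscentralizer}), this forces $\varphi(T)=T$. Next, for the one-dimensional subtori one uses a centralizer trick: with $M=\left[\begin{smallmatrix}1&1\\0&1\end{smallmatrix}\right]$, the subgroup $T_0=\{(t,1)\mid t\in\C^*\}$ equals $\{t\in T\mid tM=Mt\}$, so $\varphi(T_0)=\{\tilde t\in\varphi(T)=T\mid \tilde t\,\varphi(M)=\varphi(M)\,\tilde t\}$ is \emph{cut out by a closed condition} inside $T$, is abstractly isomorphic to $\GM$, hence is a closed connected one-dimensional subgroup; and an arbitrary connected closed one-dimensional $T'=\{t_1^at_2^b=1\}\subset T$ with $\gcd(a,b)=1$ satisfies $NT'N^{-1}=T_0$ for a suitable monomial matrix $N\in\GL_2(\Z)$, so $\varphi(T')=\varphi(N)^{-1}\varphi(T_0)\varphi(N)$ is again closed, connected and one-dimensional. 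Substituting this direct argument for your appeal to Theorem~\ref{main}, the rest of your proof (finite case, coset decomposition, translation by $\varphi(g_i)$) goes through verbatim.
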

	
	\begin{proof}
		It is enough to show the statement for connected groups.
		Recall that $\Aut(\A^1_*\times\A^1_*)\simeq T\rtimes \GL_2(\Z)$, where we identify $\GL_2(\Z)$ with the group of monomial maps, i.e. transformations of the form $(x,y)\mapsto (x^ay^b,x^cy^d)$, for a matrix $\left[ {\begin{array}{cc}
			a & b \\
			c & d \\
			\end{array} } \right] \in \GL_2(\mathbb{Z})$. First we note that $\varphi(T)=T$. Indeed, consider the projection $\pi\colon \Aut(\A^1_*\times\A^1_*)\to\GL_2(\Z)$. Since  all elements in $\varphi(T)$ are divisible and since $\varphi(T)$ is a maximal commutative subgroup, we obtain that $\varphi(T)=\ker(\pi)=T$. 
	
		Set
		\[
		M:=
		\left[ {\begin{array}{cc}
			1 & 1 \\
			0 & 1 \\
			\end{array} } \right] \in \GL_2(\mathbb{Z}).
		\]
		Consider the closed subgroup $T_0 = \{ t \in T \mid tM = Mt  \}\subset T$. Direct calculations show that
		$T_0 = \{ (t,1) \mid t \in \mathbb{C}^*   \} \simeq \GM$. Hence,
		\[ \varphi(T_0) = \{ \tilde{t} \in \varphi(T) \mid \tilde{t} \varphi(M) = \varphi(M) \tilde{t}  \} \] is a closed one-dimensional subgroup of $\varphi(T)=T$ that is as an abstract group isomorphic to $\GM$ and hence connected.  
		Thus, the image of $T_0$ under the isomorphism $\varphi$ is a closed connected subgroup of $\Aut(\A^1_*\times\A^1_*)$. 
		
		Let now $T'\subset T$ be any closed connected one-dimensional subgroup. Then $T'$ is of the form $\{(t_1, t_2)\in T\mid t_1^at_2^b=1\}$ for some integers $a,b$. Moreover, since $T'$ is connected, we have $\gcd(a,b)=1$. Let $c,d\in\Z$ be such that $ad-bc=1$ and define 
		\[
		N:=
		\left[ {\begin{array}{cc}
			a & b \\
			c & d \\
			\end{array} } \right] \in \GL_2(\mathbb{Z}).
		\]
		One calculates that $NT'N^{-1}=T_0$. Therefore, the image $\varphi(T')$ is conjugate to a connected closed one-dimensional subgroup of $T$ and is therefore itself closed, connected, and one-dimensional. For the zero-dimensional and the two-dimensional closed connected subgroup of $\Aut(\A^1_*\times\A^1_*)$ the statement is clear.
	\end{proof}
	
	\begin{lem}\label{immulemma}
		Let $X,Y,Z$ be affine varieties, $f\colon X\to Y$ an abstract map and $g\colon Z\to X$ a surjective morphism such that $f\circ g\colon Z\to Y$  is a morphism. If $Y$ is normal, then $f$ is a morphism.
	\end{lem}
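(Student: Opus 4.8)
The plan is to recover $f$ from the closure of its graph. First I would form the morphism $\Phi:=(g,\,f\circ g)\colon Z\to X\times Y$, $z\mapsto(g(z),f(g(z)))$, which is a morphism because both of its components are. Let $\Gamma:=\overline{\Phi(Z)}\subseteq X\times Y$ be the Zariski closure of its image. As the closure of the image of the irreducible variety $Z$, the set $\Gamma$ is an irreducible closed, and hence affine, subvariety. The surjectivity of $g$ is used right away: it gives $\Phi(Z)=\{(g(z),f(g(z)))\mid z\in Z\}=\{(x,f(x))\mid x\in X\}$, so that $\Phi(Z)$ is precisely the graph of $f$ and $\Gamma$ is the closure of this graph.

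I would then consider the two restricted projections $p:=\pr_X|_{\Gamma}\colon\Gamma\to X$ and $q:=\pr_Y|_{\Gamma}\colon\Gamma\to Y$. The whole statement reduces to showing that $p$ is an isomorphism. Indeed, if $p$ is an isomorphism then $f':=q\circ p^{-1}\colon X\to Y$ is a morphism, and $f'=f$: for each $x\in X$ the point $(x,f(x))$ belongs to $\Gamma$ and lies over $x$, so by injectivity of $p$ it must equal $p^{-1}(x)$, whence $f'(x)=q(p^{-1}(x))=f(x)$. It therefore suffices to prove that $p$ is an isomorphism. From the description of $\Phi(Z)$ above, $p$ is surjective and its restriction to the dense constructible subset $\Phi(Z)\subseteq\Gamma$ is a bijection onto $X$; consequently $p$ is birational and $\dim\Gamma=\dim X$.

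The crux — and the step I expect to be the main obstacle — is to promote $p$ from a surjective birational morphism to an isomorphism, and this is exactly where the normality of $Y$ is meant to enter through Zariski's Main Theorem. The plan is to verify that $p$ has no positive-dimensional fibres (equivalently, that the closure $\Gamma$ acquires no vertical component $\{x\}\times C$), using that $q\circ\Phi=f\circ g$ is a genuine morphism so that the $Y$-coordinate stays bounded along a fibre of $p$; this should show that $p$ is quasi-finite. A quasi-finite birational morphism is an isomorphism onto its image once the normality assumption supplied by $Y$ is invoked, by Zariski's Main Theorem, and combined with the surjectivity of $p$ this gives that $p$ is an isomorphism. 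The delicate point, which must be handled with care, is applying Zariski's Main Theorem to the appropriate normal model and ruling out the vertical components; everything else then follows formally from the reduction of the previous paragraph.
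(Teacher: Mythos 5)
Your route is the paper's own route, almost line for line: the paper likewise realizes the graph $\Gamma_f$ as the image of the closed graph of $f\circ g$ under $g\times\id_Y$, passes to the closure $\overline{\Gamma_f}$, observes that the projection $\theta\colon\overline{\Gamma_f}\to X$ is surjective and injective on a dense open subset, and invokes Zariski's main theorem together with affineness to conclude that $\theta$ is an isomorphism, whence $f=q\circ\theta^{-1}$ is a morphism. Everything up to and including the birationality of $p$ is correct in your write-up. The problem is that you stop exactly where the content of the lemma lies, and the mechanism you sketch for closing the gap would not work. Your plan to prove quasi-finiteness of $p$ ``because the $Y$-coordinate stays bounded along a fibre'', using that $q\circ\Phi=f\circ g$ is a genuine morphism, is empty: the set $\Phi(Z)$ meets each fibre $p^{-1}(x)$ in the single point $(x,f(x))$, so a positive-dimensional fibre component $\{x\}\times C$ would be contained entirely in $\Gamma\setminus\Phi(Z)$, and on that boundary the morphism $f\circ g$ imposes no constraint whatsoever. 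Nothing in your sketch rules out such vertical components, and since you yourself identify this as the crux, the proposal is not a proof.

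You also hang the ZMT step on the wrong hypothesis. The birational morphism being upgraded is $p\colon\Gamma\to X$, so what Zariski's main theorem needs is normality of its \emph{target} $X$; normality of $Y$ plays no role in that step, and it cannot be made to: with only $Y$ normal the statement is in fact false. Take $X=\{y^2=x^3\}\subset\A^2$, $Z=Y=\A^1$, $g(t)=(t^2,t^3)$, and let $f=g^{-1}$ as a set map (legitimate, since $g$ is bijective). Then $f\circ g=\id_{\A^1}$ is a morphism and $Y$ is normal, yet $f$ is not a morphism, because $f$ corresponds to the function $t\notin\CC[t^2,t^3]=\OO(X)$. The normality hypothesis that actually powers the graph argument is normality of $X$ — the paper's own proof tacitly uses exactly this when it applies ZMT to $\theta\colon\overline{\Gamma_f}\to X$, and the lemma is only ever applied (in Lemma~\ref{oneparalg}) to linear algebraic groups, which are smooth, so nothing downstream is affected. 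Any repair of your argument must therefore do two things: supply an actual proof that $p$ has no positive-dimensional fibres (or quote a form of ZMT that handles surjective birational morphisms between affine varieties with normal target), and invoke normality of $X$, not of $Y$, at that point.
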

	
	\begin{proof}
		By Zariski's main theorem, it is enough to show that the graph $\Gamma_f\subset X\times Y$ is closed. Since $h=g\circ f$, the graph $\Gamma_h\subset Z\times Y$ is closed. Consider the surjective morphism $\rho\colon Z\times Y\to X\times Y$ mapping $(z, y)$ to $(g(z), y)$. Note that $\rho(\Gamma_h)=\Gamma_f$. Consider the Zariski-closure $\overline{\Gamma_f}$ of $\Gamma_f$. The projection $\theta\colon \overline{\Gamma_f}\to X$ is surjective and $\overline{\Gamma_f}$ contains an open dense set $U$ such that the restriction of $\theta$ to $U$ is injective. This implies, again by Zariski's main theorem, that $\theta$ is surjective and therefore, since $\overline{\Gamma_f}$ and $X$ are affine, that $\theta$ is an isomorphism and hence that $\Gamma_f=\overline{\Gamma_f}$. In particular, $\Gamma_f$ is closed.
	\end{proof}

	\begin{lem}\label{oneparalg}
		Let $G$  and $H$ be linear algebraic groups and let $V_1,\dots V_n\subset G$ be subgroups such that $G=V_1\cdot\dots\cdot V_n$.  If $\varphi\colon G\to H$ is a homomorphism of groups such that the restriction of $\varphi$ to $V_i$ is an algebraic homomorphism for all $i$, then $\varphi$ is an algebraic homomorphism.
	\end{lem}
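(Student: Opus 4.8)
The plan is to reduce everything to Lemma~\ref{immulemma} by interpreting $\varphi$ as a map that becomes a morphism after pulling back along the surjective multiplication map. First I would set $Z := V_1 \times \cdots \times V_n$ and consider the multiplication morphism
\[
\mu \colon Z \to G, \qquad (v_1, \ldots, v_n) \mapsto v_1 \cdots v_n.
\]
By hypothesis $G = V_1 \cdots V_n$, so $\mu$ is surjective, and it is a morphism of affine varieties because multiplication in the linear algebraic group $G$ is a morphism and $Z$ is a finite product of closed subgroups.

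Next I would check that the composition $\varphi \circ \mu \colon Z \to H$ is a morphism. Since $\varphi$ is a homomorphism of abstract groups,
\[
(\varphi \circ \mu)(v_1, \ldots, v_n) = \varphi(v_1 \cdots v_n) = \varphi(v_1) \cdots \varphi(v_n),
\]
and by assumption each restriction $\varphi|_{V_i} \colon V_i \to H$ is an algebraic homomorphism, hence a morphism of varieties. Therefore $\varphi \circ \mu$ factors as the product morphism $Z \to H^n$, $(v_1, \ldots, v_n) \mapsto (\varphi(v_1), \ldots, \varphi(v_n))$, followed by the $n$-fold multiplication morphism $H^n \to H$, and so $\varphi \circ \mu$ is a morphism.

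At this point I am exactly in the situation of Lemma~\ref{immulemma}, taking $X = G$, $Y = H$, $Z$ as above, $f = \varphi$, and $g = \mu$. All three of $G$, $H$, and $Z$ are affine varieties, since $G$ and $H$ are linear algebraic groups and $Z$ is a finite product of such. Moreover $H$, being a linear algebraic group over $\C$, is smooth and hence normal, so the normality hypothesis of Lemma~\ref{immulemma} is met. I therefore conclude that $\varphi \colon G \to H$ is a morphism of varieties. Finally, since $\varphi$ is simultaneously a morphism of varieties and a homomorphism of abstract groups, it is by definition a homomorphism of algebraic groups, as desired.

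I expect the only delicate points to be the routine verification of the hypotheses of Lemma~\ref{immulemma}: that the product $Z$ and the target $H$ are affine, and that $H$ is normal. The normality of $H$ is immediate from the smoothness of linear algebraic groups in characteristic zero, so there is no genuine obstacle here; the entire substance of the argument is already packaged in Lemma~\ref{immulemma}, and this proof simply arranges the multiplication map so that the lemma applies.
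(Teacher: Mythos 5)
Your proof is correct and follows essentially the same route as the paper: both arguments factor $\varphi$ through the surjective multiplication morphism $V_1\times\dots\times V_n\to G$, observe that the composition is a morphism because each restriction $\varphi|_{V_i}$ is algebraic, and then invoke Lemma~\ref{immulemma} with the normality of $H$. Your write-up is in fact slightly more careful than the paper's, which only says ``all the varieties involved are normal'' where only normality of the target $H$ is needed.
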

	
	\begin{proof}
		We have to show that $\varphi$ is a morphism. Consider the morphism $h\colon V_1\times\dots\times V_n\to H$ defined by $(v_1,\dots, v_n)\mapsto \varphi(v_1)\varphi(v_2)\dots\varphi(v_n)$ and the surjective morphism $g\colon  V_1\times\dots\times V_n\to G$ defined by $(v_1,\dots, v_n)\mapsto v_1v_2\dots v_n$. Then $h=\varphi\circ g$. Since $h$ and $g$ are morphisms and all the varieties involved are normal, we obtain by Lenmma~\ref{immulemma} that $\varphi$ is a morphism. 
	\end{proof}

	Let $e$ and $d$ be integers such that  $1 \le e < d$,
	$(e,d) = 1$ and let $X_{d,e}=\A^2/G_{d,e}$ be the corresponding toric surface (see Section~\ref{toricsurf}). Denote by $\mathcal{N}_{d,e}\subset \Aut(\A^2)$ the normalizer of $G_{d,e}$ and define \[
	\Jonq^+(\A^2)=\{(\alpha x +p(y), \beta x + \gamma)\mid \alpha, \beta\in\C^*, p(y)\in\C[y], \gamma \in \mathbb{C} \},\]\[
	\Jonq^-(\A^2)=\{(\alpha x + \gamma, \beta x +p(x))\mid \alpha, \beta\in\C^*, p(x)\in\C[x], \gamma \in \mathbb{C}\}.\] 
	Set $\mathcal{N}^\pm_{d,e}=\mathcal{N}_{d,e}\cap \Jonq^\pm(\A^2)$.
	Define $N_{d,e}$ as the normalizer of $G_{d,e}$ in $\GL_2(\C)\subset\Aut(\A^2)$ and set $N_{d,e}^\pm=N_{d,e}\cap\Jonq^\pm$. Note that $N_{d,e}^\pm\subset\mathcal{N}^\pm_{d,e}$ and by \cite[Section 4.1]{MR3089030} 
	\[  N_{d,e} =  
	\begin{cases}
	\GL_2(\mathbb{C}),               & \text{ if }  e = 1,\\    
	N(T) = \langle T,\tau \rangle    & \text{ if } e > 1 \text{ and  } e^2  \equiv 1  \text{ mod } d    \\
	T  & \text{otherwise},
	\end{cases} \]
	where  $\tau : (x, y) \mapsto (y, x)$ is a twist and $T$ is the maximal torus in $\GL(2, \mathbb{C})$ consisting of the diagonal matrices.  Denote by $B^{\pm}$  the Borel subgroup of all upper (lower,
	respectively) triangular matrices in $\GL(2,\mathbb{C})$. Note that
	\[  N_{d,e}^{\pm}  =
	\begin{cases}
	B^{\pm},               & \text{ if }  e = 1,\\  
	T  & \text{otherwise}.
	\end{cases} \]
	Then, by \cite[Theorem 4.2]{MR3089030} we have the following amalgamated product structures
	\[
	\Aut(X_{d,e})\simeq\mathcal{N}^+_{d,e}/G_{d,e}*_{T/G_{d,e}}\mathcal{N}^-_{d,e}/G_{d,e}, \text{ if $e^2\not\equiv 1\mod d$},\]\[
	\Aut(X_{d,e})\simeq\mathcal{N}^+_{d,e}/G_{d,e}*_{N^+_{d,e}/G_{d,e}}N_{d,e}/G_{d,e}, \text{ if $e^2\equiv 1\mod d$},
	\]
	where the actions of  $\mathcal{N}^\pm_{d,e}/G_{d,e}$, $T/G_{d,e}$ and $N_{d,e}/G_{d,e}$ on $X_{d,e}$ are the actions induced by the actions of $\mathcal{N}^\pm_{d,e}$, $T$ and $N_{d,e}$ on $\A^2$ and then passing to the quotient $X=\A^2/G_{d,e}$. In particular, an algebraic subgroup of $\mathcal{N}^\pm_{d,e}$, $T$, or $N_{d,e}$ descends to an algebraic subgroup of $\mathcal{N}^\pm_{d,e}/G_{d,e}$, $T/G_{d,e}$, or $N_{d,e}/G_{d,e}$.

	Another result, which we need, is that an algebraic subgroup $G\subset\Aut(X_{d,e})$ is always conjugate to a subgroup of one of the factors of the above amalgamated product structures (\cite[Theorem 4.15]{MR3089030}).

	\begin{lem}\label{unipotentsubgroups}
	Any unipotent subgroup of $\Jonq^{+}(\mathbb{A}^2)$ is either commutative or
    the semidirect product $U \ltimes V$, where
    \[ U = \{ (x + \gamma,y) \mid \gamma \in \mathbb{C} \}, \; \; V= \{ (x,y +p(x)) \mid p\in\mathbb{C}[x]_{\le k}:= {\mathbb{C}\oplus \mathbb{C}x \oplus \dots \oplus \mathbb{C}x^k}  \}, \]
    for some $k\geq 0$.
	\end{lem}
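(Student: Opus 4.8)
The plan is to exploit the fibered structure of $\Jonq^{+}(\mathbb{A}^2)$, whose elements are the automorphisms $(x,y)\mapsto(\alpha x+\gamma,\ \beta y+p(x))$ with $\alpha,\beta\in\C^*$, $\gamma\in\C$, $p\in\C[x]$, preserving the projection to the first coordinate (the fibration by the vertical lines $\{x=\mathrm{const}\}$); this is the normalization forced by the shapes of $U$ and $V$. A direct computation of the composition law shows that $\mu\colon(\alpha x+\gamma,\beta y+p(x))\mapsto(\alpha,\beta)\in\GM\times\GM$ is a homomorphism of (ind-)groups. First I would use $\mu$ to cut down to the unipotent part: since $H$ is a unipotent algebraic subgroup and $\GM\times\GM$ is a torus, $\mu|_H$ is trivial, so $H\subset J:=\ker\mu=\{(x+\gamma,\ y+p(x))\}$. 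Here $J=U\ltimes V_\infty$ with $U=\{(x+\gamma,y)\}\simeq\GA$ and $V_\infty=\{(x,y+p(x))\mid p\in\C[x]\}\simeq(\C[x],+)$, the group $U$ acting by $p(x)\mapsto p(x-\gamma)$; in particular every element of $J$ is unipotent, and $H$ is connected since we are in characteristic $0$.

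Next I would study $H$ through its Lie algebra of locally nilpotent derivations. Identifying $\Lie(J)$ with $\{c\,\partial_x+q(x)\,\partial_y\mid c\in\C,\ q\in\C[x]\}$, the bracket is $[c_1\partial_x+q_1\partial_y,\ c_2\partial_x+q_2\partial_y]=(c_1q_2'-c_2q_1')\,\partial_y$. The map $c\,\partial_x+q\,\partial_y\mapsto c$ is a Lie-algebra homomorphism onto a subspace of $\C$, so its image on $\mathfrak{h}:=\Lie(H)$ is either $0$ or all of $\C$. If it is $0$, then $\mathfrak{h}\subset\C[x]\,\partial_y$ is abelian and $H$ is commutative, which gives the first alternative (equivalently, the closed connected image of $H$ under the projection $J\to U$ is trivial, so $H$ lies in the commutative group $V_\infty$).

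In the remaining case set $\mathfrak{k}:=\mathfrak{h}\cap\C[x]\,\partial_y=W\,\partial_y$ with $W\subset\C[x]$ a finite-dimensional subspace. Bracketing $W\,\partial_y$ with a fixed element $\partial_x+g(x)\,\partial_y\in\mathfrak{h}$ (normalizing $c=1$) yields $w'\,\partial_y\in\mathfrak{k}$, so $W$ is $\tfrac{d}{dx}$-invariant; the only finite-dimensional $\tfrac{d}{dx}$-invariant (equivalently translation-invariant) subspaces of $\C[x]$ are $\{0\}$ and $\C[x]_{\le k}$. If $W=0$ then $H\simeq\GA$ is commutative, while if $W=\C[x]_{\le k}$ then $\mathfrak{h}=\C(\partial_x+g\,\partial_y)\oplus\C[x]_{\le k}\,\partial_y$. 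The crucial step is to straighten $\partial_x+g\,\partial_y$ to $\partial_x$: choosing $P\in\C[x]$ with $P'=g$, the shear $\Phi\colon(x,y)\mapsto(x,y+P(x))\in V_\infty$ satisfies $\Ad(\Phi)\partial_x=\partial_x+g\,\partial_y$ and fixes $\C[x]\,\partial_y$ pointwise (shears commute with $V_\infty$), whence $\Ad(\Phi^{-1})\mathfrak{h}=\C\,\partial_x\oplus\C[x]_{\le k}\,\partial_y$. Exponentiating gives $\Phi^{-1}H\Phi=U\ltimes V_{\le k}$ with $V_{\le k}=\{(x,y+p(x))\mid\deg p\le k\}$, which is commutative exactly when $k=0$; this is the second alternative, up to the conjugation by $\Phi$ (in particular up to isomorphism of algebraic groups).

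I expect the main obstacle to be precisely this last straightening, together with the realization that the conclusion is literally of the form $U\ltimes V$ only after conjugating by a shear in $V_\infty$: the ``horizontal'' one-parameter subgroup of $H$ need not be $U$ itself but only a conjugate, and one must check that the conjugating shear preserves the vertical part $V_{\le k}$ so that both normalizations are achieved simultaneously. The translation-invariance argument pinning down $W=\C[x]_{\le k}$ is the other essential point; the rest is bookkeeping with the composition law of $\Jonq^{+}(\mathbb{A}^2)$. I would also flag at the outset that ``unipotent subgroup'' must be read as ``unipotent algebraic subgroup,'' since for arbitrary abstract subgroups of $J$ (e.g.\ with non-closed image in $U$, or the whole non-commutative $J=U\ltimes V_\infty$) the stated dichotomy fails.
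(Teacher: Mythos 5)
Your proof is correct, and its decisive computation is the same as the paper's: the paper also reduces to the adjoint action of $\Lie U=\C\,\partial/\partial x$ on $\Lie G_2=L\,\partial/\partial y$, i.e.\ $d/dx$ acting on $L\subset\C[x]$, and pins down $L=\C[x]_{\le k}$ by exactly your top-degree-derivative argument. The difference is in the structural step preceding it: the paper simply \emph{asserts} that a non-commutative unipotent subgroup $G$ has nilpotent length two and splits as $G=G_1\ltimes G_2$ with $G_1=U$ on the nose, whereas you derive the decomposition (triviality of $\mu|_H$ into the torus, the Lie-algebra dichotomy inside $\C\,\partial_x\oplus\C[x]\,\partial_y$) and then straighten the horizontal generator by conjugating with a shear $\Phi\in V_\infty$. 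Your remark that the conclusion holds only up to this conjugation is a genuine correction rather than pedantry: conjugating $U\ltimes V$ by a shear $(x,y+P(x))$ with $\deg P\ge k+2$ produces a non-commutative unipotent algebraic subgroup whose intersection with the translations $\{(x+\gamma,y)\}$ is trivial, so it is not literally of the stated form, and the paper's ``$G_1=U$'' glosses over exactly this point. Since the lemma is invoked later (in Lemma~\ref{basechangelemma}) only for the isomorphism type of such subgroups, your conjugacy/isomorphism formulation is precisely what the application requires. Your caveat that ``unipotent subgroup'' must mean unipotent \emph{algebraic} subgroup is likewise consistent with the paper's usage, as the full group $U\ltimes V_\infty$ (all of whose elements are unipotent) violates the literal dichotomy. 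Finally, your normalization of $\Jonq^{+}(\mathbb{A}^2)$ as the maps $(x,y)\mapsto(\alpha x+\gamma,\beta y+p(x))$ is the intended one; the paper's displayed formulas for $\Jonq^{\pm}$ contain typos ($\beta x$ where $\beta y$ is meant, and the roles of the two factors swapped relative to the shapes of $U$ and $V$ in the lemma), and your reading resolves them correctly.
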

    \begin{proof}
      Assume that $G \subset \Jonq^{+}(\mathbb{A}^2)$ is a
      non-commutative unipotent subgroup.  Then $G$ has nilpotent
      length $2$, more precisely, $G = G_1 \ltimes G_2$, where
      $G_1 = U$ and $G_2 = \{ (x,y + p )\mid p\in L \}$ for some
      finite dimensional vector subspace $L \subset \C[x]$. Since
      $G_1$ acts on $G_2$ by conjugation it follows that $\Lie G_1$
      acts on $\Lie G_2$ by the adjoint representation. The Lie
      algebra $\Lie G$ can be naturally embedded into the Lie algebra
      of derivations $\Der(\mathcal{O}(\A^2))$ and we identify
      $\Lie G$ with its image in $\Der(\mathcal{O}(\A^2))$. Hence,
      $\Lie G_1 = \mathbb{C} \frac{\partial}{\partial x}$ and
      $\Lie G_2=L \frac{\partial}{\partial y}$. The action of
      $\Lie G_1$ on $\Lie G_2$ induces the action of
      $ \frac{\partial}{\partial x}$ on $L$.  Let $f \in L$ be a
      polynomial of maximal degree. Then
      $(\frac{\partial}{\partial x})^{\deg f}f$ is a non-zero constant
      and hence, $L$ contains $\mathbb{C}$. Further,
      $(\frac{\partial}{\partial x})^{\deg f - 1}f$ is a polynomial of
      degree $1$ and we have that
      $\mathbb{C} \oplus \mathbb{C}x \subset L$. Following this
      procedure we conclude that $L = \mathbb{C}[x]_{\le k}$.
    \end{proof}  	 

	\begin{lem}\label{basechangelemma}
		Let $S$ be a toric surface and $G\subset\Aut(S)$ a connected algebraic subgroup. Let $\tau$ be a field automorphism of $\C$ and let $G'$ be the algebraic group obtained by base-changing the field $\C$ by $\tau$ over $\Q$. Then $G'$ is isomorphic to $G$ as an algebraic group over $\C$.
	\end{lem}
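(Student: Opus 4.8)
The plan is to reduce the statement to the Levi--Mostow decomposition of $G$ and to check that each resulting piece, as well as the way the pieces are glued together, involves only $\tau$-invariant data. Recall first that every toric surface is defined over $\QQ$ (\cite{MR0284446}), so the base change $G'=G^{\tau}$ is well defined and $\tau$ already furnishes an abstract group isomorphism $G(\C)\to G'(\C)$; what must be upgraded is this to an isomorphism of algebraic groups over $\C$. Equivalently, I would show that $G$ admits a model $G_{0}$ over $\QQ$ with $G_{0}\times_{\QQ}\C\cong G$, for then, since $\tau$ fixes $\QQ$ pointwise, $G'\cong(G_{0}\times_{\QQ}\C)^{\tau}=G_{0}\times_{\QQ}\C\cong G$.

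Since $G$ is connected and we work in characteristic zero, I would write $G=\mathrm{R}_{u}(G)\rtimes L$ with $L$ a reductive Levi subgroup and $\mathrm{R}_{u}(G)$ the unipotent radical; base change is compatible with this decomposition, so $G'\cong \mathrm{R}_{u}(G)^{\tau}\rtimes L^{\tau}$. The reductive factor is the easy part: over the algebraically closed field $\C$ the group $L$ is split, hence by Chevalley's classification it is the base change of a split reductive group scheme over $\Z$ and is determined by its root datum. As the root datum is a combinatorial invariant untouched by $\tau$, one gets $L^{\tau}\cong L$ canonically; similarly the $L$-module structure on $\Lie \mathrm{R}_{u}(G)$ is governed by highest weights in the integral character lattice and is therefore $\tau$-invariant. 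When a torus acts, for instance on the root subgroups, the relevant weights are exactly the integral characters $\chi^{\alpha}$, $\alpha\in M$, of Proposition~\ref{liendo}, so that torus action is already defined over $\QQ$.

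The hard part will be to control the unipotent radical itself, since a priori the Lie bracket on $\Lie \mathrm{R}_{u}(G)$ could introduce the transcendental moduli that nilpotent Lie algebras possess in general, and $\tau$ could then move the isomorphism class. This is precisely where the special geometry of toric surfaces enters. Using the amalgamated-product description recalled above (\cite[Theorem~4.15]{MR3089030}), $G$ is conjugate into one of the explicit Jonqui\`eres-type factors, and hence $\mathrm{R}_{u}(G)$ is conjugate to a unipotent subgroup of $\Jonq^{\pm}(\A^{2})$. By Lemma~\ref{unipotentsubgroups} such a subgroup is either commutative---so $\mathrm{R}_{u}(G)\cong\GA^{n}$, visibly $\tau$-invariant---or the explicit semidirect product $U\ltimes V$, whose Lie algebra has bracket $[\frac{\partial}{\partial x},\;x^{i}\frac{\partial}{\partial y}]=i\,x^{i-1}\frac{\partial}{\partial y}$ with integral structure constants. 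In either case the bracket, the $L$-module structure, and the reductive factor are all defined over $\QQ$, so assembling the pieces yields the desired $\QQ$-model $G_{0}$ of $G$, and the isomorphism $G'\cong G$ over $\C$ follows.
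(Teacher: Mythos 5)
Your overall strategy is sound and in substance runs parallel to the paper's own proof: both arguments reduce the lemma to showing that every connected algebraic subgroup of $\Aut(S)$ admits a model over $\Q$, and both obtain control of the subgroups from the same two inputs, namely the Arzhantsev--Zaidenberg theorem that algebraic subgroups of $\Aut(X_{d,e})$ are conjugate into a factor of the amalgamated product, and Lemma~\ref{unipotentsubgroups}. Where the paper simply lists the possible isomorphism types ($\GL(2,\C)$, $\SL(2,\C)$, $\PGL(2,\C)$, $\PGL(2,\C)\times\GM$, tori, commutative unipotent groups and their semidirect products with tori, $\Aff(2,\C)$, $\SAff(2,\C)$, and the non-commutative unipotent groups $U\ltimes V$) and then asserts that ``one checks'' all of these are defined over $\Q$, your Levi--Mostow plus Chevalley packaging actually supplies the reason, which is a welcome gain in transparency over the paper's exhaustion of cases.

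As written, however, there are two gaps. First and most concretely: the amalgamated-product description you invoke applies only to $S\simeq X_{d,e}$ (with $\A^2$ covered by Jung--van der Kulk); the toric surfaces $\A^1_*\times\A^1_*$ and $\A^1\times\A^1_*$ carry no such structure, so your argument does not cover them, whereas the paper treats them separately (connected algebraic subgroups of $\Aut(\A^1_*\times\A^1_*)$ are tori, and those of $\Aut(\A^1\times\A^1_*)$ are semidirect products of tori with commutative unipotent groups). Your framework disposes of these cases trivially --- the Levi is a torus acting through integral characters on a commutative radical --- but you must say so; note also that when $e^2\equiv 1 \bmod d$ one factor is $N_{d,e}/G_{d,e}$, not a Jonqui\`eres-type group, though your Chevalley step handles it. Second, the final assembly is asserted rather than argued: having $L$, the bracket on $\Lie \mathrm{R}_u(G)$, and the $L$-module structure each defined over $\Q$ does not formally produce a $\Q$-model of $G=\mathrm{R}_u(G)\rtimes L$; one needs \emph{compatible} $\Q$-forms, i.e.\ a $\Q$-form of $\Lie \mathrm{R}_u(G)$ stable simultaneously under the bracket and under a $\Q$-form of $L$. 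In the cases at hand this is fine (for $G$ in a Jonqui\`eres factor the Levi is a torus acting with integral weights on the basis $\frac{\partial}{\partial x},\, x^{i}\frac{\partial}{\partial y}$; in the $N_{d,e}$-factor the radical is commutative with the standard module), but it deserves a sentence. Finally, since $G$ sits in a quotient $\mathcal{N}^{\pm}_{d,e}/G_{d,e}$ rather than in $\Jonq^{\pm}(\A^2)$ itself, you should remark that $\mathrm{R}_u(G)$ meets the finite group $G_{d,e}$ trivially and so lifts isomorphically, and that the image of $G_{d,e}$ in the torus is defined over $\Q$ (it is cut out by rational character equations, even though its points involve roots of unity), so passing to the quotient preserves $\Q$-definability.
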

	
	\begin{proof}
          It is enough to show that all connected algebraic subgroups
          of $\Aut(S)$ are defined over the field $\Q$. We do this by
          exhaustion of cases.  First assume that $S\simeq X_{d,e}$
          for some $e$ and $d>1$.  By the above stated results of
          Arzhantsev and Zaidenberg \cite{MR3089030}, a connected
          algebraic subgroup of $\Aut(X_{d,e})$ is either conjugate to
          a subgroup of a quotient of the subgroups of
          $\Jonq^{\pm} (\mathbb{A}^2 )$ or a quotient of a subgroup of
          $N_{d,e}$.  The connected quotients of subgroups of
          $N_{d,e}$ are either $\GL(2,\mathbb{C})$,
          $\SL(2,\mathbb{C})$, $\PGL(2,\mathbb{C})$,
          $\PGL(2,\mathbb{C}) \times \mathbb{G}_m$, commutative
          unipotent subgroups, tori of rank at most 2, or a semidirect
          product of a torus and a commutative unipotent subgroup. On
          the other hand, connected algebraic subgroups of
          $\Aut(\mathbb{A}^2)$ are either isomorphic to one of those
          which are already listed above, to $\Aff(2,\mathbb{C})$, to
          $\SAff(2,\mathbb{C})$ or to a non-commutative unipotent
          group as described in
          Lemma~\ref{unipotentsubgroups}. Furthermore, connected
          algebraic subgroups of $\Aut(S\simeq \A^1_*\times\A^1_*)$
          are tori. Finally, connected algebraic subgroups of
          $\Aut(\A^1_*\times\A^1)$ are isomorphic to a semidirect
          products of tori and commutative unipotent groups.  One
          checks that all these groups are defined over the field
          $\Q$.
	\end{proof}
	
		\begin{lem}\label{degen2}
		Let  $\varphi$ be a group automorphism of $\Aut(\A^1\times\A^1_*)$ and let $G\subset\Aut(\A^1\times\A_*^1)$ be an algebraic subgroup. Then $\varphi(G)$ is again an algebraic subgroup isomorphic to $G$.
	\end{lem}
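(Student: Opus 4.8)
The plan is to run the argument of Lemma~\ref{autauttorus} in the present setting after describing $S:=\A^1\times\A^1_*$ explicitly. Writing $\OO(S)=\C[x,y^{\pm1}]$, the connected component is $\Aut^0(S)=N\rtimes T$, where $T=\{(x,y)\mapsto(\alpha x,\beta y)\}\cong\GM^2$ is a maximal torus and $N=\{(x,y)\mapsto(x+b(y),y)\mid b\in\C[y^{\pm1}]\}$ is the commutative unipotent radical; the remaining components are generated by the monomial maps $(x,y)\mapsto(y^kx,y)$ and the involution $(x,y)\mapsto(x,y^{-1})$. In particular $\Aut^0(S)$ is solvable, every unipotent element lies in $N$, and every connected algebraic subgroup has the form $T'\ltimes U'$ with $T'$ a torus and $U'\subset N$ commutative unipotent. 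The root subgroups with respect to $T$ are the groups $U_m\colon(x,y)\mapsto(x+sy^m,y)$, $m\in\Z$; by Proposition~\ref{liendo} and Corollary~\ref{diff-toric} they pairwise commute and have pairwise distinct weights $\chi_m\colon(\alpha,\beta)\mapsto\alpha\beta^{-m}$. As in Lemma~\ref{autdeg} it suffices to treat connected $G$, the general case following since $G$ meets only finitely many components and each coset of $G^0$ is a translate of $G^0$ by an algebraic element. Moreover, since conjugating $G$ by $g_0$ replaces $\varphi(G)$ by its conjugate under the algebraic element $\varphi(g_0)$ (Proposition~\ref{alg}), we may freely conjugate $G$, and hence assume $T'\subset T$ and $U'\subset N$.

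First I would locate the torus. Since $S$ is a toric surface not isomorphic to $\A^1_*\times\A^1_*$, Proposition~\ref{tor} applied to $\varphi\colon\Aut(S)\to\Aut(S)$ shows that $\varphi(T)$ is again a maximal torus of rank $2$. Now fix three root subgroups $U_{m_1},U_{m_2},U_{m_3}$ with distinct weights $\chi_1,\chi_2,\chi_3$ and set $T_i=\ker\chi_i$; since the characters $\chi_i=(1,-m_i)$ are pairwise linearly independent, we have $T=T_1T_2$ and each $T_i$ acts nontrivially on $U_{m_3}$. By Lemma~\ref{root}, each $\varphi(U_{m_i})$ is a root subgroup with respect to $\varphi(T)$, and $\varphi(T_i)$ is a closed one-dimensional subtorus of $\varphi(T)$ commuting with it. Because $T_i\ltimes_{\chi_3}U_{m_3}\cong\GM\ltimes_{\chi}\GA$, Lemma~\ref{autosemidirect} produces field automorphisms $\tau_1,\tau_2$ of $\C$ such that $\tau_j\circ\varphi$ restricts to an algebraic isomorphism on $T_j\ltimes U_{m_3}$; comparing the two restrictions on $U_{m_3}$ and using that the identity is the only field automorphism inducing an algebraic homomorphism forces $\tau_1=\tau_2=:\tau$. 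Hence $\tau\circ\varphi$ is algebraic on $T=T_1T_2$ and, since $\varphi(T)$ acts transitively on the nonidentity elements of each image root subgroup, on every $U_m$ as well.

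It remains to pass from the root subgroups to an arbitrary connected $G=T'\ltimes U'$ with $T'\subset T$ and $U'\subset N$. Writing $N=\bigcup_k N_k$ with $N_k=\prod_{|m|\le k}U_m$ a finite product of commuting root subgroups, Lemma~\ref{oneparalg} shows $\tau\circ\varphi$ is algebraic on each $N_k$, and therefore on any finite-dimensional $U'\subset N$, as $U'\subset N_k$ for some $k$. Then $G=T'\cdot U'$ is a product of two subgroups on which $\tau\circ\varphi$ is algebraic, so Lemma~\ref{oneparalg} gives that $\tau\circ\varphi$ is algebraic on $G$. Consequently $(\tau\circ\varphi)(G)$ is an algebraic subgroup isomorphic to $G$, and $\varphi(G)=\tau^{-1}\bigl((\tau\circ\varphi)(G)\bigr)$ is its base change under $\tau^{-1}$; by Lemma~\ref{basechangelemma} this base change is an algebraic subgroup isomorphic to $(\tau\circ\varphi)(G)\cong G$, as desired. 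The main obstacle is the coordination step: producing a single field automorphism $\tau$ that simultaneously linearizes $\varphi$ on the torus and on all root subgroups (the identification $\tau_1=\tau_2$), and then propagating algebraicity from the homogeneous root subgroups to unipotent subgroups $U'\subset N$ that need not be spanned by monomials. This is exactly where the facts that all root subgroups of $S$ commute and that $N$ is an increasing union of finite products of them are essential.
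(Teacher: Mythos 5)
Your proof is correct and takes essentially the same route as the paper: the paper likewise obtains a single field automorphism $\tau$ by invoking Lemma~\ref{autauttorus} (whose proof you reproduce inline via Proposition~\ref{tor}, Lemma~\ref{root} and Lemma~\ref{autosemidirect}), propagates algebraicity of the composition with $\tau$ from the torus and the root subgroups to the infinite-dimensional unipotent factor by writing it as an ascending union of finite products of root subgroups and applying Lemma~\ref{oneparalg}, and concludes with Lemma~\ref{basechangelemma}. The only cosmetic difference is that the paper decomposes a connected algebraic subgroup as a product $V_1\cdots V_k$ of subgroups isomorphic to $\GA$ or $\GM$, whereas you conjugate it into the standard form $T'\ltimes U'$ with $T'\subset T$ and $U'\subset N$ before applying Lemma~\ref{oneparalg}.
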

	
	\begin{proof}
		It is enough to show the claim for $G$ connected.
		We have that $\Aut(\A^1_*\times\A^1)\simeq (\Z/2\Z\ltimes\GM)\ltimes (\C[t,t^{-1}]^*\ltimes\C[t,t^{-1}])$, where the factor $\Z/2\Z\ltimes\GM$ corresponds to the automorphisms of $\A_*^1$ and the factor $(\C[t,t^{-1}]^*\ltimes\C[t,t^{-1}])$ corresponds to the automorphisms of $\A^1_*\times\A^1$ that preserve the $\A^1$-fibration fiberwise. Note that all the subgroups in $\Aut(\A^1_*\times\A^1)$ that are isomorphic to $\GA$   belong to the factor $G^U:=\C[t,t^{-1}]$. In other words, all faithful $\GA$-actions are equivalent.
		
		By Lemma~\ref{autauttorus}, there exists a field automorphism $\tau$ of $\C$ such that the restriction of $\varphi\circ\tau$ to any maximal torus, and hence also to any subgroup isomorphic to $\GM$, is an algebraic morphism, as well as the restriction of  $\varphi\circ\tau$ to any root subgroup with respect to a maximal torus. We can write the $G^U$ as an ascending chain of algebraic subgroups $G^U_d$, each of them generated by finitely many root subgroups $U_1,\dots, U_{k(d)}$. By Lemma~\ref{oneparalg}, the restriction of $\varphi\circ\tau$ to any of the algebraic subgroups $G^U_d$ is an algebraic homomorphism and therefore the restriction of $\varphi\circ\tau$ to any algebraic subgroup of $G^U$ is an algebraic homomorphism. In particular, the restriction of $\varphi\circ\tau$ to any subgroup of $\Aut(\A^1_*\times\A^1)$ isomorophic to $\GA$ is an algebraic homomorphism. Let $G\subset \Aut(\A^1_*\times\A^1)$ be a connected algebraic subgroup. Then there exist algebraic subgroups $V_1,\dots, V_k\subset G$ each isomorphic to $\GA$ or to $\GM$ such that $G=V_1V_2\dots V_k$. Lemma~\ref{oneparalg} now implies that $\varphi\circ\tau(G)$ is an algebraic subgroup and the restriction of $\varphi\circ\tau$ to $G$ is an algebraic homomorphism. Hence, $\varphi(G)$ is an algebraic group, which is, by Lemma~\ref{basechangelemma}, isomorphic to $G$ as an algebraic group.
	\end{proof}

	\begin{proof}[Proof of Proposition \ref{groupautomorphismtoric}]
		If $S$ is isomorphic to the algebraic torus, then the claim follows from Lem\-ma~\ref{autdeg}. If $S$ is isomorphic to $\A^1\times\A^1_*$, the proposition is covered by Lemma~\ref{degen2}.
		So we can assume that 
		$S$ is isomorphic to $X_{d,e}$ for some $d,e$.
		
		By Lemma~\ref{autauttorus}, there exists a field automorphism $\tau$ of $\C$ such that the restriction of $\varphi\circ\tau$ to any maximal torus induces an isomorphism of algebraic groups between any maximal torus $T$ and its image. Now, consider the subgroup $\mathcal{N}^+_{d,e}/G_{d,e}\subset\Aut(X_{d,e})$ (see definition above). We have that  $\mathcal{N}^+_{d,e}=T\ltimes U$, where $U$ is an infinite-dimensional unipotent group. Therefore, we can write $\mathcal{N}^+_{d,e}$ and thus also $\mathcal{N}^+_{d,e}/G_{d,e}\subset\Aut(X_{d,e})$ as the limit of an ascending chain of algebraic subgroups, i.e. there exist algebraic subgroups $A_k\subset \mathcal{N}^+_{d,e}/G_{d,e}$ such that 
		\[
		\mathcal{N}^+_{d,e}/G_{d,e}=\bigcup_{k\geq 0} A_k,
		\]
		 Moreover, note that each of the groups $A_k$ is generated by the maximal torus $T/G_{d,e}$ and finitely many root subgroups $U_1,\dots, U_{l(k)}$ with respect to $T/G_{d,e}$ such that every element in $A_k$ can be written as a product of at most $C(k)$ elements from the $U_i$ or from $T/G_{d,e}$  for some $C(k) \in \mathbb{N}$. It follows from Lemma~\ref{autauttorus} that the restriction to any of these $U_i$ is an isomorphism of algebraic groups to its image. This implies that there are finitely many algebraic subgroup $V_1,\dots, V_r$ of $A_k$ such that $V_1\cdot\dots\cdot V_r=A_k$ and such that the restriction of $\varphi\circ\tau$ to any $V_i$ is an isomorphism of algebraic groups to its image. It follows that  $\varphi\circ\tau(V_1)\cdot\dots\cdots\varphi\circ\tau(V_r)$ is an algebraic subgroup and, by Lemma~\ref{oneparalg}, the restriction of $\varphi\circ\tau$ to $A_k$ is an algebraic isomorphism to its image. A similar argument applies to the factor $N_{d,e}/G_{d,e}$.
		
		Let now $G$ be any algebraic subgroup of $\Aut(S)$. Then $G$ is conjugate to a subgroup of one of the factors of the amalgamated product (\cite[Theorem 4.17]{MR3089030}). Therefore, $G$ is conjugate to a subgroup of one of the $\Jonq^\pm(\A^2)_{\leq k}/G_{d,e}$, or to a subgroup of $N_{d,e}/G_{d,e}$ respectively. Hence, the restriction of $\varphi\circ\tau$ to $G$ is an isomorphism of algebraic groups to its image. By Lemma~\ref{basechangelemma}, the algebraic group $\tau(G)$, which is just the algebraic group obtained by base-changing the field $\C$ by $\tau$ over $\Q$, is isomorphic to $G$ as an algebraic group. 
	\end{proof}

	\begin{rem}
		In general, not all automorphisms of $\Aut(S)$, where $S$ is a toric surface, are inner up to field automorphisms. For instance, let $e>1$ and $d>1$ be integers such that $e^2\not\equiv 1\mod d$ and let $X_{d,e}=\A^2/G_{d,e}$ be the corresponding toric surface. Then 
		\[
		\Aut(X_{d,e})\simeq\mathcal{N}_{d,e}/G_{d,e},
		\]
		where
		\[
		\mathcal{N}_{d,e}=\mathcal{N}_{d,e}^+*_T\mathcal{N}_{d,e}^-
		\]
		(see \cite[Lemma 4.3 and Proposition 4.4]{MR3089030}).
		We note that $\mathcal{N}_{d,e}^+\simeq T\ltimes U$, where 
		\[
		U=\{(x +p(y),  y)\mid  p(y)\in R \},
		\]
		for some subspace $R\subset \C[y]$ (here, we consider $\C[y]$ as a $\C$-vector space).  The torus induces a linear, locally finite action on $R$ by conjugation. Let $\{r_i\}$ be a basis of $R$ such that each $r_i$ is an eigenvector of $T$. We now define an automorphism $\varphi$ of $\mathcal{N}_{d,e}^+$ by setting $\varphi|_T=\id_T$ and $\varphi(x +r_i,  y)=(x+c_ir_i,y)$ for some constants $c_i\in\C^*$. Because of the amalgamated product structure of $\mathcal{N}_{d,e}$, and since $\varphi|_T=\id_T$, we can extend $\varphi$ to an automorphism $\Phi$ of $\mathcal{N}_{d,e}$ by setting $\Phi|_{\mathcal{N}_{d,e}^-}=\id_{\mathcal{N}_{d,e}^-}$. We observe that for a general choice of the constants $c_i$ the automorphism $\Phi$ is not inner, because the torus $T$ coincides with its centralizer.
		Moreover, $\Phi(G_{d,e})=G_{d,e}$, so $\Phi$ descends to
		an automorphism $\Phi'$ of $\Aut(X_{d,e})$ and again, it is straightforward to check that for a generic choice of the $c_i$,  the automorphism $\Phi'$ is not inner. Since the restriction of $\Phi'$ to the torus is the identity automorphism, $\Phi'$ will not become inner after a base-change by a field-automorphism. In fact, by Proposition~\ref{groupautomorphismtoric}, the restriction of $\Phi'$ to any algebraic subgroup is an algebraic morphism.
	\end{rem}

	\section{Proofs of the main theorems}
	
	\subsection{Toric surfaces}
	
	We first prove Theorem~\ref{toricthm}.
	Let $G$ be a linear algebraic group, $g\in G$ an element in $G$ and $\overline{\langle g\rangle}^\circ$ the neutral component of the closure of the group generated by $g$ in $G$. Then $\overline{\langle g\rangle}^\circ\simeq \GM^n\times\GA^m$, where $m\in\{0,1\}$. This follows from the structure of commutative linear algebraic groups and the fact that any unipotent group topologically generated by one element is trivial or isomorphic to $\GA$.  In what follows we will use this easy observation frequently. 	 Also recall that $\Aut(\mathbb{A}^1_* \times \mathbb{A}^1_*)\simeq \GL_2(\Z)\ltimes \GM^2$, where $\GL_2(\Z)$ is the group of monomial transformations.

	\begin{lem}\label{exceptions}
		Let $S$ be an affine normal surface. If 
		$\Aut(S)$ is isomorphic to  $\Aut(\mathbb{A}^1_* \times \mathbb{A}^1_*)$ as an abstract group, then
		$S$ is isomorphic to $\mathbb{A}^1_* \times \mathbb{A}^1_*$ as a variety.
	\end{lem}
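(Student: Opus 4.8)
The plan is to transport a maximal torus across the isomorphism in order to show that $S$ is toric, and then to invoke Proposition~\ref{tor} to pin down $S$ as the torus itself. Write $\varphi\colon\Aut(S)\to\Aut(\A^1_*\times\A^1_*)$ for the given isomorphism and recall that $\Aut(\A^1_*\times\A^1_*)\simeq\GL_2(\Z)\ltimes\GM^2$. The subgroup $\GM^2$ is a connected, non-unipotent algebraic subgroup, so applying Theorem~\ref{main} to the isomorphism $\varphi^{-1}$ yields that $\varphi^{-1}(\GM^2)\subset\Aut(S)$ is an algebraic subgroup isomorphic to $\GM^2$ as an algebraic group. By Theorem~\ref{ind-group-prop} this inclusion corresponds to a faithful regular action of the $2$-torus $T':=\GM^2$ on $S$.

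Next I would show that such an action forces $S$ to be toric. Since $\dim T'=2=\dim S$ and the action is faithful, it suffices to check that the generic $T'$-orbit is dense. If instead the generic orbit were $1$-dimensional, then for a generic point the connected component of its stabiliser would be a $1$-dimensional subtorus $T_0\subset T'$, so that a dense open subset of $S$ would be contained in the union of the fixed-point loci $S^{T_0}$ taken over the (countably many) $1$-dimensional subtori $T_0\subset T'$. As each $S^{T_0}$ is closed and $\C$ is uncountable, the irreducible surface $S$ cannot be a countable union of proper closed subsets; hence $S=S^{T_0}$ for a single $T_0$, meaning that $T_0$ acts trivially---contradicting faithfulness. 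Therefore the generic orbit is dense, and since $S$ is normal it is an affine toric surface.

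To finish, suppose for contradiction that $S\not\cong\A^1_*\times\A^1_*$. Then $S$ is an affine toric surface not isomorphic to $\A^1_*\times\A^1_*$, so Proposition~\ref{tor} applies with $S_1:=S$, $S_2:=\A^1_*\times\A^1_*$ and the isomorphism $\varphi$. Its conclusion asserts, in particular, that the normalization of $S_2$ is an affine toric surface that is not isomorphic to $\A^1_*\times\A^1_*$. But $S_2=\A^1_*\times\A^1_*$ is already normal, so its normalization is $\A^1_*\times\A^1_*$ itself, a contradiction. Hence $S\cong\A^1_*\times\A^1_*$, as claimed.

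I expect the only delicate point to be the toricity step: rigorously ruling out complexity one for the faithful maximal-rank torus action, where both the countability of the subtori of $T'$ and the uncountability of $\C$ are used in an essential way. Everything else is a direct application of Theorem~\ref{main} and Proposition~\ref{tor}, which were designed precisely to handle this kind of transfer.
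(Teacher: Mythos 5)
Your proposal has a genuine gap: the appeal to Theorem~\ref{main} is circular within the logical structure of the paper. Lemma~\ref{exceptions} is used in the very first line of the proof of Theorem~\ref{toricthm} (to handle the case where one of the surfaces is $\A^1_*\times\A^1_*$), and the proof of Theorem~\ref{main} in turn depends on Theorem~\ref{toricthm}: its toric case rests on Proposition~\ref{groupautomorphismtoric}, which concerns automorphisms of a single $\Aut(S)$ and can only be brought to bear on an isomorphism $\Aut(S)\to\Aut(S')$ after one knows $S\cong S'$, i.e.\ after Theorem~\ref{toricthm}; and its non-toric case begins with the deduction that $S'$ is also non-toric, which is again Theorem~\ref{toricthm}. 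So you may not invoke Theorem~\ref{main} to transport $\GM^2$ across $\varphi^{-1}$: the instance you need --- transporting algebraic subgroups across an isomorphism $\Aut(\A^1_*\times\A^1_*)\to\Aut(S)$ with $S$ an unknown normal surface --- is precisely the content that Lemma~\ref{exceptions} exists to supply. The underlying mathematical difficulty is real, not just bookkeeping: an abstract group isomorphism could a priori send semisimple one-parameter data to unipotent data, and indeed the paper's own proof must confront exactly this; in the non-toric case it shows that the closures $\overline{\langle \phi(t_i)\rangle}^\circ$ of images of infinite-order torus elements are forced (via Corollary~\ref{toricma}) to be copies of $\GA$, so the image of the torus need not be a torus at all. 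Your proposal contains no argument ruling this out.

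What the paper does instead, and what your proposal is missing, is the entire non-toric analysis: using Proposition~\ref{alg} and Lemma~\ref{closed} it shows all the resulting one-dimensional unipotent subgroups in $\phi(T)$ must share the same generic orbits (else $S\simeq\A^2$), takes the quotient $\A^1$-fibration $\pi\colon S\to C$, lets $\Gamma=\phi(\GL_2(\Z))$ act on $C$, observes that the kernel of this action acts faithfully on general fibers $\A^1$ and is therefore solvable, and derives a contradiction from the fact that $\GL_2(\Z)$ is not virtually solvable. The salvageable parts of your proposal are its endpoints: your toricity step (a faithful $\GM^2$-action on a normal affine surface makes it toric, via the countability of one-dimensional subtori and uncountability of $\C$) is correct, though you should also dispose of the zero-dimensional generic orbit case by the same argument with $T_0=T'$; and your final contradiction via Proposition~\ref{tor} (the normalization of $\A^1_*\times\A^1_*$ would have to be non-isomorphic to $\A^1_*\times\A^1_*$) is exactly how the paper itself concludes the toric case. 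But between these endpoints you must replace the citation of Theorem~\ref{main} with an independent argument excluding the non-toric possibility, which is where all the work lies.
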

	\begin{proof}  Let  $\phi\colon \Aut(\mathbb{A}^1_* \times \mathbb{A}^1_*) \xrightarrow{\sim} \Aut(S)$ be an isomorphism of groups and
		let $T \subset \Aut(\mathbb{A}^1_* \times \mathbb{A}^1_*)$ be 
		the maximal torus. Since $T$ coincides with its centralizer (see Lemma \ref{toruscentralizer}), 
		$\phi(T) \subset \Aut(S)$ is a closed subgroup by Lemma \ref{closed}.
		Let $d \in T$ be an element of infinite  order. Then,
		by Proposition  \ref{alg},
		$\phi(d)$  is an algebraic element of $\Aut(S)$. Hence  $\overline{\langle \phi(d) \rangle} \subset \phi(T)$ 
		is a commutative algebraic subgroup of positive dimension.  
		
		If $S$ is a toric surface, the claim of the statement
                follows from Proposition \ref{tor}.  So assume that
                $S$ is not toric. In this case, Corollary
                \ref{toricma} implies that $S$ does not admit a
                faithful action of $\GM^2$ or of $\GA \times
                \GM$. Hence, for all algebraic elements
                $h\in\Aut(\mathbb{A}^1_* \times \mathbb{A}^1_*)$ of
                infinite order the algebraic subgroup
                $\overline{\langle \phi(h)
                  \rangle}^\circ\subset\Aut(S)$ is either isomorphic
                to $\GM$ or to $\GA$.  Let
                $t_1,t_2 \in T$ be elements of infinite order with distinct centralizers in $\Aut(A^1_*\times\A^1_*)$. Then the
                commutative groups
                $\overline{\langle \phi(t_1) \rangle}^\circ$ and
                $\overline{\langle \phi(t_2) \rangle}^\circ$ do not
                coincide. Since, by assumption, $S$ is not toric,
                $\overline{\langle \phi(t_1) \rangle}^\circ$ and
                $\overline{\langle \phi(t_2) \rangle}^\circ$ are both
                isomorphic to $\GA$, because otherwise
                Corollary~\ref{toricma} implies again that $S$ is
                toric.  If
                $\overline{\langle \phi(t_1) \rangle}^\circ$ and
                $\overline{\langle \phi(t_2) \rangle}^\circ$ have
                different orbits, the algebraic group generated by
                $\overline{\langle \phi(t_1) \rangle}^\circ$ and
                $\overline{\langle \phi(t_2) \rangle}^\circ$ is
                unipotent and acts with an open orbit on $S$, which
                implies that $S \simeq \mathbb{A}^2$. This contradicts
                our assumption.  Hence, all the one-dimensional
                unipotent algebraic subgroups of $\phi(T)^\circ$ have
                the same orbits and therefore the same ring of
                invariants, which we denote by
                $\mathcal{O}(S)^{\phi(T)^\circ}$.  Consider the
                quotient map $\pi\colon S\to C$, where
                $C:=\spec(\mathcal{O}(S)^{\phi(T)^\circ})$ is an
                affine curve. Denote by
                $\Gamma:=\phi(\GL_2(\Z))\subset\Aut(S)$ the image of
                the group of monomial transformations. Since $\Gamma$
                normalizes $\phi(T)^\circ$, we obtain an action of $\Gamma$
                on $C$ which is equivariant with respect to $\pi$. Let
                $K\subset\Gamma$ be the kernel of this action. In
                particular, $K$ stabilizes the fibers of $\pi$ and
                therefore acts faithfully on the general fiber of
                $\pi$, which is isomorphic to $\A^1$. This implies
                that $K$ is solvable. Since the automorphism group of
                an affine curve is solvable or finite, we obtain that
                $\Gamma$ is solvable up to finite index. But this is a
                contradiction, since $\GL_2(\Z)$ does not contain a
                solvable subgroup of finite index.
	\end{proof}

	The following result is the analogue of Theorem~\ref{toricthm} for the particular case of the affine plane. In this case the normality hypothesis can be removed:
	
	\begin{prop}\label{A2}
		Let $S$ be an affine surface such that $\Aut(S)$ and
		$\Aut(\A^2)$ are isomorphic as groups. Then $S$ is
		isomorphic to $\A^2$.
	\end{prop}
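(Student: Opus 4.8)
The plan is to pass to the normalization $\tilde S$ of $S$, show that $\tilde S$ is the affine plane, and then deduce $S=\tilde S$. Write $\varphi\colon\Aut(\A^2)\to\Aut(S)$ for the given isomorphism, let $T\subset\GL_2\subset\Aut(\A^2)$ be the diagonal torus (a maximal torus, as it has rank $2=\dim\A^2$), and let $\eta\colon\tilde S\to S$ be the normalization. Every automorphism of $S$ lifts uniquely to $\tilde S$, giving an embedding $\Aut(S)\hookrightarrow\Aut(\tilde S)$; composing with $\varphi$ also yields a homomorphism $\psi\colon\Aut(\A^2)\to\Aut(\tilde S)$ into the automorphism group of a \emph{normal} surface, to which I can apply Proposition~\ref{alg}. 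Since $\A^2$ is toric and not isomorphic to $\A^1_*\times\A^1_*$, Proposition~\ref{tor} applies: $T':=\varphi(T)$ is a maximal torus of rank $2$ in $\Aut(S)$, hence also a maximal torus of $\Aut(\tilde S)$, and $\tilde S$ is an affine toric surface not isomorphic to $\A^1_*\times\A^1_*$.

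First I would pin down $\tilde S$. Inside $\Aut(\A^2)$ I consider the root subgroups with respect to $T$ given by the translations $U=\{(x+c,y)\}$, $V=\{(x,y+c)\}$ and the shear $U'=\{(x,y+cx)\}$: a direct check shows that $U,V,U'$ are normalized by $T$ with pairwise distinct weight characters, that $U$ and $V$ commute, and that $U$ and $U'$ do not. As $\varphi(T)=T'$, Lemma~\ref{root} shows that $\varphi(U),\varphi(V),\varphi(U')$ are root subgroups of $\Aut(\tilde S)$ with respect to $T'$; the first two commute and the last two do not. Because $\A^1\times\A^1_*$ is the only affine toric surface all of whose root subgroups commute (as recorded in the proof of Proposition~\ref{tor}), the non-commuting pair $\varphi(U),\varphi(U')$ rules out $\tilde S\cong\A^1\times\A^1_*$. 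Together with Proposition~\ref{tor} this forces $\tilde S\cong X_{d,e}$, whose defining cone has exactly two rays $\rho_1,\rho_2$.

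The crux is to show $d=1$. The transposition $g\colon(x,y)\mapsto(y,x)$ lies in $\GL_2$, normalizes $T$, and satisfies $gUg^{-1}=V$; hence $\psi(g)$ is algebraic (Proposition~\ref{alg} applied to $\psi$) and normalizes $T'$, so it induces an involution $A$ of the cone of $X_{d,e}$. By Proposition~\ref{liendo} the weights $\alpha_U\neq\alpha_V$ of $\varphi(U),\varphi(V)$ are distinct roots of the cone, and conjugation by $\varphi(g)$ carries the one to the other, so $A(\alpha_U)=\alpha_V$; in particular $A\neq\id$. An involution of a rank-two lattice that preserves a two-ray cone and is not the identity must interchange $\rho_1$ and $\rho_2$ (fixing both rays would force $A=\id$), so $\alpha_U$ and $\alpha_V$ have \emph{different} distinguished rays. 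On the other hand, a direct computation with the homogeneous locally nilpotent derivations $\delta_\alpha$ of Proposition~\ref{liendo} shows that on $X_{d,e}$ with $d>1$ two root subgroups with different distinguished rays never commute (the pairings $\langle\alpha_U,\rho_2\rangle$ and $\langle\alpha_V,\rho_1\rangle$ are then strictly positive, making the commutator of derivations nonzero). Since $\varphi(U)$ and $\varphi(V)$ commute yet have different distinguished rays, we conclude $d=1$, i.e. $\tilde S\cong\A^2$. I expect this step — playing the commutativity of the images off against the root structure of the cone — to be the main obstacle.

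It remains to identify $S$ with $\A^2$. On $\tilde S\cong\A^2$ the commuting root subgroups $\varphi(U),\varphi(V)$ have different distinguished rays, hence different generic orbits, so the unipotent group $W:=\varphi(\langle U,V\rangle)\cong\GA^2$ has a two-dimensional, i.e. open, orbit; a faithful action of $\GA^2$ on $\A^2$ with an open orbit is transitive. The normalization $\eta\colon\tilde S\to S$ is surjective and $W$-equivariant, so $W$ acts transitively on $S$ as well. A variety admitting a transitive action of an algebraic group is smooth, so $S$ is smooth, hence normal, so $\eta$ is an isomorphism; moreover a two-dimensional homogeneous space of the unipotent group $\GA^2$ has trivial isotropy and is therefore isomorphic to $\GA^2=\A^2$. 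Thus $S\cong\A^2$, as claimed.
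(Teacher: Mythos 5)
Your proof is correct, and it takes a genuinely different route from the paper's. The paper never leaves $S$ and never identifies a toric model: it applies $\varphi$ to the whole translation group $\Tr\simeq\GA^2$, which coincides with its own centralizer, so $\varphi(\Tr)$ is closed (Lemma~\ref{closed}); since the maximal torus $\varphi(T)$ (maximality via Proposition~\ref{tor}, as in your argument) acts on it faithfully with finitely many orbits, $\varphi(\Tr)$ is a two-dimensional unipotent algebraic subgroup, and then a dichotomy finishes: if two of its $\GA$-subgroups have different generic orbits, the open-orbit argument gives $S\simeq\A^2$ (essentially your last paragraph); if all generic orbits agree, the elements $\exp(cf^k\delta)$ with $f$ invariant make the centralizer of $\varphi(\Tr)$ infinite-dimensional, contradicting self-centralization. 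You instead pass to the normalization, use Proposition~\ref{tor} to get $\tilde S\simeq X_{d,e}$, and kill $d>1$ by an argument not in the paper: the transposition induces a lattice involution swapping the two rays, so the commuting images $\varphi(U),\varphi(V)$ would have different distinguished rays, while for $d>1$ one has $\langle\alpha_U,\rho_2\rangle=ld+e\geq 1$ and $\langle\alpha_V,\rho_1\rangle=e'+kd\geq 1$ by Lemma~\ref{weight-toric-surface}, so $[\delta_{\alpha_U},\delta_{\alpha_V}]\neq 0$ — a correct computation, and it is exactly what replaces the paper's infinite-dimensional-centralizer trick by a structural reason why the two translated $\GA$'s have different generic orbits. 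The paper's route buys brevity and independence from cone combinatorics; yours buys an explicit identification of $\tilde S$. Two points to tidy, neither a gap: Lemma~\ref{root} is stated for isomorphisms, so apply it to $\varphi\colon\Aut(\A^2)\to\Aut(S)$ and then lift the resulting actions of $T'$, $\varphi(U)$, $\varphi(V)$, $\varphi(U')$ to $\tilde S$ via the universal property of normalization (the action map from the normal variety $G\times\tilde S$ to $S$ factors through $\tilde S$); and justify in one line that an automorphism normalizing $T'$ preserves $\sigma$, e.g.\ because $\sigma$ is recovered from the $T'$-action as the set of one-parameter subgroups admitting limits.
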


	\begin{proof}
		Let
		$\Tr=\{ (x+c,y+d) \mid c,d \in \mathbb{C} \} \subset \Aut(\mathbb{A}^2)$
		be the subgroup of translations.  The group $\Tr$ coincides with its
		centralizer in $\Aut(\mathbb{A}^2)$. Hence, $\phi(\Tr) \subset \Aut(S)$ is a closed subgroup
		by Lemma~\ref{closed}.  The maximal torus $T\subset \Aut(\mathbb{A}^2) $ given  by the group of diagonal automorphisms
		acts on $\Tr$ by conjugation with finitely many orbits.  By Lemma~\ref{tor},
		$\phi(T) \subset \Aut(S)$ is a maximal torus.  The closed subgroup $\phi(T) \subset \Aut(S)$ also acts on $\phi(\Tr)$ faithfully and
		with finitely many orbits and hence $\phi(\Tr) \subset \Aut(S)$ is an
		algebraic subgroup of dimension $2$.  Since
		$\Tr$ does not contain elements of
		finite order, the group $\phi(\Tr)$ is unipotent. 
		If two different $\GA$-actions in $\phi(\Tr)$ have different generic orbits,
		then $\phi(\Tr)$ acts with an open orbit and because $\phi(\Tr)$ is unipotent
		it follows that $S \simeq \mathbb{A}^2$. Now assume that all the $\GA$-actions from $\phi(\Tr)$ have the same generic orbits. Then the ring of invariants $\mathcal{O}(S)^{\phi(\Tr)}$ contains non-constant functions and there exists a locally nilpotent derivation $\delta$ such that every $\GA$-action in $\phi(\Tr)$ is of the form $\{ \exp(c g\ddd) \mid c \in \mathbb{C} \}$ for some $g\in\mathcal{O}(S)^{\phi(\Tr)}$. But in this case for any $k\geq 0$ and any $f\in\mathcal{O}(S)^{\phi(\Tr)}$, the $\GA$-action  $\{ \exp(c f^k \ddd) \mid c \in \mathbb{C} \}$ commutes with all the $\GA$-actions in $\phi(\Tr)$ which implies that the centralizer of $\phi(\Tr)$ is infinite-dimensional.
		This contradicts the fact that $\dim \phi(\Tr)$ has dimension two and the claim follows.
	\end{proof}

	\begin{proof}[Proof of Theorem \ref{toricthm}]
		Let $\phi\colon \Aut(S_1) \xrightarrow{\sim} \Aut(S_2)$ be an
		isomorphism of groups and fix a maximal torus $T_1\subset\Aut(S_1)$.
		If $S_1$ or $S_2$ are isomorphic to
		$\mathbb{A}^1_* \times \mathbb{A}^1_*$, to
		$\mathbb{A}^1_* \times \mathbb{A}^1$, or to $\A^2$ then the claim
		follows from Lemma \ref{exceptions}, Proposition~\ref{tor}, or
		Proposition \ref{A2} respectively.  Now let $S_1$ be isomorphic to some
		$X_{d,e}$ different from $\A^2$, i.e. $d>1$ and $e>0$. By Proposition \ref{tor} and Proposition \ref{A2}, $S_2$ is a
		toric surface $X_{\tilde{d},\tilde{e}}$ for some $\tilde{d} > 1$ and $\tilde{e} > 0$. Moreover, $T_2 = \phi(T_1)$
		is a $2$-dimensional torus.
		
		By Lemma \ref{root}, all the root subgroups of $\Aut(X_{d,e})$ with
		respect to $T_1$ are mapped by $\varphi$ to root subgroups of
		$\Aut(X_{d',e'})$ with respect to $T_2$. Hence, to conclude the
		proof, it is enough to show that we can recover $X_{d,e}$ from the abstract group structure of its
		root subgroups and their relationship with the torus.  Assume that the torus $T_1$ acts on a root subgroup $\GA$ with weight character    $\chi$. The center of the semidirect product $\GA\rtimes_{\chi} T_1$ is exactly $\{0\}\rtimes_{\chi} \ker\chi$. The image $\varphi(\GA)$ is a root subgroup of $\Aut(S_2)$ with respect to the torus $\varphi(T_1)=T_2$ with some weight $\chi_2$. Hence the kernel of $\chi$ is mapped under $\varphi$ to the kernel  of $\chi_2$. We consider now the kernels of the  weight characters of two root subgroups with different distinguished rays and look at their intersection. More precisely, let $\chi^{\alpha_1}$ be a character with distinguished ray $\rho_1$ and $\chi^{\alpha_2}$ a character with distinguished ray $\rho_2$. By Lemma~\ref{weight-toric-surface} we have, in the notation of Section~\ref{toricsurf},
		\[
		\alpha_1=-\beta_2+l\cdot\beta_1\mbox{ and }
		\alpha_2=(a\beta_1+e'\beta_2)+k\cdot(e\beta_1+d\beta_2)\,,
		\]
		where $l,k\geq 0$, and $e',a$ are the only positive integers
		with $0 \leq e'<d$ such that $(e',d)=1$ and $ee'=1+ad$. Define
		\[
		K_{l,k}:= \ker \chi^{\alpha_1}\cap \ker \chi^{\alpha_2}\,.
		\]
		From $\chi^{\alpha_1}=1$ we obtain
		$\chi^{\beta_2}=(\chi^{\beta_1})^l$ and substituting this last equation into
		$\chi^{\alpha_2}=1$ we obtain
		$(\chi^{\beta_1})^{a+l\cdot e'+k\cdot e+lk\cdot d}=1$. This last equation has exactly $a+l\cdot e'+k\cdot e+lk\cdot d$ solutions for $\chi^{\beta_1}$. This yields
		that the order of $K_{l,k}$ is 
		\[
		|K_{l,k}|=a+l\cdot e'+k\cdot e+lk\cdot d\,.
		\]
		We fix a character $\chi^{\alpha_1}$ of $\Aut(S_1)$ with
		distinguished ray $\rho_1$. Now we consider all the characters
		$\chi^{\alpha_2}$ with distinguished ray $\rho_2$. These are exactly
		the characters corresponding to the root subgroups that do not
		commute with the root subgroup corresponding to
		$\chi^{\alpha_1}$. By considering the intersections of the kernels,
		we obtain a sequence of integers $\{|K_{l,k}|\}_{k\in\Z_{\geq 0}}$,
		where $l$ is fixed and $k$ varies. We observe that this sequence is
		an arithmetic progression. By varying $l$, we obtain a set of such
		arithmetic progressions. The smallest common difference of these
		arithmetic progressions is $d$ and the second smallest common
		difference is $d+e$.
		
		Analogously, for every fixed $k\in \Z_{\geq 0}$ the sequence of
		integers $\{|K_{l,k}|\}_{l\in\Z_{\geq 0}}$ is an arithmetic
		progression for every $k\in\Z_{\geq 0}$. The smallest common
		difference of these arithmetic progressions is $d$ and the second
		smallest common difference is $d+e'$.
		
		Since $\varphi(K_{l,k})$ is again the intersection of the kernels of
		two non-commuting characters, the sequences of integers $|K_{l,k}|$
		for $l$ or $k$ fixed are the same for $\Aut(S_1)$ and $\Aut(S_2)$.
		
		This yields that the isomorphism
		$\phi\colon \Aut(S_1) \xrightarrow{\sim} \Aut(S_2)$ can only exist
		if $S_1=X_{d,e}$ or $S_1=X_{\tilde{d},\tilde{e}}$ with
		$\tilde{d}=d$, and $\tilde{e}=e$ or if $\tilde{e}=e'$. This is,
		$S_1$ is isomorphic to $S_2$ by Lemma~\ref{isom-toric-surfaces}.
	\end{proof}

	\subsection{Non-toric $\GM$-surfaces}
  Surfaces admitting two non-commuting $\GA$-actions are
called \emph{Gizatullin} surfaces.
	\begin{rem}\label{remma}
          Let $S$ be a Gizatullin
          surface which is not toric and let $H \subset \Aut(S)$ be a
          one-dimensional algebraic subgroup.  If $H \simeq \GM$, then
          by Corollary~\ref{toricma} the only positive-dimensional
          connected algebraic subgroup in the centralizer of $H$ is
          $H$.  If $H \simeq \GA$, then again by
          Corollary~\ref{toricma}, any positive-dimensional algebraic
          subgroup in the centralizer of $H$ is isomorphic to $\GA$
          and is {\it equivalent} to $H$, i.e. it has the same  generic orbits
          as $H$, since otherwise $S$ would be isomorphic to $\A^2$.
	\end{rem}

	The next proposition shows that the property of being
	Gizatullin is encoded in the automorphism group.

	\begin{prop}\label{Gizatullin} %
		Let $S$ and $S'$ be two normal affine surfaces such that $\Aut(S)$ and $\Aut(S')$ are isomorphic as groups. Then the surface $S$ admits two non-commuting $\GA$-actions if and only if
		$S'$ admits two non-commuting $\GA$-actions.  
	\end{prop}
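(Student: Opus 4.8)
The plan is to prove the implication ``if $S$ is Gizatullin then so is $S'$''; the reverse implication then follows by running the same argument with $\varphi^{-1}$, where $\varphi\colon\Aut(S)\to\Aut(S')$ denotes the given isomorphism. If $S$ is toric, Theorem~\ref{toricthm} yields $S\cong S'$ and there is nothing to prove, so assume $S$ is non-toric. Then $S'$ is non-toric as well: if it were toric, Theorem~\ref{toricthm} applied to $\varphi^{-1}$ (with $S'$ in the role of the toric surface) would give $S'\cong S$, contradicting that $S$ is non-toric. Now fix two $\GA$-subgroups $U_1,U_2\subset\Aut(S)$ that do not commute, choose $u_i\in U_i\setminus\{\id\}$ with $u_1u_2\ne u_2u_1$, and set $A_i:=\overline{\langle\varphi(u_i)\rangle}^{\circ}$. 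By Proposition~\ref{alg} each $\varphi(u_i)$ is algebraic, so $A_i\cong\GM^{n_i}\times\GA^{m_i}$ with $m_i\le 1$ (the Zariski closure of a cyclic group has at most one $\GA$-factor). Since $S'$ is non-toric it admits neither a faithful $\GM^2$-action (which would have a dense orbit) nor, by Corollary~\ref{toricma}, a faithful $\GA\times\GM$-action; as both groups would otherwise occur as algebraic subgroups, we are left with $A_i\cong\GM$ or $A_i\cong\GA$.

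The crux is to rule out $A_i\cong\GM$. Note first that $A_i$ centralises $\varphi(U_i)$: every $w\in\varphi(U_i)$ commutes with $\varphi(u_i)$ and hence with the closed subgroup $A_i=\overline{\langle\varphi(u_i)\rangle}^{\circ}$. Suppose $A_i\cong\GM$. Its torsion subgroup is $\mu_\infty\cong\QQ/\Z$, so $\Theta_i:=\varphi^{-1}(\mu_\infty)\cong\QQ/\Z$ is an infinite torsion subgroup of $\Aut(S)$ that centralises $U_i$. I would derive a contradiction from the geometry of $\GA$-actions on the non-toric surface $S$: by Remark~\ref{remma} every positive-dimensional algebraic subgroup of the centraliser of $U_i$ is unipotent, hence torsion-free, so an infinite torsion group should not be able to centralise $U_i$. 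To make this quantitative I would pass to the induced $\A^1$-fibration $\pi\colon S\to C=\spec\OO(S)^{U_i}$; an automorphism centralising $U_i$ descends to $C$ and commutes with the translation action of $U_i$ on the generic fibre $\cong\A^1$, whose centraliser in $\Aut(\A^1)$ is the torsion-free group of translations. Thus the kernel of $Z(U_i)\to\Aut(C)$ is torsion-free and $\Theta_i$ embeds into $\Aut(C)$, which combined with Lemma~\ref{centralizer} on the $S'$-side is intended to force the contradiction.

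Granting $A_1\cong A_2\cong\GA$, we have produced two genuine $\GA$-actions $A_1,A_2\subset\Aut(S')$, and they do not commute, since $\varphi(u_1)\in A_1$ and $\varphi(u_2)\in A_2$ do not commute. Hence $S'$ is Gizatullin, and by symmetry the equivalence follows. I expect the last exclusion step to be the genuine obstacle: the distinction between $\GA$ and $\GM$ is invisible to the abstract group structure of a single one-parameter subgroup, because $(\CC,+)$ and the torsion-free part of $(\CC^{\ast},\cdot)$ are isomorphic as abstract groups. The argument must therefore feed the transported torsion $\QQ/\Z$ back into the surface geometry of $S$, and the delicate point is that roots of unity do live in $\Aut(\A^1)$, so one cannot conclude merely from the base $C\cong\A^1$; closing the gap requires controlling how $\Theta_i$ acts near the fibre over the fixed point and invoking Remark~\ref{remma} there.
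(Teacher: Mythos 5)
Your setup matches the paper's opening moves: the reduction to the non-toric case via Theorem~\ref{toricthm}, and the use of Proposition~\ref{alg} together with Corollary~\ref{toricma} to force each $A_i:=\overline{\langle\varphi(u_i)\rangle}^{\circ}$ to be isomorphic to $\GM$ or to $\GA$. But your central step --- ruling out $A_i\simeq\GM$ by transporting torsion --- is a genuine gap, and your own closing paragraph concedes that it does not close. The obstruction is exactly where you locate it: $\Theta_i\simeq\QQ/\Z$ is merely an abstract torsion subgroup of $\Aut(S)$, not an algebraic subgroup, so Remark~\ref{remma} (which speaks only of positive-dimensional \emph{algebraic} subgroups of the centraliser) cannot be invoked against it; and after your (correct) observation that the kernel of $Z(U_i)\to\Aut(C)$ is torsion-free, you land in $\Aut(C)$ with $C\simeq\A^1$, where $\QQ/\Z$ sits comfortably inside $\GM\subset\GM\ltimes\GA$. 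Nothing in your argument excludes this, and ``controlling how $\Theta_i$ acts near the fibre over the fixed point'' is a hope, not a proof. Note also that you are attempting something strictly stronger than the proposition needs: you try to show unconditionally that the image of a unipotent element can never topologically generate a torus, whereas the paper never establishes this.

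The paper instead argues by contradiction, assuming $S'$ is \emph{not} Gizatullin, and this hypothesis supplies the structure your direct approach lacks: all $\GA$-actions on $S'$ then commute and have the same generic orbits. In the mixed case ($\GM$ and $\GA$), Lemma~\ref{ML} produces a $\GA$-action on $S'$ normalized by the putative torus $\overline{\langle\varphi(u)\rangle}^{\circ}$, hence an element $s$ with $\varphi(u)^{-1}\varphi(s)\varphi(u)=\varphi(s)^2$ after rescaling; pulling this doubling relation back through $\varphi$, and using Remark~\ref{remma} to identify $\overline{\langle s\rangle}^{\circ}$ as a $\GA$, one obtains a unipotent element --- hence a $\GA$ --- acting nontrivially by group automorphisms on a one-dimensional algebraic group, which is impossible by Lemma~\ref{obvious}. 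The case where both closures are tori is treated the same way after producing a $\GA$-action on $S'$ via \cite[Theorem~3.3]{MR2126657}. This conversion of the problem into a single forbidden conjugation relation between algebraic elements is the missing idea: it works precisely because it deals in elements and small algebraic subgroups, to which Proposition~\ref{alg} and Remark~\ref{remma} apply, rather than in non-algebraic subgroups like $\Theta_i$. A smaller, fixable slip: $\varphi(u_i)$ need not lie in the identity component $A_i$, so your final sentence deducing that $A_1$ and $A_2$ do not commute is unjustified as written; one should pass to suitable powers, noting --- as the paper does --- that $u^k$ and $v^l$ never commute for $k,l>0$ when $U_1$ and $U_2$ do not commute.
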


	\begin{proof}
		Let $S$ and $S'$ be normal surfaces with $S$ Gizatullin and let
		$\varphi\colon\Aut(S)\rightarrow\Aut(S')$ be an isomorphism of groups. We can
		assume that $S$ and $S'$ are not toric since otherwise the result follows
		directly from Theorem~\ref{toricthm}. We assume now that $S'$ is not
		Gizatullin and we derive a contradiction.
		
		Let $u$ and $v$ be nontrivial elements of two
                non-commuting $\GA$-actions on $S$.  This implies in
                particular that $u^k$ and $v^l$ do not commute for all
                $k,l>0$.   By Proposition
                \ref{alg}, $\phi(u)$ and $\phi(v)$ are algebraic
                elements in $\Aut(S')$ and so
                $\overline{\langle \phi(u) \rangle}^\circ$ and
                $\overline{\langle \phi(v) \rangle}^\circ$ are
                algebraic subgroups of $\Aut(S')$, which do not
                commute. Each of the groups
                $\overline{\langle \phi(u) \rangle}^\circ$ and
                $\overline{\langle \phi(v) \rangle}^\circ$ is
                isomorphic to $\GM^k\times\GA^l$ for some $k\geq 0$
                and some $l\in\{0,1\}$.  By Corollary~\ref{toricma} we
                can assume that the algebraic groups
                $\overline{\langle \phi(u) \rangle}$ and
                $\overline{\langle \phi(v) \rangle}$ are
                one-dimensional since otherwise $S'$ is toric.
		
		If both groups $\overline{\langle \phi(u) \rangle}^\circ$ and
		$\overline{\langle \phi(v) \rangle}^\circ$ are isomorphic to $\GA$,
		then $S'$ is Gizatullin by definition. Assume now that
		$\overline{\langle \phi(u) \rangle}^\circ \simeq \GM$ and
		$\overline{\langle \phi(v) \rangle}^\circ \simeq \GA$. Note that all the
		$\GA$-actions on $S'$ commute and have the same   generic orbits, since
		otherwise $S'$ would again be Gizatullin. By
		Lemma~\ref{ML},
		there exists a $\GA$-action on $S'$ which is normalized by
		$\overline{\langle \phi(u) \rangle}^\circ$.  Hence, there exists an
		element $s \in \Aut(S)$  such that
		$\overline{ \langle \phi(s) \rangle }$ is isomorphic to $\GA$ and is
		normalized by $\overline{ \langle \phi(u) \rangle }^\circ$.  From
		Remark~\ref{remma} it follows that the algebraic subgroup
		$\overline{\langle s \rangle}^\circ \subset \Aut(S)$ is isomorphic
		to $\GA$. It follows that, up to a different choice of $s$, we can
		assume that $\overline{\langle s \rangle}$ is connected. Since
		$\overline{ \langle \phi(s) \rangle }$ and
		$\overline{ \langle \phi(u) \rangle }$ do not commute, up to
		changing the generator $u$ of
		$\overline{ \langle \phi(u) \rangle }$, we can asume that
		$\phi(u)^{-1} \circ \phi(s) \circ \phi(u) = \phi(s^2)$.  Note that
		even after this new choice, $u$ and $s$ remain unipotent
		elements. Hence, $u^{-1} \circ s \circ u = s^2$, which means that
		the $\GA$-action $\overline{\langle u \rangle}$ acts nontrivially on
		$\overline{\langle s \rangle}\simeq \GA$, but this is not possible.
		Therefore, we are left with the case where both the groups
		$\overline{\langle \phi(u) \rangle}^\circ$ and
		$\overline{\langle \phi(v) \rangle}^\circ$ are isomorphic to
		$\GM$.
{	In this case, by \cite[Theorem~3.3]{MR2126657}, there exists a
		$\GA$-action $W$ in $\Aut(S')$ and by Lemma~\ref{ML} we can assume that $W$ is 
		normalized by
		$\overline{\langle \phi(u) \rangle}^\circ$.  In particular,   after possible scaling $u$,  we can assume that
		$\phi(u) \circ w \circ \phi(u)^{-1} = w^2$
		for a $w \in W$ of infinite order.
		Similarly as above, we
		obtain that $\overline{\langle u \rangle} \simeq \GA$ acts
		nontrivially on $\overline{\langle \varphi^{-1}(w) \rangle}$, which is again not
		possible. This proves the theorem.}
	\end{proof}

	We now prove Theorem~\ref{dynamical} that will also be used as a tool
	in subsequent proofs.
	
	\begin{proof}[Proof of Theorem~\ref{dynamical}] Let 
    $T$ be a one-dimensional torus of $\Aut(S)$ and let
	$t \in T$ be of
		infinite order. By Proposition~\ref{alg}, the image
		$\varphi(t) \in \Aut(S')$ is an algebraic element.  By
		Corollary~\ref{toricma}, the group
		$\overline{\langle \phi(t) \rangle}^\circ$ can not be isomorphic to
		$\GM\times\GM$ nor to $\GM\times\GA$ since $S'$ is non-toric, by Theorem~\ref{toricthm}. It
		follows therefore that $\overline{\langle \phi(t) \rangle}^\circ$ is
		either isomorphic to $\GM$ or to $\GA$.  By assumption, $\Aut(S)$ contains a $\GA$-action and hence, by
Lemma~\ref{ML},
		there exists a root subgroup $U$ of $S$ with respect to $T$. For any $u \in U\setminus\{\id\}$ the algebraic subgroup
		$\overline{\langle \phi(u) \rangle}^\circ$ is also isomorphic to $\GM$ or to $\GA$.  We can choose $t$ and $u$ in such a way
		that $t \circ u \circ t^{-1} = u^2$.  This implies that
		$\overline{\langle \phi(t) \rangle}^\circ$ acts nontrivially on
		$\overline{\langle \phi(u) \rangle}^\circ$ by conjugation. Hence, $\overline{\langle \phi(t) \rangle}^\circ \simeq \GM$
		and $\overline{\langle \phi(u) \rangle}^\circ \simeq \GA$.
		
		We claim that $S$ and $S'$ are both hyperbolic or both parabolic
		which will prove the theorem, since elliptic surfaces with a
		$\GA$-action are all toric, see Section~\ref{sec:non-toric}.  If $S'$ has two non-commuting root subgroups
		then $S'$ is Gizatullin and hence $S$ is Gizatullin by
		Proposition~\ref{Gizatullin}. This is only possible in the
		hyperbolic case by \cite[Corollary 4.4]{MR2196000}. Hence, in this
		case both $S$ and $S'$ are hyperbolic. We assume now that all root
		subgroups in $\Aut(S')$ commute. Furthermore, by
		Proposition~\ref{Gizatullin} we can also assume that $S$ is not
		Gizatullin and also that all root subgroups in $\Aut(S)$ commute.  We
		will now prove that if $S$ has root subgroups of different weights,
		then so does $S'$. This will prove the claim by
		Lemma~\ref{different-weight}.
		
		Assume that there  are two root subgroups $U$ and $U'$  in $\Aut(S)$ with
		different non-zero weights. Let $t\in T$ be such that
		$t\circ u\circ t^{-1}=u^2$, for all $u\in U$. Since the weights are
		different, we have that $t\circ u'\circ t^{-1}\neq (u')^2$, for all
		$u'\in U'$ different from the identity.  Let $u\in U$ be different from
		the identity. Up to changing $u$ by {a}  multiple we can assume that
		$\overline{\langle \phi(u) \rangle}$ is connected and
		hence is a root subgroup with respect to
		$\overline{\langle \phi(t) \rangle}^\circ$. Let $k\in \Z$ be such
		that $\varphi(t^k)\in \overline{\langle \phi(t) \rangle}^\circ$. Now
		we have
		$\varphi(t^k)\circ\varphi(u)\circ\varphi(t^{-k})= \varphi(u^{2^k})$.
		Similarly, let $u'\in U'$  be different from the identity. Again we can
		assume that $\overline{\langle \phi(u') \rangle}$ is
		connected and hence is a root subgroup with respect to
		$\overline{\langle \phi(t) \rangle}^\circ$, but this time we have
		\[\varphi(t^k)\circ\varphi(u')\circ\varphi(t^{-k})
		\neq\varphi\left((u')^{2^k}\right)\,,\] and so the  weights of
		$\overline{\langle \phi(u) \rangle}^\circ$ and
		$\overline{\langle \phi(u') \rangle}^\circ$ are different.
	\end{proof}

	\begin{thm}\label{semidirect-product}
		Let $S$ and $S'$ be normal surfaces. Assume
		that $\Aut(S)$ contains algebraic subgroups $T$ and $U$
		isomorphic to $\GM$ and $\GA$, respectively. If there exists a
		group isomorphism $\varphi\colon \Aut(S)\rightarrow\Aut (S')$,
		then the following hold:
		\begin{enumerate}
			\item The image $\varphi(T)\subset\Aut(S')$ is an algebraic
			subgroup isomorphic to $\GM$.  \label{torus}
			\item There exist root subgroups in $\Aut(S)$ and they are
			mapped to root subgroups such that their weights are preserved up to an
			isomorphism of the torus. \label{roots}
		\end{enumerate}
	\end{thm}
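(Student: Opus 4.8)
The plan is to reduce everything to the non-toric situation and to treat that case by the element-wise method used in the proof of Theorem~\ref{dynamical}. Note first that, by Theorem~\ref{toricthm}, the surface $S$ is toric if and only if $S'$ is. Suppose $S$ (hence $S'$) is toric. Since $U\simeq\GA$ acts on $S$, the surface $S$ is not isomorphic to $\A^1_*\times\A^1_*$, whose automorphism group contains no copy of $\GA$. By Theorem~\ref{toricthm} there is an isomorphism $f\colon S\to S'$, inducing $f_*\colon\Aut(S)\to\Aut(S')$, so that $f_*^{-1}\circ\varphi$ is a group automorphism of $\Aut(S)$. Proposition~\ref{groupautomorphismtoric} then shows that this automorphism carries every algebraic subgroup to an isomorphic algebraic subgroup and, after a suitable field automorphism $\tau$ of $\CC$, is algebraic on algebraic subgroups; applying this to $T$ and to the root subgroups with respect to $T$ (which exist by Lemma~\ref{ML}) gives the first assertion and the existence and root-subgroup part of the second in the toric case, the weight statement being deferred to the group-theoretic characterization at the end. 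From now on assume $S$, and hence $S'$, is non-toric.

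Since $U\simeq\GA$ yields a nontrivial $\GA$-action, Lemma~\ref{ML} provides a root subgroup $U_0\subset\Aut(S)$ with respect to $T$; its weight $\chi_0$ is nontrivial, for otherwise $T$ and $U_0$ would give a faithful $\GM\times\GA$-action and $S$ would be toric by Corollary~\ref{toricma}. Pick $t_0\in T$ and $u_0\in U_0\setminus\{\id\}$ with $\chi_0(t_0)=2$; then $t_0u_0t_0^{-1}=u_0^2$ and $t_0$ has infinite order. By Proposition~\ref{alg} the images $\varphi(t_0),\varphi(u_0)$ are algebraic, and each of $\overline{\langle\varphi(t_0)\rangle}^\circ$, $\overline{\langle\varphi(u_0)\rangle}^\circ$, being the identity component of the closure of a cyclic group, is isomorphic to $\GM^a\times\GA^b$ with $b\le1$. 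The cases $a\ge2$ and $(a,b)=(1,1)$ are impossible, since they would produce a faithful action of $\GM^2$ or of $\GM\times\GA$ on the non-toric surface $S'$ (Corollary~\ref{toricma}); hence both groups are isomorphic to $\GM$ or to $\GA$. Applying $\varphi$ to $t_0u_0t_0^{-1}=u_0^2$ shows that $\overline{\langle\varphi(t_0)\rangle}^\circ$ acts nontrivially by conjugation on $\overline{\langle\varphi(u_0)\rangle}^\circ$, so Lemma~\ref{obvious} forces $T':=\overline{\langle\varphi(t_0)\rangle}^\circ\simeq\GM$ and $\overline{\langle\varphi(u_0)\rangle}^\circ\simeq\GA$.

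The step I expect to be the main obstacle is to promote the abstract isomorphism $\varphi(T)\simeq\GM$ to the statement that $\varphi(T)$ is a closed algebraic subgroup, in fact equal to $T'$: a priori $\varphi(T)$ could be a non-closed, Zariski-dense copy of $\CC^*$ inside a larger group. To rule this out I would argue as follows. As $T$ is commutative and $\varphi(t_0)\in\varphi(T)$, the whole of $\varphi(T)$ centralizes $T'=\overline{\langle\varphi(t_0)\rangle}^\circ$, so $\varphi(T)\subseteq Z_{\Aut(S')}(T')$. By Lemma~\ref{centralizer}, valid because $S'$ is non-toric, $Z_{\Aut(S')}(T')$ contains $T'$ as a subgroup of finite index. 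Since $\varphi(T)\simeq\CC^*$ is divisible, it has no proper subgroup of finite index, whence $\varphi(T)\subseteq T'$. Applying the same reasoning to $\varphi^{-1}$ — now using that $t_0$ has infinite order, so $\overline{\langle t_0\rangle}^\circ=T$, together with Lemma~\ref{centralizer} for the non-toric $S$ — yields $T'\subseteq\varphi(T)$. Hence $\varphi(T)=T'\simeq\GM$, which proves the first assertion.

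Because $S$ and $S'$ are non-toric, $T$ and $T'=\varphi(T)$ are maximal tori in $\Aut(S)$ and $\Aut(S')$, so Lemma~\ref{root} applies with $\varphi(T)=T'$ and shows that every root subgroup of $\Aut(S)$ with respect to $T$ is sent to a root subgroup of $\Aut(S')$ with respect to $\varphi(T)$; together with Lemma~\ref{ML} this yields the existence and root-subgroup part of the second assertion. For the weights, which I treat uniformly in both the toric and non-toric cases, I would note that the weight character $\chi$ of a root subgroup $U$ with respect to $T$ is encoded group-theoretically by the equivalences $t\,s\,t^{-1}=s^{m}\Leftrightarrow\chi(t)=m$ for $t\in T$, $s\in U$ and $m\in\Z$. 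Since $\varphi$ preserves these relations and restricts to an isomorphism $T\simto\varphi(T)$, the weight of $\varphi(U)$ with respect to $\varphi(T)$ coincides with that of $U$ under this identification; that is, the weights are preserved up to an isomorphism of the torus, which completes the proof of the second assertion.
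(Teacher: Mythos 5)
Your proof is correct and follows essentially the same route as the paper's: the toric case via Theorem~\ref{toricthm} and Proposition~\ref{groupautomorphismtoric}; in the non-toric case Proposition~\ref{alg}, Corollary~\ref{toricma} and Lemma~\ref{obvious} to force $\overline{\langle\varphi(t_0)\rangle}^\circ\simeq\GM$ and $\overline{\langle\varphi(u_0)\rangle}^\circ\simeq\GA$; Lemma~\ref{centralizer} combined with divisibility to identify $\varphi(T)$; and Lemmas~\ref{ML} and~\ref{root} together with the order of $\ker\chi$ for the root subgroups and weights (the paper phrases the last point as preservation of the center of $T\ltimes_\chi U$, which is the same kernel argument). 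Your two-sided inclusion argument, using that the divisible group $\varphi(T)$ must map trivially to the finite quotient of the centralizer, is only a mild repackaging of the paper's observation that the divisible elements of the two centralizers are exactly $T$ and $\overline{\langle\varphi(t_0)\rangle}^\circ$.
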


	\begin{proof}
		If $S$ is toric, then $(a)$ and $(b)$ follow from Theorem
		\ref{toricthm} and Proposition \ref{groupautomorphismtoric}. 
		Now assume that $S$ is non-toric. This implies, by Theorem~\ref{toricthm}, that $S'$ is also non-toric.  Let $t \in T$
                be of infinite order. By Proposition~\ref{alg}, the
                image $\varphi(t) \in \Aut(S')$ is an algebraic
                element.  By Corollary~\ref{toricma} the group
                $\overline{\langle \phi(t) \rangle}^\circ$ can not be
                isomorphic to $\GM\times\GM$ nor to $\GM\times\GA$
                since $S'$ is non-toric. Hence,
                $\overline{\langle \phi(t) \rangle}^\circ$ is
                isomorphic either to $\GM$ or to $\GA$.  The same is
                true for $\overline{\langle \phi(u) \rangle}^\circ$,
                where $u \in U\setminus\{\id\}$. By
                Lemma~\ref{ML} we can assume that $U$
                is normalized by $T$.  Since we can choose $t$ and $u$
                in such a way that $t \circ u \circ t^{-1} = u^2$, it
                follows that
                $\overline{\langle \phi(t) \rangle}^\circ$ acts
                nontrivially on
                $\overline{\langle \phi(u) \rangle}^\circ$ by
                conjugation. This implies that
                $\overline{\langle \phi(t) \rangle}^\circ \simeq \GM$
                and
                $\overline{\langle \phi(u) \rangle}^\circ \simeq \GA$.
		
		To prove part (\ref{torus})  we have to show that
		$\phi(T) = \overline{\langle \phi(t) \rangle} = \overline{\langle
			\phi(t) \rangle}^\circ$. Indeed, by Lemma~\ref{centralizer}, $T$ is a finite index subgroup
		of its centralizer $Z$ in $\Aut(S)$ and
		$\overline{\langle \phi(t) \rangle}^\circ$ is a finite index
		subgroup of its centralizer $Z'$ in $\Aut(S')$.
		Since $Z = \Cent_{\Aut(S)}(\langle t \rangle)$ and $Z' = \Cent_{\Aut(S')}(\langle \varphi(t) \rangle)$ we have that 
		$\varphi(Z)=Z'$. 
		We claim that the only divisible elements in $Z$ are those which belong to $T$. Indeed, $T$ is the normal subgroup of $Z$ and we can consider  the quotient map 
		$Z \to Z/T$. Since the group $Z/T$ is finite and an element from $Z/T$ is divisible if and only if it is the identity, the claim follows.  Analogously, the only divisible elements in $Z'$ belong to
		$\overline{\langle \varphi(t) \rangle}^\circ$.
		Now, since $\varphi$ maps divisible elements to divisible, we conclude that $\varphi(T) = \overline{\langle \varphi(t) \rangle}^\circ$.
		This proves $(a)$.
		
		To prove part (\ref{roots})  we first remark that by Lemma~\ref{ML}
		there exist root subgroups of $\Aut(S)$ with respect to $T$.  
		By Lemma \ref{root} root subgroups are maped to root subgroups. To conclude the proof we remark that the center of $T\ltimes_\chi U$ is
		$C=\{0\}\times \ker\chi$ and so  $\chi(t)=t^k$ or $\chi(t)=t^{-k}$, where $|k|$ is the order of
		$C$. Since the center of a subgroup is preserved under isomorphism $\varphi$, the
		proof follows.
	\end{proof}

	In the following proposition we prove part of
	Theorem~\ref{main} in the particular case, where $G$ is a $1$-dimensional connected algebraic
	group.

	\begin{prop} \label{one-dimensional}
		Let $S$ and $S'$ be normal affine surfaces and assume that there
		exists a group isomorphism
		$\varphi\colon \Aut(S) \rightarrow \Aut(S')$. Then the following
		hold:
		\begin{enumerate}
			\item The surface $S$ admits a $\GM$-action if and only if $S'$
			admits a $\GM$-action. \label{GM} 
			\item The surface $S$ admits a $\GA$-action if and only if $S'$
			admits a $\GA$-action. \label{GA}
		\end{enumerate}
	\end{prop}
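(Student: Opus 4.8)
The plan is to reduce both equivalences to forward implications, dispose of the toric case at once, and then run a uniform ``algebraic element'' argument, prioritizing part (b). The two equivalences are symmetric in $S$ and $S'$ (replace $\varphi$ by $\varphi^{-1}$), so it suffices to prove the forward implication of each. If either $S$ or $S'$ is toric, then $S\cong S'$ by Theorem~\ref{toricthm}, and both equivalences hold trivially because isomorphic surfaces admit the same actions. So I may assume $S$ and $S'$ are both non-toric. The common device is the following: given an infinite-order element $g$ lying in a one-parameter subgroup of $\Aut(S)$, its image $\varphi(g)$ is algebraic by Proposition~\ref{alg}, and by Corollary~\ref{toricma} the connected algebraic group $G_g:=\overline{\langle\varphi(g)\rangle}^\circ$ is isomorphic to $\GM$ or to $\GA$ — otherwise a faithful $\GM^2$ (dense orbit, hence toric) or a faithful $\GA\times\GM$ would act on $S'$, contradicting non-toricity.

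I would prove part (b) first. If $S$ carries a $\GM$-action in addition to its $\GA$-action, then $\Aut(S)$ contains subgroups isomorphic to $\GM$ and to $\GA$, so Theorem~\ref{semidirect-product}(b) sends a root subgroup of $\Aut(S)$ to a root subgroup of $\Aut(S')$; since a root subgroup is isomorphic to $\GA$, the surface $S'$ admits a $\GA$-action. Granting (b), part (a) follows without further work: for $t$ an infinite-order element of the torus $T$ giving the $\GM$-action on $S$, either $G_t\cong\GM$ and $S'$ already admits a $\GM$-action, or $G_t\cong\GA$, in which case $S'$ admits a $\GA$-action, whence applying (b) to $\varphi^{-1}$ shows $S$ admits a $\GA$-action; then $S$ carries both a $\GM$- and a $\GA$-action and Theorem~\ref{semidirect-product}(a) yields $\varphi(T)\cong\GM$, i.e. a $\GM$-action on $S'$. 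Thus everything reduces to the single remaining case of (b): $S$ is non-toric, admits a $\GA$-action $U$, but admits \emph{no} $\GM$-action.

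In that case I would assume for contradiction that $S'$ admits no $\GA$-action, so that $S'$ is non-toric and, by the device above, $G_u\cong\GM$ for every $u\in U\setminus\{\id\}$. Choosing $u_1,u_2\in U$ corresponding to $\Q$-linearly independent elements of $(\C,+)$, the tori $G_{u_1},G_{u_2}$ commute; were they distinct they would span a faithful $\GM^2$ and force $S'$ toric, so they coincide in a single torus $G\cong\GM$, and consequently $\varphi(U)\subseteq\Cent_{\Aut(S')}(G)$. By Lemma~\ref{centralizer} this centralizer contains $G$ with finite index, and since $U\cong(\C,+)$ is divisible it has no proper finite-index subgroup; hence $\varphi(U)\subseteq G$. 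Pulling back the torsion subgroup $\mu_\infty\subset G\cong\C^*$ then produces a faithful action of $\Q/\Z$ on $S$ by finite-order automorphisms, each commuting with $U$. The heart of the matter — and the step I expect to be the main obstacle — is to show that such a tower of compatible finite cyclic actions integrates to a genuine $\GM$-action on $S$, contradicting the hypothesis that $S$ has none; equivalently, one must rule out that a unipotent element has an image whose closure is a torus. I would settle this using the classification recalled in Section~\ref{sec:non-toric}: by Lemma~\ref{ML} the absence of a $\GA$-action is equivalent to the absence of root subgroups, so the non-toric $\GM$-surfaces without a $\GA$-action are exactly the elliptic ones, whose graded coordinate rings let the compatible $\mu_n$-gradings assemble into a single $\Z$-grading, i.e. a $\GM$-action. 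The same torsion mechanism makes part (a) favorable rather than obstructed, since there the image of $\mu_\infty\subset T$ is precisely the faithful $\Q/\Z\subset\Aut(S')$ whose integration furnishes the sought $\GM$-action on $S'$.
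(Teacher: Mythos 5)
Your overall architecture is viable, and your reduction of part (a) to part (b) via Theorem~\ref{semidirect-product} is correct (note that it inverts the paper's order: the paper proves (a) first, its hard case being an $S$ with a $\GM$- but no $\GA$-action, and then deduces (b) by the same kind of exclusion argument you use). Your treatment of the hard case of (b) is also correct up to a point: placing the images $\overline{\langle\varphi(u)\rangle}^\circ$ in a single torus $G\simeq\GM$, deducing $\varphi(U)\subseteq G$ from Lemma~\ref{centralizer} and divisibility of $(\C,+)$, and pulling back $\mu_\infty\subset G$ to a faithful copy of $\Q/\Z$ in $\Aut(S)$ commuting with $U$ are all sound steps.

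The gap is exactly where you flag it: the ``integration'' step, and the sketch you offer does not close it. First, a faithful subgroup of $\Aut(S)$ abstractly isomorphic to $\Q/\Z$, even one commuting with $U$, is not known to arise from a $\GM$-action: the compatible $\mu_n$-actions give compatible $\Z/n$-gradings of $\OO(S)$, which a priori assemble only into a grading by $\varprojlim \Z/n=\hat{\Z}$; without an argument that the weights of a finite generating set stabilize, no $\Z$-grading, hence no $\GM$-action, results. Second, the justification you cite is both false and inapplicable: the claim that non-toric $\GM$-surfaces without a $\GA$-action are exactly the elliptic ones contradicts the paper's own Example~\ref{example-hyperbolic} (a non-toric \emph{hyperbolic} $\GM$-surface with $\Aut(S)=\GM$, so with no $\GA$-action), and in your configuration $S$ is not known to be a $\GM$-surface at all --- the $\GM$-action is precisely what you must produce --- while $S$ \emph{does} carry a $\GA$-action, so the dichotomy you invoke does not address your case. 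The paper closes its analogous hard case by a different mechanism, which is also the natural repair here: assuming $S'$ has no $\GA$-action, $S'$ is not Gizatullin, so by Proposition~\ref{Gizatullin} neither is $S$; hence all unipotent subgroups of $\Aut(S)$ commute and share their ring of invariants, giving a single $\A^1$-fibration preserved by all of $\Aut(S)$; the structure theorems \cite[Theorems~8.13 and~8.25]{MR3793368} for automorphism groups of $\A^1$-fibered surfaces, combined with the existence of infinitely many finite-order elements in $\Aut(S)$ --- which is exactly what your pulled-back $\Q/\Z$ provides --- then yield an algebraic subgroup isomorphic to $\GM$ in $\Aut(S)$, contradicting your case hypothesis. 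In short: your torsion subgroup is the right object, but it must be fed into such a structure theorem for the fibered surface rather than ``integrated'' directly.
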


	\begin{proof}
	 To
		prove our theorem, we can assume that $S$ and $S'$ are not toric
		since in that case our theorem follows directly from
		Theorem~\ref{toricthm}.  We are now going to prove part (\ref{GM}).
		Assume there exists an algebraic subgroup $T\subset\Aut(S)$ isomorphic to
		$\GM$. If $S$ admits a non-trivial $\GA$-action, then the result
		follows from Theorem~\ref{semidirect-product}. We assume in the sequel
		that $S$ admits no $\GA$-action. We will show that there exists an algebraic
		subgroup $\Aut(S')$ that is isomorphic to $\GM$.
		All elements in the image $\varphi(T)$ are algebraic by
		Theorem~\ref{alg}. If $\Aut(S')$ contains no algebraic subgroup isomorphic to $\GM$, then all elements of infinite order in
		$\varphi(T)$ are unipotent. If $\Aut(S')$ contains
		two non-commuting unipotent subgroups, then we are in the case of
		Proposition~\ref{Gizatullin}. This implies that $\Aut(S)$ also
		contains two non-commuting unipotent subgroups which is a case we
		already excluded. Hence, all unipotent subgroups in $\Aut(S')$
		commute. In particular, the set of all unipotent elements in
		$\Aut(S')$ is a normal commutative subgroup such that every
		algebraic subgroup has the same ring of invariants. Let $U \subset \Aut(S')$ be any
		algebraic subgroup isomorphic to $\GA$. The quotient map
		$\pi\colon S'\to \spec \OO(S')^U$ gives an $\A^1$-fibration which is
		preserved by $\Aut(S')$ in the sense that every element
		$g\in\Aut(S')$ permutes the fibers. A comprehensive study of the
		automorphism groups of 2-dimensional $\A^1$-fibrations was carried
		out in \cite[Section~8]{MR3793368}. Excluding the case of toric
		surfaces, we are either in the setting of \cite[Theorem~8.13]{MR3793368} or
		\cite[Theorem~8.25]{MR3793368}. In both cases we obtain that $\Aut(S')$ contains an
		algebraic subgroup isomorphic to $\GM$ because $\Aut(S')$ 
		contains infinitely many elements of finite order. This proves part
		(\ref{GM}) in the theorem.
		
		Finally, part (\ref{GA}) of the theorem follows by an exclusion
		argument. Indeed, assume $\Aut(S)$ contains a subgroup $U$
		isomorphic to $\GA$. Let $u\in U$ be of infinite order, then
		$\varphi(u)$ is algebraic and so
		$\overline{\langle\varphi(u)\rangle}$ contains a connected
		$1$-dimensional algebraic subgroup $G$. If $G\simeq \GA$, then part
		(\ref{GA}) follows. If $G\simeq \GM$, then by part (\ref{GM}) we
		have that $\Aut(S)$ contains also an algebraic subgroup isomorphic
		to $\GM$. Now the theorem follows by Theorem~\ref{semidirect-product}
		that implies that $\Aut(S')$ has a root subgroup. 
	\end{proof}

	Let $\SL_2$ and $\PSL_2$ be the special linear and projective special
	linear groups, respectively, over the base field of complex numbers.
	Surfaces with a non-trivial $\SL_2$-action are well understood, see
	for example \cite[Section 4]{MR768181} or \cite{MR0340263}. We will
	rely on these results to treat the case where $G=\PSL_2$   in our main
	result Theorem~\ref{main}.
	
	\begin{lem}[{\cite[Section 4]{MR768181}}] \label{list-SL}
		Let $S$ be a normal affine $\SL_2$-surface. Then $S$ is
		isomorphic to one of the following:
		\begin{itemize}
			\item a toric surface;
			\item $\SL_2/T$, where $T\subset \SL_2$ is a maximal torus;
			\item $\SL_2/N$, where $N\subset \SL_2$ is the normalizer of
			$T$.
		\end{itemize}
	\end{lem}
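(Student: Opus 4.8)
The plan is to analyse the $\SL_2$-action on $S$ by the orbit method. Since $\SL_2$ is three-dimensional and $S$ is a surface, every orbit has dimension at most $2$. A one-dimensional orbit would be a homogeneous space $\SL_2/H$ with $\dim H=2$; as the only two-dimensional closed subgroups of $\SL_2$ are the Borel subgroups $B$, such an orbit would be isomorphic to $\SL_2/B\cong\p^1$. But a complete subvariety of an affine variety is finite, so $S$ contains no orbit isomorphic to $\p^1$. Hence, the action being non-trivial, every orbit has dimension $0$ or $2$, some orbit is two-dimensional, and such an orbit is open and therefore dense; I denote it $O\cong\SL_2/H$, so that $\dim H=1$ and the complement $S\setminus O$ is a finite set of fixed points.

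Next I would enumerate the closed one-dimensional subgroups $H\subset\SL_2$ up to conjugacy. The identity component $H^\circ$ is either a maximal torus $T\cong\GM$ or a maximal unipotent subgroup $U\cong\GA$, and since $H$ normalises $H^\circ$ we have $H\subset N_{\SL_2}(H^\circ)$. As $N_{\SL_2}(T)=N$ with $N/T\cong\Z/2\Z$ and $N_{\SL_2}(U)=B$ with $B/U\cong\GM$, the possibilities are exactly $H\in\{T,N\}$ when $H^\circ=T$, and $H=\mu_k\ltimes U$ for some $k\geq 1$ when $H^\circ=U$. By Matsushima's criterion, $O=\SL_2/H$ is affine if and only if $H$ is reductive, that is, if and only if $H\in\{T,N\}$.

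The unifying step is then a Hartogs argument using normality. Since $S$ is normal and $S\setminus O$ has codimension two, restriction gives an isomorphism $\OO(S)\simto\OO(O)$, so $S=\spec\OO(O)$ is the affine closure of its open orbit. If $H\in\{T,N\}$, then $O$ is already affine, forcing $S=O\cong\SL_2/T$ or $\SL_2/N$ with no fixed points added. If $H^\circ=U$, I realise $O$ through the standard representation: the orbit of a nonzero vector identifies $\SL_2/U$ with $\A^2\setminus\{0\}$, and the residual right action of $H/U=\mu_k$ corresponds to scalar multiplication, so $\SL_2/(\mu_k\ltimes U)\cong(\A^2\setminus\{0\})/\mu_k$ with $\zeta\cdot(x,y)=(\zeta x,\zeta y)$. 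Taking global functions, $\OO(O)=\C[x,y]^{\mu_k}$, whence $S=\A^2/\mu_k$, a cyclic quotient of the affine plane and therefore a toric surface (with $S=\A^2$ when $k=1$).

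The main obstacle is the non-reductive case: one must verify that the generic stabiliser really has the form $\mu_k\ltimes U$ and identify the affine closure $\spec\OO(O)$ with a concrete toric surface, which is exactly where the explicit model $\A^2\setminus\{0\}\cong\SL_2/U$ and the computation of $\C[x,y]^{\mu_k}$ do the work. The reductive case, by contrast, closes off immediately via Matsushima together with the Hartogs extension, which prevents any fixed point from being adjoined. Normality of $S$ is essential throughout, both for the codimension-two extension of regular functions and for recognising $S$ as the affine closure $\spec\OO(O)$ of its dense orbit.
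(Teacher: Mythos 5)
Your route is the standard one -- and in fact the paper does not prove this lemma at all: it is quoted from the literature (Popov's classification, the cited \cite[Section 4]{MR768181}), so your argument is a reconstruction of that classification. Most of it is sound: the exclusion of one-dimensional orbits via the fact that the only two-dimensional closed subgroups of $\SL_2$ are Borel subgroups, so such an orbit would be $\SL_2/B\cong\p^1$, complete and hence impossible in an affine surface; the conjugacy classification $H\in\{T,\,N,\,\mu_k\ltimes U\}$ of one-dimensional closed subgroups (the splitting of the extension $1\to U\to H\to\mu_k\to 1$ follows from Jordan decomposition inside the closed subgroup $H$); Matsushima's criterion in the reductive case; and the explicit identification $\SL_2/(\mu_k\ltimes U)\cong(\A^2\setminus\{0\})/\mu_k$ with $\OO(O)=\C[x,y]^{\mu_k}$, giving the Veronese cone $\A^2/\mu_k$, which is toric. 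The final step, recognising $S$ as $\spec\OO(O)$ via the open immersion $O\subset S$, is also correct \emph{provided} the boundary has codimension two.

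That proviso is where you have a genuine gap: you assert that $S\setminus O$ is ``a finite set of fixed points'', but your orbit analysis only shows that every orbit contained in $S\setminus O$ is a single fixed point. A union of fixed points need not be zero-dimensional: a priori the boundary could be a curve fixed pointwise by $\SL_2$, and then the codimension-two Hartogs extension $\OO(S)\simeq\OO(O)$ -- on which both of your cases rely -- would fail. This must be ruled out, and reductivity does it in one line: invariant functions are constant on the dense orbit $O$, hence $\OO(S)^{\SL_2}=\C$ and the affine GIT quotient $S/\!\!/\SL_2$ is a single point; since each fiber of the quotient morphism contains exactly one closed orbit, and every fixed point is a closed orbit, $S\setminus O$ contains at most one point. (Equivalently: evaluation at a fixed point agrees with evaluation of the Reynolds projection, so the constants would have to separate distinct fixed points.) With this inserted the codimension-two claim holds, and the rest of your proof goes through verbatim; it even sharpens the picture, showing that in the reductive case no boundary point is adjoined, while in the non-reductive case exactly the vertex of the cone $\A^2/\mu_k$ is.
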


	In the next proposition we prove that normal affine $\SL_2$-surfaces
	are uniquely determined by their automorphism groups.
	
	\begin{prop}\label{SL2T}
		Let $S$ and $S'$ be normal affine surfaces such that there exists a
		group isomorphism $\varphi\colon\Aut(S)\rightarrow \Aut(S')$. Then the following holds:
		\begin{itemize}
			\item If $G\subset \Aut(S)$ is an algebraic subgroup isomorphic to $\PSL_2$, then $\varphi(G)\subset\Aut(S')$ is an algebraic subgroup isomorphic to $\PSL_2$.
			\item  The surface $S$
			is isomorphic to $\SL_2/T$ if and only if $S'$ is isomorphic to  $\SL_2/T$.
			\item The surface $S$
			is isomorphic to $\SL_2/N$ if and only if $S'$ is isomorphic to  $\SL_2/N$.
			\item 	Normal affine $\SL_2$-surfaces are
			uniquely determined by their automorphism groups.
		\end{itemize}

	\end{prop}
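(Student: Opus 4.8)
The plan is to deduce all four assertions from the first, whose proof is the crux. The key general remark is that a faithful $\PSL_2$-action lifts along the isogeny $\SL_2\to\PSL_2$ to a faithful $\SL_2$-action, so any normal affine surface carrying a faithful $\PSL_2$-action is a normal affine $\SL_2$-surface and is therefore, by Lemma~\ref{list-SL}, toric or isomorphic to $\SL_2/T$ or $\SL_2/N$. First I would dispose of the toric case: if $S$ is toric, then Theorem~\ref{toricthm} gives $S'\simeq S$, and Proposition~\ref{groupautomorphismtoric} shows that $\varphi(G)$ is algebraic and isomorphic to $G$. So assume $S$ is non-toric, i.e. $S\simeq \SL_2/T$ or $S\simeq \SL_2/N$; these are hyperbolic Gizatullin surfaces. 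Fix in $G\simeq\PSL_2$ a maximal torus $T\simeq\GM$ and the two opposite root subgroups $U^{\pm}\simeq\GA$. By Theorem~\ref{semidirect-product}, $\varphi(T)$ is an algebraic subgroup isomorphic to $\GM$, and $U^{\pm}$ are sent to root subgroups $V^{\pm}:=\varphi(U^{\pm})$ of $\Aut(S')$ with respect to $\varphi(T)$, whose weights are those of $U^{\pm}$ up to an automorphism of $\GM$; in particular $V^{+}$ and $V^{-}$ have weights of opposite sign. By Lemma~\ref{different-weight} the surface $S'$ is then hyperbolic and $V^{+},V^{-}$ have distinct generic orbits, so $S'$ is Gizatullin as well.

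The heart of the argument is to show that $L:=\overline{\varphi(G)}\subset\Aut(S')$ is an algebraic subgroup isomorphic to $\PSL_2$ and that in fact $\varphi(G)=L$. For algebraicity I would use the Bruhat decomposition $\PSL_2=B\cup BwB$ with $B=T\cdot U^{+}$ and $w$ the Weyl representative (of order two in $\PSL_2$). Applying $\varphi$, every element of $\varphi(G)$ lies in $\varphi(B)\cup \varphi(B)\,\varphi(w)\,\varphi(B)$, where $\varphi(B)=\varphi(T)\cdot V^{+}$ is an algebraic subgroup by Lemma~\ref{times} and $\varphi(w)$ is a single element of finite order, hence algebraic. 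Since multiplication maps products of filter sets into a filter set, $\varphi(G)$ is contained in one filter set $\Aut(S')_N$; as $\Aut(S')_N$ is closed in $\Aut(S')$, the closure $L=\overline{\varphi(G)}$ is a closed subgroup contained in a filter set, i.e. an algebraic subgroup. It is connected, being generated by the connected subgroups $V^{+}$ and $V^{-}$.

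To identify $L$, note that $\varphi(G)\cong\PSL_2(\CC)$ is abstractly simple, perfect, and Zariski dense in $L$. Intersecting with the solvable radical $R$ of $L$ yields a normal subgroup of $\varphi(G)$, which cannot be all of $\varphi(G)$ (otherwise $L$ would be solvable, contradicting that it contains the non-solvable $\varphi(G)$); controlling $R$ in this way one shows $L$ is semisimple. Since $L$ acts faithfully on the normal affine surface $S'$, and a connected semisimple group admitting a faithful action on a surface is isomorphic to $\SL_2$ or $\PSL_2$, the group $L$ has rank one; thus $\varphi(T)\simeq\GM$ is a maximal torus of $L$ and $V^{\pm}$ are its two root subgroups. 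As a maximal torus together with its two root subgroups already generates $L(\CC)$, we obtain $\varphi(G)=\langle\varphi(T),V^{+},V^{-}\rangle=L(\CC)=L$, and since $\varphi(G)\cong\PSL_2(\CC)\not\cong\SL_2(\CC)$ this forces $L\simeq\PSL_2$. This proves the first statement and shows that $S'$ carries a faithful $\PSL_2$-action; being non-toric, $S'\simeq\SL_2/T$ or $S'\simeq\SL_2/N$.

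Finally I would separate the two homogeneous surfaces by a group-theoretic invariant preserved by $\varphi$, namely the centraliser of $G$. Writing $S\simeq\SL_2/H$ with $H\in\{T,N\}$, the group $\PSL_2$ acts transitively, so the centraliser of $G$ in $\Aut(S)$ is exactly the group of $G$-equivariant automorphisms, equal to the right-translation group $\operatorname{N}_{\SL_2}(H)/H$. This is $\Z/2\Z$ for $H=T$ (the deck involution of the double cover $\SL_2/T\to\SL_2/N$) and trivial for $H=N$, since $N$ is self-normalising in $\SL_2$. Because $\varphi$ restricts to an isomorphism $\Cent_{\Aut(S)}(G)\xrightarrow{\sim}\Cent_{\Aut(S')}(\varphi(G))$, we conclude $S\simeq\SL_2/T\iff S'\simeq\SL_2/T$ and $S\simeq\SL_2/N\iff S'\simeq\SL_2/N$, giving the second and third statements; the fourth is then immediate, combining these with the toric case handled by Theorem~\ref{toricthm}. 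I expect the main obstacle to be the identification $\overline{\varphi(G)}\simeq\PSL_2$: reducing the closure to a semisimple and then to a rank-one group, that is, controlling the solvable radical and invoking the restriction on semisimple groups acting on surfaces, is the delicate point, whereas the Bruhat/filter-set step and the centraliser computation are comparatively routine.
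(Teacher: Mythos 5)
Your overall architecture matches the paper's: reduce to the non-toric case via Theorem~\ref{toricthm} and Proposition~\ref{groupautomorphismtoric}, transport the maximal torus and the two opposite root subgroups of $G$ by Theorem~\ref{semidirect-product}, show $\varphi(G)$ lands in a filter set via bounded products of algebraic pieces, and then separate $\SL_2/T$ from $\SL_2/N$ by a conjugation-invariant subgroup. But there is one genuine gap: your semisimplicity step for $L=\overline{\varphi(G)}$. From $\varphi(G)\cap R=\{1\}$, where $R$ is the solvable radical of $L$, you cannot conclude $R=\{1\}$; a Zariski-dense, abstractly simple subgroup can a priori meet the radical trivially (think of the graph of a wild, non-algebraic cocycle of $\varphi(G)$ into a unipotent radical), and ``controlling $R$ in this way'' is not an argument. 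The irony is that your own Bruhat step already contains the fix and makes the closure detour unnecessary: $\varphi(G)=\varphi(B)\cup\varphi(B)\varphi(w)\varphi(B)$ with $\varphi(B)=\varphi(T)\cdot V^{+}$ algebraic by Lemma~\ref{times}, so $\varphi(G)$ is a \emph{constructible} subset of a filter set (each piece is the image of a morphism from a variety). A constructible subgroup contains a dense open subset of its closure, and a subgroup containing a dense open subset of an algebraic group is that whole group; hence $\varphi(G)=L$ is already closed and algebraic. Then abstract simplicity of $L=\varphi(G)\cong\PSL_2(\C)$ kills the radical outright (a nontrivial proper $R$ would be a proper normal subgroup, and $R=L$ is impossible since $L$ is not solvable), and your rank-one/faithfulness argument identifies $L\simeq\PSL_2$ exactly as you say. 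This constructible-implies-closed step is precisely the paper's route: it writes every element of $H=\varphi(G)$ as a bounded product $u_1v_1\cdots u_nv_n$ of elements of the two transported root subgroups and concludes that $H$ is constructible, hence closed, hence an algebraic subgroup --- never taking a closure at all.

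For the remaining bullets your route differs mildly from the paper's, and correctly so. The paper distinguishes $\SL_2/T$ from $\SL_2/N$ by the terse remark that the normalizers of $G$ in the two automorphism groups differ; you instead use the centralizer, computed as the group of equivariant automorphisms $\operatorname{N}_{\SL_2}(H)/H$, i.e.\ $\Z/2\Z$ for $H=T$ (the deck involution of $\SL_2/T\to\SL_2/N$) versus trivial for the self-normalizing $H=N$, and transport it by $\varphi(\Cent_{\Aut(S)}(G))=\Cent_{\Aut(S')}(\varphi(G))$. This is correct --- it uses that the $\PSL_2$-action is transitive, which holds because the classification underlying Lemma~\ref{list-SL} is an equivariant one --- and is, if anything, more explicit than the paper's normalizer remark. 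Likewise, your opening observation that a faithful $\PSL_2$-action lifts to a faithful $\SL_2$-action, so that Lemma~\ref{list-SL} applies, is a legitimate (and in the paper implicit) reduction. So: repair the radical step as above and the proof is sound and essentially the paper's.
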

	
	\begin{proof}
	If $S$ is toric, the statements follow from  Theorem~\ref{toricthm} and Proposition~\ref{groupautomorphismtoric}, so we can assume that $S$ is non-toric.
	{To prove the first statement of the proposition we first note that,}
		by Theorem \ref{semidirect-product}, the isomorphism $\varphi$ maps 
		a maximal torus $D\subset\Aut(S)$ into a $1$-dimensional torus
		$\varphi(D)$. Also by Theorem~\ref{semidirect-product},  the root
		subgroups of $\Aut(S)$ with respect to $D$ are mapped to root
		subgroups of $\Aut(S')$ with respect to $\varphi(D)$ and the
		weights are preserved up to an automorphism of the torus.
		
		The group $G$ is generated by two non-commuting root subgroups $U$
		and $V$ with respect to $D$. Therefore, we obtain two
		non-commutative root subgroups $\varphi(U)$ and $\varphi(V)$ of
		$\Aut(S')$ with respect to $\phi(D)$ such that the group
		$H=\varphi(G)$ generated by $\varphi(U)$ and $\varphi(V)$ consists
		of algebraic elements.  There exists a $n$ such that every element
		in $H$ can be written as a product $g=u_1v_1\cdot\dots\cdot u_nv_n$,
		where $u_i\in \varphi(U)$ and $v_i\in \varphi(V)$. This product
		structure shows that $H$ is a constructible set with a group
		structure, and therefore $H$ is closed. Moreover, $H$ is contained in a
		filter set of the ind-group $\Aut(S')$ and hence $H$ is an algebraic
		group. This proves the first statement.
		
	The {normalizers}  of $G$ in $\Aut(\SL_2/T)$ and $\Aut(\SL_2/N)$ are different
		and so $\SL_2/T$ and $\SL_2/N$ are distinguished from each
		other by their automorphism groups.  Together with Lemma~\ref{list-SL} {and Theorem~\ref{toricthm}} this proves the last three statements.
	\end{proof}

	We now proceed to the proof of our main theorem.
	
	\begin{proof}[Proof of Theorem~\ref{main}]
		 If $S$ is toric
		then the theorem  follows directly from  Proposition \ref{groupautomorphismtoric}. We
		assume in the sequel that $S$ is non-toric and therefore we also know that $S'$ is non-toric.  The
		only connected algebraic groups $G$ acting faithfully on non-toric
		surfaces are $\GM$, $\GA^n$, $\GM\ltimes\GA^n$, and  $\PSL_2$, since
		$G$ must have rank at most $1$. Remark that $\SL_2$ is excluded from
		this list since it only acts faithfully on toric surfaces by
		Lemma~\ref{list-SL}. Furthermore, we don't consider $\GA^n$ because it is unipotent and hence excluded from the statement of the theorem.

If $G\simeq\PSL_2$, the theorem follows from
Proposition~\ref{SL2T}. If $G\simeq\GM\ltimes\GA$, the theorem follows
from Theorem~\ref{semidirect-product}. Assume now that
$G\simeq\GM$. If $G$ is contained in an algebraic subgroup
$G\ltimes H$ for some $H\subset \Aut(S)$ isomorphic to $\GA$, then the
theorem follows from the previous case. Hence, by
Theorem~\ref{semidirect-product} we can assume that there are no
unipotent elements in $\Aut(S)$. By
Proposition~\ref{one-dimensional}~(\ref{GM}) we have that $S'$ also
admits a faithful action of $\GM$. With the same argument as above, we
obtain that there are no unipotent elements in
$\Aut(S')$. Furthermore, by \cite[Theorem~3.3]{MR2126657} we have that
 $G$ is the unique subgroup of $\Aut(S)$ isomorphic to $\GM$. The
image $\varphi(G)$ is commutative and contains a 1-parameter subgroup
$G'$ isomorphic to $\GM$. By Lemma~\ref{centralizer} we have that
$\varphi(G)$ is contained in $G'\times H$ where $H$ is a finite
subgroup of $\Aut(S')$. Since, by divisibility of elements of $\GM$,
the group $\GM$ has no non-trivial finite quotients, we have that
$\varphi(G)\subset G'$. By symmetry, applying the same argument to
$\varphi^{-1}$ we obtain $\varphi^{-1}(G')\subset G$. This yields
$\varphi(G)=G'$.
		
Finally, assume that $G\simeq\GM\ltimes\GA^n$. We can write $\GA^n$ as
a direct product $U_1\times\ldots\times U_n$ of root subgroups. All
the root subgroups $U_i$ have the same generic orbits since otherwise
$S\simeq \A^2$ and we assume $S$ is not toric. Furthermore, all the
weights of $U_i$ have the same sign in the character lattice of $\GM$,
which is isomorphic to $\Z$ by Lemma~\ref{different-weight}.
By Theorem~\ref{semidirect-product} $\GM$ is mapped to $\GM$ and every $U_i$ is mapped to 
{a
root
subgroup} $U'_i \subset \Aut(S')$ {that has} the same weight {as} $U_i$, up to an automorphism
of $\GM$. Since $S'$ is also not
toric and $U_i'$ commutes with $U_j'$ for all $i,j$, again by
Lemma~\ref{different-weight} we have that all $U_i'$ have the same
sign in the character lattice of $\GM$. Hence, $\varphi(G)$ is an
algebraic group isomorphic to $G$.
\end{proof}
	
\begin{rem}\label{unipotentrem}
  Let $S$ and $S'$ be normal affine surfaces and let
  $\varphi\colon\Aut(S)\to\Aut(S')$ be an isomorphism of groups. If
  $\Aut(S)$ contains a unipotent subgroup $U$, then $\Aut(S')$ also
  contains an algebraic subgroup isomorphic to $U$.  Indeed, assume
  $U \simeq\GA^n$.  It is enough to prove that $S'$ admits a
  $\GA$-action, but this has been shown in
  Proposition~\ref{one-dimensional}.
\end{rem}

	\subsection{Higher dimensional toric varieties}
	\label{sec:higher}

	The following lemma is needed for the proof of Theorem~\ref{mainhighdim}.
	
	\begin{lem}\label{n-1}
		Let $X_\sigma$ be an affine toric variety of dimension $n$ and let
		$\TT \subset \Aut(X_\sigma)$ be a maximal torus. Then there
		exists an $(n-1)$-dimensional torus $H \subset \TT$ such that all
		root subgroups of $\Aut(X)$ with respect to $H$ have different
		weights.
	\end{lem}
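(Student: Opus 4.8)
The plan is to build $H$ as the kernel of one carefully chosen character of $\TT$ and then verify that passing from $\TT$ to $H$ neither merges the weights of the existing root subgroups nor creates new root subgroups.

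First I would recall, via Proposition~\ref{liendo}, that the root subgroups of $\Aut(X_\sigma)$ with respect to $\TT$ are indexed by the roots $\alpha\in\RT(\sigma)$, the weight of the one attached to $\alpha$ being $\chi^\alpha$, and that by Corollary~\ref{diff-toric} these weights are pairwise distinct in $M$. Since $\sigma$ is strongly convex, its dual $\sigma^\vee$ is full-dimensional, so I can pick a primitive lattice vector $m_0\in M$ lying in the interior of $\sigma^\vee$; then $\langle m_0,\rho\rangle>0$ for every ray $\rho\in\sigma(1)$. I set $H:=\ker(\chi^{m_0})\subset\TT$, which is a subtorus of dimension $n-1$ with character lattice $M_H=M/\Z m_0$; restricting a weight $\chi^\alpha$ to $H$ amounts to reducing $\alpha$ modulo $\Z m_0$.

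The second step is to show that the roots stay distinct after this reduction, i.e. that no two distinct roots $\alpha\neq\alpha'$ satisfy $\alpha-\alpha'\in\Z m_0$. Suppose $\alpha-\alpha'=t\,m_0$ with $t\neq 0$ and pair with the distinguished rays. If $\rho_\alpha=\rho_{\alpha'}$ then $\langle\alpha-\alpha',\rho_\alpha\rangle=0$, whereas $\langle t\,m_0,\rho_\alpha\rangle=t\langle m_0,\rho_\alpha\rangle\neq0$, a contradiction. If $\rho_\alpha\neq\rho_{\alpha'}$ then $\langle\alpha-\alpha',\rho_\alpha\rangle=-1-\langle\alpha',\rho_\alpha\rangle\leq -1$ forces $t<0$, while $\langle\alpha-\alpha',\rho_{\alpha'}\rangle=\langle\alpha,\rho_{\alpha'}\rangle+1\geq 1$ forces $t>0$, again a contradiction. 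Hence the map $\RT(\sigma)\to M_H$ is injective, so the existing root subgroups keep pairwise distinct weights with respect to $H$.

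The main obstacle is the third step: ruling out root subgroups with respect to $H$ that are not already root subgroups with respect to $\TT$. By Proposition~\ref{root-hom} (applied to the torus $H$) such a subgroup corresponds to a locally nilpotent derivation $\delta$ of $\OO(X_\sigma)$ that is homogeneous for the $M_H$-grading. I would decompose $\delta$ into its $\TT$-homogeneous components; all their $\TT$-degrees lie in a single coset $e+\Z m_0$. Choosing $\lambda\in N$ with $\langle m_0,\lambda\rangle=1$, these components are exactly the homogeneous parts of $\delta$ for the $\Z$-grading of $\OO(X_\sigma)$ induced by $\lambda$, indexed by their $\lambda$-degree. Using the standard fact that the top and bottom homogeneous components of a locally nilpotent derivation with respect to a $\Z$-grading are themselves locally nilpotent, these extreme components are nonzero $\TT$-homogeneous locally nilpotent derivations, hence by Proposition~\ref{liendo} scalar multiples of root derivations $\delta_\alpha$ and $\delta_{\alpha'}$. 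If $\delta$ had more than one component these two roots would be distinct and satisfy $\alpha-\alpha'\in\Z m_0$, contradicting the second step; therefore $\delta$ is $\TT$-homogeneous and is itself a root derivation. Consequently the root subgroups of $\Aut(X_\sigma)$ with respect to $H$ coincide with those with respect to $\TT$, and by the second step they have pairwise distinct weights, which proves the lemma.
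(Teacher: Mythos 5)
Your proof is correct, and it takes a genuinely different, more self-contained route than the paper's. The paper chooses any $(n-1)$-dimensional subtorus $H\subset\TT$ whose lattice of one-parameter subgroups $N_H$ satisfies $N_H\cap\sigma=\{0\}$ --- your $H=\ker(\chi^{m_0})$ with $m_0$ primitive in the interior of $\sigma^\vee$ is exactly such a subtorus, since $\langle m_0,p\rangle>0$ for all $p\in\sigma\setminus\{0\}$ forces $m_0^\perp\cap\sigma=\{0\}$ --- and then outsources the key step, that every $H$-root subgroup is already a $\TT$-root subgroup, to \cite[Proposition~1]{MR3263267}, concluding via Corollary~\ref{diff-toric}. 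You reprove that key fact from scratch in your third step: you decompose an $M/\Z m_0$-homogeneous locally nilpotent derivation into its $\TT$-homogeneous components, which are supported on a single coset $e+\Z m_0$, identify them with the components of the $\Z$-grading induced by a $\lambda\in N$ with $\langle m_0,\lambda\rangle=1$, and invoke the standard fact that the top and bottom components of a locally nilpotent derivation with respect to a $\Z$-grading are again locally nilpotent (the same trick that underlies Lemma~\ref{ML}); together with your second step this forces the derivation to have a single component, hence to be a root derivation by Proposition~\ref{liendo}. Your second step --- injectivity of $\RT(\sigma)\to M/\Z m_0$ via pairing with the distinguished rays --- is moreover a detail the paper's proof leaves implicit: Corollary~\ref{diff-toric} by itself only gives that the weights are distinct in $M$, and one still needs that restriction to $H$ does not merge them; your computation supplies exactly that. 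What each approach buys: the paper's proof is three lines and works for an arbitrary subtorus transverse to $\sigma$, at the cost of an external citation; yours is longer but elementary, stays entirely within the toolkit of Propositions~\ref{liendo} and~\ref{root-hom}, and in particular makes the weight-restriction argument explicit.
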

	\begin{proof}
		Take any $(n-1)$-dimensional subtorus $H\subset\TT$ such that
		$N_H\cap \sigma=\{0\}$, where $N_H$ is the sublattice of $N$ of
		1-parameter subgroups of $\TT$ that are contained in $H$ and recall
		that $\sigma\subset N_\RR$. It is clear that every $\TT$-root
		subgroup is also a $H$-root subgroup. Now,
		\cite[Proposition~1]{MR3263267} shows that every $H$-root subgroup is
		also $\TT$-root subgroup and so Corollary~\ref{diff-toric} implies
		that the weights of $H$-root subgroups are also different.
	\end{proof}
	
	We now show that affine toric varieties are determined by
	their set of roots.

	\begin{lem}\label{det-roots}
		Let $\sigma$ and $\sigma'$ be strongly convex rational
		polyhedral cones in $N_\RR$. If $\RT(\sigma)=\RT(\sigma')$
		then $\sigma=\sigma'$. In particular, a toric variety
		$X_\sigma$ is completely determined by the set of its roots
		with respect to any fixed maximal torus in $\Aut(X_\sigma)$.
	\end{lem}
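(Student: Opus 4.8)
The plan is to reconstruct the cone $\sigma$ directly from the set $\RT(\sigma)\subset M$, by showing that the dual cone is the recession (asymptotic) cone of the closed convex hull of the roots:
\[
\sigma^\vee=\operatorname{rec}\bigl(\overline{\operatorname{conv}}(\RT(\sigma))\bigr).
\]
Granting this identity, if $\RT(\sigma)=\RT(\sigma')$ then the two closed convex hulls coincide, hence so do their recession cones, giving $\sigma^\vee=(\sigma')^\vee$; taking duals yields $\sigma=\sigma'$, which is the first assertion.

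To prove the identity I would establish the two inclusions separately. For the inclusion of the recession cone into $\sigma^\vee$, recall that every root $\alpha$ satisfies $\langle\alpha,\rho_\alpha\rangle=-1$ and $\langle\alpha,\rho\rangle\ge 0$ for all other rays, so that $\langle\alpha,\rho\rangle\ge -1$ for every $\rho\in\sigma(1)$. Thus $\RT(\sigma)$, and hence its closed convex hull, lies in the polyhedron $Q=\{m\in M_\RR\mid \langle m,\rho\rangle\ge -1 \text{ for all }\rho\in\sigma(1)\}$, whose recession cone is exactly $\{w\mid\langle w,\rho\rangle\ge 0 \text{ for all }\rho\}=\sigma^\vee$; this gives $\operatorname{rec}(\overline{\operatorname{conv}}(\RT(\sigma)))\subseteq\sigma^\vee$. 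For the reverse inclusion I would use the partition $\RT(\sigma)=\bigsqcup_{\rho}\RT_\rho(\sigma)$ by distinguished ray. For a fixed ray $\rho$ write $F_\rho=\sigma^\vee\cap\rho^\perp$ for the corresponding facet of $\sigma^\vee$. First one checks $\RT_\rho(\sigma)\neq\emptyset$: choosing $\beta\in M$ with $\langle\beta,\rho\rangle=-1$ (possible since $\rho$ is primitive) and adding a sufficiently large lattice point $m$ from the relative interior of $F_\rho$ — where $\langle m,\rho'\rangle>0$ for every ray $\rho'\neq\rho$ — produces an element $\beta+m\in\RT_\rho(\sigma)$. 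Since moreover $\RT_\rho(\sigma)\supseteq (\beta+m)+(F_\rho\cap M)$, the convex hull of $\RT_\rho(\sigma)$ has recession cone exactly $F_\rho$, so $F_\rho\subseteq\operatorname{rec}(\overline{\operatorname{conv}}(\RT(\sigma)))$. As a recession cone is closed under taking conical hulls, and as $\sigma^\vee$ is the conical hull of its facets $\bigcup_\rho F_\rho$, the inclusion $\sigma^\vee\subseteq\operatorname{rec}(\overline{\operatorname{conv}}(\RT(\sigma)))$ follows.

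The main obstacle is this reverse inclusion, and within it the two points just used: the existence of a root over each facet and the fact that $\sigma^\vee$ is generated by its facets. The latter is standard when $\sigma$ is full-dimensional, so that $\sigma^\vee$ is pointed; the non-full-dimensional case — where $X_\sigma$ has a torus factor — I would reduce to it by passing to the quotient $M_\RR/V^\perp$ with $V=\operatorname{span}_\RR(\sigma)$. Indeed, the root conditions only involve the restriction of $\alpha$ to $V$, so $\RT(\sigma)$ is invariant under translation by $V^\perp\cap M$, the lineality space $V^\perp$ of $\sigma^\vee$ is automatically contained in every $F_\rho$ and in the recession cone, and modulo $V^\perp$ one is back in the pointed situation. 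Finally, for the ``in particular'' statement, Proposition~\ref{liendo} identifies the root subgroups of $\Aut(X_\sigma)$ with respect to a maximal torus $\TT$ with the roots of $\sigma$ via their weights, and since any two maximal tori of dimension $\dim X_\sigma$ are conjugate (as recalled before Corollary~\ref{diff-toric}), the set of weights of root subgroups is the same for every maximal torus and equals $\RT(\sigma)$ under the identification of character lattices; thus $\RT(\sigma)$ is intrinsic to $\Aut(X_\sigma)$ and, by the first part, determines $\sigma$ and hence $X_\sigma$.
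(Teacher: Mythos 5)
Most of your argument is correct, but the last step of the reverse inclusion contains a genuine error: the claim that ``$\sigma^\vee$ is the conical hull of its facets $F_\rho=\sigma^\vee\cap\rho^\perp$'' fails exactly when the pointed part of $\sigma^\vee$ is one-dimensional, i.e.\ when $\sigma$ is a single ray (and degenerately when $\sigma=\{0\}$, where $\RT(\sigma)=\emptyset$). Your reduction modulo the lineality space $V^\perp$ does not repair this, because the failure occurs precisely in the pointed one-dimensional situation: a ray has $\{0\}$ as its only facet, so it is not generated by its facets. In fact your central identity $\sigma^\vee=\operatorname{rec}\bigl(\overline{\operatorname{conv}}(\RT(\sigma))\bigr)$ is \emph{false} in this case. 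Take $\sigma=\RR_{\geq 0}\,\rho$ with $\rho\in N$ primitive: the root conditions degenerate to the single equation $\langle\alpha,\rho\rangle=-1$, so $\RT(\sigma)=\{\alpha\in M\mid\langle\alpha,\rho\rangle=-1\}$ is a full affine lattice hyperplane, its closed convex hull is the affine hyperplane $\{x\mid\langle x,\rho\rangle=-1\}$, and the recession cone of that is the linear hyperplane $\rho^\perp$, strictly smaller than the half-space $\sigma^\vee$. Such cones are squarely within the scope of the lemma (e.g.\ $X_\sigma\simeq\A^1\times\A^1_*$ for a ray in a rank-two lattice, a toric surface the paper's later arguments rely on), so your proof as written does not cover them, and the comparison ``equal root sets $\Rightarrow$ equal recession cones $\Rightarrow$ equal dual cones'' breaks down there.

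The rest of your argument does check out: the containment $\RT(\sigma)\subset Q$ with $\operatorname{rec}(Q)=\sigma^\vee$; the existence of roots with prescribed distinguished ray (your relative-interior step is fine, since a functional $\langle\cdot,\rho'\rangle$ nonnegative on $F_\rho$ and vanishing at a relative interior point vanishes identically, forcing $F_\rho\subseteq F_{\rho'}$ and hence $\rho=\rho'$); and the translation-invariance $\RT_\rho(\sigma)\supseteq e+(F_\rho\cap M)$, which yields $F_\rho\subseteq\operatorname{rec}\bigl(\overline{\operatorname{conv}}(\RT(\sigma))\bigr)$. So your identity, and with it the conclusion, holds whenever $\dim\sigma\geq 2$. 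The gap is repairable by treating $\dim\sigma\leq 1$ separately: $\RT(\sigma)=\emptyset$ if and only if $\sigma=\{0\}$; the recession cone of the convex hull of the roots is a linear hyperplane exactly in the ray case (it is full-dimensional when $\dim\sigma\geq 2$), so the trichotomy is visible from $\RT(\sigma)$ alone; and in the ray case $\rho$ is recovered as the primitive normal vector pairing to $-1$ with the roots. For comparison, the paper avoids this global convex-geometry route altogether: it recovers the distinguished ray of each individual root $e$ from the translation directions $m$ with $e+m$ again a root (these span $\rho_e^\perp$, from which $\rho_e$ is read off by sign and primitivity), and then reconstructs $\sigma$ as generated by the distinguished rays --- an argument insensitive to $\dim\sigma$, which is why it needs no case distinction.
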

	\begin{proof}
		To prove the lemma, it is enough to show that the cone
		$\sigma$ of a toric variety $X_\sigma$ can be recovered from
		the set of roots $\RT(\sigma)$. Since any strongly convex
		rational polyhedral cone is the convex hull of its rays, it
		is enough to show that every ray $\rho\in \sigma(1)$ can be
		recovered from the set of roots. By \cite[Remark~2.5]{MR2657447}
		the set $\RT_\rho(\sigma)$ of roots with $\rho\in\sigma(1)$
		as distinguished ray is not empty. Hence, to recover
		$\sigma$ from $\RT(\sigma)$ it is enough to recover for
		every $e\in \RT(\sigma)$ its distinguished ray.
		
		By \cite[Remark~2.5]{MR2657447}, the lattice vector
		$m+e\in \RT_\rho(\sigma)$ for every $e\in \RT_\rho(\sigma)$
		and every $m\in \rho^\bot\cap \sigma^\vee_M$. Let us fix now
		a root $e\in \RT(\sigma)$. By the preceding consideration,
		there exists a hyperplane $H\subset M_\RR$ such that the
		linear span of $H\cap(\RT(\sigma)-e)$ equals $H$. Take now
		$L=H^\bot\subset N_\RR$ the line orthogonal to $H$. The line $L$ is
		composed of two rays and has only two primitive vectors
		$\pm p\in L$. The distinguished ray of $e$ is given by
		$\rho_e=-\langle e,p\rangle \cdot p$ since 
		$\langle e,\rho_e \rangle=-\langle e,p \rangle^2=-1$.
	\end{proof}

	\begin{prop}\label{propfinal}
		Let $X$ and $Y$ be affine toric varieties. If
		$\Aut(X)$ and $\Aut(Y)$ are isomorphic as ind-groups, then
		$X$ and $Y$ are isomorphic. 
	\end{prop}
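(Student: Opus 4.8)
The plan is to use the ind-group structure of $\phi\colon\Aut(X)\to\Aut(Y)$ to transport all of the combinatorial toric data — the big torus, its root subgroups, and their weight characters — from $\Aut(X)$ to $\Aut(Y)$, and then to recover the defining cones via Lemma~\ref{det-roots}. The first point to record is that an isomorphism of ind-groups carries algebraic subgroups to algebraic subgroups and restricts on each of them to an isomorphism of algebraic groups (both $\phi$ and $\phi^{-1}$ are ind-morphisms, so on each filter set they are morphisms of varieties). In particular $\phi$ sends tori to tori and closed copies of $\GA$ to closed copies of $\GA$.

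First I would fix the dimension. For a toric variety $Z$ the supremum of the dimensions of tori in $\Aut(Z)$ equals $\dim Z$: any torus acts faithfully and hence has dimension at most $\dim Z$ (a faithful torus action has finite generic stabilizer, since the identity component of the generic stabilizer is a subtorus fixing a dense set and therefore acting trivially), while the big torus realizes $\dim Z$. As $\phi$ preserves tori together with their dimensions, this invariant is preserved, so $\dim X=\dim Y=:n$. Let $T_X\subset\Aut(X)$ be the big torus (maximal by Lemma~\ref{toruscentralizer}); then $\phi(T_X)$ is a torus of top dimension $n$ in $\Aut(Y)$, hence conjugate to the standard maximal torus $T_Y$, since all tori of dimension $\dim Y$ in $\Aut(Y)$ are conjugate. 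Composing $\phi$ with an inner automorphism of $\Aut(Y)$, I may assume $\phi(T_X)=T_Y$.

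Next I would transport root subgroups and weights. A closed subgroup $U\cong\GA$ normalized by $T_X$ is sent by the group isomorphism $\phi$ to a closed subgroup $\phi(U)\cong\GA$ normalized by $\phi(T_X)=T_Y$, so $\phi$ induces a bijection between root subgroups with respect to $T_X$ and root subgroups with respect to $T_Y$. Writing $\psi=\phi|_{T_X}\colon T_X\to T_Y$ for the induced isomorphism of tori, with $\psi^*\colon M_Y\to M_X$ the pullback on character lattices, the conjugation action of $T_Y$ on $\phi(U)$ is the $\phi$-transport of the action of $T_X$ on $U$, so the weight $\chi_{\phi(U)}\in M_Y$ of $\phi(U)$ satisfies $\psi^*\chi_{\phi(U)}=\chi_U$. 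By Proposition~\ref{liendo} the weight characters of the root subgroups with respect to the standard torus are exactly the elements of $\RT(\sigma)$, so setting $L:=(\psi^*)^{-1}\colon M_X\to M_Y$ I obtain $L(\RT(\sigma_X))=\RT(\sigma_Y)$. Finally I would dualize: the cocharacter isomorphism $\psi_*\colon N_X\to N_Y$ satisfies $\langle L\alpha,\psi_*p\rangle=\langle\alpha,p\rangle$, so checking the defining inequalities of roots on the rays $\psi_*\rho$ of $\psi_*\sigma_X$ gives $L(\RT(\sigma_X))=\RT(\psi_*\sigma_X)$; comparing with the previous equality forces $\RT(\psi_*\sigma_X)=\RT(\sigma_Y)$, and Lemma~\ref{det-roots} yields $\psi_*\sigma_X=\sigma_Y$. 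Thus $\psi_*$ is a lattice isomorphism carrying $\sigma_X$ to $\sigma_Y$, i.e.\ $X\cong Y$. The case where $X$ is the algebraic torus is automatic: then $\Aut(X)$ has no root subgroups, hence neither does $\Aut(Y)$, so $Y$ is a torus by Corollary~\ref{diff-toric}, and the dimension count gives $X\cong\GM^n\cong Y$.

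I expect the main obstacle to be the bookkeeping in the weight-to-root translation: verifying precisely that $\phi$ preserves weights up to the induced lattice isomorphism $\psi^*$, and dualizing correctly so that the equality of \emph{sets of roots} in $M_Y$ becomes an equality of \emph{cones} in $N_Y$ to which Lemma~\ref{det-roots} can be applied. The reduction $\phi(T_X)=T_Y$ (needing the conjugacy of top-dimensional tori) and the dimension equality are the only other places requiring care.
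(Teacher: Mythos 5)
Your proof is correct and follows essentially the same route as the paper's: transport the maximal torus, the root subgroups, and their weights through the ind-group isomorphism, then invoke Proposition~\ref{liendo} and Lemma~\ref{det-roots} to identify the cones. The extra steps you supply --- conjugating $\phi(T_X)$ onto the standard torus, the explicit lattice bookkeeping with $\psi^*$ and $\psi_*$, and the separate treatment of the torus case --- are exactly the details the paper's terser proof leaves implicit, not a different argument.
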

	\begin{proof}
		Fix an isomorphism $\phi\colon\Aut(X)\rightarrow \Aut(Y)$ of
		ind-groups. Let $\TT \subset \Aut(X)$ be a maximal torus of
		dimension $n$. Then $\phi(T)\subset\Aut(Y)$ is a torus of dimension $n$ and therefore, $\dim(X)=\dim(Y)$.  Since the groups $\Aut(X)$ and $\Aut(Y)$ are isomorphic, the
		root subgroups of $\Aut(X)$ with respect to $\TT$ are sent to root
		subgroups of $\Aut(Y)$ with respect to $\phi(T)$. Moreover, weights
		are preserved under this isomorphism. Now, Lemma~\ref{det-roots}
		implies that $X\simeq Y$ as a toric variety. In particular,
		$X\simeq Y$ as a variety.
	\end{proof}

	We now proceed to prove the last remaining main result of this
	paper.
	
	\begin{proof}[Proof of Theorem~\ref{mainhighdim}]
		Let $\phi\colon\Aut(X) \rightarrow \Aut(Y)$ be an isomorphism of
		ind-groups and let $n$ be the dimension of $X$.  By Lemma~\ref{n-1} there is a
		subtorus $H \subset \Aut(X)$ of dimension $n-1$ such that all the root
		subgroups of $\Aut(X)$  with respect to
		$H$ have different weights. Since $\phi$ is an isomorphism of ind-groups, all the root subgroups of $\Aut(Y)$ with respect to the algebraic torus
		$\phi(H)\subset \Aut(Y)$ have
		different weights . Hence, by Lemma~\ref{Kr15} we have
		$\dim Y \leq n$. Since $X$ admits an $n$-dimensional faithful torus
		action, the same holds for $Y$ and we conclude that $Y$ is also a
		toric variety of dimension $n$. Now the theorem follows from
		Proposition~\ref{propfinal}.
	\end{proof}

\subsection{Examples and Counterexamples}\label{excounterex}
	
In the remaining of this paper we provide examples showing that
certain conditions in Theorem~\ref{main}, Theorem~\ref{dynamical}, and
Theorem~\ref{mainhighdim} are necessary for the statements to hold.
	
\begin{lem}\label{fibration}
  Let $S$ be a surface endowed with a non-constant morphism
  $\pi\colon S\rightarrow C$ to a smooth curve $C$. Assume that $C$ is
  non-rational, has trivial automorphism group, and that the general fibers of $\pi$ are
  rational. Then $\pi$ is invariant under automorphisms of $S$.
\end{lem}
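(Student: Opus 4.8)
The plan is to prove that $\pi\circ g=\pi$ for every $g\in\Aut(S)$. Since $\Aut(C)$ is trivial, it is enough to exhibit an automorphism $\bar g\in\Aut(C)$ of the base with $\pi\circ g=\bar g\circ\pi$ and then to invoke $\bar g=\id_C$. The argument uses the two hypotheses on $C$ for two different purposes: non-rationality of $C$ will force $g$ to respect the fibration $\pi$, while triviality of $\Aut(C)$ will force the induced transformation of the base to be trivial.

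First I would show that $g$ maps the general fibre of $\pi$ into a fibre of $\pi$. Fix a general point $c\in C$, so that $F:=\pi^{-1}(c)$ is a rational curve; then $g(F)\cong F$ is again a rational curve. The restriction $\pi|_{g(F)}\colon g(F)\to C$ is a morphism from a rational curve to $C$, and composing it with the normalisation of $g(F)$ and with the inclusion of $C$ into its smooth projective model $\bar C$ would give a morphism from $\p^1$ to the positive-genus curve $\bar C$. Such a morphism is constant, because there is no non-constant morphism from a curve of genus $0$ to a curve of genus $\geq 1$. Hence $\pi|_{g(F)}$ is constant and $g(F)$ lies in a single fibre of $\pi$. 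Thus $g$ carries the general fibres of $\pi$ to fibres of $\pi$.

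The previous step says that $\pi\circ g$ is constant on the general fibres of $\pi$, so it factors through $\pi$ and yields a rational map $\bar g\colon C\dashrightarrow C$ with $\pi\circ g=\bar g\circ\pi$. As $C$ is a smooth curve, $\bar g$ extends to a morphism into the smooth projective model $\bar C$ and hence defines an element of $\Bir(C)=\Aut(\bar C)$; running the same construction for $g^{-1}$ produces a two-sided inverse, so $\bar g$ is an automorphism of $\bar C$. The delicate point, and the main obstacle of the proof, is to upgrade this to $\bar g\in\Aut(C)$, so that the hypothesis $\Aut(C)=\{\id_C\}$ becomes applicable. Here one uses crucially that $\pi\circ g$ is an everywhere-defined morphism $S\to C$ and that $g(S)=S$ gives $\pi(g(S))=\pi(S)$: evaluating $\bar g$ on the dense image $\pi(S)$ by $\bar g(\pi(s))=\pi(g(s))$ shows that $\bar g$ preserves $\pi(S)$, and from this one checks that $\bar g$ carries $C$ to $C$ (this is automatic when $C$ is projective, and in general is precisely the place where the fact that $g$ is an automorphism, rather than merely a dominant endomorphism, is needed).

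Once $\bar g\in\Aut(C)$ is established, triviality of $\Aut(C)$ forces $\bar g=\id_C$, and therefore $\pi\circ g=\pi$. Since $g\in\Aut(S)$ was arbitrary, $\pi$ is invariant under all automorphisms of $S$. I expect the fibre-to-fibre step to be entirely routine given non-rationality; the real work will be the verification in the descent step that the induced birational self-map of $C$ is an honest automorphism of $C$.
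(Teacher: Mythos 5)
Your outline follows the same route as the paper's (much terser) proof: the paper observes that $(\pi\circ\alpha)|_F$ is either dominant or constant on a general fibre $F$, rules out dominance because it would make $C$ rational, concludes that general fibres are mapped into fibres, and then invokes triviality of $\Aut(C)$ in a single sentence. Your first step is that same argument in normalized form and is fine. The problem is the descent step, which you rightly single out as ``the real work'' but then dispatch with ``from this one checks that $\bar g$ carries $C$ to $C$.'' That check cannot be carried out from the stated hypotheses. What your argument actually yields is $\bar g\in\Aut(\bar C)$ together with $\bar g(\pi(S))=\pi(g(S))=\pi(S)$; since $\pi$ is only assumed non-constant, $\pi(S)$ is merely a cofinite subset of $C$ (constructible, irreducible and dense in a curve), and an automorphism of $\bar C$ preserving $\pi(S)$ may permute the finite set $\bar C\setminus\pi(S)$ in a way that exchanges boundary points of $\bar C\setminus C$ with points of $C\setminus\pi(S)$. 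So $\bar g(C)=C$ simply does not follow.

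Moreover this is not a gap you could close by working harder: the step fails, and with it the statement in this generality. Take $\bar C$ a general curve of genus $2$, so that $\Aut(\bar C)=\{\id,\iota\}$ with $\iota$ the hyperelliptic involution, let $q\in\bar C$ be a non-Weierstrass point, and set $C=\bar C\setminus\{q\}$; then $C$ is non-rational and $\Aut(C)$ is trivial, since any automorphism of $C$ extends to $\bar C$ and must fix $q$, while $\iota(q)\neq q$. Now let $S=(\bar C\setminus\{q,\iota(q)\})\times\A^1$ and let $\pi\colon S\to C$ be the first projection followed by the inclusion: all hypotheses of the lemma hold, yet $g=\iota\times\id_{\A^1}\in\Aut(S)$ satisfies $\pi\circ g\neq\pi$, with $\bar g=\iota$ exhibiting exactly the boundary-swapping phenomenon above. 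What rescues the conclusion --- and every application of the lemma in the paper (Examples~\ref{example-unipotent}, \ref{example-elliptic}, \ref{example-hyperbolic}, and~\ref{main2}) --- is that there $\pi$ is surjective: with $\pi(S)=C$, your identity $\bar g(\pi(s))=\pi(g(s))$ immediately gives $\bar g(C)=C$, hence $\bar g\in\Aut(C)=\{\id_C\}$ and $\pi\circ g=\pi$. So the missing ingredient is surjectivity of $\pi$ (or projectivity of $C$, as you note), not the fact that $g$ is an automorphism rather than a dominant endomorphism. To your credit, your write-up localizes the weak spot more honestly than the paper's own proof, which passes over the descent entirely; it just stops short of noticing that the flagged verification is impossible without $\pi(S)=C$.
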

	
\begin{proof}
  Let $\alpha\colon S\to S$ be an automorphism and let $F$ be a
  general fiber of $\pi$. The morphism $(\pi\circ\alpha)|_F$ is either
  dominant or constant. If it is dominant, then $C$ is rational which
  is a contradiction. Hence $\pi\circ\alpha(F)\subset C$ is a point
  and therefore $\alpha(F)$ another fiber of $\pi$. It follows that
  $\alpha$ preserves general fibers fiberwise and therefore that
  $\alpha$ preserves all the fibers. Since $C$ has trivial
  automorphism group, $\pi$ is invariant under $\alpha$. 
\end{proof}

To prove that the restriction on unipotent subgroups cannot be removed
in Theorem~\ref{main}, we have the following example, which was explained to us by Adrien Dubouloz and Hanspeter Kraft.

\begin{exa} \label{example-unipotent} %
  Let $C$ be a smooth affine curve with trivial automorphism group
  and no non-constant invertible regular functions. Let $c_0\in C$ be
  any point and let $F$ be the fiber $F=\A^1\times \{c_0\}$ inside
  $\A^1\times C$ of the projection map to $C$. Take a projective
  completion $\widetilde{S}$ of $\A^1\times C$ with boundary $H$.  Let
  $\operatorname{Bl}(\widetilde{S})$ be the surface obtained by
  blowing-up $\widetilde{S}$ at three generic points $(p_0,c_0)$, $(p_1,c_0)$, and $(p_2,c_0)$ of $F$. Furthermore, let $S$ be the
  surface obtained by removing from $\operatorname{Bl}(\widetilde{S})$
  the strict transform of $F+H$. Since $F+H$ is ample the strict
  transfrom is also ample, by Kleiman's Criterion, and so the surface
  $S$ is affine.  There is a morphism $\rho\colon S\to \A^1\times C$
  whose composition with the second projection gives an
  $\A^1$-fibration $\pi\colon S\to C$. The morphism $\rho$ restricted
  to the preimage $\pi^{-1}(C\setminus\{c_0\})$ is an isomorphism onto
  $\A^1\times (C\setminus\{c_0\})$. Furthermore, $\pi^{-1}(c_0)$
  consists of three disjoint copies of $\A^1$.
		
  Let now $\alpha$ be an automorphism of $S$. By Lemma
  \ref{fibration}, $\alpha$ preserves the fibration over $C$ fiber by
  fiber. In particular, it induces an automorphism on
  $\A^1\times (C\setminus\{c_0\})$, which is of the form
  $(x,c)\mapsto (f(c)x+g(c),c)$, where
  $f\in \mathcal{O}(C\setminus\{c_0\})^*$ and
  $g\in\mathcal{O}(C\setminus\{c_0\})$. We claim that $\alpha$ is the
  lift of an automorphism $\alpha_0$ of $\A^1\times C$, i.e.
  $f\in \mathcal{O}(C)^*=\CC^*$ and $g\in\mathcal{O}(C)$.

  Indeed, let $C'\subset C$ be a neighborhood of $C$ where the divisor
  $c_0$ is principal and let $h$ be a rational function on $C$ such
  that $c_0=\operatorname{div}(h)|_{C'}$. Let $S'$ be the preimage
  $\pi^{-1}(C')$ of $C'$ by the fibration. Then $S'$ is obtained by
  blowing-up the points $(p_0,c_0)$, $(p_1,c_0)$, and $(p_2,c_0)$
  inside $\A^1\times C'$ and removing the strict transform of $F$. Let
  now $A=\OO(C')$, $\A^1=\spec \CC[x]$ and
  $\mathbb{P}^1=\operatorname{Proj}\CC[s,t]$. Letting
  $r(x)=(x-p_0)(x-p_1)(x-p_2)$, the blow-up of the three points is
  given by the equation $ht=rs$ on $\A^1\times C'\times \mathbb{P}^1$
  and the strict transform of $F$ is given by $s=0$ in the
  blow-up. Hence, the affine surface $S'$ is naturally covered by two
  affine charts $U=\spec A[x,t]/(ht-r)$ and
  $U'=\spec A[x,s,s^{-1}]/(h-rs)$ glued along
  $U_0=\spec A[x,\tfrac{s}{t},\tfrac{t}{s}]/(h-r\tfrac{s}{t})$. But
  $U'=U_0$ and so $S'=U=\spec B$, where $B=A[x,t]/(ht-r)=A[x,rh^{-1}]\subset A[x, h^{-1}]$. Now, the
  comorphism $\alpha^*$ of the automorphism $\alpha$ maps $A$ to $A$
  identically and maps $x$ to $\alpha^*(x)=fx+g$ but $f$ and $g$ are
  regular on $C'\setminus \{c_0\}$. Hence $f=ah^\ell$ with
  $a\in \CC^*, l\in\mathbb{Z}$, and $g\in A[h^{-1}]$. We conclude that
  $\alpha^*(x)=ah^\ell x+g\in B$. A straightforward computation yields
  $(\alpha^{-1})^*(x)=a^{-1}h^{-\ell}(x-g)\in B$. Up to changing
  $\alpha$ by its inverse, we can assume $\ell\geq 0$. Let now
  $\mu_i:A[x,h^{-1}]\rightarrow A[h^{-1}]$ be the evaluation
  homomorphism $x=p_i$ for $i=0,1,2$.  Restricting $\mu_i$ to $B$ for
  any $i=0,1,2$, we obtain a homomorphism $B\rightarrow A$ that is the
  identity on $A$. This yields $B\cap A[h^{-1}]=A$. In particular,
  $\mu_i(h^{-\ell}(x-g))=h^{-\ell}(p_i-g)\in A$ for all $i=0,1,2$. This
  yields $h^{-\ell}(p_i-p_j)\in A$ for all $i,j=0,1,2$. Hence $\ell=0$
  and so $g\in A=\OO(C')$. Finally, the claim follows since
  $\OO(C\setminus\{c_0\})\cap \OO(C')=\OO(C)$.
  
  Note as well that $\alpha_0$ must preserve the set
  $(p_0,c_0)$, $(p_1,c_0)$, and $(p_2,c_0)$ since $\alpha$ preserves the
  fiber $\pi^{-1}(c_0)$ of $S$. As the points $p_i$ were chosen
  generically, it follows that $\alpha_0|_{\A^1\times\{c_0\}}$ is the
  identity. It follows that $f=1$ and that $g$ is contained in the
  ideal $\mathfrak{m}_{c_0}$ of $c_0$ in $\mathcal{O}(C)$. In
  particular, every element in $\Aut(S)$ is unipotent and $\Aut(S)$ is
  abstractly isomorphic to the additive group
  $(\mathfrak{m}_{c_0},+)$, which is abstractly isomorphic to the
  additive group $(\C,+)$.
		
  Now, let $G\subset\Aut(S)$ be any algebraic subgroup isomorphic to
  $\GA$. We can find an automorphism $\varphi$ of $\Aut(S)$ such that
  $\varphi(G)$ is dense in $\Aut(S)$. In particular, $\varphi(G)$ is
  not an algebraic subgroup.
\end{exa}

In the next two examples we exhibit elliptic and hyperbolic
$\GM$-surfaces, respectively whose automorphism groups are isomorphic
and isomorphic to $\GM$. This shows that the hypothesis to have
additionally a $\GA$-action in \ref{dynamical} is essential.
	
\begin{exa} \label{example-elliptic}
		
  Let $C$ be a smooth projective curve normally embedded in
  $\mathbb{P}^n$. Assume further that $C$ is non-rational and has
  trivial automorphism group. Let $S$ be the affine cone in
  $\mathbb{A}^{n+1}$ of $C$. The group $\GM$ is contained in $\Aut(S)$
  by scalar multiplication making $S$ into an elliptic
  $\GM$-surface. The origin $\bar{0}\in S$ is a singular point in $S$
  since $S$ is different from $\mathbb{P}^1$ and so it is fixed by any
  automorphism \cite[Theorem~2.5]{MR644276}. Furthermore, there is a
  non-constant regular map $\pi:S\setminus\{\bar{0}\}\rightarrow
  C$. Hence, by Lemma~\ref{fibration} we obtain that $\pi$ is
  invariant under any automorphism of $S$. This yields
  $\Aut(S)\subseteq \GM\rtimes \mathbb{Z}/2\mathbb{Z}=\Aut(\GM)$.  Let
  now $L$ be the closure in $S$ of a fiber of $\pi$. Any automorphism
  restricts to an automorphism of $L$ fixing $\bar{0}$. This yields
  $\Aut(S)= \GM$.
\end{exa}
	
\begin{exa} \label{example-hyperbolic}

  Let $C$ be a non-rational smooth affine curve having trivial
  automorphism group {and no invertible functions}. The surface $\A^1\times C$ admits a natural
  parabolic $\GM$-action acting on the first coordinate. The set of
  fixed points is given by $F=\{0\}\times C$. Let $\widetilde{S}$ be
  an equivariant projective completion of $\A^1\times C$ with boundary
  $H$. Let $\operatorname{Bl}(\widetilde{S})$ be the surface obtained
  by blowing up $\widetilde{S}$ at a point $(0,c_0)$ in $F$ with
  ideal $(y,\mathfrak{m}_{c_0}^2)$, where $\mathfrak{m}_{c_0}$ is the
  maximal ideal of $c_0\in C$ and $\A^1=\spec\CC[x]$. Furthermore, let
  $S$ be the surface obtained by removing from
  $\operatorname{Bl}(\widetilde{S})$ the strict transform of
  $F+H$. Since $F+H$ is ample, the strict transform is also ample, by
  Kleiman's Criterion and so the surface $S$ is affine.

  Since the blown-up point in $\A^1\times C$ is fixed and $F+H$ is
  also fixed, the $\GM$-action on $\A^1\times C$ lifts to a
  $\GM$-action on $S$. Let $L$ be the line corresponding to the
  exceptional divisor in $\operatorname{Bl}(\widetilde{S})$
  intersected with $S$. The line $L$ is composed of a 1-dimensional
  orbit $O_1$ and a fixed point $p'$. Furthtemore, there are exactly
  two orbit closures meeting at $p'$: the closure of $O_1$ and the
  closure of the strict transform $O_2$ of the only one-dimensional
  $\GM$-orbit in $\A^1\times C$ having $p$ in its
  closure. Furthermore, the algebraic quotient $\pi:S\rightarrow C$ is
  induced from the second projection on $\A^1\times C$. By
  Lemma~\ref{fibration} we obtain that $\pi$ is invariant under any
  automorphism of $S$. This yields
  $\Aut(S)\subseteq \GM\rtimes
  \mathbb{Z}/2\mathbb{Z}=\Aut(\GM)$. Furthermore, if the element of
  inversion $t\mapsto t^{-1}$ in $\Aut(\GM)$ induced an automorphism
  in $S$, it would exchange the two orbits $O_1$ and $O_2$ meeting at
  the only fixed point. We claim that $S$ is hyperbolic and that the
  orbit $O_1$ has isotropy $\Z/2\Z$ by construction while the orbit
  $O_2$ has trivial isotropy. This yields $\Aut(S)= \GM$ and so $S$ is
  the desired example.

  To prove the claim we argue as in Example~\ref{example-unipotent}.
  Let $C'\subset C$ be a neighborhood of $C$ where the divisor $c_0$
  is principal and let $h$ be a rational function on $C$ such that
  $c_0=\operatorname{div}(h)|_{C'}$. Let $S'$ be the preimage
  $\pi^{-1}(C')$ of $C'$ by the natural map
  $\pi\colon S\rightarrow C$. With the same argument as in
  Example~\ref{example-unipotent} we obtain that $S'=\spec B$, where
  $B=A[x,s]/(h^2-ys)$. The $\GM$-action is given by $\deg(h)=0$ and
  $\deg(y)=1$ and so $\deg(s)=-1$. This proves that $S$ is
  hyperbolic. The orbit $O_1$ corresponds to $s=0$ and so has isotropy
  $\Z/2\Z$. On the other hand, the orbit $O_2$ corresponds to $h=0$
  and so has trivial isotropy. This proves the claim.
\end{exa}
	
Finally, we show that the assumption in Theorem~\ref{mainhighdim} that
the variety $X$ is not isomorphic to an algebraic torus, is necessary.
	
\begin{exa}\label{main2}
  Let $\TT$ be an algebraic torus and let $C$ be a smooth affine curve
  having trivial automorphism group and no non-constant invertible
  regular functions. We claim that $\Aut(\TT)$ and $\Aut(C\times\TT)$
  are isomorphic as ind-groups.  Indeed, let
  $\varphi\colon C\times \TT\rightarrow C\times \TT$ be an
  automorphism of $C\times \TT$. By Lemma~\ref{fibration} the first
  projection $\pr_1\colon C\times\TT\rightarrow C$ is invariant under
  automorphisms of $C\times\TT$. Hence, $\varphi(x,t)=(x,\psi(x,t))$
  for all $x\in C$, $t\in \TT$ and some morphism
  $\psi \colon C \times T \to T$. For every $t\in \TT$ we let
  $\psi_t\colon C\rightarrow \TT$ be the map given by
  $\psi_t(z)=\psi(z,t)$. The comorphism
  $\psi_t^*(z)\colon\OO(T)=\KK[M]\rightarrow \OO(C)$ sends invertible
  functions to invertible functions, but $\KK[M]$ is generated by
  invertible functions while $C$ admits no invertible function other
  than the constants. Hence, the image of $\psi_t^*$ is the base field
  and so the map $\varphi_t$ is constant.  We obtain that
  $\psi(z,t)=\widetilde{\psi}(t)$ for some automorphism
  $\widetilde{\psi}\colon\TT\rightarrow \TT$ of the torus $\TT$ and
  for all $z\in C$, $t\in \TT$. This yields that the automorphism
  $\varphi$ is a product $\varphi=\id_C\times \widetilde{\psi}$, which
  proves the claim.
\end{exa}

\bibliographystyle{amsalpha} %
\bibliography{bibliography_cu}

\providecommand{\bysame}{\leavevmode\hbox to3em{\hrulefill}\thinspace}
\providecommand{\MR}{\relax\ifhmode\unskip\space\fi MR }
\providecommand{\MRhref}[2]{%
  \href{http://www.ams.org/mathscinet-getitem?mr=#1}{#2}
}
\providecommand{\href}[2]{#2}
\begin{thebibliography}{KRvS19}

\bibitem[AH06]{AlHa06}
Klaus Altmann and J{\"u}rgen Hausen, \emph{Polyhedral divisors and algebraic
  torus actions}, Math. Ann. \textbf{334} (2006), no.~3, 557--607.

\bibitem[AHS08]{AHS08}
Klaus Altmann, J{\"u}rgen Hausen, and Hendrik S{\"u}ss, \emph{Gluing affine
  torus actions via divisorial fans}, Transform. Groups \textbf{13} (2008),
  no.~2, 215--242.

\bibitem[AZ13]{MR3089030}
Ivan Arzhantsev and Mikhail Zaidenberg, \emph{Acyclic curves and group actions
  on affine toric surfaces}, Affine algebraic geometry, World Sci. Publ.,
  Hackensack, NJ, 2013, pp.~1--41. \MR{3089030}

\bibitem[Bad01]{MR1805816}
Lucian Badescu, \emph{Algebraic surfaces}, Universitext, Springer-Verlag, New
  York, 2001, Translated from the 1981 Romanian original by Vladimir Ma\c{s}ek
  and revised by the author. \MR{1805816}

\bibitem[BC16]{blanc2016dynamical}
J{\'e}r{\'e}my Blanc and Serge Cantat, \emph{Dynamical degrees of birational
  transformations of projective surfaces}, Journal of the American Mathematical
  Society \textbf{29} (2016), no.~2, 415--471.

\bibitem[BD15]{MR3410471}
J{{\'e}}r{{\'e}}my Blanc and Julie D{{\'e}}serti, \emph{Degree growth of
  birational maps of the plane}, Ann. Sc. Norm. Super. Pisa Cl. Sci. (5)
  \textbf{14} (2015), no.~2, 507--533. \MR{3410471}

\bibitem[BF13]{MR3092478}
J{{\'e}}r{{\'e}}my Blanc and Jean-Philippe Furter, \emph{Topologies and
  structures of the {C}remona groups}, Ann. of Math. (2) \textbf{178} (2013),
  no.~3, 1173--1198. \MR{3092478}

\bibitem[BH03]{MR2000454}
Florian Berchtold and J\"{u}rgen Hausen, \emph{Demushkin's theorem in
  codimension one}, Math. Z. \textbf{244} (2003), no.~4, 697--703. \MR{2000454}

\bibitem[Bri09]{MR2488561}
Michel Brion, \emph{Anti-affine algebraic groups}, J. Algebra \textbf{321}
  (2009), no.~3, 934--952. \MR{2488561}

\bibitem[Bri18]{notesBrion}
\bysame, \emph{Notes on automorphism groups of projective varieties},
  https://www-fourier.ujf-grenoble.fr/~mbrion/autos.pdf (2018).

\bibitem[Can15]{cantatrev}
{S}erge Cantat, \emph{The {C}remona groups}, to appear in Proceedings of 2015
  Summer Institute on Algebraic Geometry, AMS Proceedings of Symposia in Pure
  Mathematics (2015).

\bibitem[CD12]{MR2904576}
Serge Cantat and Igor Dolgachev, \emph{Rational surfaces with a large group of
  automorphisms}, J. Amer. Math. Soc. \textbf{25} (2012), no.~3, 863--905.
  \MR{2904576}

\bibitem[CLS11]{MR2810322}
David~A. Cox, John~B. Little, and Henry~K. Schenck, \emph{Toric varieties},
  Graduate Studies in Mathematics, vol. 124, American Mathematical Society,
  Providence, RI, 2011. \MR{2810322}

\bibitem[Dan20]{bac_dang}
Nguyen-Bac Dang, \emph{Degrees of iterates of rational maps on normal
  projective varieties}, Proceedings of the London Mathematical Society
  \textbf{121} (2020), no.~5, 1268--1310.

\bibitem[Dem70]{MR0284446}
Michel Demazure, \emph{Sous-groupes alg{\'e}briques de rang maximum du groupe
  de {C}remona}, Ann. Sci. {\'E}cole Norm. Sup. (4) \textbf{3} (1970),
  507--588. \MR{0284446 (44 \#1672)}

\bibitem[Dem82]{MR655409}
A.~S. Demushkin, \emph{Combinatorial invariance of toric singularities},
  Vestnik Moskov. Univ. Ser. I Mat. Mekh. (1982), no.~2, 80--87, 117.
  \MR{655409}

\bibitem[Dem88]{Dem88}
Michel Demazure, \emph{Anneaux gradu\'es normaux}, Introduction \`a la
  th\'eorie des singularit\'es, {II}, Travaux en Cours, vol.~37, Hermann,
  Paris, 1988, pp.~35--68.

\bibitem[D{\'e}s06]{MR2278755}
Julie D{\'e}serti, \emph{Sur les automorphismes du groupe de {C}remona},
  Compos. Math. \textbf{142} (2006), no.~6, 1459--1478. \MR{2278755
  (2007g:14008)}

\bibitem[DF01]{diller2001dynamics}
Jeffrey Diller and Charles Favre, \emph{Dynamics of bimeromorphic maps of
  surfaces}, American Journal of Mathematics \textbf{123} (2001), no.~6,
  1135--1169.

\bibitem[Fil82]{MR693972}
R.~P. Filipkiewicz, \emph{Isomorphisms between diffeomorphism groups}, Ergodic
  Theory Dynamical Systems \textbf{2} (1982), no.~2, 159--171 (1983).
  \MR{693972}

\bibitem[FK18]{furter2018geometry}
Jean-Philippe Furter and Hanspeter Kraft, \emph{On the geometry of the
  automorphism groups of affine varieties}, arXiv preprint arXiv:1809.04175
  (2018).

\bibitem[Fre06]{MR2259515}
Gene Freudenburg, \emph{Algebraic theory of locally nilpotent derivations},
  Encyclopaedia of Mathematical Sciences, vol. 136, Springer-Verlag, Berlin,
  2006, Invariant Theory and Algebraic Transformation Groups, VII. \MR{2259515}

\bibitem[Ful93]{MR1234037}
William Fulton, \emph{Introduction to toric varieties}, Annals of Mathematics
  Studies, vol. 131, Princeton University Press, Princeton, NJ, 1993, The
  William H. Roever Lectures in Geometry. \MR{1234037}

\bibitem[FZ03]{MR2020670}
Hubert Flenner and Mikhail Zaidenberg, \emph{Normal affine surfaces with {$\Bbb
  C^\ast$}-actions}, Osaka J. Math. \textbf{40} (2003), no.~4, 981--1009.
  \MR{2020670}

\bibitem[FZ05a]{MR2196000}
\bysame, \emph{Locally nilpotent derivations on affine surfaces with a {$\Bbb
  C^*$}-action}, Osaka J. Math. \textbf{42} (2005), no.~4, 931--974.
  \MR{2196000}

\bibitem[FZ05b]{MR2126657}
\bysame, \emph{On the uniqueness of {$\Bbb C^*$}-actions on affine surfaces},
  Affine algebraic geometry, Contemp. Math., vol. 369, Amer. Math. Soc.,
  Providence, RI, 2005, pp.~97--111. \MR{2126657}

\bibitem[Giz80]{MR563788}
M.~H. Gizatullin, \emph{Rational {$G$}-surfaces}, Izv. Akad. Nauk SSSR Ser.
  Mat. \textbf{44} (1980), no.~1, 110--144, 239. \MR{563788}

\bibitem[Gub98]{MR1626643}
Joseph Gubeladze, \emph{The isomorphism problem for commutative monoid rings},
  J. Pure Appl. Algebra \textbf{129} (1998), no.~1, 35--65. \MR{1626643}

\bibitem[Kle93]{MR1510801}
Felix Klein, \emph{Vergleichende {B}etrachtungen \"{u}ber neuere geometrische
  {F}orschungen}, Math. Ann. \textbf{43} (1893), no.~1, 63--100. \MR{1510801}

\bibitem[Kot14]{MR3263267}
Polina~Yu. Kotenkova, \emph{On restriction of roots on affine {$T$}-varieties},
  Beitr. Algebra Geom. \textbf{55} (2014), no.~2, 621--634. \MR{3263267}

\bibitem[KPZ17]{MR3793368}
Sergei Kovalenko, Alexander Perepechko, and Mikhail Zaidenberg, \emph{On
  automorphism groups of affine surfaces}, Algebraic varieties and automorphism
  groups, Adv. Stud. Pure Math., vol.~75, Math. Soc. Japan, Tokyo, 2017,
  pp.~207--286. \MR{3793368}

\bibitem[KR82]{MR644276}
T.~Kambayashi and P.~Russell, \emph{On linearizing algebraic torus actions}, J.
  Pure Appl. Algebra \textbf{23} (1982), no.~3, 243--250. \MR{644276}

\bibitem[Kra84]{MR768181}
Hanspeter Kraft, \emph{Geometrische {M}ethoden in der {I}nvariantentheorie},
  Aspects of Mathematics, D1, Friedr. Vieweg \& Sohn, Braunschweig, 1984.
  \MR{768181}

\bibitem[Kra17]{MR3738084}
\bysame, \emph{Automorphism groups of affine varieties and a characterization
  of affine {$n$}-space}, Trans. Moscow Math. Soc. \textbf{78} (2017),
  171--186. \MR{3738084}

\bibitem[Kra18]{kraft2018regularization}
\bysame, \emph{Regularization of rational group actions}, arXiv preprint
  arXiv:1808.08729 (2018).

\bibitem[KRvS19]{kraft2017affine}
Hanspeter Kraft, Andriy Regeta, and Immanuel van Santen, \emph{Is the affine
  space determined by its automorphism group?}, Int. Math. Res. Not. (2019), to
  appear.

\bibitem[Lie10]{MR2657447}
Alvaro Liendo, \emph{Affine {$\Bbb T$}-varieties of complexity one and locally
  nilpotent derivations}, Transform. Groups \textbf{15} (2010), no.~2,
  389--425. \MR{2657447}

\bibitem[Lie11]{MR2852493}
\bysame, \emph{Roots of the affine {C}remona group}, Transform. Groups
  \textbf{16} (2011), no.~4, 1137--1142. \MR{2852493}

\bibitem[ML98]{makar1998locally}
Leonid Makar-Limanov, \emph{Locally nilpotent derivations, a new ring invariant
  and applications}, \verb+http://www.math.wayne.edu/~lml/lmlnotes.pdf+ (1998).

\bibitem[Oda88]{MR922894}
Tadao Oda, \emph{Convex bodies and algebraic geometry}, Ergebnisse der
  Mathematik und ihrer Grenzgebiete (3) [Results in Mathematics and Related
  Areas (3)], vol.~15, Springer-Verlag, Berlin, 1988, An introduction to the
  theory of toric varieties, Translated from the Japanese. \MR{922894}

\bibitem[Pop73]{MR0340263}
V.~L. Popov, \emph{Quasihomogeneous affine algebraic varieties of the group
  {${\rm SL}(2)$}}, Izv. Akad. Nauk SSSR Ser. Mat \textbf{37} (1973), 792--832.
  \MR{0340263}

\bibitem[Reg17]{regeta2017characterization}
Andriy Regeta, \emph{Characterization of $ n $-dimensional normal affine $
  sl\_n $-varieties}, arXiv preprint arXiv:1702.01173 (2017).

\bibitem[Sha66]{MR0485898}
I.~R. Shafarevich, \emph{On some infinite-dimensional groups}, Rend. Mat. e
  Appl. (5) \textbf{25} (1966), no.~1-2, 208--212. \MR{0485898}

\bibitem[Spr98]{MR1642713}
T.~A. Springer, \emph{Linear algebraic groups}, second ed., Progress in
  Mathematics, vol.~9, Birkh\"{a}user Boston, Inc., Boston, MA, 1998.
  \MR{1642713}

\bibitem[Sta13]{stampfli2013contributions}
Immanuel Stampfli, \emph{Contributions to automorphisms of affine spaces},
  Ph.D. thesis, University\_of\_Basel, 2013.

\end{thebibliography}

\end{document}